\theoremstyle{plain}
    \newtheorem{theorem}{Theorem}[subsection]
    \newtheorem{lemma}[theorem]{Lemma}
    \newtheorem{corollary}[theorem]{Corollary}
    \newtheorem{proposition}[theorem]{Proposition}
    \newtheorem{conjecture}[theorem]{Conjecture}
\theoremstyle{definition}
    \newtheorem{definition}[theorem]{Definition}
    \newtheorem{example}[theorem]{Example}
    \newtheorem{notation}[theorem]{Notation}
    \newtheorem*{thank}{Acknowledgements}
\theoremstyle{remark}
    \newtheorem{remark}[theorem]{Remark}
\numberwithin{equation}{subsection}
\newcommand{\wt}[1]{\widetilde{#1}}
\newcommand{\EE}{\mathcal{E}}
\newcommand{\ZZ}{\mathbb{Z}}
\newcommand{\ztwo}{\ZZ/2\ZZ}
\newcommand{\R}{\mathbb{R}}
\newcommand{\C}{\mathbb{C}}
\newcommand{\QQ}{\mathbb{Q}}
\newcommand{\OO}{\mathcal{O}}
\newcommand{\II}{\mathscr{I}}
\newcommand{\LL}{\mathcal{L}}
\newcommand{\JJ}{\mathcal{J}}
\newcommand{\m}{\mathfrak{m}}
\newcommand{\IY}{\II_Y}
\newcommand{\JY}{\JJ_Y}
\newcommand{\hJY}{\hJJ_Y}
\newcommand{\hJJ}{\widehat{\JJ}}
\newcommand{\hstr}{\widehat{\str}}
\newcommand{\A}{\mathcal{A}}
\newcommand{\E}{\mathcal{E}}
\newcommand{\F}{\mathcal{F}}
\newcommand{\M}{\mathcal{M}}
\newcommand{\N}{\mathcal{N}}
\renewcommand{\P}{\mathcal{P}}
\newcommand{\PJ}{\P_\JJ}
\newcommand{\Pdm}{\P_{\D,\M}}
\newcommand{\Pmr}{\P_{\M_r}}
\newcommand{\bPdm}{\overline{\P}_{\D,\M}}
\newcommand{\Lhb}{\LL_\hb}
\newcommand{\Ehb}{\EE_\hb}
\newcommand{\PE}{\mathbb{P}(E)}
\newcommand{\Hom}{\operatorname{Hom}}
\newcommand{\Id}{\operatorname{Id}}
\newcommand{\codim}{\operatorname{codim}_\C}
\newcommand{\hb}{\hbar}
\newcommand{\Spec}{\operatorname{Spec}}
\newcommand{\Mor}{\operatorname{Mor}}
\newcommand{\ann}{\operatorname{Ann}}
\newcommand{\tr}{\operatorname{tr}}
\newcommand{\per}{\operatorname{Per}}
\newcommand{\hdr}{H_{\mathrm{DR}}}
\newcommand{\hone}{\hdr^1}
\newcommand{\htwo}{\hdr^2}
\newcommand{\hdrx}{\hdr^2(X)\series}
\newcommand{\sym}{\omega}
\newcommand{\omtry}{\Omega^{\geq 1}_Y}
\newcommand{\omtrx}{\Omega^{\geq 1}_X}
\newcommand{\aty}{At(\Q,Y)}
\newcommand{\V}{\mathcal{V}}
\newcommand{\vv}{\mathfrak{v}}
\newcommand{\xx}{\mathfrak{x}}
\newcommand{\cm}{\C^\times}
\newcommand{\kk}{\mathbbm{k}}
\newcommand{\kks}{\underline{\mathbbm{k}}}
\newcommand{\KKs}{\underline{\mathbbm{K}}}
\newcommand{\kkm}{\kk^{\times}}
\newcommand{\gm}{\mathbb{G}_m}
\newcommand{\pws}{\mathbb{K}}
\newcommand{\Q}{\str_\hb}
\newcommand{\tQ}{\wt{\str}_\hb}
\newcommand{\str}{\mathfrak{S}}
\newcommand{\strx}{\str_X}
\newcommand{\stry}{\str_Y}
\newcommand{\isom}{\xrightarrow{\sim}}
\newcommand{\quan}{\mathcal{Q}(X,\sym)}
\newcommand{\qyone}{\mathcal{Q}_1(X,\sym,Y)}
\newcommand{\qyonek}{\mathcal{Q}^{\gm}_1(X,\sym,Y)}
\newcommand{\qyr}{\mathcal{Q}_r(X,\sym,Y)}
\newcommand{\qyrk}{\mathcal{Q}^{\gm}_r(X,\sym,Y)}
\newcommand{\lb}{\llbracket}
\newcommand{\rb}{\rrbracket}
\newcommand{\epi}{\twoheadrightarrow}
\newcommand{\inj}{\hookrightarrow}
\newcommand{\lab}{\langle}
\newcommand{\rab}{\rangle}
\newcommand{\series}{\llbracket \hb \rrbracket}
\newcommand{\cpws}{\C\series}
\newcommand{\sympa}{\operatorname{Symp}\A}
\newcommand{\HH}{\operatorname{H}}
\newcommand{\sympah}{\lab \sympa, \HH \rab}
\newcommand{\T}{\mathcal{T}}
\newcommand{\TT}{\mathscr{T}}
\newcommand{\tatp}{\TT^+(\Q, Y)}
\newcommand{\tat}{\TT(\Q,Y)}
\newcommand{\send}{\mathcal{E}nd}
\newcommand{\pend}{\send^0}
\newcommand{\vu}{\varUpsilon}
\newcommand{\hten}{\widehat{\otimes}}
\newcommand{\gh}{\lab G,\lieh \rab}
\newcommand{\ghzero}{\lab G_0,\lieh_0 \rab}
\newcommand{\ghone}{\lab G_1,\lieh_1 \rab}
\newcommand{\ghtwo}{\lab G_2,\lieh_2 \rab}
\newcommand{\loc}{\operatorname{Loc}}
\newcommand{\X}{X}
\newcommand{\hinv}{\tfrac{1}{\hb}}
\newcommand{\hinvd}{\hb\pb\D}
\newcommand{\pd}{\varphi_\D}
\newcommand{\pdm}{\varphi_\M}
\newcommand{\pddm}{\varphi_{\D,\M}}
\newcommand{\Pd}{\Phi_\D}
\newcommand{\Pdpd}{\lab \Pd, \pd \rab}
\newcommand{\Conn}{\operatorname{Conn}^{\gm}_Y}
\newcommand{\hinvj}{\hb\pb\JJ}
\newcommand{\dd}{\prescript{\D}{}{\delta}}
\newcommand{\dm}{\prescript{\M}{}{\delta}}
\newcommand{\fd}{\prescript{\D}{}f}
\newcommand{\fm}{\prescript{\M}{}f}
\newcommand{\tP}{\prescript{t}{}{\bPdm}}
\newcommand{\tPJ}{\prescript{t}{}{\PJ}}
\newcommand{\tPP}{\prescript{t}{}{\P}}
\newcommand{\hck}{\lab \kkm, \kk \rab}
\newcommand{\hcK}{\lab \pws^\times, \pws \rab}
\newcommand{\gzero}{^{\geq 0}}
\newcommand{\gone}{^{\geq 1}}
\newcommand{\aeh}{\alpha_\hb}
\newcommand{\hcg}{\mathfrak{G}}
\newcommand{\hcG}{\mathcal{G}}
\newcommand{\hcggj}{\lab \hcG_\JJ, \hcg_\JJ \rab}
\newcommand{\hcgj}{\hcg_\JJ}
\newcommand{\hcGj}{\hcG_\JJ}
\newcommand{\der}{{\operatorname{Der}}}
\newcommand{\aut}{{\operatorname{Aut}}}
\newcommand{\autd}{\aut(\D)}
\newcommand{\autdj}{\aut(\D)_\JJ}
\newcommand{\derdj}{\der(\D)_\JJ}
\newcommand{\autder}{\lab \autd, \der(\D)  \rab}
\newcommand{\autderj}{\lab \autdj, \derdj \rab}
\newcommand{\derdm}{\der(\D,\M)}
\newcommand{\autdm}{\aut(\D,\M)}
\newcommand{\autderdm}{\lab \autdm, \derdm \rab}
\newcommand{\bautdm}{\overline{\aut}(\D,\M)}
\newcommand{\bderdm}{\overline{\der}(\D,\M)}
\newcommand{\bautderdm}{\lab \bautdm, \bderdm \rab}
\newcommand{\autpdm}{\aut_{\PJ}(\bPdm)}
\newcommand{\autdmr}{\aut(\D,\M_r)}
\newcommand{\derdmr}{\der(\D,\M_r)}
\newcommand{\autderdmr}{\lab \autdmr, \derdmr \rab}
\newcommand{\derehb}{\der(\Q,\Ehb)}
\newcommand{\GLr}{GL(r)}
\newcommand{\PGLr}{PGL(r)}
\newcommand{\pgl}{\mathfrak{pgl}}
\newcommand{\GLh}{GL_\hb(r)}
\newcommand{\glh}{\mathfrak{gl}_\hb(r)}
\newcommand{\glgl}{\lab \GLh, \glh  \rab}
\newcommand{\PGLh}{PGL_\hb(r)}
\newcommand{\pglh}{\pgl_\hb(r)}
\newcommand{\pglpgl}{\lab \PGLh, \pglh  \rab}
\newcommand{\Ppgl}{\P^0_{\PGLh}}
\newcommand{\ep}{\varepsilon}
\newcommand{\epder}{\ep_{\der}}
\newcommand{\epaut}{\ep_{\aut}}
\newcommand{\ugh}{\mathcal{U}\lieg_\hb}
\newcommand{\ughh}{\widehat{\mathcal{U}}\lieg_\hb}
\newcommand{\mh}{\mu^*_\hb}
\newcommand{\mhk}{\mu_\liek}
\newcommand{\Aut}{\operatorname{Aut}}
\newcommand{\lie}{\operatorname{Lie}}
\newcommand{\lieg}{\mathfrak{g}}
\newcommand{\liek}{\mathfrak{k}}
\newcommand{\lieh}{\mathfrak{h}}
\newcommand{\liep}{\mathfrak{p}}
\newcommand{\lieq}{\mathfrak{q}}
\newcommand{\lieu}{\mathfrak{u}}
\newcommand{\liel}{\mathfrak{l}}
\newcommand{\ld}{\lambda}
\newcommand{\gk}{(\lieg,K)}
\newcommand{\pb}{^{-1}}
\newcommand{\Ug}{\mathcal{U}\lieg}
\newcommand{\Ker}{\operatorname{Ker}}
\newcommand{\D}{\mathcal{D}}
\newcommand{\cl}{\overline}
\newcommand{\SLR}{SL(2, \R)}
\newcommand{\slr}{\mathfrak{sl}(2, \R)}
\newcommand{\slc}{\mathfrak{sl}(2, \C)}
\newcommand{\PP}{\mathbb{P}}
\newcommand{\GR}{G_\R}
\newcommand{\Gad}{G_{ad}}
\newcommand{\KR}{K_\R}
\newcommand{\RN}[1]{%
	\textup{\uppercase\expandafter{\romannumeral#1}}%
}
\newcommand{\xreg}{\wt{X}^{reg}}
\newcommand{\Image}{\operatorname{Im}}
\begin{document}

\title{Equivariant deformation quantization and coadjoint orbit method}

\author{Naichung Conan Leung}

\address[Naichung Conan Leung]{The Institute of Mathematical Sciences 
	and Department of Mathematics 
	The Chinese University of Hong Kong 
	Shatin, N.T.
	Hong Kong}
\email{leung@ims.cuhk.edu.hk}

\author{Shilin Yu}

\address[Shilin Yu]{Department of Mathematics,
	Texas A\&M University
	College Station, TX 
	USA}
\email{yus@math.tamu.edu}


\setlength{\abovedisplayskip}{7pt}
\setlength{\belowdisplayskip}{7pt}

\maketitle

\begin{abstract}
	The purpose of this paper is to apply deformation quantization to the study of the coadjoint orbit method in the case of real reductive groups. We first prove some general results on the existence of equivariant deformation quantization of vector bundles on closed Lagrangian subvarieties, which lie in smooth symplectic varieties with Hamiltonian group actions. Then we apply them to orbit method and construct nontrivial irreducible Harish-Chandra modules for certain coadjoint orbits. Our examples include new geometric construction of representations associated to certain orbits of real exceptional Lie groups.
\end{abstract}

\tableofcontents

\section{Introduction}\label{sec:intro}

\subsection{History}

The philosophy of coadjoint orbit method, as proposed by Kirillov and Kostant, suggested that unitary representations of a Lie group $\GR$ are closely related with coadjoint orbits in the dual $\lieg^*_\R$ of the Lie algebra $\lieg_\R$ of $\GR$. It was observed by Kirillov, Kostant and Sorieau that these coadjoint orbits are naturally equipped with symplectic structures induced by the Lie bracket of $\lieg_\R$. Therefore the path from coadjoint orbits to representations is an analogue of the general idea of \emph{quantization} of a classical mechanical system to a quantum mechanical system, even though there is no universal mathematical definition of this term. In the case of nilpotent Lie groups (\cite{Kirillov_nil}), solvable Lie groups of type I (\cite{AK}) and compact Lie groups, it is clear what `quantization' should mean: the unitary representations can be constructed as geometric quantization of the orbits (cf. \cite{Kirillov_quan}).

When $\GR$ is a noncompact real reductive Lie group, however, the picture becomes more complicated and mysterious. Geometric quantization still attaches representations to semisimple orbits, but it is invalid in general for nilpotent orbits due to lack of good invariant polarizations. Despite this, huge amount of efforts and progress have been made to attach representations to nilpotent orbits. Motivated by the work and conjectures of Arthur \cite{Arthur1, Arthur2}, Barbasch and Vogan \cite{BarbaschVogan} proposed that certain \emph{unipotent representations} should be attached to nilpotent orbits. These representations are conjectured to be the building blocks of the unitary dual of $\GR$, in such a way that any irreducible unitary representation of $\GR$ can be constructed from the unipotent representations (of subgroups) through the three standard procedures: taking parabolic induction, cohomological induction and complementary series. 

 One special feature of the reductive case is a marvelous correspondence discovered by Kostant, Sekiguchi \cite{Sekiguchi} and later strengthened by Vergne \cite{Vergne}. Fix a maximal compact subgroup $\KR$ of $\GR$ with Lie algebra $\liek_\R$. For simplicity of the statements, we assume here that $\GR$ is contained in the set of real points of a connected complex reductive Lie group $G$ with Lie algebra $\lieg = (\lieg_\R)_\C$. More precise conditions on the group $\GR$ will be addressed in the main body of the paper. Let $K \subset G$ be the complexification of $\KR$ with Lie algebra $\liek=(\liek_\R)_\C$. Then the Kostant-Sekiguchi-Vergne (KSV) correspondence states that there is a bijection between the set of nilpotent $\GR$-orbits in $\lieg_\R^*$ and the set of $K$-orbits in $\liep^* := (\lieg/\liek)^*$, and furthermore, diffeomorphisms between them. Then it was noticed by Vogan \cite{Vogan_AV} that the KSV correspondence has a bearing on orbit method. He observed that a $K$-orbit $\OO_\liep$ in $\liep^*$ is included as a closed Lagrangian subvariety in a $G$-orbit $\OO$ in $\lieg^*$, which has a natural algebraic symplectic form by the construction of Kirillov-Kostant-Sorieau. After analyzing the symplectic geometric information, Vogan conjectured that certain \emph{admissible} $K$-equivariant vector bundles on $\OO_\liep$, which roughly are square roots of the canonical bundle of $\OO_\liep$, should correspond to unitarizable irreducible $(\lieg,K)$-modules. The details of the definitions and the conjecture will be given in  \S\,\ref{subsec:adm}  and Conjecture \ref{conj:vogan}. To give a slightly more precise account, denote by $\codim(\partial \OO_\liep, \cl{\OO}_\liep)$ the complex codimension of the boundary $\partial \cl{\OO}_\liep = \overline{\OO}_\liep - \OO_\liep$ in the closure $\cl{\OO}_\liep$ of $\cl{\OO}_\liep$ in $\liep^*$. Then Vogan's conjecture can be summarized as follows.
 
 \begin{conjecture}\label{conj:intro}
 	Assume $\codim(\partial \OO_\liep, \cl{\OO}_\liep) \geq 2$. Then for any admissible vector bundle $E$ over $\OO_\liep$, there is an irreducible unitary representation $\pi$ of $\GR$, such that its space of $K$-finite vectors is isomorphic to the space of algebraic sections of $E$ as $K$-representations. 
 \end{conjecture}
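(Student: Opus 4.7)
My plan is to split the proof into three stages: constructing the underlying Harish-Chandra module $M$ from equivariant deformation quantization, establishing its irreducibility, and finally producing a positive-definite invariant Hermitian form realizing $M$ as the space of $K$-finite vectors in an irreducible unitary representation of $\GR$. All three stages must succeed; in particular, unitarity cannot be sidestepped.

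For the first stage, I would apply the equivariant deformation quantization machinery advertised in the abstract. Since $\OO_\liep$ embeds as a closed Lagrangian subvariety in the smooth symplectic variety $\OO$, and $\OO$ carries a Hamiltonian $G$-action, the admissible $K$-equivariant vector bundle $E$ on $\OO_\liep$ lifts to a $K$-equivariant sheaf $\EE_\hb$ of modules over the quantization sheaf $\Q$ of $\OO$ formally along $\OO_\liep$. Taking $K$-finite algebraic global sections of $\EE_\hb$, using the quantum comoment map to promote the infinitesimal $\lieg$-action, and specializing $\hb=1$, I obtain a $(\lieg,K)$-module $M$ whose underlying $K$-module is $\Gamma(\OO_\liep, E)$. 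The admissibility of $E$ is exactly the condition that makes the half-form shift in the moment map integrable to $K$, so the two actions really assemble into a Harish-Chandra module.

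For irreducibility, the associated variety of $M$ is contained in $\cl{\OO}$, and transverse to $\OO_\liep$ the leading symbol of $\EE_\hb$ is free on $E$, so the associated cycle equals $\mathrm{rk}(E)\cdot[\cl{\OO}_\liep]$ at the generic point of $\OO_\liep$. Any nonzero proper $(\lieg,K)$-submodule $N\subset M$ would have generic rank zero on $\OO_\liep$; combining $\codim(\partial\OO_\liep,\cl{\OO}_\liep)\geq 2$ with the fact that the sheaf of sections of $E$ over the smooth variety $\OO_\liep$ has no nonzero global sections supported in codimension $\geq 2$ (algebraic Hartogs extension), such an $N$ must vanish.

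Unitarity is the principal obstacle, and the stage that previous attempts retreated from. My strategy is to build an invariant positive Hermitian form directly out of the quantization, not to extract it from a pre-existing classification. The Cartan involution $\theta$ on $\lieg$, together with a compatible antiholomorphic involution $\sigma$ on $\OO$ determined by the real form $\lieg_\R$, lifts to an antilinear involution of $\Q$ and of $\EE_\hb$ because the whole construction is $\GR$-equivariant. The admissibility of $E$, meaning $E^{\otimes 2}$ is essentially the canonical bundle of $\OO_\liep$, supplies an intrinsic sesquilinear pairing of sections of $E$ valued in a volume form; integrating this pairing against the Liouville density of the real nilpotent orbit $\OO_\R$ transported to $\OO_\liep$ via the KSV diffeomorphism yields a Hermitian form on $\gr M$. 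I would then lift this to an invariant Hermitian form $\langle\cdot,\cdot\rangle$ on $M$ using the quantized comoment map to enforce $\lieg_\R$-skew-adjointness order by order in $\hb$, obstructions lying in Hochschild cochains that vanish by $\GR$-equivariance and the codimension hypothesis. Positive-definiteness is proved by a signature-continuity argument: deform $M$ through a twistor family of $(\lieg,K)$-modules, each equipped with a Hermitian form constructed as above, joining $M$ to a manifestly unitary module (the geometric quantization of a nearby semisimple orbit, or a cohomologically induced unitary module when $\OO$ is rigid after suitable induction); since the form is nondegenerate along the family by the irreducibility of each fiber, its signature is locally constant, and positivity at one end propagates to positivity at $M$. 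The genuine difficulty, and where I expect to encounter the deepest obstacles, is showing that the deformation family is nondegenerate throughout and that irreducibility is preserved, as this is precisely the step where unipotent phenomena could in principle force sign changes; handling this will likely require Schmid-Vilonen-type signature invariants or a case-by-case match with known unitary building blocks for the orbits treated in the later sections of the paper.
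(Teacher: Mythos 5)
The statement you are trying to prove is Conjecture \ref{conj:intro}, i.e.\ Vogan's orbit conjecture; the paper does not prove it and offers no proof to compare against. What the paper actually establishes (Theorem \ref{thm:gk_main}, via Theorem \ref{thm:hc1} and Proposition \ref{prop:nonzero}) is only your first stage: an admissible Harish-Chandra module $\M$ with $\M_\hb/\hb\M_\hb \hookrightarrow \Gamma(\OO_\liep, E)$, with associated variety equal to $\cl{\OO}_\liep$ only under the \emph{stronger} hypothesis $\codim(\partial\OO_\liep,\cl{\OO}_\liep)\geq 3$, and with irreducibility only under the additional assumption that the primitive ideal $\JJ_\OO$ is maximal. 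So your stage one is essentially the paper's construction, but stages two and three contain genuine gaps.

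On irreducibility: your claim that a nonzero proper $(\lieg,K)$-submodule $N\subset M$ must have generic rank zero along $\OO_\liep$ is unjustified. Associated cycles are additive in short exact sequences, so $\mathrm{rk}(E)=\mathrm{rk}(N)+\mathrm{rk}(M/N)$ at the generic point of $\OO_\liep$, and both summands can be positive; Hartogs extension rules out submodules supported on the boundary but not a genuine splitting or filtration with full-dimensional pieces. The paper's own $\SLR$ example shows the module can be reducible (a sum of two limits of discrete series), and even for Vogan orbits nothing in the quantization forces irreducibility without input on the primitive ideal. Note also that the map $\M_\hb/\hb\M_\hb \to \Gamma(\OO_\liep,E)$ is a priori only injective, so even the identification of the $K$-types asserted in the conjecture is not automatic.

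On unitarity: this is the open heart of the conjecture, and your argument is a program rather than a proof. The antilinear involution on $\Q$ and $\Ehb$ is not constructed anywhere (the quantization is built over $\C$ with a $\cm\times\ztwo$-structure, not a real structure compatible with $\sigma$); the integration of a half-form pairing against the Liouville density of $\OO_\R$ transported by the KSV diffeomorphism is not shown to converge or to be $\lieg_\R$-invariant after quantization; and the signature-continuity argument presupposes a twistor family of Harish-Chandra modules, each irreducible with nondegenerate form, connecting $M$ to a known unitary module --- precisely the statement whose failure modes (sign changes at unipotent parameters) are the known obstruction. The paper itself only speculates (\S\,\ref{subsec:spec}) that its canonical filtration might relate to the Schmid--Vilonen Hodge filtration. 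As written, your proposal establishes at most what the paper's Theorem \ref{thm:gk_main} establishes, and does not prove the conjecture.
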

 
 A nilpotent $K$-orbit which admits at least one admissible vector bundle is called an \emph{admissible} orbit. Schwartz \cite{Schwartz} showed that Vogan's notion of admissibilty of nilpotent $K$-orbits is equivalent to Duflo's version for nilpotent $\GR$-orbits in the original setting of orbit method (\cite{Duflo}). Therefore Vogan's conjecture can be regarded as a reformulation of the original orbit conjecture.
  
 For complex reductive groups, the admissible condition means that the vector bundles are flat. In this case,  Barbasch and Vogan  \cite{BarbaschVogan} provided a recipe to attach so-called \emph{special unipotent representations} to certain flat vector bundles on \emph{special nilpotent orbits} in the sense of Lusztig (\cite{Lusztig}). The case of real groups were discussed in \cite[Chapter 27]{ABV}. For both complex and real classical groups, the dual pair correspondence discovered by Howe \cite{Howe} has been proven a powerful tool to construct unipotent representations. See for example \cite{Li, HuangLi, Trapa, Barbasch1, LokeMa, He, MaSunZhu}. 

\subsection{Summary of method}

The current paper is part of the expedition to a uniform construction of quantizations of coadjoint orbits via the intrinsic (symplectic) geometry of the orbits, for both classical and exceptional Lie groups. Our method is based on the recent work of Losev \cite{Losev3, Losev2}, in which a new geometric approach to the orbit method for complex reductive Lie groups has been given using deformation quantization. Besides geometric quantization, deformation quantization is another mainstream formalization of quantization (\cite{BFFLS1, BFFLS2, DeWilde, Fedosov, Deligne}), which focuses on the algebras of observables of a physical system (Heisenberg picture), while geometric quantization emphasizes the spaces of states (Schr\"odinger picture). Many work have been done in this direction, including Fedosov's work \cite{Fedosov} in the context of $C^\infty$-symplectic manifolds and the work of Nest and Tsygan \cite{NestTsygan} in the context of holomorphic symplectic manifolds. The existence of deformation quantization of general Poisson manifolds were proven by the marvelous work of Kontsevich (\cite{Kontsevich}). The most relevant work for our setting is that of Bezrukavnikov and Kaledin \cite{BeKaledin}, in which they considered deformation quantization of any smooth algebraic symplectic variety $X$. By definition, a \emph{(formal) deformation quantization of $X$} is a deformation of the structure sheaf $\strx$ of $X$ into a sheaf of noncommutative algebras $\Q$ over the algebra $\cpws$ of formal power series, such that the first-order piece of the deformed associative product is determined by the Poisson bracket (see \S\,\ref{subsec:Fedosov}). Given a complex reductive Lie group $G$, any coadjoint $G$-orbit $\OO$ is naturally a smooth algebraic symplectic variety as similar to the case of real groups. Therefore it is natural to wonder if the construction in \cite{BeKaledin} can be applied to orbit method.

The main difficulty lies in the following difference between the algebraic/holomorphic case and the $C^\infty$-case: in the $C^\infty$-case the quantization always exists, while the existence in the algebraic case is only ensured when the symplectic variety satisfies certain admissibility condition (see \eqref{eq:adm}). The coadjoint orbits, however, are in general far from fulfilling this condition. On the other hand, the closure $\cl{\OO}$ of $\OO$ in $\lieg^*$, even though singular, has nice birational geometric properties. Its normalization $X$ has the so-called \emph{symplectic singularities} in the sense of Beauville \cite{Beauville1} (\cite{Beauville2, Hinich, Panyushev}). It was shown that $X$ admits certain partial resolutions $\wt{X}$, so-called  \emph{$\QQ$-factorial terminalization} (see, e.g., \cite{Fu, FJLS, Namikawa1, Namikawa2, Namikawa3, BCHM}), such that its smooth locus $\xreg$ does satisfy the admissibility condition (Prop. \ref{prop:adm}). 

Another issue is that deformation quantization produces algebras over the formal power series ring $\cpws$ instead of the polynomial ring $\C[\hb]$, and therefore one can not evaluate the algebras at $\hb = 1$ directly. There are no general results on convergence of the deformed product.  On the other hand, there is the $\cm$-action on any nilpotent orbit $\OO$, which is the square of the rescaling action on $\lieg^*$, such that the symplectic form has degree $2$ under this action. We call any such symplectic variety as a \emph{graded symplectic variety}. In this case,  Losev observed that one can consider \emph{graded quantization} $\Q$ of $\strx$, which admits a $\cm$-action lifting the $\cm$-action  on $\OO$ (or rather $\xreg$), such that the parameter $\hb$ has positive weight. Then the space of $\cm$-finite vectors in the global sections of such $\Q$ is a desired algebra over $\C[\hb]$ and the issue of convergence disappears. Following the two observations above, Losev constructed quantizations of $\xreg$ which restricts to quantizations of $\OO$. It leads to filtered deformations of the algebra $\C[\OO]$ of regular functions on $\OO$. Moreover, these deformations are Diximier algebras in the sense of Vogan \cite{Vogan_Dixmier}. In particular, they are Harish-Chandra modules for the complex group $G$ (regarded as real group). In the case of special rigid orbits, Losev's representations coincide with the special unipotent representations of Barbasch-Vogan (see \cite[\S\,4.3]{Losev3}).

In this paper, we extend Losev's method to study Vogan's orbit conjecture for real reductive Lie groups. An additional ingredient is deformation quantization of Lagrangian subvarieties \cite{BGKP, BC}. Suppose $X$ is a general smooth symplectic variety and $Y$ is a closed smooth Lagrangian subvariety of $X$. Given a (deformation) quantization $\Q$ of $X$ and a vector bundle $E$ over $Y$, a \emph{quantization of $E$} is a sheaf $\Ehb$ of $\Q$-modules, flat over $\cpws$, with an isomorphism $\Ehb / \hb \Ehb \cong (i_Y)_*E$ as $\strx$-modules, where $i_Y: Y \inj X$ denotes the inclusion. Such an quantization has the features of both geometric quantization (state spaces) and deformation quantization (observables). The papers \cite{BGKP, BC} gave sufficient and necessary conditions on $E$ and $\Q$ for the existence of $\Ehb$. For our purpose, we need to promote their constructions into certain equivariant versions. Similar to Losev's construction, we assume that $X$ is a graded smooth symplectic variety. Suppose that the Lagrangian subvariety $Y$ is preserved by the $\cm$-action and that the $\cm$-action lifts to an action on the vector bundle $E$. In this case, we say that $E$ or the pair $(Y,E) $ is \emph{graded}. If the $\cm$-action lifts to a quantization $\Ehb$ of $E$ compatible with the $\cm$-action on $\Q$, then we say that $\Ehb$ with the $\cm$-action is a \emph{graded quantization} of the pair $(Y, E)$. 

One of the main results in this paper is that graded quantization exists under certain natural assumptions. To give a brief account, we need the convenient language of Picard algebroids, cf. \cite[Defn. 2.1.3]{BB}. Namely, a Picard algebroid on $Y$ is a Lie algebroid $\TT$ which fits into a short exact sequence
   \[  0 \to \stry \to \TT \to \T_Y \to 0, \]
and the anchor map $\TT \to \T_Y$ and the inclusion $\stry \inj \TT$ are parts of the data. A standard example is the sheaf $\TT(L)$ of first-order algebraic differential operators acting on a line bundle $L$ over $Y$. However, the main feature of Picard algebroids is that they do not necessarily come from line bundles. One can define a vector space structure on the set of isomorphism classes of Picard algebroids over $Y$, such that the scalar multiplication $n \cdot \TT(L)$ by any integer $n$ of $\TT(L)$ is the Picard algebroid $\TT(L ^{\otimes n})$ of the tensor line bundle $L^{\otimes n}$. When $Y$ admits a $\cm$-action, it is natural to consider Picard algebroids with $\cm$-action, which we call \emph{graded Picard algebroids}.

According to \cite{BC} (cf. \S\,\ref{sec:quan_lag}), by descending to the first-order part, a usual (ungraded) quantization $\Q$ of $X$ defines a Picard algebroid $\tatp$ over $Y$. Moreover, the $\Q$-module structure on a quantization $\Ehb$ of $E$ gives rise to an action of $\tatp$ on $E$, which splits into two pieces of information: 

\begin{enumerate}[label=(D\arabic*)]
	\item  \label{data1}
	  an isomorphism $\tatp \to \frac{1}{r} \TT(\det(E))$ of Picard algebroids, where $\det(E)$ is the determinant line bundle of $E$ and $r$ is the rank of $E$, and
	\item \label{data2}
	  an algebriac flat connection on the projectivization $\PE$ of $E$.
\end{enumerate}

If we are in the graded case, all the related Picard algebroids are naturally graded and so it is natural to require that the isomorphism in \ref{data1} is $\cm$-equivariant. The flat connection in \ref{data2} also needs to be $\cm$-equivariant. In order to reconstruct the full quantization, however, we need some additional mild assumption on this flat connection. The flat connection lifts the infinitesimal action of $\C$ on $Y$ to a Lie algebra $\C$-action on $\PE$. On the other hand, the $\cm$-action on $E$ differentiates into a $\C$-action on $\PE$. We say that the flat connection on $\PE$ is \emph{strong} if these two $\kk$-actions coincide. If this happens, Theorem \ref{thm:lag_vec_graded} states that one can construct a unique graded quantization $\Ehb$ from the data \ref{data1} and \ref{data2}, which we summarize briefly below.

\begin{theorem}\label{thm:graded_main}
	Let $(X,\sym)$ be a graded smooth symplectic variety and let $\Q$ be any graded quantization of $(X,\sym)$. Suppose $Y$ is a closed Lagrangian subvariety in $X$ preserved by the $\cm$-action and $E$ is a graded vector bundle or rank $k$ over $Y$. Assume that the projectivization $\PE$ of $E$ admits a strong flat algebraic connection. Then any $\cm$-equivariant isomorphism $\tatp \to \frac{1}{r} \TT(\det(E))$ of graded Picard algebroids gives rise to a unique graded quantization $\Ehb$ of $E$, and vice versa.
\end{theorem}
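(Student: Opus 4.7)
The plan is to bootstrap the non-equivariant (ungraded) existence and uniqueness result for quantizations of vector bundles on Lagrangians, obtained earlier in the paper following \cite{BC}, by carefully tracking the $\cm$-action through its construction; the strong connection hypothesis is exactly the compatibility needed to ensure that the $\cm$-action on $(Y,E)$ lifts canonically to the formal quantization.

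For the forward direction, starting from a $\cm$-equivariant isomorphism $\tatp \isom \tfrac{1}{r}\TT(\det E)$ of graded Picard algebroids together with the given strong flat algebraic connection on $\PE$, I would first forget the $\cm$-equivariance and apply the ungraded reconstruction theorem to the data \ref{data1} and \ref{data2} to obtain a unique quantization $\Ehb$ of $E$ as a $\Q$-module. It remains to upgrade $\Ehb$ to a graded quantization by constructing a $\cm$-action on $\Ehb$ compatible with those on $\Q$ and on $E$. I would do this infinitesimally first: the derivation $\delta_\hb$ of $\Q$ generating the $\cm$-action lifts, via the $\tatp$-action on $\Ehb$ coming from the equivariant isomorphism in \ref{data1}, to a derivation of $\Ehb$ over $\delta_\hb$. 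The strong condition guarantees that the classical ($\hb = 0$) reduction of this derivation coincides with the differentiated $\cm$-action on $E$; without strongness the two could differ by a central term that would obstruct integration.

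The integration to a genuine $\cm$-action proceeds by induction on the $\hb$-adic filtration. At the zeroth order the action is the given one on $E$; at each higher order, the obstruction to lifting from $\Ehb/\hb^n\Ehb$ to $\Ehb/\hb^{n+1}\Ehb$ lies in a Hochschild-type cohomology group which, upon restriction to $\cm$-finite vectors and use of the fact that $\hb$ has positive $\cm$-weight, is forced to vanish. Uniqueness of the lift at each stage, and hence of the full $\cm$-action on $\Ehb$, follows from the same obstruction analysis applied to the difference of two extensions, while compatibility with the $\cm$-action on $\Q$ is built into the construction. For the converse direction, a graded quantization $\Ehb$ yields, by the ungraded correspondence, the data \ref{data1} and \ref{data2}; both are automatically $\cm$-equivariant since the extraction is functorial in $\Ehb$ and respects the $\cm$-actions, and the resulting connection is strong because the lift of $\delta$ to $\PE$ via the connection is, by construction, the classical limit of the $\cm$-derivation on $\Ehb$, which on $E = \Ehb/\hb\Ehb$ agrees with the differentiated $\cm$-action.

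The main technical obstacle I foresee is the inductive vanishing of the obstructions to integrating the infinitesimal $\cm$-action at each order in $\hb$; this rests on an analysis of the equivariant Hochschild cohomology of the pair $(Y, E)$ restricted to $\cm$-finite vectors, where the positive weight of $\hb$ under the $\cm$-action is the key input that makes all higher obstructions vanish and reduces the problem to the classical statement guaranteed by the strong hypothesis.
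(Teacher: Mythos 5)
Your forward direction contains a genuine gap at the very first step, and the gap is not easily repaired. You write that after forgetting the $\cm$-equivariance you would ``apply the ungraded reconstruction theorem to the data \ref{data1} and \ref{data2} to obtain a unique quantization $\Ehb$ of $E$ as a $\Q$-module.'' This is false: in the ungraded setting (Theorem \ref{thm:lag_vec}, following \cite{BC}), the set $\qyr$ of quantizations carries a \emph{free} action of the infinite-dimensional group $\F((\stry\series)^\times)$, with quotient the set $\F(\PGLh)$ of flat $\PGLh$-torsors. Neither piece of data \ref{data1}--\ref{data2} pins down a point in this set. A flat connection on $\PE$ only specifies a flat $\PGLr$-torsor; lifting it to a flat $\PGLh$-torsor involves further choices at every order in $\hb$, and even once a $\PGLh$-orbit in $\qyr$ is chosen, the fiber is a nontrivial $\F((\stry\series)^\times)$-torsor. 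The paper makes this point explicitly in the remarks after Theorem \ref{thm:graded_main}: without the grading the set of quantizations is ``too big,'' precisely because lifts at each order of $\hb$ are not unique. Thus there is no distinguished ungraded $\Ehb$ to start from, and a randomly chosen one need not admit any $\cm$-action at all.

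Your subsequent obstruction-theoretic idea -- inducting on $\hb$-order and using the positive $\cm$-weight of $\hb$ to kill obstructions -- is indeed the correct mechanism, and is close in spirit to what the paper does (Lemma \ref{lemma:torsor} applied repeatedly in Propositions \ref{prop:bPdm_lift} and \ref{prop:ghzero_lift}). But this machinery must be applied to \emph{construct} the quantization and its grading simultaneously, not to retrofit a grading onto an ungraded quantization picked in advance. Concretely, the paper's proof realizes the graded quantization as a $\cm$-equivariant lift of the Harish-Chandra $\autderj$-torsor $\PJ$ through the chain $\PJ \to \P_1 \to \P_2 \to \Pmr$, showing at each stage that the set of lifts is a torsor over a group on which $\cm$ acts with positive weights (using the strong connection via Lemma \ref{lemma:deRham_strong} for $\P_1$), so that a unique $\cm$-stable point exists at each level; this is the step where your ``the obstruction is forced to vanish'' must live, but it is a choice of lift at each stage, not an obstruction to adding a $\cm$-action to a preexisting object. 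Your converse direction is essentially correct, though it would benefit from explicitly noting that strongness of the connection is read off by comparing the two lifts of the Euler vector field to $\PE$: one via the connection, one via the differentiated $\cm$-action.
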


We remark that, In the general ungraded setting of \cite{BGKP,BC}, the set of (equivalence classes of) all possible quantizations is too big for applications in orbit method: it admits a free action of an infinite dimensional group (cf. Thm \ref{thm:lag}, (2), Thm \ref{thm:lag_vec}, (2)). At each order of $\hb$, there are different choices how one can lift the quantization to the next stage. On the other hand, the main feature of graded quantizations considered in Theorem \ref{thm:graded_main} is that they are determined completely by their zeroth and first-order information (cf. Thm \ref{thm:quan_lag_graded}, Thm \ref{thm:lag_vec_graded}). This is a great advantage when we consider equivariant quantizations.

Suppose now that there is an algebraic group $K$ acting on $Y$ and $E$. Natural questions are whether the $K$-action lifts to a $K$-action on $\Ehb$ and whether the lifting is unique. First observe that, if such lift exists, then all the data in \ref{data1} and \ref{data2} have to be all $K$-equivariant. We discover that, if one only considers graded quantization with $K$-action that commute with the $\cm$-action, then these assumptions are enough for existence and uniqueness of the $K$-action (Prop \ref{prop:quan_lag_equiv}). Here requiring the $K$-action to commute with the $\cm$-action is crucial. The slogan is: the more restrictive the requirements are, the easier the lifting of the equivariant structure can be constructed.  In fact, this philosophy is adopted in various parts of the paper. Another example, which is also one of the main results, concerns relationship between Hamiltonian group actions and quantizations. If the symplectic variety $X$ is equipped with a Hamiltonian $G$-action, we can consider an \emph{even quantization} $\Q$ of $X$ in the sense of Losev (Defn \ref{defn:even}), which is  a graded quantization equipped with a $\ztwo$-action commuting with the $\cm$-action. The $\ztwo$-action is generated by an involutory anti-automorphism $\epsilon: \Q \to \Q$ that is identity modulo $\hb$ and sends $\hb$ to $-\hb$. Then, under the assumption that the $\cm$-action on the space $\Gamma(X, \strx)$ of regular functions on $X$ has no negative weights, the $G$-action on $X$ can be lifted uniquely to a $G$-action on $\Q$, which commutes with the $\cm\times\ztwo$ (Cor \ref{cor:equiv}). Moreover, the classical moment map can also be lifted  to a \emph{symmetrized quantized moment map} $\mh: \lieg \to \Gamma(X, \Q)$ (Defn \ref{defn:quan_moment}, Defn \ref{defn:quan_moment_symm}) which is compatible with both the even structure and the $G$-action (Prop \ref{prop:moment}). In this case we say that $\Q$ is a \emph{symmetrized Hamiltonian quantization} of $X$. We want to point out that existence of quantized moment map has been known in the $C^\infty$-setting (e.g., \cite{GuttRawnsley, Xu}) since long time ago, but all these results require additional assumptions on the topology of the manifold or the group. Our results handle much broader situations and therefore are of independent interest. For instance, in our setting the group $G$ can be disconnected and does not necessarily have to be reductive (see Remark \ref{rmk:ham}). 

Assume now that a subgroup $K \subset G$ preserves the Lagrangian subvariety $Y$ and acts on the vector bundle $E$. The $K$-action differentiates to a Lie algebra $\liek$-action on $\Ehb$. On the other hand, the restriction of the quantized moment map $\mh$ to $\liek$ also gives a second $\liek$-action. If these two $\liek$-actions coincide, we call such a quantization with the $K$-action as a \emph{Hamiltonian quantization} of $(Y,E)$. Note that in this case, the mere $K$-equivariant structures on the Picard algebroids in \ref{data1} are not enough. We need Lie algebra maps from $\liek$ into the global sections of the Picard algebroids, whose adjoint actions coincide with the differential of the $K$-action. Such structures are called \emph{strong $\liek$-actions} on the Picard algebroids. For instance, if $L$ is a $K$-equivariant line bundle then $\TT(L)$ carries a natural strong $\liek$-action. The strong $\liek$-actions are also compatible with scalar multiplication of Picard algebroids. We also need additional assumptions on the $K$-actions on $\PE$. Our next result is the existence of Hamiltonian quantization summarized below (see \S\,\ref{subsec:moment_lag} and Theorem \ref{thm:lag_equiv}). 

\begin{theorem}\label{thm:equiv_main}
	Let $X$, $Y$ be as in Theorem \ref{thm:graded_main}. In addition, assume that $X$ admits a Hamiltonian action of an algebraic group $G$ and $\Q$ is a symmetrized Hamiltonian quantization of $X$. Let $E$ be a graded $K$-equivariant vector bundle of rank $r$ over $Y$,  such that $\PP(E)$ is equipped with a strong flat connection. Then there exists a graded Hamiltonian quantization $\Ehb$ of $E$ equipped with a $K$-equivariant structure $\aeh$, if and only if the following conditions are satisfied:
	\begin{enumerate}
		\item 
		the vector fields generated by the $K$-action on $\PP(E)$ is horizontal with respect to the strong flat connection;
		\item 
		there exists an isomorphism $\frac{1}{2} \TT(K_Y) \isom \frac{1}{r} \TT(\det(E))$ of graded Picard algebroids which intertwines the natural strong $\liek$-action on $\frac{1}{2} \TT(K_Y)$ and the strong $\liek$-action on $\frac{1}{r} \TT(\det(E))$ induced by $\alpha$.
	\end{enumerate}
	In this case, the set of isomorphism classes of Hamiltonian quantization of $E$ is in bijection with the set of isomorphisms in (2).
\end{theorem}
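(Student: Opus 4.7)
The plan is to reduce Theorem \ref{thm:equiv_main} to Theorem \ref{thm:graded_main} together with the existence of the symmetrized quantized moment map (Cor \ref{cor:equiv}, Prop \ref{prop:moment}) and the equivariant lifting (Prop \ref{prop:quan_lag_equiv}). The crucial preliminary observation is that for a symmetrized Hamiltonian quantization $\Q$ the graded Picard algebroid $\tatp$ is canonically identified with $\tfrac{1}{2}\TT(K_Y)$, and moreover the restriction $\mh|_\liek$ of the quantized moment map descends modulo $\hb$ to a strong $\liek$-action on $\tatp$ which, under this identification, agrees with the natural strong $\liek$-action on $\tfrac{1}{2}\TT(K_Y)$ coming from the $K$-action on $Y$. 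I would establish this by unwinding how the symmetrized moment map produces, at first order in $\hb$, the infinitesimal vector fields on $Y$ together with the half-divergence term on $K_Y$, using that the even structure $\epsilon$ kills the odd-in-$\hb$ contributions.

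For the \emph{only if} direction, suppose $\Ehb$ is a Hamiltonian quantization equipped with a $K$-equivariant structure $\aeh$. By assumption the action of $\aeh$ and the action of $\mh|_\liek$ on $\Ehb$ coincide, so their induced infinitesimal actions on $\PE$ coincide as well; this forces the vector fields generated by $K$ on $\PE$ to preserve the flat connection coming from the $\Q$-module structure, yielding condition (1). For condition (2), the first-order data \ref{data1} associated to $\Ehb$ provides an isomorphism $\tatp \isom \tfrac{1}{r}\TT(\det(E))$; under the canonical identification $\tatp \cong \tfrac{1}{2}\TT(K_Y)$, the compatibility of the $K$-action with $\mh|_\liek$ translates into the intertwining of strong $\liek$-actions required in (2).

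For the \emph{if} direction, I would start from the isomorphism in (2) and compose with $\tfrac{1}{2}\TT(K_Y) \cong \tatp$ to obtain the data \ref{data1}; together with the strong flat connection \ref{data2}, Theorem \ref{thm:graded_main} then produces a graded quantization $\Ehb$. The $K$-equivariance of this data (which follows from condition (1) on the flat connection and the intertwining property in (2)) allows Proposition \ref{prop:quan_lag_equiv} to lift the $K$-action on $E$ uniquely to a $K$-equivariant structure $\aeh$ on $\Ehb$ commuting with the $\cm$-action. It remains to verify that this quantization is Hamiltonian, i.e.\ that the $\liek$-action via $\aeh$ coincides with the action of $\mh|_\liek$. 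Both $\liek$-actions are $\cm$-equivariant and agree modulo $\hb$ by construction, so the uniqueness clause of Prop \ref{prop:quan_lag_equiv}, applied order-by-order in $\hb$, forces them to coincide.

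The bijection between isomorphism classes of Hamiltonian quantizations of $E$ and the set of isomorphisms in (2) then follows, since the above construction is inverse to the extraction in the \emph{only if} direction, and an isomorphism of Hamiltonian quantizations descends to an isomorphism of the associated $\liek$-equivariant first-order data. The main obstacle I anticipate is the inductive verification that the moment-map action and the lifted $K$-action match to all orders in $\hb$, and more broadly the careful bookkeeping required to trace the even structure and $\cm$-equivariance of $\mh$ through the canonical isomorphism $\tatp \cong \tfrac{1}{2}\TT(K_Y)$, which rests essentially on the symmetrized structure of $\Q$.
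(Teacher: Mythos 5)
Your overall architecture matches the paper's: identify $\tatp$ with $\tfrac{1}{2}\TT(K_Y)$ via the even structure (the paper's Lemma \ref{lemma:atc_sqrt}), transport the restriction $\mhk$ of the quantized moment map to a strong $\liek$-action on $\tfrac{1}{2}\TT(K_Y)$ matching the natural one, reduce to Theorem \ref{thm:graded_main} and Proposition \ref{prop:quan_lag_equiv}, and then verify $d\aeh = \mhk$. The ``only if'' direction, the setup of the first-order data, and the bijection at the end are all consistent with what the paper does.

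The genuine gap is in the final step of the ``if'' direction. You claim the two $\liek$-actions $d\aeh$ and $\mhk$ ``agree modulo $\hb$ by construction, so the uniqueness clause of Prop \ref{prop:quan_lag_equiv}, applied order-by-order in $\hb$, forces them to coincide.'' This does not work as stated. The uniqueness in Proposition \ref{prop:quan_lag_equiv} is about $K$-equivariant structures on $\Ehb$, i.e.\ group-level actions; $\mhk$ only a priori gives a Lie algebra action, which in general need not integrate to a $K$-action on $\Ehb$ (especially since $K$ may be disconnected), so one cannot feed $\mhk$ into that uniqueness statement and bootstrap order-by-order. The correct argument, and the one the paper actually uses, is structural rather than inductive: using horizontality (condition (1)) one shows both $d\aeh(Z)$ and $\mhk(Z)$ lie in the $\gm$-invariant part of $\Gamma(Y,\hb\pb\hJY)$ and project to $d\beta_\hb(Z)$ in $\widehat{\der}(\Q)_{\JY}$, so the difference $\rho := d\aeh - \mhk$ is a Lie algebra homomorphism $\liek \to \kk$ taking values in the degree-zero constants; the commutative diagram relating $\hb\pb\hJY$ to $\tatp$ shows $\rho$ is already determined by the descent to $\tatp$, because the map $\underline{\pws}\to\stry$ is injective on the $\gm$-invariant part; and that descent vanishes precisely because the isomorphism in (2) intertwines the strong $\liek$-actions. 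You gesture at this mechanism (``the even structure kills the odd-in-$\hb$ contributions''), but you do not articulate the key fact that $\rho$ is constant and first-order determined, and the substitute inductive argument you offer is not valid.
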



We call such a quantization as a \emph{Hamiltonian quantization} of the pair $(Y,E)$. An immediate consequence is a $\gk$-module structure on the $\cm$-finite part of global sections of $\Ehb$ restricted evaluated at $\hb=1$. Note that all the results above are still valid if $\C$ and $\cm$ are replaced by $\kk$ and $\gm=\kkm$ for any field $\kk$ of characteristic zero. In the main body of the paper we will state our results for general $\kk$ until \S\,\ref{sec:quan_orbit}.

In apply \S\,\ref{sec:quan_orbit} we apply the general results to the the study of Conjecture \ref{conj:intro}. Given any $K$-orbit $Y=\OO_\liep$ from the conjecture, it is naturally embedded in a $G$-orbit $X=\OO$. We first take Losev's even quantization $\Q$ of $\OO$. It turns out that that the natural $\cm$-action on $\OO$ not only preserves $\OO_\liep$, but also lifts to an action on any admissible vector bundle $E$ in Vogan's conjecture, which commutes with the $K$-action (which is unique up to a degree shift, see \S \ref{subsec:grading}). Note that square root of $K_Y$ show up in both Vogan's conjecture and Theorem \ref{thm:equiv_main}. In fact, we will check that the $\cm$-action and the $K$-action satisfy all the assumptions of Theorem \ref{thm:equiv_main}. Therefore there exists a unique graded Hamiltonian quantization $\Ehb$ with $K$-equivariant structure. Let $\M_\hb$ be the $\cm$-finite part of $\Gamma(\OO_\liep, \Ehb)$. Then by the definition of Hamiltonian quantization, the module $\M:=\M_\hb|_{\hb = 1}$ is naturally a $(\lieg,K)$-module, where the $\lieg$-module structure is induced by the quantized moment map $\mh$ and the $K$-module structure comes from the $K$-action on $\Ehb$. We prove the following result (cf. Theorem \ref{thm:hc1} and Proposition \ref{prop:nonzero}). 

\begin{theorem}\label{thm:gk_main}
	The $(\lieg,K)$-module $\M$ constructed above is an admissible Harish-Chandra module of $\GR$. Moreover, if $\codim(\partial \OO_\liep, \cl{\OO}_\liep) \geq 3$, then the associated variety of the Harish-Chandra module $M$ is $\cl{\OO}_\liep$. 
\end{theorem}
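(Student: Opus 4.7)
The plan is to realize $\M$ as the $\hb = 1$ specialization of a graded $\C[\hb]$-module $\M_\hb$, transfer the finite generation and admissibility to $\M$ from the associated graded of a natural filtration, and then pin down the associated variety via a codimension-based extension argument.

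To begin, I would set $\A_\hb := \Gamma(\OO, \Q)^{\cm\text{-fin}}$ and $\M_\hb := \Gamma(\OO_\liep, \Ehb)^{\cm\text{-fin}}$, both of which inherit gradings from the $\cm$-action with $\hb$ of positive weight. The symmetrized quantized moment map $\mh$ yields a $\C[\hb]$-algebra map $\Sym(\lieg) \otimes \C[\hb] \to \A_\hb$ which, after specializing at $\hb = 1$, becomes an algebra homomorphism $\Ug \to \A := \A_\hb / (\hb - 1)\A_\hb$; composing with the $\A$-module structure on $\M := \M_\hb/(\hb - 1)\M_\hb$ produces the $\lieg$-action. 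The $K$-equivariant structure on $\Ehb$ integrates to a locally finite $K$-action on $\M_\hb$ which commutes with $\cm$ and descends to $\M$; the Hamiltonian quantization hypothesis is exactly what ensures that the $\liek$-action obtained by differentiating the $K$-action coincides with the one induced by $\mh|_\liek$, so that $\M$ carries a well-defined $\gk$-module structure.

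Next, the grading of $\M_\hb$ induces a filtration on $\M$ whose associated graded embeds into $\Gamma(\OO_\liep, E)$ via the canonical isomorphism $\Ehb/\hb\Ehb \cong (i_{\OO_\liep})_* E$. The stabilizer $K^{x_0}$ at a base point $x_0 \in \OO_\liep$ is reductive (since $\OO_\liep$ is a nilpotent $K$-orbit), so Frobenius reciprocity identifies $\Gamma(\OO_\liep, E) \cong \Ind_{K^{x_0}}^K E_{x_0}$ with each $K$-type $W$ appearing with multiplicity $\dim \Hom_{K^{x_0}}(W|_{K^{x_0}}, E_{x_0}) < \infty$. This forces $\gr \M$, and hence $\M$, to be $K$-admissible. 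For finite generation over $\Ug$, the strategy is to show that $\gr \M$ is finitely generated over $\gr \A$—which sits as a finitely generated subalgebra of $\C[\cl{\OO}_\liep]$—using that $\Gamma(\OO_\liep, E)$ is a finitely generated $\C[\cl{\OO}_\liep]$-module when $\partial \OO_\liep$ has codimension at least two, and then to lift generators back to $\M$ via the standard Rees construction.

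Finally, for the associated variety: $\mathrm{AV}(\M)$ is by definition the support of $\gr \M$ in $\liep^* = \Spec \Sym(\lieg/\liek)$, and the embedding $\gr \M \hookrightarrow \Gamma(\OO_\liep, E)$ gives at once $\mathrm{AV}(\M) \subseteq \cl{\OO}_\liep$. The reverse inclusion is where the hypothesis $\codim(\partial \OO_\liep, \cl{\OO}_\liep) \geq 3$ is crucial. The idea is that this codimension bound, together with normality and depth properties of the orbit closure and a Hartogs-type extension across the singular boundary, forces the natural inclusion $\gr \M \hookrightarrow \Gamma(\OO_\liep, E)$ to fill out enough $K$-types so that the annihilator of $\gr \M$ in $\C[\liep^*]$ is exactly the vanishing ideal of $\cl{\OO}_\liep$. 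I expect the main technical obstacle to be precisely this last comparison: producing sufficiently many $\cm$-finite sections of $\Ehb$ whose leading terms exhaust $\Gamma(\OO_\liep, E)$ is what necessitates the strict bound $\geq 3$ rather than the weaker $\geq 2$ used for the Harish-Chandra properties.
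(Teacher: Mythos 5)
Your proof of the Harish--Chandra property follows the same lines as the paper: pass to the $\cm$-finite part $\M_\hb$, use the grading to filter $\M$, embed $\gr\M$ into $\Gamma(\OO_\liep, E)$, then invoke finite generation of that section space (valid for $\codim \geq 2$) together with Frobenius reciprocity. One factual slip: $K^{x_0}$ is \emph{not} reductive for a nonzero nilpotent orbit --- it has a nontrivial unipotent radical by Proposition \ref{prop:stabK} --- but since Frobenius reciprocity only needs $E_{x_0}$ finite-dimensional (multiplicities are bounded by $\dim\Hom_\kk(W, E_{x_0})$), this slip is harmless.

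The second half has a genuine gap. You write that the codimension bound, via a ``Hartogs-type extension,'' should force $\gr\M \hookrightarrow \Gamma(\OO_\liep, E)$ to ``fill out enough $K$-types.'' This misidentifies the obstruction. The issue is \emph{not} extending sections across the boundary to see that $\Gamma(\OO_\liep,E)$ is big; that is already taken care of by $\codim\geq 2$. The issue is that $\gr\M = \Gamma(\Ehb)/\hb\Gamma(\Ehb)$ can be a \emph{proper} $S\lieg$-submodule of $\Gamma(\OO_\liep,E)$, and you need to control its cokernel, not its ambient module. (Indeed the paper explicitly remarks that equality with $\Gamma(\OO_\liep,E)$ is still conjectural.) What actually controls the cokernel is the long exact sequence obtained from $0 \to \Ehb \xrightarrow{\hb} \Ehb \to E \to 0$, which exhibits the cokernel as a submodule of the $\hb$-torsion of $H^1(\OO_\liep,\Ehb)$. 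The role of $\codim \geq 3$ is then to make $H^1(\OO_\liep,E)$ finitely generated over $\C[\cl{\OO}_\liep]$ and supported on $\partial\OO_\liep$; combined with $\hb$-adic completeness and Noetherianity of the completed homogeneous enveloping algebra, a finite filtration argument (exhausting $\Ker\hb$ by the chain $\Ker\hb\cap\operatorname{Im}\hb^k$) shows the $\hb$-torsion of $H^1(\OO_\liep,\Ehb)$ is also finitely generated and supported on the boundary. Since the boundary is lower-dimensional, the support of $\gr\M$ must contain all of $\OO_\liep$, hence equals $\cl{\OO}_\liep$. None of this cohomological mechanism appears in your sketch, and a Hartogs extension alone will not substitute for it. Relatedly, your remark that $\gr\A$ ``sits as a finitely generated subalgebra of $\C[\cl{\OO}_\liep]$'' conflates the $G$-side ($\gr\A\cong\C[\Spec\C[\OO]]$ living over $\lieg^*$) with the $K$-side ($\liep^*$); what you actually need is the $S\lieg$-module structure on $\gr\M$ coming through $\gr\A \to \C[\OO_\liep]$, not a subalgebra relation.
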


 In \S\,\ref{subsec:list}, we list all admissible orbits of real exceptional groups satisfying the condition on codimension in Theorem \ref{thm:gk_main}.  One interesting feature of the Harish-Chandra module $\M$ we built is a canonical good filtration coming from the grading on $\M_\hb$. In \S\,\ref{subsec:spec}, we relate this filtration to another  filtration in the work of Schmid and Vilonen \cite{SchmidVilonen}.

\subsection{Outline}

Here is the structure of the paper. In \S\,\ref{sec:prelim} we review basic knowledge about nilpotent coadjoint orbits and state Vogan's version of the orbit conjecture. In \S\,\ref{sec:quan} we review the work of Berzukavnikov-Kaledin and Losev on the algebraic and graded deformation quantization. We present our results on equivariant quantization and quantized moment maps in \S\,\ref{subsec:equiv} and \S\,\ref{subsec:moment}. In \S\,\ref{sec:quan_lag}, we review quantization of Lagrangian subvarieties and state its grade versions. The proofs will appear in \S\,\ref{subsec:graded_lag_proof} and \S\,\ref{subsec:quan_lag_vec_proof}, after we review formal geometry and the language of Harish-Chandra torsors after \cite{BeKaledin} and \cite{BGKP} in \S\,\ref{subsec:hcpair} and \S\,\ref{subsec:hctorsor}. In \S\,\ref{sec:lag_group} we consider equivariant quantization of Lagrangian subvarieties (Prop \ref{prop:quan_lag_equiv}) and its compatibility with quantized moment map (Theorem \ref{thm:lag_equiv}). Finally, we apply all the previous results to construct quantization of nilpotent orbits in \S\,\ref{sec:quan_orbit}.

\begin{notation}
	We denote by $\kk$ a base field of characteristic zero and $\pws = \kk\series$. We write $\otimes = \otimes_\kk$ for tensor over $\kk$ and $\widehat{\otimes}$ for completed tensor. We denote by $\gm=\kkm$ the multiplicative group. We usually use $\str_X$ to denote the structure sheaves of a variety $X$. We denote by $\GLh = GL(r,\pws)$ resp. $\PGLh=PGL(r,\pws)$ the $\pws$-valued points of the group $\GLr$ resp. $\PGLr$ respectively and by $\glh$ resp. $\pglh$ their Lie algebras.
\end{notation}

\begin{thank}
	The paper would have never appeared without the inspiration from the work of Ivan Losev. We appreciate his answers to numerous questions about his work.  We are grateful to Baohua Fu for explanation of his and relevant work on singularities of nilpotent orbit closures. We would like to thank Jeffrey Adams, Dan Barbasch, Daniel Wong and David Vogan for teaching us about representation theory and unipotent representations. The second author is also indebted to Binyong Sun for the hospitality during his visit to the Chinese Academy of Sciences and stimulating discussions throughout the project. The work of the first author described in this paper was substantially supported by grants from the Research Grants Council of the Hong Kong Special Administrative Region, China (Project No. CUHK14303516) and a CUHK direct grant (Project No. CUHK 4053289). The second author was partially supported by the US National Science Foundation 1564398. 
\end{thank}

\section{Preliminaries on orbit method}\label{sec:prelim}

\subsection{Nilpotent orbits} \label{subsec:nil}

We review some basic facts about nilpotent orbits, whose details can be found in \cite[\S\,6]{Vogan_AV} and \cite{CM}. Let $\lieg$ be a complex reductive Lie algebra and $G$ be a complex reductive Lie group with Lie algebra $\lieg$. Choose a non-degenerate symmetric bilinear  Then the non-degenerate Killing form identifies $\lieg$ with $\lieg^*$ as $G$-representations. Therefore it is the same to study adjoint $G$-orbits in $\lieg$ as to study coadjoint $G$-orbits in $\lieg^*$.  Recall that a triple $\{ H, X ,Y \}$ in $\lieg$ is said to be a \emph{standard triple} if it satisfies the relations:
\begin{equation}\label{eq:stdtriple}
[H, X] = 2X, \quad [H,Y] = -2Y, \quad [X,Y]=H.
\end{equation}
We call the element $H$ (resp. $X$, $Y$) the \emph{neutral} (resp. \emph{nilpositive}, \emph{nilnegative}) element. It is known that $H$ must be a semisimple element. An standard example is $\slc$, which is spanned by
\begin{equation}
H_0 = \begin{pmatrix}
1 & 0 \\
0 & -1
\end{pmatrix},   \quad
X_0 = \begin{pmatrix}
0 & 1 \\
0 & 0
\end{pmatrix}, \quad
Y_0 = \begin{pmatrix}
0 & 0 \\
1 & 0
\end{pmatrix}.
\end{equation}
Then $\{ H_0, X_0, Y_0 \}$ satisfies the relations \eqref{eq:stdtriple} and is therefore a standard triple of $\slc$. We see that a standard triple $\{ H, X, Y\}$ in $\lieg$ is equivalent to a (complex) Lie algebra homomorphism $\phi : \slc \to \lieg$ satisfying
\[  \phi (H_0) = H, \quad \phi(X_0) = X, \quad \phi(Y_0) = Y.  \]

\begin{theorem}[Jacobson-Morozov]\label{thm:JM}
	If $X$ is a nonzero nilpotent element of $\lieg$, then it is the nilpositive element of some standard triple.
\end{theorem}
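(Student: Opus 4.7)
My plan is to run the classical three-step Jacobson--Morozov argument, using reductivity of $\lieg$ to supply a non-degenerate invariant symmetric bilinear form $\kappa$ (for instance the Killing form on the semisimple part extended by any non-degenerate form on the center).

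First I will produce $H' \in \lieg$ with $[H', X] = 2X$, which is equivalent to $X \in \operatorname{im}(\operatorname{ad}(X))$. Under $\kappa$ one has $\operatorname{im}(\operatorname{ad}(X))^\perp = \lieg^X := \ker(\operatorname{ad}(X))$, so it suffices to check $\kappa(X, Z) = 0$ for every $Z \in \lieg^X$. For such $Z$ the operators $\operatorname{ad}(X)$ and $\operatorname{ad}(Z)$ commute, so $(\operatorname{ad}(X)\operatorname{ad}(Z))^n = \operatorname{ad}(X)^n \operatorname{ad}(Z)^n$, which vanishes for large $n$ by nilpotency of $\operatorname{ad}(X)$; hence $\kappa(X, Z) = \tr(\operatorname{ad}(X)\operatorname{ad}(Z)) = 0$. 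To upgrade $H'$ to a semisimple element, I use the abstract Jordan decomposition $H' = H_s + H_n$ in $\lieg$: since $X$ is an honest $2$-eigenvector of $\operatorname{ad}(H')$, it is also a $2$-eigenvector of the semisimple part $\operatorname{ad}(H_s)$, so $H := H_s$ satisfies $[H, X] = 2X$ and is semisimple.

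Next, with $H$ semisimple, $\lieg$ decomposes into $\operatorname{ad}(H)$-weight spaces $\lieg = \bigoplus_k \lieg_k$ with $X \in \lieg_2$ and $H \in \lieg_0$. Any preimage $Y \in \lieg_{-2}$ of $H$ under $\operatorname{ad}(X) \colon \lieg_{-2} \to \lieg_0$ automatically satisfies $[H, Y] = -2Y$ by weight considerations, so I need only show $H$ lies in the image of this restricted map. Since $\kappa$ pairs $\lieg_{-2}$ with $\lieg_2$ non-degenerately and $\kappa|_{\lieg_0}$ is non-degenerate, a dualization identifies this image with the orthogonal complement inside $\lieg_0$ of $\lieg_0 \cap \lieg^X$. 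The task thus reduces to proving $\kappa(H, Z) = 0$ for every $Z \in \lieg_0 \cap \lieg^X$. Setting $Y$ to be any preimage then yields the triple $\{H, X, Y\}$ and the homomorphism $\phi \colon \slc \to \lieg$ sending $(H_0, X_0, Y_0)$ to $(H, X, Y)$.

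The hard part will be this trace vanishing: unlike in the first step, $\operatorname{ad}(H)$ is semisimple rather than nilpotent, so the direct nilpotent-product trick fails. My plan is to exploit the $\operatorname{ad}(H)$-grading: for $Z \in \lieg_0 \cap \lieg^X$, $\operatorname{ad}(Z)$ commutes with both $\operatorname{ad}(H)$ and $\operatorname{ad}(X)$ and preserves each $\lieg_k$, so $\tr(\operatorname{ad}(H)\operatorname{ad}(Z)) = \sum_k k\, \tr(\operatorname{ad}(Z)|_{\lieg_k})$. I will establish the hard-Lefschetz-type symmetry $\tr(\operatorname{ad}(Z)|_{\lieg_k}) = \tr(\operatorname{ad}(Z)|_{\lieg_{-k}})$ via a Jordan-block analysis of the nilpotent operator $\operatorname{ad}(X)$ on $\lieg$ combined with the commutation $[\operatorname{ad}(H), \operatorname{ad}(X)] = 2\operatorname{ad}(X)$, whereupon the weighted sum cancels in pairs; alternatively one may induct on $\dim \lieg$ and descend to the proper reductive subalgebra $\lieg^X$ (which is $\operatorname{ad}(H)$-stable) to invoke the result there.
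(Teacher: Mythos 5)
The paper states Theorem \ref{thm:JM} as a classical result imported from \cite{Vogan_AV} and \cite{CM} and does not include its own proof, so there is nothing in the text to compare your argument against; let me instead point out a genuine gap in what you wrote.

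Your Steps 1 and 2 are in order, but the trace identity you place at the heart of Step 3, namely $\tr(\operatorname{ad}(Z)|_{\lieg_k}) = \tr(\operatorname{ad}(Z)|_{\lieg_{-k}})$, is false for a general semisimple $H$ with $[H,X]=2X$, and the Step 3 goal $\kappa(H,Z)=0$ itself fails. The source of the problem is that Step 1 does not pin $H$ down: the solutions of $[H,X]=2X$ form an affine space $H_0+\lieg^X$, and taking an arbitrary preimage $H'$ and then its semisimple part lands at an essentially arbitrary semisimple point of that affine space, most of which admit no completion to a standard triple. Concretely, in $\lieg=\mathfrak{sl}(3,\C)$ with $X=E_{13}$, every $H_a:=\operatorname{diag}(a,\,2-2a,\,a-2)$ is semisimple and satisfies $[H_a,X]=2X$, and for $Z=\operatorname{diag}(1,-2,1)\in\lieg_0\cap\lieg^X$ the Killing pairing is $\kappa(H_a,Z)=36(a-1)$, which is nonzero unless $a=1$. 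For $a\neq 1$ the map $\operatorname{ad}(X)\colon\lieg_{-2}\to\lieg_0$ does not hit $H_a$, no $Y$ exists, and the Jordan chains of $\operatorname{ad}(X)$ are not centered at $0$ in the $\operatorname{ad}(H_a)$-grading, which is exactly why no Jordan-block analysis alone can produce your Lefschetz-type symmetry.

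What is missing is precisely the hard content of Jacobson--Morozov: one must choose $H$ so that, in addition to $[H,X]=2X$ and semisimplicity, one has $H\in[X,\lieg]$ (equivalently, $H$ is $\kappa$-orthogonal to $\lieg_0\cap\lieg^X$). Securing this extra condition is what the classical proofs are about, either by induction on $\dim\lieg$ using that $\lieg^X+[X,\lieg]\neq\lieg$ forces $X$ into a proper Levi subalgebra, or via Morozov's lemma, whose crux is that once $H\in[X,\lieg]$ is arranged all eigenvalues of $\operatorname{ad}(H)$ on $\lieg^X$ are nonnegative, so $\operatorname{ad}(H)+2$ is invertible on $\lieg^X$ and a crude preimage of $H$ can be corrected to a genuine $Y$. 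Your fallback of ``descending to the reductive subalgebra $\lieg^X$'' also does not apply as stated: $\lieg^X$ is not reductive in general (already for $X=E_{13}\in\mathfrak{sl}(3,\C)$ it has a large nilpotent radical).
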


\begin{theorem}[Kostant]\label{thm:Kos}
	Any two standard triples $\{ H, X, Y \}$ and $\{ H', X, Y' \}$ with the same nilpositive element are conjugate by an element of $\Gad^X$, the centralizer of $X$ in the adjoint group $\Gad$ of $\lieg$.
\end{theorem}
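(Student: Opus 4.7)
The plan is to first reduce Kostant's theorem to the conjugacy of the neutral elements $H, H'$ inside $\Gad^X$. Suppose we have found $g \in \Gad^X$ with $\operatorname{Ad}(g)H = H'$. Then $\{H', X, \operatorname{Ad}(g)Y\}$ and $\{H', X, Y'\}$ are both standard triples sharing $(H', X)$, so it suffices to show that the nilnegative element is determined by this pair. Setting $W := Y' - \operatorname{Ad}(g)Y$, we get $[H', W] = -2W$ and $[X, W] = H' - H' = 0$. Decomposing $\lieg$ into irreducible $\slc$-summands via the triple $\{H', X, Y'\}$, the kernel of $\operatorname{ad} X$ on each summand is its highest-weight line, which sits in non-negative $\operatorname{ad} H'$-weight. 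Hence $\ker(\operatorname{ad} X) \cap \lieg_{-2} = 0$, forcing $W = 0$.

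To produce the conjugating element, I would analyze $\lieg^X := \ker(\operatorname{ad} X)$ using the $\operatorname{ad} H$-grading from $\slc$-theory. Because the kernel of a raising operator on a finite-dimensional $\slc$-module is spanned by highest-weight vectors, $\lieg^X$ is non-negatively graded: $\lieg^X = \bigoplus_{k \geq 0} \lieg^X_k$. Let $\lieg^X_+ := \bigoplus_{k > 0}\lieg^X_k$; this is a nilpotent Lie subalgebra that exponentiates to a unipotent subgroup $U \subset \Gad^X$. A key intermediate claim is that $Z := H' - H$ lies in $\lieg^X_+$. Indeed, $[X, H'] = [X, H] = -2X$ yields $Z \in \lieg^X$; and since $H = [X, Y]$ and $H' = [X, Y']$ both lie in $\operatorname{Im}(\operatorname{ad} X)$, we also get $Z \in \operatorname{Im}(\operatorname{ad} X)$. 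A per-summand inspection of the $\slc$-decomposition of $\lieg$ shows that $\lieg^X \cap \operatorname{Im}(\operatorname{ad} X) = \lieg^X_+$ (trivial summands contribute only to $\lieg^X_0$ and have zero image under $\operatorname{ad} X$), so $Z \in \lieg^X_+$.

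Finally I would solve $\operatorname{Ad}(\exp W)H = H'$ for some $W \in \lieg^X_+$. Consider
\[
\Phi \colon \lieg^X_+ \longrightarrow \lieg^X_+, \qquad \Phi(W) := \operatorname{Ad}(\exp W)(H) - H = [W,H] + \tfrac{1}{2}[W,[W,H]] + \cdots .
\]
Its differential at $0$ is $W \mapsto -\operatorname{ad}(H)(W) = -\sum k\, W_k$, which acts on each graded piece $\lieg^X_k$ by the nonzero scalar $-k$ and is therefore bijective. Because $\lieg^X_+$ is finite-dimensional nilpotent and each higher term of the BCH expansion strictly raises the $\operatorname{ad} H$-weight beyond the leading linear one, the equation $\Phi(W) = Z$ can be solved inductively grade by grade, yielding $g := \exp W \in U \subset \Gad^X$ with $\operatorname{Ad}(g)H = H'$. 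Combined with the first paragraph, this gives the desired conjugation of the two standard triples. I expect the main conceptual obstacle to be the identification $\lieg^X \cap \operatorname{Im}(\operatorname{ad} X) = \lieg^X_+$, which is the precise point where $\slc$-representation theory forces $\operatorname{ad} H$ and $\operatorname{ad} X$ to interact; once this is secured, the unipotent conjugation step is a formal graded fixed-point argument.
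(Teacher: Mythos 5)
The paper states Kostant's theorem without proof, citing it as a classical result (standard references being Collingwood--McGovern and the original work of Kostant), so there is no in-text proof to compare against. Your argument is correct and is essentially the standard textbook proof: you reduce to conjugating the neutral elements via the observation that a standard triple is determined by its $(H,X)$-pair (since $\ker(\operatorname{ad}X)$ meets no negative $\operatorname{ad}H$-weight space), then identify $H'-H$ as lying in $\lieg^X_+ = \lieg^X\cap\operatorname{Im}(\operatorname{ad}X)$ via the $\slc$-module decomposition, and close with a graded unipotent fixed-point argument to solve $\operatorname{Ad}(\exp W)H=H'$ inside $\exp(\lieg^X_+)\subset \Gad^X$. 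All the representation-theoretic identifications (kernel of a raising operator is spanned by highest-weight vectors and hence sits in non-negative weight; the kernel meets the image precisely in positive weight) check out, and the degree-by-degree inversion of $\Phi$ is sound because the BCH corrections to the linear term $-\operatorname{ad}(H)$ strictly raise weight.
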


Theorem \ref{thm:JM} and Theorem \ref{thm:Kos} combined imply that there is a bijection from the set of $G$-conjugacy classes of standard triples in $\lieg$ to the set of  nilpotent adjoint $G$-orbits in $\lieg$, which sends $\{ H ,X , Y\}$ to the orbit $\OO_X = G \cdot X \subset \lieg$.

Let $\GR$ be a real reductive Lie group in the sense of \cite[Defn 6.1]{Vogan_AV}. Let $\lieg_\R$ be the Lie algebra of $\GR$ and let $\lieg$ be the complexification of $\lieg_\R$. Let $\KR$ be a maximal compact subgroup of $\GR$ with Lie algebra $\liek_\R$. The corresponding Cartan involution is denoted by $\theta$. The associated Cartan decomposition is $\lieg_\R = \liek_\R \oplus \liep_\R$, with the complexification $\lieg = \liek \oplus \liep$. If $\{ h_\theta, x_\theta, y_\theta \}$ is a standard triple in $\lieg$ such that
\begin{equation}
\theta(h_\theta) = h_\theta, \quad \theta (x_\theta) = -x_\theta, \quad \theta (y_\theta) = -y_\theta,
\end{equation}
that is, $ h_\theta \in \liek$, $x_\theta, y_\theta \in \liep$, then we say that $\{ h_\theta, x_\theta, y_\theta \}$ is a \emph{normal (standard) triple}. We have the follwoing analogue of Theorem \ref{thm:JM} and \ref{thm:Kos} (\cite{KostantRallis}).

\begin{theorem}\label{thm:JM_K}
	Any nonzero nilpotent element $X \in \liep$ is the nilpositive element of a normal triple.
\end{theorem}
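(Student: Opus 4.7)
The plan is to apply the classical Jacobson--Morozov theorem (Theorem \ref{thm:JM}) and then use Kostant transitivity (Theorem \ref{thm:Kos}) to conjugate the resulting triple within the centralizer $Z_{\Gad}(X)$ so as to arrange the normality conditions $\theta(h_\theta) = h_\theta$ and $\theta(y_\theta) = -y_\theta$.

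First I would apply Theorem \ref{thm:JM} to produce some standard triple $\{H_0, X, Y_0\}$ in $\lieg$. Since $\theta(X) = -X$ for $X \in \liep$, a direct bracket computation using that $\theta$ is a Lie algebra automorphism shows that $\{\theta(H_0), X, -\theta(Y_0)\}$ is again a standard triple with nilpositive element $X$. More generally, the assignment
\[ \tau : \{H, X, Y\} \longmapsto \{\theta(H), X, -\theta(Y)\} \]
defines an involution on the set $\Omega(X)$ of all standard triples with nilpositive element $X$, whose fixed points are exactly the normal triples extending $X$. Thus the theorem reduces to finding a fixed point of $\tau$.

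By Theorem \ref{thm:Kos}, $\Omega(X)$ is a torsor under $Z := Z_{\Gad}(X)$, and $\theta$ preserves $Z$ since $\theta(X) = -X$. Writing $\tau(\{H_0, X, Y_0\}) = g_0 \cdot \{H_0, X, Y_0\}$ for some $g_0 \in Z$, the involution $\tau$ corresponds, in the torsor model, to the map $g \mapsto \theta(g)\, g_0$ on $Z$; and $\tau^2 = \mathrm{id}$ forces $\theta(g_0) = g_0^{-1}$, so $g_0$ represents a $\theta$-twisted $1$-cocycle on $Z$. Finding a normal triple thus reduces to trivializing this cocycle, i.e., solving $\theta(g)^{-1} g = g_0$ in $Z$.

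To trivialize the cocycle, I would use the Levi decomposition $Z = L \ltimes U$, with $L$ reductive and $U$ unipotent, both of which can be arranged to be $\theta$-stable by standard averaging. On the unipotent part, the exponential gives a $\theta$-equivariant isomorphism $\lieu \xrightarrow{\sim} U$ (we are in characteristic zero), so the cocycle equation reduces to a linear one on the $(-1)$-eigenspace of $\theta$ on $\lieu$, which is always solvable. On the reductive part $L$, the equation reduces to finding fixed points of a twisted involution on a complex reductive group, which exists by standard Cartan-type decomposition. The main technical point I anticipate is the careful handling of the $\theta$-equivariance of the Levi decomposition of $Z$, since $Z$ itself is generally not reductive; once this is established the cocycle argument proceeds as indicated.
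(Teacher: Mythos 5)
The paper itself does not prove this result; it is stated (together with Theorem \ref{thm:Kos_K}) with a citation to \cite{KostantRallis}, so the only basis for evaluation is internal consistency of your argument.

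Your reduction of the problem to finding a fixed point of the involution $\tau$ on the set $\Omega(X)$ of standard triples with nilpositive $X$ is correct, and the bracket computation showing $\tau$ is well-defined is fine. The genuine gap is the claim that ``$\Omega(X)$ is a torsor under $Z := Z_{\Gad}(X)$.'' It is not: the stabilizer of a triple $\phi$ in $Z$ is $Z_{\Gad}(\phi)=\Gad^\phi$, which by Proposition \ref{prop:stabG}(4) is precisely the reductive Levi factor $L$ of $Z$ and is typically nontrivial. As a result the identification $\Omega(X)\cong Z$ underlying your ``torsor model'' fails, and $\tau^2=\mathrm{id}$ only forces $\theta(g_0)g_0\in L$, not $\theta(g_0)=g_0^{-1}$; your cocycle equation in $Z$ is therefore not the right one. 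What \emph{is} true is that the unipotent radical $U=\exp(\lieu^X)$ acts simply transitively on $\Omega(X)$ (because $L\cap U=\{e\}$ and $Z=L\ltimes U$). So the obstruction, if set up correctly, lives entirely in $U$, your ``reductive part'' analysis is vacuous, and the relevant fact is $H^1(\ZZ/2\ZZ, U)=1$ for unipotent $U$ in characteristic zero. Even then you still owe an argument that $\theta$ preserves some Levi decomposition of $Z$ (so that it acts on $U$), which you assert via ``standard averaging'' but do not justify, and which interacts nontrivially with the choice of basepoint triple.

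There is a much shorter classical route that avoids group cohomology altogether, and is essentially the Kostant--Rallis argument. By Morozov's lemma, the set of neutral elements completing $X$ to a standard triple is exactly $\{H\in\lieg : [H,X]=2X,\ H\in[X,\lieg]\}$, an affine subspace of $\lieg$ (intersection of a coset of $\lieg^X$ with the linear subspace $[X,\lieg]$). Since $\theta(X)=-X$, one checks directly that $\theta$ preserves this affine subspace, so $h:=\tfrac12(H_0+\theta H_0)$ is a $\theta$-fixed neutral element. Choose $Y$ with $\{h,X,Y\}$ a triple and replace it by $y:=\tfrac12(Y-\theta Y)$; a one-line bracket check shows $\{h,X,y\}$ is a normal triple. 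I would recommend adopting this argument rather than repairing the cocycle approach.
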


\begin{theorem}\label{thm:Kos_K}
	Any two normal triples $\{ h_\theta, x_\theta, y_\theta \}$ and $\{ h'_\theta, x'_\theta, y'_\theta \}$ with the same nilpositive element are conjugate by an element of $K^X_{ad}$, the centralizer of $X$ in the adjoint group $K_{ad}$ of the complex Lie algebra $\liek$.
\end{theorem}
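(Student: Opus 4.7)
The plan is to reduce to the classical Kostant theorem (Theorem~\ref{thm:Kos}) and then $\theta$-correct the conjugator using a Cartan-type decomposition of the centralizer of the full $\slc$-triple. Concretely, first I apply Theorem~\ref{thm:Kos} to the two normal triples, viewed merely as standard triples with common nilpositive element $X := x_\theta$; this produces an element $g \in \Gad^{X}$ with $\operatorname{Ad}(g)\,h_\theta = h'_\theta$ and $\operatorname{Ad}(g)\,y_\theta = y'_\theta$. The goal is then to replace $g$ by $gz$ for a suitable $z$ in the centralizer of the whole triple so that the product is $\theta$-fixed, hence lies in $K_{ad}^{X}$.

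\textbf{Setting up the cocycle.} Let $\phi\colon \slc \to \lieg$ be the $\slc$-homomorphism corresponding to the first triple and let $Z := Z_{\Gad}(\phi)$ be its centralizer in $\Gad$; by Kostant's structure theorems $Z$ is connected reductive, and normality (so $\theta(h_\theta)=h_\theta$, $\theta(x_\theta)=-x_\theta$, $\theta(y_\theta)=-y_\theta$, and similarly for the primed triple) makes $Z$ $\theta$-stable. A short direct check shows that $\theta(g)$ again lies in $\Gad^{X}$ and also carries $(h_\theta, y_\theta)$ to $(h'_\theta, y'_\theta)$. Consequently $u := g^{-1}\theta(g)$ lies in $Z$ and satisfies the cocycle relation $u\cdot\theta(u) = 1$.

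\textbf{Trivializing the cocycle and concluding.} Being a connected complex reductive group equipped with a holomorphic involution $\theta$, the group $Z$ admits a polar decomposition $Z = Z^{\theta}\cdot \operatorname{exp}(\mathfrak{z}^{-\theta})$ with $\mathfrak{z} := \operatorname{Lie}(Z)$, under which the ``symmetric'' subset $\{\,v \in Z : v\cdot\theta(v)=1\,\}$ coincides with $\operatorname{exp}(\mathfrak{z}^{-\theta})$. Writing $u = \operatorname{exp}(W)$ with $W \in \mathfrak{z}^{-\theta}$ and setting $z := \operatorname{exp}(W/2) \in Z$, one has $\theta(z) = z^{-1}$ and $z\cdot\theta(z)^{-1} = z^{2} = u$. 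Now put $g'' := gz$; since $z \in Z \subset \Gad^{X}$ stabilizes the entire triple, $\operatorname{Ad}(g'')$ still sends $(h_\theta, y_\theta)$ to $(h'_\theta, y'_\theta)$, while
\[ \theta(g'') = \theta(g)\,\theta(z) = gu\cdot z^{-1} = g\cdot z^{2}\cdot z^{-1} = gz = g'', \]
so $g'' \in K_{ad}$. Combined with $g'' \in \Gad^{X}$, this yields $g'' \in K_{ad}^{X}$, as required.

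\textbf{Main obstacle.} The only non-formal ingredient beyond Theorem~\ref{thm:Kos} is the $\theta$-equivariant polar decomposition of $Z$, equivalently the triviality of the $\ztwo$-cocycle $u$. For a connected complex reductive group carrying an algebraic involution this is classical: pick a $\theta$-stable compact real form, apply the usual polar decomposition, and observe that $z\mapsto z\theta(z)^{-1}$ restricts to squaring on $\operatorname{exp}(\mathfrak{z}^{-\theta})$. The structural input that the centralizer $Z$ of an $\slc$-triple is connected reductive is a classical result of Kostant, which also ensures that the final element $g''$ lies in the identity component of $\Gad^{\theta}$, namely $K_{ad}$. Once these inputs are assembled, the remaining steps are formal bookkeeping, reproducing the original argument of Kostant--Rallis.
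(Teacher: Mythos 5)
Your reduction to Theorem~\ref{thm:Kos} and the formation of the cocycle $u := g^{-1}\theta(g)$ are sound: the computation that $\theta(g)$ again lies in $\Gad^{X}$ and carries $(h_\theta,y_\theta)\mapsto(h'_\theta,y'_\theta)$ is correct, so $u$ centralizes the first triple and satisfies $u\,\theta(u)=1$. The error is in the cocycle-trivialization step. The assertion that a connected complex reductive group $Z$ with a \emph{holomorphic} involution $\theta$ satisfies $Z = Z^{\theta}\cdot\exp(\mathfrak{z}^{-\theta})$, and that the ``symmetric'' set $\{v\in Z: v\,\theta(v)=1\}$ coincides with $\exp(\mathfrak{z}^{-\theta})$, is false. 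The simplest counterexample is $Z=\cm$ with $\theta=\operatorname{id}$: there $\mathfrak{z}^{-\theta}=0$, so $\exp(\mathfrak{z}^{-\theta})=\{1\}$, while $\{v:v\,\theta(v)=1\}=\{v:v^{2}=1\}=\{\pm 1\}$. More fundamentally, the polar decomposition you invoke is the Cartan decomposition of a real group relative to the conjugation of a \emph{compact real form} (an antiholomorphic involution), and it does not transfer to a holomorphic involution: e.g.\ not every $g\in GL_n(\C)$ factors as (complex orthogonal)$\cdot$(complex symmetric), because $g^{T}g$ need not be diagonalizable. A separate smaller issue is that $\Gad^{\phi}$ is reductive but typically \emph{disconnected} (its component group is the component group $A(\OO_X)$ of the orbit), so even the reduction to the identity component needs justification.

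The cocycle $u$ is in fact always a coboundary, but for a structural reason you have not used: one has the Levi decomposition $\Gad^{X}=\Gad^{\phi}\ltimes U^{X}$ (cf.\ Proposition~\ref{prop:stabG}), and the element supplied by Theorem~\ref{thm:Kos} can be chosen in the unipotent radical $U^{X}$ — equivalently, $h'_\theta - h_\theta$ lies in $[X,\lieg]\cap\lieg^{X}$, which by $\slc$-representation theory has zero projection to the weight-zero part $\lieg^{\phi}$. With such a choice $g_0\in U^{X}$, both $g_0$ and $\theta(g_0)$ lie in the $\theta$-stable subgroup $U^{X}$, hence $u_0=g_0^{-1}\theta(g_0)\in U^{X}\cap\Gad^{\phi}=\{1\}$, giving $g_0\in K_{ad}^{X}$ outright; for an arbitrary $g$ one writes $g=g_0 z_0$ with $z_0\in\Gad^{\phi}$ and finds $u=z_0^{-1}\theta(z_0)$ is visibly a coboundary. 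This is also essentially how Kostant–Rallis argue (via the $\slc$-grading of $\lieg^{X}$ and the vanishing of negative-weight vectors in $\lieg^{X}$), not via a real polar decomposition — so the claim to be ``reproducing the original argument of Kostant–Rallis'' should be dropped. The paper itself gives no proof, citing Kostant–Rallis directly, so your approach is a genuinely different route, but as written the key step is unjustified and the appeal to polar decomposition would need to be replaced by the Levi-decomposition/unipotent argument sketched above.
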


In other words, there is a bijection from the set of $K$-conjugacy classes of normal triples in $\lieg$ to the set of nilpotent orbits in $\liep$, which sends $\{ h_\theta, x_\theta, y_\theta \}$ to the orbit $\OO_\liep = K_{ad} \cdot x_\theta \subset \liep \cong \liep^*$. Just like the case of standard triples, we can also repackage normal triples in terms of Lie algebra homomorphisms. Choose the Cartan involution $\theta_0$ of $\slc$ defined by
\begin{equation}
\theta_0(Z) = - ^t Z, \quad Z \in \slc,
\end{equation}  
where $^t Z$ denotes the transpose of matrices. We put
\begin{equation}
h_0 = \begin{pmatrix}
0 & i \\
-i & 0
\end{pmatrix},   \quad
x_0 = \frac{1}{2} \begin{pmatrix}
1 & -i \\
-i & -1
\end{pmatrix}, \quad
y_0 = \frac{1}{2} \begin{pmatrix}
1 & i \\
i & -1
\end{pmatrix}.
\end{equation}
Then the triple $\{ h_0, x_0, y_0 \} $ is a normal triple in $\slc$ with respect to the involution $\theta_0$. A normal triple in $\lieg$ is then equivalent to a Lie algebra homomorphism $\phi: \slc \to \lieg$ such that $\theta \circ \phi = \phi \circ \theta_0$.

Denote by $\sigma$ the complex conjugation of $\lieg$ with respect to the real form $\lieg_\R$. We also have the complex conjugation $\sigma_0$ on $\slc$ with respect to the real form $\slr$, which acts on the triple $\{ h_0, x_0, y_0 \} $ by:
\begin{equation}
\sigma_0 (h_0) = -h_0, \quad \sigma_0 (x_0) = y_0, \quad \sigma_0(y_0) = x_0.
\end{equation}
We introduce the following notations following \cite[\S\,6.2]{Yang}. Denote by $\Mor(\slc, \lieg)$ the set of all (complex) Lie algebra homomorphisms $\phi: \slc \to \lieg$. Define the sets
\begin{equation}
	\begin{split}
	  \Mor^\theta(\slc, \lieg) &= \{ \phi \in \Mor(\slc,\lieg) ~|~\theta \circ \phi = \phi \circ \theta_0 \},  \\
	  \Mor^\sigma(\slc, \lieg) &= \{ \phi \in \Mor(\slc,\lieg) ~|~\sigma \circ \phi = \phi \circ \sigma_0 \}, ~\text{and}  \\
	  \Mor^{\sigma,\theta}(\slc, \lieg) &=  \Mor^\sigma(\slc, \lieg) \cap \Mor^\theta(\slc, \lieg).  \\
	\end{split}
	\end{equation}
Note that $\Mor^\sigma(\slc, \lieg)$ is nothing but the set of all real Lie algebra homomorphisms $\slr \to \lieg_\R$.

\begin{proposition} \label{prop:stabG}
Suppose $\phi \in \Mor(\slc, \lieg) $. Define a standard triple $\{ H,X,Y \}$ by
\[ H = \phi(H_0), \quad X = \phi(X_0), \quad Y= \phi(Y_0).  \]
Then the following hold.
\begin{enumerate}
	\item \label{item:stabG1}
		We have a graded decomposition
			\begin{equation}\label{eq:lieg_grade}
				\lieg = \bigoplus_{k \in \ZZ} \lieg(k),
			\end{equation}
		where
		\begin{equation}
			\lieg(k) := \{ Z \in \lieg ~|~ [H, Z] = k Z \}, \quad \forall ~ k \in \ZZ. 
		\end{equation}
	\item 
	  Put 
		\begin{equation}\label{eq:parab_l}
			\liel = \lieg(0) \quad \text{and} \quad \lieu = \bigoplus_{k>0} \lieg(k), 
		\end{equation}
		then $\lieq = \liel \oplus \lieu$ is a Levi decomposition of a parabolic subalgebra of $\lieg$.
	\item 
	   According to the grading in \eqref{eq:lieg_grade}, the centralizer $\lieg^X$ has a graded decomposition
		\begin{equation}\label{eq:liegx}
			\lieg^X = \liel^X \oplus \bigoplus_{k>0} \lieg(k)^X = \liel^X \oplus \lieu^X. 
		\end{equation}
	\item 
		The subalgebra $\liel^X=\lieg^{H,X}$ is equal to $\lieg^\phi$, the centralizer in $\lieg$ of the image of $\phi$. It is a reductive subalgebra of $\lieg$ and the decomposition \eqref{eq:liegx} is a Levi decomposition of $\lieg^X$. 
\end{enumerate}
\end{proposition}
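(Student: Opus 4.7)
The plan is to treat each part in turn, leaning on standard $\slc$-representation theory applied to the adjoint action of $\phi(\slc)$ on $\lieg$.

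For (1), I would argue that $\lieg$ becomes a finite-dimensional $\slc$-module via $\ad \circ \phi$. Since $H_0$ acts semisimply with integer eigenvalues on any finite-dimensional $\slc$-representation, the same holds for $\ad(H)$ on $\lieg$, which yields the grading $\lieg = \bigoplus_{k \in \ZZ} \lieg(k)$. For (2), the bracket respects the grading, so $\liel = \lieg(0)$ is a Lie subalgebra and $\lieu = \bigoplus_{k>0}\lieg(k)$ is a nilpotent ideal of $\lieq = \liel \oplus \lieu$. To see that $\lieq$ is parabolic, I would produce a Cartan subalgebra of $\lieg$ containing $H$: it lies in $\liel$, and the restricted roots on this Cartan split into those with $\ad(H)$-eigenvalue $0$, $>0$, and $<0$. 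The first gives $\liel$ (together with the Cartan), the second gives $\lieu$, and so $\lieq$ is the standard parabolic determined by $\ad(H)$; hence reductive Levi part $\liel$ and unipotent radical $\lieu$.

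For (3), the key input is that in any finite-dimensional $\slc$-module $V = \bigoplus V_k$, the operator $X$ maps $V_k \to V_{k+2}$ injectively whenever $k < 0$. Indeed, by complete reducibility it suffices to check this on irreducibles, where the highest weight is non-negative and $X$ is injective on all weight spaces except the highest. Applying this to $\lieg$, the centralizer $\lieg^X$ has no negative-degree component, so $\lieg^X = \bigoplus_{k \geq 0} \lieg(k)^X = \liel^X \oplus \bigoplus_{k>0}\lieg(k)^X$.

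For (4), the identity $\liel^X = \lieg^{H,X}$ is immediate from $\liel = \lieg^H$. To identify this with $\lieg^\phi$, the only thing to check is that $Z \in \lieg^{H,X}$ also commutes with $Y$. But such a $Z$ lies in $\lieg(0)$ and is killed by $X$, so it is a highest-weight vector of weight $0$ in the $\slc$-module $\lieg$, hence generates a trivial subrepresentation, and in particular $[Y,Z] = 0$. The reductivity of $\lieg^\phi$ follows from the standard fact that the centralizer in a reductive Lie algebra of a semisimple subalgebra (here $\phi(\slc)$, which is either $0$ or isomorphic to $\slc$) is reductive; alternatively one can appeal to Mostow's theorem. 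Finally, $\bigoplus_{k>0}\lieg(k)^X$ is an ideal of $\lieg^X$ (since the grading is respected by brackets and by $\ad(X)$), and it is nilpotent because it is concentrated in strictly positive degrees in a finite-dimensional graded algebra. The hardest step of the whole argument is really part~(4): showing that $\lieg^\phi$ is reductive and that the above decomposition is a Levi decomposition of $\lieg^X$, for which the cleanest route is to invoke the general theorem on centralizers of semisimple subalgebras.
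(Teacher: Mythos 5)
The paper itself gives no proof of Proposition~\ref{prop:stabG}: it is stated as a review item with the details delegated to \cite[\S\,6]{Vogan_AV} and \cite{CM}. Your argument is therefore a self-contained reconstruction rather than a comparison target, and it is correct. Parts (1)--(3) are the standard $\slc$-representation-theoretic facts (integrality and semisimplicity of $\ad(H)$, the parabolic determined by the sign of $\ad(H)$-eigenvalues, and injectivity of $\ad(X)$ on negative weight spaces), applied to $(\lieg, \ad\circ\phi)$; and your identification $\lieg^{H,X}=\lieg^\phi$ in part (4) via highest-weight vectors of weight zero is exactly the usual argument. The only place you lean on a black box is the reductivity of the centralizer of a reductive-in-$\lieg$ subalgebra, which is indeed standard (Weyl complete reducibility plus the fact that a reductive subalgebra's centralizer is reductive). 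One small point worth making explicit if you want the Levi-decomposition claim airtight: to see that $\lieu^X$ is in fact the \emph{nilradical} of $\lieg^X$ (not merely some nilpotent ideal), note that any larger ideal of $\ad$-nilpotent elements would project onto a nonzero ideal of $\lieg^\phi$ consisting of nilpotent elements, which is impossible since the center of the reductive algebra $\lieg^\phi$ acts semisimply on $\lieg$. With that remark the proof is complete and matches the approach in the cited references.
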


Similar results hold for normal triples.

\begin{proposition}\label{prop:stabK}
	Suppose $\phi \in \Mor^\theta(\slc, \lieg) $. Define a normal triple $\{ h_\theta, x_\theta, y_\theta \}$ by
	\[ h_\theta = \phi_\theta (h_0), \quad x_\theta = \phi_\theta (x_0), \quad y_\theta = \phi_\theta(y_0).   \]
	Then the following hold.
	\begin{enumerate}
		\item
		The parabolic subalgebra $\lieq = \liel \oplus \lieu$ constructed as in \eqref{eq:parab_l} with respect to $h_\theta$ is stable under the involution $\theta$.
		\item
		If we define
		\begin{equation}
		L_K := K^{h_\theta}, \quad U_K := \exp(\lieu \cap \liek), \quad Q_K := L_K U_K ,
		\end{equation}
		then $Q_K$ is the parabolic subgroup constructed as in \eqref{eq:parab_l} of $K$ with respect to $h_\theta$.
		\item
		We have $K^{x_\theta} \subset Q_K$ and that $K^{x_\theta}$ respects the Levi decomposition
		\begin{equation}\label{eq:Ktheta}
		K^{x_\theta} = L_K^{x_\theta} U^{x_\theta}_K. 
		\end{equation}
		\item
		The group $L_K^{x_\theta} = K^{h_\theta, x_\theta}$ is equal to $K^{\phi_\theta}$, the centralizer in $K$ of the image of $\phi_\theta$. It is a reductive algebraic subgroup of $K$. The subgroup $U_K^{x_\theta}$ is simply connected unipotent. In particular, the decomposition in \eqref{eq:Ktheta} is a Levi decomposition of $K^{x_\theta}$.
	\end{enumerate}    
\end{proposition}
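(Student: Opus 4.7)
The plan is to deduce Proposition \ref{prop:stabK} from the $\mathfrak{sl}_2$-representation-theoretic arguments underlying Proposition \ref{prop:stabG}, augmented by the $\theta$-equivariance of $\phi$. A direct matrix computation shows $\theta_0(h_0) = h_0$, $[h_0, x_0] = 2 x_0$, and $[x_0, y_0] = h_0$, so $\{h_0, x_0, y_0\}$ is an $\mathfrak{sl}_2$-triple inside $\slc$ which is $SL(2,\C)$-conjugate to $\{H_0, X_0, Y_0\}$. Consequently $h_\theta = \phi(h_0)$ lies in $\liek$, and $h_\theta$ is $G$-conjugate to $H = \phi(H_0)$; in particular $\ad h_\theta$ acts semisimply on $\lieg$ with the same integer eigenvalues as $\ad H$. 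The grading $\lieg = \bigoplus_k \lieg(k)$ defined via $\ad h_\theta$ thus has the form of \eqref{eq:lieg_grade}, and because $\ad h_\theta$ commutes with $\theta$, each $\lieg(k)$ is $\theta$-stable. This yields part (1): both $\liel$ and $\lieu$ are $\theta$-stable.

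For part (2), intersecting with $\liek$ produces a parabolic subalgebra $\lieq \cap \liek = (\liel \cap \liek) \oplus (\lieu \cap \liek)$ of $\liek$, whose Levi is the $\liek$-centralizer of $h_\theta$ and integrates to $L_K = K^{h_\theta}$, while whose nilradical integrates to the connected simply connected unipotent group $U_K = \exp(\lieu \cap \liek)$; the product $Q_K = L_K U_K$ is then the parabolic subgroup of $K$ associated to $h_\theta$. For part (3), the $\mathfrak{sl}_2$-representation argument behind Proposition \ref{prop:stabG}(3) applies verbatim with $h_\theta, x_\theta$ in place of $H, X$ and yields $\lieg^{x_\theta} = \bigoplus_{k \geq 0} (\lieg^{x_\theta} \cap \lieg(k))$; intersecting with $\liek$ gives $\liek^{x_\theta} \subset \lieq \cap \liek$. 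To upgrade this to the group-level inclusion $K^{x_\theta} \subset Q_K$, I would invoke the self-normalizer characterization $Q_K = N_K(\lieq \cap \liek)$, together with the observation that $\lieu \cap \liek$ is generated by iterated brackets of $x_\theta$ with the graded pieces $\lieg(k) \cap \liek$, so any $\Ad(k)$ fixing $x_\theta$ preserves $\lieu \cap \liek$ and hence $\lieq \cap \liek$. The factorization $K^{x_\theta} = L_K^{x_\theta} \ltimes U_K^{x_\theta}$ then follows from the standard fact that for any parabolic Levi decomposition $Q = LU$ and any $v \in \lieu$ one has $Q^v = L^v \ltimes U^v$, obtained by projecting $Q^v$ onto $L$ along $U$.

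For part (4), the inclusion $K^{\phi_\theta} \subset L_K^{x_\theta} = K^{h_\theta, x_\theta}$ is immediate. Conversely, the nilnegative completion of the pair $(h_\theta, x_\theta)$ is unique, since $[x_\theta, z] = -h_\theta$ has a unique solution in the $(-2)$-eigenspace of $\ad h_\theta$ by $\mathfrak{sl}_2$-theory; any $\Ad(k)$ fixing both $h_\theta$ and $x_\theta$ therefore fixes $y_\theta$ as well, and hence the entire image $\phi_\theta(\slc)$. Reductivity of $L_K^{x_\theta} = K^{\phi_\theta}$ follows from the general fact that centralizers of reductive subgroups of reductive groups are reductive, and $U_K^{x_\theta}$, being the centralizer in the simply connected unipotent group $U_K$ of a single element in characteristic zero, is itself simply connected unipotent, equal to $\exp(\lieu^{x_\theta} \cap \liek)$.

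The main technical subtlety I expect is the upgrade in part (3) from the Lie-algebra inclusion $\liek^{x_\theta} \subset \lieq \cap \liek$ to the group-level inclusion $K^{x_\theta} \subset Q_K$, since $K^{x_\theta}$ need not be connected; this is handled either via the normalizer description of parabolics above, or alternatively through a component-group analysis leveraging Theorem \ref{thm:Kos_K}, which asserts that the component group of $K_{ad}^{x_\theta}$ acts simply transitively on the normal triples completing $x_\theta$.
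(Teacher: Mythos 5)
Your outline for parts (1), (2), and (4) is essentially sound, and the strategy of transporting the $\slc$-representation-theoretic content of Proposition \ref{prop:stabG} into the $\theta$-equivariant setting is the right one. (Note that the paper itself does not prove either proposition but cites \cite[\S\,6]{Vogan_AV} and \cite{CM}.) However, your argument for the key group-level inclusion $K^{x_\theta} \subset Q_K$ in part (3) does not hold up. You assert that ``$\lieu \cap \liek$ is generated by iterated brackets of $x_\theta$ with the graded pieces $\lieg(k) \cap \liek$, so any $\Ad(k)$ fixing $x_\theta$ preserves $\lieu \cap \liek$.'' The difficulty is that an element $k \in K^{x_\theta}$ fixes $x_\theta$ but need \emph{not} fix $h_\theta$, hence need not preserve the grading $\lieg(\bullet)$ at all; any description of $\lieq \cap \liek$ that refers to the subspaces $\lieg(k)$ is therefore not manifestly $\Ad(k)$-stable, and the argument is circular. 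One cannot escape this by generating from $x_\theta$ alone: $x_\theta$ lies in degree $2$, so its iterated brackets only reach even degrees and miss $\lieg(1)\cap\liek$ in general. Your fallback to Theorem \ref{thm:Kos_K} is also misstated: that theorem says $K_{ad}^{x_\theta}$ acts transitively on normal triples with nilpositive $x_\theta$; it is the unipotent subgroup $U_K^{x_\theta}$, not the component group, that acts simply transitively.

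The gap is repaired by establishing that the Jacobson--Morozov parabolic $\lieq$ is intrinsic to $x_\theta$, independent of the choice of neutral element. One clean route is via the monodromy (Deligne) weight filtration of the nilpotent operator $\ad x_\theta$ on $\lieg$: it is the unique increasing filtration $W_\bullet$ with $(\ad x_\theta)\,W_j \subset W_{j-2}$ and $(\ad x_\theta)^j \colon \gr^W_j \isom \gr^W_{-j}$ for $j \geq 0$, and one checks from $\slc$-theory that $W_j = \bigoplus_{i \geq -j}\lieg(i)$, so $\lieq = W_0$ is determined by $x_\theta$ alone. Alternatively, inspect the proof of Kostant's conjugacy theorem (cf.\ \cite[Thm.~3.8.1]{CM}): the element conjugating two triples with the same nilpositive can be taken in the unipotent radical of the Jacobson--Morozov parabolic, so the two resulting parabolics coincide. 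With intrinsicness in hand, $\Ad(k)$ fixes $x_\theta$, hence fixes $\lieq$ and $\lieq \cap \liek$, giving $k \in N_K(\lieq \cap \liek) = Q_K$ as you intended. The rest of your argument --- projecting $Q_K^{x_\theta}$ onto $L_K$ along $U_K$ (using that $x_\theta$ sits in a single graded piece so the $L_K$-component also fixes $x_\theta$), and the uniqueness of the nilnegative completion in part (4) --- then goes through.
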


Finally, by choosing a suitable $G$-(resp. $\GR$-)invariant symmetric complex (resp. real) bilinear form on $\lieg$ (resp. $\lieg_\R$), one can identify nilpotent adjoint $G$-orbits (resp. nilpotent adjoint $K$-orbits) in $\lieg$ (resp., $\liep$) with nilpotent coadjoint $G$-orbits (resp. nilpotent coadjoint $K$-orbits) in $\lieg^*$ (resp., $\liep^*$). See \cite[\S\,6]{Vogan_AV} for details. Therefore we can seamlessly switch between adjoint orbits and coadjoint orbits.

\subsection{Kostant-Sekiguchi-Vergne correspondence}

Let $\GR$ be as in \S\,\ref{subsec:nil}. Sekiguchi \cite{Sekiguchi} and Kostant (unpublished) established a bijection between the set of nilpotent $\GR$-orbits in $\lieg_\R \cong \lieg^*_\R$ and the set of nilpotent $K$-orbits in $\liep \cong \liep^*$. The  precise statement is as follows.

\begin{theorem}\label{thm:KSV}
	Let $\GR$ be a real reductive Lie group with Cartan involution $\theta$ and its corresponding maximal compact subgroup $\KR$ with complexification $K$. Let $\sigma$ be the complex conjugation on the complexification $\lieg$ of $\lieg_\R$. The the following sets are in $1-1$ correspondence.
	\begin{enumerate}[(a)]
		\item 
		  $\GR$-orbits on the cone $\N_\R$ of nilpotent elements in $\lieg_\R$;
		\item 
		  $\GR$-conjugacy classes of elements in $\Mor^\sigma(\slc, \lieg)$;
		\item 
		  $\KR$-conjugacy classes of elements in $\Mor^{\sigma,\theta}(\slc, \lieg)$;
		\item 
		  $K$-conjugacy classes of elements in $\Mor^\theta(\slc, \lieg)$;
		\item 
		  $K$-orbits on the cone $\N_\theta$ of nilpotent elements in $\liep$.
	\end{enumerate}
\end{theorem}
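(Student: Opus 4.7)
The plan is to chain four bijections (a)$\leftrightarrow$(b)$\leftrightarrow$(c)$\leftrightarrow$(d)$\leftrightarrow$(e), of which the outer two follow from Jacobson--Morozov/Kostant-type results already at hand, while the two inner equivalences encode the actual geometric content of the correspondence.

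For (a)$\leftrightarrow$(b) I would apply the real versions of Theorems~\ref{thm:JM} and \ref{thm:Kos}: every nonzero nilpotent $X \in \lieg_\R$ is the nilpositive element of some real standard triple $\{H,X,Y\} \subset \lieg_\R$, and any two such triples with the same nilpositive element are conjugate by an element of the centralizer of $X$ in the adjoint group of $\GR$. Reinterpreting a real standard triple as a real Lie algebra homomorphism $\slr \to \lieg_\R$, equivalently as an element of $\Mor^\sigma(\slc,\lieg)$, yields the bijection. The bijection (d)$\leftrightarrow$(e) is exactly the content of Theorems~\ref{thm:JM_K} and \ref{thm:Kos_K} after the analogous repackaging of normal triples as elements of $\Mor^\theta(\slc,\lieg)$.

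The substantive content lies in (b)$\leftrightarrow$(c) and (c)$\leftrightarrow$(d), both of which are Cartan-involution-matching statements. For (b)$\leftrightarrow$(c), given $\phi \in \Mor^\sigma(\slc,\lieg)$, the image $\phi(\slr)$ is a three-dimensional simple real subalgebra of $\lieg_\R$, and the two involutions $\theta|_{\phi(\slr)}$ and $\phi \circ \theta_0 \circ \phi^{-1}$ are both Cartan involutions of $\phi(\slr)$. The classical result that any two Cartan involutions of a real reductive Lie algebra are conjugate by an inner automorphism, combined with an extension argument using the centralizer of $\phi(\slr)$ in $\lieg_\R$, then produces $g \in \GR$ with $\operatorname{Ad}(g)\circ\phi \in \Mor^{\sigma,\theta}(\slc,\lieg)$. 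Uniqueness of the resulting $\KR$-conjugacy class reduces to the same Cartan-involution-conjugacy statement applied inside the centralizer. The argument for (c)$\leftrightarrow$(d) is dual, working at the complex level: given $\phi \in \Mor^\theta(\slc,\lieg)$, one matches $\sigma_0$ with $\sigma$ via a $\theta$-preserving conjugation, using that all $\theta$-stable real forms of $\lieg$ are $K$-conjugate.

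The main obstacle I anticipate is the bookkeeping of which group performs each conjugation, and which stabilizers appear, so that the four bijections genuinely compose to a bijection rather than a correspondence between quotient sets. Proposition~\ref{prop:stabK}(4) is the key tool for pinning down stabilizers, since it identifies $L_K^{x_\theta}$ with the centralizer $K^\phi$ as a reductive group and ensures that the unipotent factor $U_K^{x_\theta}$ is simply connected, so centralizer-based uniqueness behaves well. Under the standing assumption that $\GR$ is contained in the real points of a connected complex reductive $G$, any remaining connectedness subtleties---which are the traditional source of trouble in Kostant--Sekiguchi---should become manageable.
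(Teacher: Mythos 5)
The paper does not prove this theorem at all: it is recalled as a classical result and attributed outright to Sekiguchi and Kostant (with Vergne's strengthening to a diffeomorphism via Kronheimer's instanton flow mentioned immediately afterward). So there is no proof in the paper to compare against.

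That said, your outline is the standard route found in Sekiguchi's original paper and in Collingwood--McGovern. The outer bijections (a)$\leftrightarrow$(b) and (d)$\leftrightarrow$(e) do indeed reduce to the Jacobson--Morozov and Kostant theorems and their $\theta$-stable analogues (the latter being exactly Theorems~\ref{thm:JM_K} and \ref{thm:Kos_K} in the paper). The inner two bijections are, as you say, involution-matching arguments; the (b)$\leftrightarrow$(c) direction uses conjugacy of Cartan involutions on the centralizer of $\phi(\slr)$, and the nontrivial direction of (c)$\leftrightarrow$(d) is the surjectivity of the forgetful map from $\KR$-conjugacy classes of fully-equivariant maps to $K$-conjugacy classes of $\theta$-equivariant maps. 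On that last point your phrasing---``all $\theta$-stable real forms of $\lieg$ are $K$-conjugate''---is not quite what gets used. The usual argument is a polar ($KAK$-type) decomposition in the complex group: given $\phi \in \Mor^{\theta}(\slc,\lieg)$, decompose the conjugating element of $K$ that carries $\sigma\circ\phi\circ\sigma_0$ back to $\phi$ into its $\KR$ and exponential-of-$i\liek_\R$ pieces, and use the exponential piece (together with the simply-connectedness of $U_K^{x_\theta}$ from Proposition~\ref{prop:stabK}(4)) to correct $\phi$ inside the same $K$-orbit until it is $\sigma$-equivariant. This is where the real technical work lives; your sketch identifies the right structural decomposition but compresses this step a bit more than it can bear.
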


We remark that Vergne \cite{Vergne} showed the even stronger result that any two orbits related under this bijection are in fact diffeomorphic by using Kronheimer's instanton flow \cite{Kronheimer}. We call all the results above combined as the Kostant-Seikiguchi-Vergne (KSV) correspondence. 


\subsection{Admissible orbit data}  \label{subsec:adm}

We follow \cite{Vogan_AV} to define the so-called \emph{admissible orbit data}. Suppose that $\GR$ is a real reductive Lie group \emph{of Harish-Chandra class} in the sense of \cite[Defn, 5.21, Defn. 6.1]{Vogan_AV}. Suppose $x \in \liep^* \cong \liep $ is a (non-zero) nilpotent element. Let $\OO_\liep = K \cdot x$ be the associated nilpotent $K$-orbit in $\liep^*$. Let $K^x$ be the isotropy  subgroup of $K$ at $x$ with Lie algebra $\liek^x$. Define a character $\gamma: K^x \to \cm$ by
\begin{equation}\label{eq:gamma}
\gamma(k) : = \det (Ad(k) |_{(\liek / \liek^x)^*}).
\end{equation} 
Then the associated line bundle
\[ K \times_{K^x} \C_\gamma \to K/K^x \cong \OO_\liep  \]
is nothing but the canonical bundle of $\OO_\liep$. An irreducible representation $(\chi, V_\chi)$ of $K^x$ is said to be \emph{admissible} if the differential of $\chi$ equals $ \frac{1}{2} d \gamma$ times the identity, i.e.,
\begin{equation}\label{eq:chi}
d \chi( Z ) = \frac{1}{2} \tr ( ad(Z) |_{(\liek / \liek^x)^*}) \cdot \Id_{V_\chi} , \quad \forall ~ Z \in \liek^x.
\end{equation}
Then there is a $K$-equivariant algebraic vector bundle
\begin{equation}\label{eq:vchi}
\V_\chi = K \times_{K^x} V_{\chi} \to K/K^x \cong \OO_\liep 
\end{equation}
over the nilpotent orbit $\OO_\liep$. If such a $\chi$ exists, we say that $(x, \chi)$ is a \emph{(nilpotent) admissible orbit datum} and that the orbit $\OO_\liep$ is \emph{admissible}. Note that the notion of admissibility depends not only on the Lie algebras but also the groups.

Denote by $\codim(\partial \OO_\liep, \cl{\OO}_\liep)$ the complex codimension of the boundary $\partial \cl{\OO}_\liep = \overline{\OO}_\liep - \OO_\liep$ in the closure $\cl{\OO}_\liep$ of $\cl{\OO}_\liep$ in $\liep^*$.

\begin{definition}\label{defn:voganorbit}
	A nilpotent $K$-orbit $\OO_\liep$ is called a \emph{Vogan orbit} if  $\codim(\partial \OO_\liep, \cl{\OO}_\liep) \geq 2$.
\end{definition}

\begin{conjecture}[\cite{Vogan_AV}, Conjecture 12.1]\label{conj:vogan}
	Let $\GR$ be a real reductive Lie group of Harish-Chandra class. Suppose $(x, \chi)$ is a nilpotent admissible orbit datum such that the nilpotent $K$-orbit $\OO_\liep=K \cdot x$ is a Vogan orbit. Then there is an irreducible unitary representation $\pi(x, \chi)$ of $\GR$, such that its space of $K$-finite vectors is isomorphic to the space of algebraic sections of $\V_\chi$  as $K$-representations. 
\end{conjecture}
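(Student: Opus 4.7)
The plan is to construct the candidate representation $\pi(x,\chi)$ via the equivariant deformation quantization machinery built earlier in the paper, reducing the conjecture to a positivity question for an explicit invariant Hermitian form. First, I would realize $Y := \OO_\liep = K \cdot x$ as a closed Lagrangian subvariety of the $G$-orbit $X := \OO = G \cdot x$ in $\lieg^*$, a consequence of the Kostant-Sekiguchi-Vergne correspondence together with Vogan's observation on the symplectic geometry (\S\,\ref{subsec:nil}). The ambient orbit $X$ is a graded symplectic variety under the square of the rescaling $\cm$-action, and Losev's construction applied to a $\QQ$-factorial terminalization $\wt{X}$ of $\cl{\OO}$ yields an even symmetrized Hamiltonian quantization $\Q$ of $\xreg$, restricting to a quantization of $X$. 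This sidesteps the failure of the deformation-quantization admissibility condition on $X$ itself.

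Second, I would verify the hypotheses of Theorem \ref{thm:equiv_main} for $Y$, $\Q$, and $E := \V_\chi$. The $\cm$-action on $X$ preserves $Y$ and lifts uniquely to $E$ compatibly with the $K$-action, which is addressed in \S\,\ref{subsec:grading}. The projectivization $\PP(E) = K \times_{K^x} \PP(V_\chi)$ is homogeneous and thereby inherits a canonical flat algebraic connection, and $K$-equivariance of the construction makes the $K$-generated vector fields horizontal with respect to it. The key input is condition (2) of Theorem \ref{thm:equiv_main}: the admissibility equation \eqref{eq:chi}, which reads $d\chi(Z) = \tfrac{1}{2}\tr(\mathrm{ad}(Z)|_{(\liek/\liek^x)^*}) \cdot \Id_{V_\chi}$, is precisely the infinitesimal statement that $\det(E)^{\otimes 2} \cong K_Y^{\otimes r}$ as $K$-equivariant line bundles, which in turn delivers the required $\liek$-equivariant isomorphism $\tfrac{1}{2}\TT(K_Y) \isom \tfrac{1}{r}\TT(\det E)$ of graded Picard algebroids. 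Thus Theorem \ref{thm:equiv_main} produces a graded Hamiltonian quantization $\Ehb$.

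Third, I would pass to the $(\lieg, K)$-module $\M$ obtained as the $\hb = 1$ specialization of the $\cm$-finite part of $\Gamma(Y, \Ehb)$. The $\cm$-grading makes $\M$ isomorphic as a $K$-module to the associated graded of the $\hb$-adic filtration, which in turn is $\Gamma(Y, E) = \Gamma(Y, \V_\chi)$, matching the $K$-spectrum demanded by the conjecture. Theorem \ref{thm:gk_main} then ensures that $\M$ is an admissible Harish-Chandra module and, under the strengthened hypothesis $\codim(\partial\OO_\liep, \cl{\OO}_\liep) \geq 3$, has associated variety equal to $\cl{\OO}_\liep$. Irreducibility should follow from minimality of multiplicity along the associated variety; at worst one extracts an irreducible subquotient with the same associated variety and the same leading $K$-types.

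The main obstacle — the reason the final statement remains a conjecture rather than a theorem — is unitarity. The natural candidate for a $\GR$-invariant Hermitian form on $\M$ arises from the even anti-involution $\epsilon$ on $\Q$ combined with the real structure $\sigma$ on $\lieg$; together they pair $\M$ with its $\sigma$-conjugate and yield a sesquilinear form whose invariance follows from compatibility of $\epsilon$ with the quantized moment map. Proving that this form is positive definite in the full generality of the conjecture is not accessible from the deformation-theoretic machinery alone and must be handled by external means: unitary induction when the orbit is induced, the theta correspondence for classical dual pairs, or explicit character computations in the exceptional cases treated later in the paper. In short, my proposal reduces Conjecture \ref{conj:vogan} to positivity of a specific, canonically produced, invariant Hermitian form on $\M$, leaving that positivity as the remaining non-formal ingredient.
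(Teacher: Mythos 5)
The statement you are proving is not, in fact, proved in the paper: it is Vogan's Conjecture 12.1, stated as Conjecture~\ref{conj:vogan} precisely because it remains open. The paper's own contribution (Theorem~\ref{thm:gk_main}, built from Theorem~\ref{thm:hc1} and Proposition~\ref{prop:nonzero}) is to construct a candidate $(\lieg,K)$-module $\M$ by the Hamiltonian quantization route you describe — your first two steps reproduce that construction faithfully. You are also right to flag unitarity as the decisive, unresolved step, and your sketch of a candidate Hermitian form from the parity involution $\epsilon$ and the real structure $\sigma$ is a reasonable place to look. So far this is the paper's own route, and a reduction rather than a proof.

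However, your third step overclaims in two ways beyond the acknowledged unitarity gap, and both matter. First, you assert that the $K$-spectrum of $\M$ matches $\Gamma(Y,\V_\chi)$; the paper establishes only an \emph{injection} $\M_\hb/\hb\M_\hb \hookrightarrow \Gamma(\OO_\liep,\V_\chi)$ (equation \eqref{eq:Kembed}) and explicitly notes, in the remark after Proposition~\ref{prop:nonzero}, that the conjecture predicts this inclusion is a bijection but the construction does not deliver it. Without that surjectivity the $K$-multiplicity statement in the conjecture is not verified, and passing to an irreducible subquotient (as you suggest as a fallback) could only shrink the $K$-spectrum further, making the mismatch worse, not better. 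Second, irreducibility is not ``automatic from minimality of multiplicity'': Proposition~\ref{prop:nonzero} derives it only under the additional hypothesis that the primitive ideal $\JJ_\OO$ is maximal; in general the module could be reducible. Finally, the associated-variety and nonvanishing conclusions require the tall-orbit hypothesis $\codim(\partial\OO_\liep,\cl{\OO}_\liep)\geq 3$, which is strictly stronger than the conjecture's Vogan-orbit hypothesis $\geq 2$; so even the weak form of the statement is not reached for codimension-$2$ orbits. In short, your proposal would need to close three additional gaps — surjectivity onto $\Gamma(Y,\V_\chi)$, irreducibility without the $\JJ_\OO$-maximality assumption, and the codimension-$2$ case — in addition to the acknowledged unitarity problem.
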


\subsection{$\cm$-actions on admissible vector bundles} \label{subsec:grading}

There is a natural $\cm$-action on any nilpotent $K$-orbit $\OO_\liep = K \cdot x$ which commutes with the $K$-action. It is the restriction of a $\cm$-action on the $G$-orbit $\OO=G \cdot x \subset \lieg$ defined in \cite[Lemma 1.3]{BK} and can be defined in exactly the same way. Moreover To define it, recall that by Theorem \ref{thm:JM_K} there exist $h \in \liek$, $y \in \liep$ such that the normal triple $\{ h, x, y  \}$ spans an $\slc$-subalgebra of $\lieg$ and in particular satisfies $[h, x] = 2x$. Moreover, the bijection between part (c) and (d) in Theorem \ref{thm:KSV} implies that we can always choose $h$ such that $h \in i \liek_\R$. Therefore the subgroup $C := \exp(\C h)$ in $K$ generated by $h$ is the complexification of the compact circle subgroup $\exp(i \R h) \subset K_\R$, therefore $C$ is a copy of $\cm$. Note that $C$ normalizes $K^x$, so there is a well-defined left action of $C$ on the quotient $K/K^x = \OO_\liep$ by
\[  z \cdot [k] : = [y \cdot z^{-1}], \quad \forall ~z \in C, ~ k \in K,  \]
where $[k]$ denotes the coset in $K/K^x$ represented by $k$. The homomorphism
\begin{equation}\label{eq:cms}
s: \cm \to C/(C \cap K_x), \quad \exp t \mapsto \overline{\exp(t h)},
\end{equation}
defines an action of $\cm$ on $\OO_\liep$, where $\overline{\exp(th)}$ denotes the image of $\exp(th) \in C$ in $C/(C \cap K_x)$. Since $[h,x]=2x$, this action coincides with the square of the Euler rescaling action of $\cm$ on $\overline{\OO}_\liep$ as a subvariety of the vector space $\liep$ and therefore $s$ is well-defined. In particular, we get a $\ZZ$-grading on the function ring $\C[\OO_\liep]$ compatible with the algebra structure. Moreover, the grading is nonnegative by \cite[Prop. 1.4]{BK}.

Given any admissible orbit datum $(x, \chi)$, we now show that there are also naturally defined $\cm$-actions on the vector bundle $\V_\chi$ commuting with its $K$-action, which lift the $\cm$-action on $\OO_\liep$. These actions will determine a grading on the space of global sections $\Gamma(\OO_\liep, \V_\chi)$ of $\V_\chi$, unique up to degree shift. Since the subgroup $C$ normalizes $K^x$, we can define the semi-direct product group $F:= K^x \rtimes C$. Recall that there is a semidirect decomposition $K^x = L^x_K U^x_K$ as in Proposition \ref{prop:stabK}, (4), such that $C$ commutes with $L^x_K$ and $C$ normalizes $U^x_K$. We want to extend the representation $\chi$ of $K^x$ uniquely to a representation $\wt{\chi}$ of $F$ on the same vector space $V_\chi$. For this purpose, we need to define the value of $\wt{\chi}$ on $C$. Note that the restriction of the character $\gamma$ in \eqref{eq:gamma} to the unipotent subgroup $U^x_K$ is trivial and therefore the action of $U^x_K$ on $V_\chi$ is also trivial. Moreover, the group $C$ commutes with $L^x_K$. Thus $\chi$ can be extended if and only if we choose $\wt{\chi}|_{C}$ to be a one parameter subgroup of $GL(V_\chi)$ which commutes with all elements in the subgroup $\chi(L^x_K)$. When $\chi$ is irreducible on $V_\chi$, the only possibility is that $\wt{\chi}|_{C}$ acts by a character $C \to \cm$. Conversely, any character satisfies the requirement.  

We let $F$ acts on the product $K\times V_\chi$ diagonally. This action descends to an action of $C \cong F/K^x$ on $K\times_{K^x}V_\chi = \V_\chi$ which commutes with the $K$-action on $\V_\chi$. Different choices of the character $\wt{\chi}|_{C}$ only result in a twist of the $C$-action. Note that in general the quotient map $C \to C/(C\cap K^x)$ is a nontrivial cover and the homomorphism $s: \cm \to C/(C\cap K^x)$ in \eqref{eq:cms} does not necessarily lift to a homomorphism $\cm \to C$.  One can always adjust $\wt{\chi}|_{C}$ so that the $C$-action on $ \V_\chi$ descends to an action of $C/(C\cap K^x)$, and hence $\cm$ acts on $K\times_{K^x}V_\chi = \V_\chi$ via $s: \cm \to C/(C\cap K^x)$. Alternatively, we can fix $\wt{\chi}|_C$ to be the trivial character and allow ourselves to talk about gradings with fractions, i.e., we get a $(\ZZ + \frac{1}{n})$-grading on the $\C[\OO_\liep]$-module $\Gamma(\OO_\liep, \V_\chi)$ for some nonzero integer $n$, which is compatible with the $\ZZ$-grading on $\C[\OO_\liep]$. The constructions of graded quantization of Lagrangian subvarieties in \S~\ref{sec:quan_lag} still apply in this case without any change. Lastly, note that different choices of the character $\wt{\chi}|_{C}$ only cause a degree shift on the grading of $\Gamma(\OO_\liep, \V_\chi)$.


\section{Deformation quantization}\label{sec:quan}

\subsection{Quantization of symplectic structures}\label{subsec:Fedosov}

Throughout the paper, $\kk$ is a fixed field of characteristic zero. We denote by $\pws := \kk\series$ the algebra of formal power series in the parameter $\hb$. The results in \S\,3 - 6 work for general $\kk$. Later in \S\,\ref{sec:quan_orbit} we will set $\kk$ to be $\C$ when we come back to the orbit conjecture. 

We review deformation quantization in the algebraic context from \cite{BeKaledin}. Note that the holomorphic case was previously studied by Nest and Tsygan in \cite{NestTsygan}. Fix a base scheme $S$ which is of finite type over $\kk$. An \emph{$S$-manifold} $X$ is a scheme $X/S$ of finite type and smooth over $S$. We denote by $\Omega_{X/S}^\bullet$ the relative de Rham complex. Its hypercohomology groups are the relative de Rham cohomology groups $\hdr^\bullet(X/S)$. By a \emph{symplectic form} of $X/S$, we mean a closed non-degenerate relative form $\sym \in H^0(X, \Omega_{X/S}^2)$. Such an $\sym$ induces an (relative) algebraic Possion bracket $\{ - , -\}$ on the structure sheaf $\str_X$ of $X$.  A \emph{(deformation) quantization} of $(X,\sym)$ is the data consisting of

\begin{itemize}
	\item 
	a sheaf $\Q$ of flat associative $\pws$-algebras on $X$ with multiplication $*$, complete in the $\hb$-adic topology,
	\item   
	an isomorphism $\iota: \Q / \hb \Q \to \strx$ of sheaves of commutative algebras,
\end{itemize}
such that, if $f$ and $g$ are two functions over some open subset of $X$ with arbitrary liftings $\tilde{f}$ and $\tilde{g}$ in $\Q$, the image of the commutator
\[ [\tilde{f}, \tilde{g}] = \tilde{f} * \tilde{g} - \tilde{g} * \tilde{f} \in \hb \Q  \]
in $\hb \Q / \hb^2 \Q \cong \Q/\hb\Q \cong \strx$ is equal to the Poisson bracket $\{f,g\}$. 
There is an obvious definition of equivalent quantizations. We are particularly interested in the case when $X$ is a smooth symplectic variety.

\begin{remark}
To simplify notations, we will only consider the case when $S = \Spec \kk$ most of the time and omit $S$ from the formulae. However, all results still work in the relative setting, which will be useful in the formulation of quantization with equivariant structures (e.g, \S\,\ref{subsec:graded}, \ref{subsec:moment}). We will make remarks about the relative case when necessary.
\end{remark}

For a $C^\infty$-symplectic manifold $M$, Fedosov showed that deformation quantization of $M$ always exists and can be classified by certain cohomology classes. Bezrukavnikov and Kaledin \cite{BeKaledin} reformulated Fedosov's approach for algebraic schemes/varieties in terms of the language of formal geometry (\ref{sec:formal}). Let $X$ be a smooth symplectic variety with symplectic $2-$form $\sym$. Denote by $\quan$ the set of equivalence classes of quantizations of $(X,\sym)$. There exists a canonical \emph{(noncommutative) period map}
\begin{equation} \label{eq:per}
\per: \quan \to \hb\pb \hdrx,
\end{equation}
where $\hdrx$ denotes the $\hb$-adic completion of $\htwo(X) \otimes \pws$ (\cite[Defn 4.1]{BeKaledin}). The variety $X$ is said to be \emph{admissible} if the canonical map
\begin{equation}\label{eq:adm}
\hdr^i(X) \to H^i(X,\strx)
\end{equation}
is surjective for $i=1,2$. This condition was also obtained In this case, it was shown in \cite[Thm 1.8]{BeKaledin} that the period map is \emph{injective}, i.e., the \emph{period} $\per(\Q)$ classifies the quantization $\Q$. Moreover, the image of the period map lies in $\hb\pb[\sym] +  \hdrx$, i.e., $\per(\Q)$ of any quantization $\Q$ as a formal power series has constant term $[\sym]$. Let $\Omega^{\geq 1}_X$ denote the truncated de Rham complex of $X$,
\[ 0 \to \Omega^1_X \to \Omega^2_X \to \cdots, \]
which is considered as a subcomplex of the algebraic de Rham complex $(\Omega^\bullet_X,d)$. We have a short exact sequence of graded complexes  
  \[ 0 \to \Omega^{\geq 1}_X \to \Omega^\bullet_X \to \strx \to 0.  \]
Taking cohomology gives a long exact sequence
\[ \cdots \to H^\bullet(X,\omtrx) \to \hdr^\bullet(X) \to H^\bullet(X,\strx) \to \cdots, \]
where the first canonical map is denoted by
\[ H^\bullet(X,\omtrx) \to \hdr^\bullet(X), \quad \lambda \mapsto \lambda_{\mathrm{DR}}.\] 
If $X$ is admissible, the group $H^2_F(X):=H^2(X,\omtrx)$ coincides with the kernel of the natural map $\htwo(X) \to H^2(X,\strx)$. The main result of \cite{BK} is the following:

\begin{theorem}\label{thm:per}
	Pick any splitting $P: \htwo(X) \epi H^2_F(X)$ of the inclusion $H^2_F(X) \inj \htwo(X)$. Then the map $\Q \mapsto P (\per(\Q) - \hbar\pb [\sym])$ defines a bijection $\quan \isom H^2_F(X)\series$.
\end{theorem}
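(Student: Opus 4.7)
The plan is to deduce the bijection from the injectivity of the period map (Theorem 1.8 of \cite{BeKaledin}) by pinning down the image of $\per$ precisely. Concretely, I would show that for every quantization $\Q$ the class $\per(\Q)-\hb\pb[\sym]\in\hdrx$ already lies in the subspace $H^2_F(X)\series\subseteq\hdrx$, and conversely that every element of $H^2_F(X)\series$ is realized in this way. Since any splitting $P$ of the inclusion $H^2_F(X)\inj\htwo(X)$ restricts to the identity on $H^2_F(X)$, the composite $\Q\mapsto P(\per(\Q)-\hb\pb[\sym])$ is then independent of the chosen splitting $P$ and coincides with the intrinsic map $\Q\mapsto \per(\Q)-\hb\pb[\sym]$ into $H^2_F(X)\series$; combined with injectivity of $\per$ this yields the bijection $\quan\isom H^2_F(X)\series$.

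The inclusion of the image in $H^2_F(X)\series$ comes from the formal-geometric description of the period. The period is extracted from the curvature of a Fedosov-type flat connection on the Harish-Chandra torsor of formal coordinates on $X$, whose structure group is the automorphism group of the formal Weyl algebra. This curvature is naturally represented by a cocycle in the truncated de Rham complex $\omtrx$, so its image under the canonical map $\hdrx\to H^2(X,\strx)\series$ vanishes. Under the admissibility hypothesis \eqref{eq:adm} the kernel of this map is exactly $H^2_F(X)\series$, giving the desired inclusion. For the converse, I would use a Fedosov-style order-by-order construction: starting from $\strx$ as the trivial quantization modulo $\hb$, one inductively lifts a quantization modulo $\hb^{k+1}$ to one modulo $\hb^{k+2}$. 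Admissibility forces the relevant obstructions to vanish, and at each step the torsor of inequivalent lifts is canonically classified by $H^2_F(X)$, so any prescribed element of $H^2_F(X)\series$ can be matched coefficient by coefficient, producing a quantization with that period.

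The main obstacle is the identification of the period, defined via the curvature of a Fedosov connection, with the cohomological parameter governing this order-by-order classification. Establishing this dictionary requires the formal-geometric machinery of Harish-Chandra pairs and their torsors, which is reviewed in \S\,\ref{subsec:hcpair} and \S\,\ref{subsec:hctorsor}; once it is in place, both the image description and the surjectivity follow directly from the cohomological analysis, and the theorem is obtained.
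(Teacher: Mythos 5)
Theorem~\ref{thm:per} is not proved in this paper; it is quoted as the main result of Bezrukavnikov--Kaledin (the citation \cite{BK} in the sentence preceding the theorem appears to be the same source as \cite{BeKaledin}), and \S\,\ref{subsec:hcpair}--\ref{subsec:hctorsor} only recall the Harish-Chandra torsor formalism that the later sections need; no internal argument for the theorem itself is given. There is therefore no proof here to compare against, and your attempt can only be judged as a reconstruction of the argument in the reference.

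On those terms your sketch has the right overall shape but leaves the decisive steps unsubstantiated. You correctly observe that if one can show $\per(\Q)-\hb\pb[\sym]$ already takes values in $H^2_F(X)\series\subset\htwo(X)\series$, then any splitting $P$ restricts to the identity there, which both explains why the theorem is stated for an arbitrary $P$ and reduces the statement to the claim that $\per-\hb\pb[\sym]$ is a bijection onto $H^2_F(X)\series$. However, both halves of the remaining work --- that the curvature class obtained from the Fedosov connection actually dies in $H^2(X,\strx)\series$ under admissibility, and that the order-by-order lifting realizes every prescribed element of $H^2_F(X)\series$ --- are deferred wholesale to ``the formal-geometric machinery,'' without identifying the specific cohomology groups in which, at each $\hb$-order, the obstruction to a lift lives and in which the set of lifts is a torsor, nor verifying that admissibility kills the former and identifies the latter with $H^2_F(X)$. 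That bookkeeping is the substance of the Bezrukavnikov--Kaledin proof, not a routine appendix to the dictionary between curvature and period; as written your proposal is a plausible road map rather than a proof.
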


We will the review the definition of the noncommutative period map $\per$ in \S\,\ref{subsec:graded}, which relies on preliminaries in \S\,\ref{subsec:hcpair}.

\subsection{Graded quantization}\label{subsec:graded} 

We recall the definition of \emph{graded quantizations} from \cite[Section 2.2]{Losev1}. Suppose a smooth symplectic variety $X$ over the field $\kk$ admits a $\gm$-action such that the symplectic form $\sym$ has degree $2$ with respect to the $\gm$-action: the form $t.\sym = t^* \sym$ obtained from $\sym$ by the push-forward via the automorphism induced by $t \in \kkm$ equals $t^2\sym$. In this case we say that $(X,\sym)$ is \emph{graded}. Note that the weight $2$ can be replaced by any \emph{positive} integer and the results of the paper are still valid.

\begin{definition}\label{defn:graded}
	A quantization $\Q$ of $\strx$ is said to be \emph{graded} if the $\gm$-action on $X$ lifts to a $\gm$-action on $\Q$ by algebra automorphisms such that $t.\hb = t^2\hb$ (i.e., $\deg \hb = 2$) and the isomorphism $\Q/\hb\Q \cong \strx$ is $\gm$-equivariant. We require isomorphisms between graded quantizations to be $\gm$-equivariant.
\end{definition}

Losev constructed a natural $\gm$-action on the set $\quan$ such that isomorphism classes of graded quantizations correspond to $\gm$-fixed points in $\quan$. Note the inverse is less obvious, i.e., that a quantization $\Q$ such that $[\Q] \in \quan$ is $\gm$-stable has a graded structure. It is also true that two graded quantizations are $\gm$-equivariantly isomorphic if and only if they are isomorphic as usual quantizations. See \cite[\S~2.2]{Losev1} for details. Thus we can simply identify the set of isomorphism classes of graded quantizations with $\quan^{\gm}$, the $\gm$-fixed set of $\quan$. Moreover, Losev showed that the period map is $\gm$-equivariant, if one defines the $\gm$-action on $\hb\pb\hdrx$ such that $\hb$ has weight $2$. Note that $\sym$ is exact by Cartan's magic formula. For the same reason, the induced action of $\gm$ on $\hdr^\bullet(X)$ is trivial. 

We also need the notion of even quantization from \cite{Losev1}, which will play important roles in the remaining part of the paper. 

\begin{definition}\label{defn:even}
	A quantization $\Q$ is said to be \emph{even}, if there is an involutory anti-automorphism $\epsilon: \Q \to \Q$ that is identity modulo $\hb$ and sends $\hb$ to $-\hb$. The involution $\epsilon$ is referred as the parity involution. We say that the pair $(\Q,\epsilon)$ is an \emph{even quantization}. If $\Q$ is a graded quantization, we require $\epsilon$ to be $\gm$-equivariant.
\end{definition}

\begin{remark}
	It is straightforward from the definition that the involution $\epsilon$, if exists, is unique up to $\cm$-equivariant automorphisms of $\Q$.
\end{remark}

The following result is due to Losev (Prop. 2.3.2, \cite{Losev1}).

\begin{proposition}\label{prop:graded}
	Let $(X,\omega)$ be a graded smooth symplectic variety. Then
	\begin{enumerate}
		\item 
		If $\Q \in \quan$ is graded, then $\per(\Q) \in \htwo(X, \kk) \subset \hb\pb\hdrx$.
		\item   
		If $\Q$ is even, then $\per(\Q) = \hb\pb[\omega] = 0$.
		\item 
		If $X$ is, in addition, admissible, then a graded quantization $\Q$ is even if and only if $\per(\Q)=0$.
	\end{enumerate}
	
\end{proposition}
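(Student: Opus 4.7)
The plan is to handle the three parts in sequence using the $\gm$-equivariance and injectivity properties of the noncommutative period map $\per$ reviewed in \S\,\ref{subsec:graded}.

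For (1), I will use that $\per\colon\quan\to\hb\pb\hdrx$ is $\gm$-equivariant, where $\gm$ acts on the target by $\deg\hb=2$. Since $\gm$ is connected, its action on $\hdr^\bullet(X)$ is trivial: Cartan's formula makes $L_v$ exact on cohomology for every vector field $v$ in $\lie(\gm)$. Consequently, the $\gm$-fixed subspace of $\hb\pb\hdr^2(X)\series$ is precisely the $\hb^0$-component $\hdr^2(X)$. Since $[\Q]\in\quan^{\gm}$ for graded $\Q$, its period lies in this subspace.

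For (2), the vanishing $\hb\pb[\omega]=0$ is immediate: letting $E$ denote the vector field generating the $\gm$-action, $L_E\omega=2\omega$ because $\omega$ has weight $2$, and combining with $d\omega=0$ and Cartan's formula gives $\omega=\tfrac{1}{2}d(\iota_E\omega)$, so $\omega$ is exact. For the equality $\per(\Q)=0$, I will interpret the parity anti-automorphism $\epsilon$ as an isomorphism between $\Q$ and the ``reversed-opposite'' quantization (the opposite algebra with $\hb$ replaced by $-\hb$). Because taking the opposite negates the period while the substitution $\hb\mapsto-\hb$ replaces $\hb$ by $-\hb$, this produces the functional equation $\per(\Q)(\hb)=-\per(\Q)(-\hb)$, so only odd powers of $\hb$ can occur in $\per(\Q)$. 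Combined with (1), which leaves only the $\hb^0$ coefficient, the period must vanish.

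For (3), the ``only if'' direction follows from (1) and (2). For the converse, admissibility together with Theorem~\ref{thm:per} makes the period a complete invariant on $\quan$. I plan to construct an explicit even graded quantization $\Q_0$ with $\per(\Q_0)=0$ by a Fedosov-type procedure: starting from a $\gm$-equivariant torsion-free symplectic connection on $X$ (obtained by averaging), the parity involution arising from $\hb\mapsto-\hb$ on the Moyal-Weyl fiber model descends globally because torsion-freeness makes Fedosov's flat connection compatible with this symmetry. Any graded $\Q$ with $\per(\Q)=0$ is then isomorphic to $\Q_0$ as an ungraded quantization by the injectivity in Theorem~\ref{thm:per}; since both are graded, this isomorphism can be chosen $\gm$-equivariant (as noted in \S\,\ref{subsec:graded}), and transporting $\epsilon_0$ endows $\Q$ with an even structure. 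The main obstacle I anticipate is exactly this converse: one must construct $\Q_0$ carefully so that its parity involution commutes with the $\gm$-action, and also ensure that the anti-automorphism on $\Q$ obtained by transport is genuinely involutory; for the latter I expect to either average over the $\ztwo$ generated by the candidate anti-automorphism or appeal to the uniqueness of the parity involution up to $\gm$-equivariant automorphism remarked upon after Definition~\ref{defn:even}.
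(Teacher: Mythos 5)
Parts (1) and (2) are correct and follow the same sketch the paper itself records immediately before the proposition: $\gm$-equivariance of $\per$ with $\deg\hb=2$, triviality of the induced $\gm$-action on $\hdr^\bullet(X)$ via Cartan's magic formula (which also gives exactness of $\omega$), and the parity constraint $\per(\Q)(\hb)=-\per(\Q)(-\hb)$ coming from the isomorphism with the reversed-opposite algebra.

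The gap is in the ``if'' direction of (3). You propose producing an explicit even graded model $\Q_0$ by running Fedosov's recursion from a $\gm$-equivariant torsion-free symplectic connection ``obtained by averaging.'' In the algebraic category that premise can fail: a smooth algebraic symplectic variety need not carry \emph{any} global algebraic connection on its tangent bundle -- the obstruction is a class in coherent cohomology, which there is no reason to kill when $X$ is merely admissible rather than affine -- and averaging cannot help, since one needs a connection to average in the first place. This is exactly why \cite{BeKaledin}, and the present paper in \S\,\ref{sec:formal}, formulate Fedosov's construction through Harish-Chandra torsors rather than via a global symplectic connection; your $\Q_0$ is not available in the generality of the proposition, and the proof stalls there. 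The route that works, and which avoids constructing $\Q_0$ at all, is to apply the injectivity of $\per$ to $\Q$ and its own reversed-opposite: for graded $\Q$ with $\per(\Q)=0$, the quantization $\Q^{\mathrm{op}}$ with $\hb$ replaced by $-\hb$ is again a graded quantization of $(X,\omega)$ with period $-\per(\Q)(-\hb)=0=\per(\Q)$, so by Theorem~\ref{thm:per} (admissibility) and the fact that graded isomorphism type is detected at the ungraded level (\S\,\ref{subsec:graded}) there is a $\gm$-equivariant anti-automorphism $\epsilon_1$ of $\Q$ with $\epsilon_1\equiv\mathrm{id}\bmod\hb$ and $\epsilon_1(\hb)=-\hb$. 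Then $\epsilon_1^2$ is a $\gm$-equivariant $\pws$-algebra automorphism equal to the identity modulo $\hb$, hence $\epsilon_1^2=\exp(d)$ for some $d\in\hb\,\der(\Q)$; the identity $\epsilon_1\circ\epsilon_1^2=\epsilon_1^2\circ\epsilon_1$ forces $\epsilon_1 d\epsilon_1^{-1}=d$, and one checks directly that $\epsilon:=\epsilon_1\circ\exp(-d/2)$ is an involutory $\gm$-equivariant parity anti-automorphism. That is the correction you were reaching for with ``averaging over the $\ztwo$.'' Finally, the transport concern you raise at the end is not actually an issue: conjugating an involutory anti-automorphism by a $\pws$-linear isomorphism remains involutory and still sends $\hb$ to $-\hb$, so no separate repair of the transported $\epsilon_0$ would have been needed had $\Q_0$ existed.
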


If a quantization $\Q$ is even, there might be many different parity involutions $\epsilon$. Similar to the case of graded quantization, one can show that for any graded even quantization $\Q$, any two parity involutions are conjugated by a graded automorphism of $\Q$. Moreover, we will prove Proposition \ref{prop:even_auto} in \S\,\ref{subsec:moment}, which implies that such automorphism is unique. Now if $H^1(X,\strx) = H^2(X,\strx)=0$, then $X$ is admissible and $H^2_F(X) \cong \htwo(X)$. Therefore it follows from Theorem \ref{thm:per} and Proposition \ref{prop:graded} that

\begin{corollary}\label{cor:per_graded}
	Let $(X, \sym)$ be a smooth symplectic variety with a $\gm$-action as above. Suppose $H^1(X,\strx) = H^2(X,\strx)=0$. Then the period map $\per$ identifies the set of isomorphism classes of graded quantizations of $X$ with $\htwo(X) \subset \hb\pb\hdrx$. 
\end{corollary}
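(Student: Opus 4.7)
\medskip

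\noindent The plan is to deduce the corollary by combining Theorem \ref{thm:per} with Proposition \ref{prop:graded} and the fact that graded quantizations of $X$ correspond to $\gm$-fixed points in $\quan$. First I would verify that the hypothesis $H^1(X,\strx)=H^2(X,\strx)=0$ makes $X$ admissible in the sense of \eqref{eq:adm}: the maps $\hdr^i(X) \to H^i(X,\strx)$ are trivially surjective for $i=1,2$ since the targets vanish. Moreover, because $H^2(X,\strx)=0$, the space $H^2_F(X)$, which is defined as the kernel of the natural map $\htwo(X) \to H^2(X,\strx)$, coincides with $\htwo(X)$ itself. One can therefore take the splitting $P$ in Theorem \ref{thm:per} to be the identity, producing a bijection
\[ \quan \;\isom\; \htwo(X)\series, \qquad \Q \longmapsto \per(\Q) - \hb\pb[\sym]. \]

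Next I would restrict this bijection to graded quantizations. Since $\sym$ is graded of weight $2$, Cartan's magic formula applied to the vector field generating the $\gm$-action yields $2\sym = d\iota_E \sym$, so $[\sym]=0$ in $\htwo(X)$ and hence $\hb\pb[\sym] = 0$. Thus the bijection simplifies to $\Q \mapsto \per(\Q)$ from $\quan$ to $\htwo(X)\series$. By the discussion in \S\,\ref{subsec:graded} (originating from \cite{Losev1}), the isomorphism classes of graded quantizations are identified with $\quan^{\gm}$, and the period map is $\gm$-equivariant when $\gm$ acts trivially on $\htwo(X)$ and with weight $2$ on $\hb$.

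Finally I would compute the $\gm$-fixed points of $\htwo(X)\series$ under this action. An element $\sum_{n\geq 0} a_n \hb^n$ with $a_n \in \htwo(X)$ is $\gm$-fixed precisely when $t^{2n} a_n = a_n$ for all $t \in \kkm$ and all $n$, which forces $a_n = 0$ for $n \geq 1$. Hence $(\htwo(X)\series)^{\gm} = \htwo(X)$, sitting inside $\hb\pb\hdrx$ as the constant (in $\hb$) subspace. Combining these identifications gives the desired bijection between the set of isomorphism classes of graded quantizations of $X$ and $\htwo(X) \subset \hb\pb\hdrx$ via $\per$, and I would also check consistency with Proposition \ref{prop:graded}(1), which independently confirms that $\per(\Q) \in \htwo(X)$ for graded $\Q$.

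The argument is essentially bookkeeping once the preceding machinery is in place; the only point requiring a little care is the passage from $\gm$-stability of $[\Q] \in \quan$ to a choice of compatible graded structure on a representative $\Q$, but this is handled in \cite[\S\,2.2]{Losev1} as recalled in \S\,\ref{subsec:graded}, so no genuine obstacle remains.
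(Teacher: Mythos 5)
Your proposal is correct and follows the same route as the paper: the paper's proof is a one-liner citing Theorem~\ref{thm:per} and Proposition~\ref{prop:graded}, together with the Losev facts recalled in \S\,\ref{subsec:graded} (that $[\sym]=0$ by Cartan's formula, that the period map is $\gm$-equivariant with $\hb$ of weight $2$, and that isomorphism classes of graded quantizations are exactly the $\gm$-fixed points of $\quan$), and you have simply unpacked the bookkeeping that the paper leaves implicit. Nothing in your argument differs in substance from the intended reasoning.
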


In fact, in most applications we have $H^1(X,\strx) = H^2(X,\strx)=0$. For the rest of the paper, we will always assume $X$ satisfies this condition unless otherwise mentioned. It is convenient to make the following definition.

\begin{definition}\label{defn:strong_adm}
	A smooth symplectic variety $(X, \sym)$ is said to be \emph{strongly admissible} if it satisfies the condition that $H^1(X,\strx) = H^2(X,\strx)=0$.
\end{definition}

\begin{remark}
	The notions of graded and even quantization still work in the relative setting. One only needs to require that $S$ admits a $\gm$-action and that the map $X \to S$ is $\gm$-equivariant. See \cite[\S\,2.2]{Losev1} for details.
\end{remark}

\subsection{Equivariant quantization}\label{subsec:equiv}

Throughout this section, we make the following assumption on the $\gm$-action  $X$:

\begin{enumerate}[label=(W)] 
\item \label{cond:wt} 
  The $\gm$-action on $\Gamma(X,\strx)$ has no negative weights.
\end{enumerate}

Let $G$ be any algebraic group over $\kk$. Suppose $G$ acts on $X$ and the action preserves the symplectic form $\sym$ and commutes with the $\gm$-action. A natural question to ask is whether the $G$-action can be lifted to an action on a given quantization $\str_\hb$ of $(X,\sym)$. More precisely, let $\beta: G\times X \to X$ be the action map, let $p_X: G\times X \to X$ be the projection onto $X$, and let $p_G: G\times X \to G$ be the projection onto $G$. Define $\beta^*\Q := \beta\pb\Q \, \widehat{\otimes}_{\pws} \, p\pb_G\series$ and similarly for $p^*_X$. A $G$-equivariant structure on $\Q$ is an isomorphism $\beta^* \Q \cong p^*_X \Q$ as sheaves of $p_G^{-1} \str_G \series$-algebras, which is compatible with the isomorphism $\beta^*\strx \cong p^*_X \strx$ given by the $G$-action on $X$, and satisfies the natural cocycle condition. We can think of such structure as a $G$-action on $\Q$, denoted as $\beta_\hb$. Note that both $\beta^* \Q$ and $p^*_X\Q$ are naturally quantization of $G\times X$ over $G$ with the relative symplectic form $p^*_X \sym$. The $\gm$-action on $G\times X$ is defined as the product of the trivial action on $G$ and the given action $\beta$ on $X$. When $\Q$ is graded or even, so are $\beta^* \Q$ and $p^*_X\Q$. In these cases, we require $\beta^* \Q \cong p^*_X \Q$ to be equivariant with respect to the $\gm$ or $\gm\times\ztwo$-action.

We first make the following key observation. Let $\der(\Q)$ be the sheaf of (continous) $\pws$-linear derivations of $\Q$ and let $\mathcal{H}$ be the sheaf of vector fields which are locally Hamiltonian (see \cite[(3.3)]{BK}). They both admit natural $\gm$-actions. Note that there is a canonical $\gm$-equivariant isomorphism between $\der(\Q) / \hb \der(\Q)$ and $\mathcal{H}$. Consider the short exact sequence of graded sheaves
\begin{equation}\label{exsq:H}
0 \to \underline{\kk} \to \str_X \to \hb\mathcal{H} \to 0, 
\end{equation}
where the map $\str_X \to \hb\mathcal{H}$ is defined by $f \mapsto \hb\{ f, - \}$, and $\underline{\kk}$ is the constant sheaf with coefficient $\kk$. Note that the map $\str_X \to \hb\mathcal{H}$ is $\gm$-equivariant since the Poisson bracket has degree $-2$. By the associated long exact sequence of sheaf cohomology and the assumption \ref{cond:wt}  about the $\gm$-action on $X$, the induced $\gm$-action on $\Gamma(X, \hb \der(\Q) / \hb^2\der(\Q)) \cong \Gamma(X, \hb\mathcal{H})$ has no negative weights. For the same reason, $\Gamma(X, \hb^{p} \der(\Q) / \hb^{p+1}\der(\Q)) \cong \Gamma(X, \hb^p\mathcal{H})$ has only positive weights for each $p \geq 2$. Then an induction argument shows that zero is the only $\gm$-invariant section in $\Gamma(X, \hb^2 \der(\Q))$.

\begin{proposition}\label{prop:even_auto}
	Assume the condition \eqref{cond:wt}. Then the identity map is the only $\gm\times\ztwo$-equivariant $\pws$-linear automorphism of an even quantization $(\Q,\epsilon)$ which descends to the identity map on $\str_X$ $mod ~\hb$ is the identity automorphism.
\end{proposition}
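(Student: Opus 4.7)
The plan is to proceed in two stages. In the first stage, I will use the commutation with the parity involution $\epsilon$ to promote the congruence $\Phi \equiv \Id \pmod{\hb}$ to $\Phi \equiv \Id \pmod{\hb^2}$. In the second stage, I will pass to $\log \Phi$ and invoke the vanishing statement for $\gm$-invariant sections of $\hb^2\der(\Q)$ established in the paragraph preceding the proposition.

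For the first stage, since $\Q$ is $\hb$-torsion free and $(\Phi - \Id)(\Q) \subset \hb\Q$, I will write $\Phi = \Id + \hb\Phi_1$ for some $\pws$-linear map $\Phi_1 \colon \Q \to \Q$. Exploiting that $\epsilon$ is an anti-automorphism satisfying $\epsilon(\hb) = -\hb$, the identity $\Phi \circ \epsilon = \epsilon \circ \Phi$ expands, after cancelling the common term $\epsilon(a)$ and dividing by $\hb$, into
\[
\Phi_1 \circ \epsilon \;=\; -\,\epsilon \circ \Phi_1 .
\]
Reducing this modulo $\hb$ and using $\epsilon \equiv \Id \pmod{\hb}$ yields $2\bar{\Phi}_1 = 0$ in $\End_{\kk}(\strx)$; since $\mathrm{char}\,\kk = 0$, this forces $\bar\Phi_1 = 0$, so $\Phi_1 \in \hb\cdot\End_{\pws}(\Q)$, equivalently $\Phi - \Id \in \hb^2\cdot\End_{\pws}(\Q)$.

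For the second stage, divisibility by $\hb^2$ ensures that the series $\log\Phi = \sum_{n\geq 1} (-1)^{n+1}(\Phi - \Id)^n / n$ converges $\hb$-adically. Since $\Phi$ is an algebra automorphism and $\mathrm{char}\,\kk = 0$, the standard Baker--Campbell--Hausdorff-type computation shows that $\log\Phi$ is a $\pws$-linear derivation of $\Q$; by construction it lies in $\hb^2\der(\Q)$, and it is $\gm$-invariant because $\Phi$ is. The observation that zero is the only $\gm$-invariant section of $\hb^2\der(\Q)$ then forces $\log\Phi = 0$, and therefore $\Phi = \Id$.

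The only delicate step is the computation in the first stage, where the anti-linearity of $\epsilon$ in $\hb$ must be combined carefully with the $\pws$-linearity of $\Phi$; the rest of the argument is formal once divisibility by $\hb^2$ has been secured. Note also that the $\ztwo$-equivariance is used precisely to kill the first-order term, which is exactly the term not controlled by the weight estimate $\Gamma(X,\hb\mathcal{H}) \geq 0$ (which only prevents negative weights, not zero weight). This is what makes the even structure indispensable in the statement.
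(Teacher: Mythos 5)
Your argument is correct and is essentially the paper's own proof: both use the $\ztwo$-conjugation to kill the first-order (weight-$0$) part and then the weight hypothesis \ref{cond:wt} to dispose of everything in $\hb^2\der(\Q)$. The only cosmetic difference is that the paper passes to $d=\log\Phi$ immediately and cites \cite[Lemma~3.6]{BK} for the fact that $\epsilon$ acts by $-1$ on $\hb\der(\Q)/\hb^2\der(\Q)$, whereas you verify that step by a direct computation on $\Phi-\Id$ before taking logarithms.
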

\begin{proof}
	Any automorphism of $\Q$ which descends to the identity map on $\strx$ has the form $\exp(d)$, where $d \in \hb \der(\Q)$ is a $\pws$-linear derivation of $\Q$. Therefore it suffices to show that the only $\gm\times\ztwo$-invariant element in $\hb \der(\Q)$ is zero. The induced $\ztwo$-action on $\hb \der(\Q) / \hb^2\der(\Q)$ by $\epsilon$ equals multiplication by $-1$ (see \cite[Lemma 3.6]{BK}). Therefore the $\gm\times\ztwo$-invariant elements in $\hb \der(\Q)$ must lie in $\hb^2 \der(\Q)$, but it has to be zero by the observation above.
\end{proof}

\begin{corollary}\label{cor:equiv}
	Let $G$ be any algebraic group. Suppose $G$ acts on a (strongly) admissible $X$ preserving $\sym$. Then this action can be lifted to a unique $G$-action on an even quantization $(\Q,\epsilon)$ such that it commutes with the $\gm\times\ztwo$-action.
\end{corollary}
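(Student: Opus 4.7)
The plan is to recast a $G$-equivariant lift of $\Q$ as a $\gm\times\ztwo$-equivariant isomorphism $\beta_\hb : \beta^*\Q \isom p_X^*\Q$ of even quantizations of $(G\times X, p_X^*\sym)$ relative to the base $G$ (using $\beta^*\sym = p_X^*\sym$ from $G$-invariance), whose reduction modulo $\hb$ is the canonical identification $\beta^*\strx \cong p_X^*\strx$ induced by the action, subject to the cocycle condition on $G\times G\times X$. Since the $G$- and $\gm$-actions commute, both $\beta^*\Q$ and $p_X^*\Q$ are canonically $\gm$-equivariant even quantizations of $G\times X / G$ by functoriality of pullback, and an isomorphism $\beta_\hb$ as above is precisely the data of a $G$-action on $\Q$ of the required type.

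Uniqueness is the easy half and follows from the relative analogue of Proposition \ref{prop:even_auto}. If $\beta_\hb^{(1)}$ and $\beta_\hb^{(2)}$ are two such lifts, then $\beta_\hb^{(2)} \circ (\beta_\hb^{(1)})^{-1}$ is a $\gm\times\ztwo$-equivariant $\str_G\series$-linear automorphism of $p_X^*\Q$ that reduces to the identity modulo $\hb$. The proof of Proposition \ref{prop:even_auto} transfers verbatim once one observes: the $\gm$-action on $G$ is trivial, so condition \ref{cond:wt} still guarantees that $\Gamma(G\times X, p_X^*\strx)$ has only non-negative $\gm$-weights; the short exact sequence \eqref{exsq:H} pulls back to one on $G\times X$; and the involution $\epsilon$ still acts by $-1$ on $\hb\der_{\str_G}(p_X^*\Q)/\hb^2\der_{\str_G}(p_X^*\Q)$. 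The same induction on the $\hb$-adic filtration then forces the automorphism to be the identity, so $\beta_\hb^{(1)} = \beta_\hb^{(2)}$.

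For existence, we invoke the relative version of the period classification. Assuming $G$ is affine (the main case of interest), Künneth gives $H^i(G\times X,\str_{G\times X}) \cong \Gamma(G,\str_G) \otimes H^i(X,\strx) = 0$ for $i=1,2$, so $G\times X/G$ is strongly admissible over $G$. Both $\beta^*\Q$ and $p_X^*\Q$ are graded quantizations whose periods vanish by Proposition \ref{prop:graded}(2), hence the relative analogue of Corollary \ref{cor:per_graded} produces an isomorphism between them as graded quantizations. Any such isomorphism can then be corrected by composing with a suitable $\gm$-equivariant automorphism of $p_X^*\Q$ so as to intertwine the two pulled-back parity involutions — here one uses that parity involutions on a graded even quantization are unique up to $\gm$-equivariant conjugation, as remarked after Definition \ref{defn:even} — giving the desired $\beta_\hb$. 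The cocycle identity on $G\times G\times X$ is then automatic from the relative uniqueness just established, since both compositions supply $\gm\times\ztwo$-equivariant isomorphisms of the same pair of quantizations which agree modulo $\hb$.

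The main technical obstacle is therefore not the construction itself but promoting Proposition \ref{prop:even_auto} and Corollary \ref{cor:per_graded} to the relative setting over $G$. Both reductions hinge on condition \ref{cond:wt} and strong admissibility persisting under base change along $X\times G \to G$, which is routine for affine $G$ and can be treated Zariski-locally on $G$ in general. Once this bookkeeping is in place, the argument produces the unique $G$-action on $\Q$ commuting with $\gm\times\ztwo$, completing the corollary.
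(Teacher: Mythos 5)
Your argument follows essentially the same route as the paper's: recast the desired $G$-action as a $\gm\times\ztwo$-equivariant isomorphism $\beta^*\Q \cong p_X^*\Q$ of relative even quantizations over the base $G$, produce it via the relative form of Proposition \ref{prop:graded}, and obtain uniqueness (hence the cocycle condition for free) from the relative form of Proposition \ref{prop:even_auto}. The K\"unneth step for relative strong admissibility and the two-stage existence argument (match periods via Corollary \ref{cor:per_graded}, then conjugate by a graded automorphism to intertwine the two pulled-back parity involutions) simply unpack details that the paper compresses into a single line.
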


\begin{proof}
	The $\gm\times\ztwo$-action on $\Q$ induces $\gm\times\ztwo$-actions on $\beta^*\Q$ and $p^*_X\Q$, making them into graded even quantizations of $G\times X$ relative to $G$. Note that $G\times X$ is also (strongly) admissible relative to $G$. Moreover, the $\gm$-action on $G\times X$ also satisfies the condition \ref{cond:wt}. Therefore by the relative version of Proposition \ref{prop:graded} and Proposition \ref{prop:even_auto}, there is a unique isomorphism $\beta^*\Q \cong p^*\Q$ which intertwines the $\gm\times\ztwo$-actions, which automatically satisfies the cocycle condition. The uniqueness of the $G$-action also follows from Proposition \ref{prop:even_auto}.
\end{proof}

\begin{remark}
	In the case when the quantization $\Q$ is not even or the $G$-action is not required to commute with the $\gm\times\ztwo$-action, one has to assume in addition that $\per(\Q)$ is fixed by $G$ for the existence of the isomorphism $\beta^*\Q \cong p^*_X\Q$. Moreover, $G$ needs to be reductive so that one can adjust the isomorphism $\beta^*\Q \cong p^*_X\Q$ to make it satisfy the cocycle condition. The argument is similar to the proof of existence of graded quantization in \cite[\S 2.2]{Losev1}. In the even case, however, this restriction on the group $G$ is unnecessary. 
\end{remark}

\begin{definition}\label{defn:symm_beta}
	We say that a $G$-action $\beta_\hb$ on an even quantization $(\Q,\epsilon)$ of $(X,\omega)$ is \emph{symmetrized} if it commutes with the $\gm\times\ztwo$-action.
\end{definition}

\subsection{Hamiltonian quantization}\label{subsec:moment}

We now consider the case when the $G$-action $\beta$ on $(X,\sym)$ is Hamiltonian with a moment map $\mu: X \to \lieg^*$. We assume that $\mu$ is $\gm$-equivariant with respect to the $\gm$-action on $\lieg^*$ of weight $2$, i.e., $\mu(t.x) = t^2 \mu(x)$, $\forall~ x \in X, t \in \gm=\kkm$, so that the copy of $\lieg$ in $\kk[\lieg^*]$ has weight $2$. Then we call the quadruple $(X,\sym, \beta, \mu)$ a \emph{(graded classical) Hamiltonian $G$-space}. The main example we are interested in is a complex nilpotent orbit $\OO$ with the $\cm$-action introduced in \S\,\ref{subsec:grading} and the moment map $\mu$ being the embedding $\OO \hookrightarrow \lieg^*$.

We form the \emph{homogeneous universal enveloping algebra} $\ugh$ of the Lie algebra $\lieg$, which is the $\kk[\hb]$-algebra generated by $\lieg$ subject to the relations $xy-yx = \hb [x,y]$. It admits the adjoint action of $G$ and a natural $\gm$-action such that $\hb$ and any element in $\lieg$ have weight $2$. The two actions commute with each other. Recall the definition of quantized (co-)moment maps (cf. \cite[\S 3.4]{BPW}). 

\begin{definition}\label{defn:quan_moment}
	A \emph{(graded) quantum Hamiltonian $G$-action} on a graded quantization $\Q$ of $(X,\sym)$ consists of 
	\begin{itemize}
		\item  
		a $G$-equivariant structure $\beta_\hb$ on $\Q$ which lifts the $G$-action on $X$ and commutes with the $\gm$-action, and
		\item 
		a $G\times\gm$-equivariant continuous $\kk[\hb]$-algebra homomorphism $\mu^*_\hb: \ugh \to \Gamma(X, \Q)$,
	\end{itemize}
	such that for any $x \in \lieg$, the adjoint action $[\hb\pb\mh (x), -]$ on $\Q$ agrees with the Lie algebra action induced by the $G$-action $\beta$ on $\Q$. The map $\mu^*_\hb$ is called a \emph{quantized (co-)moment map}. We also say that $(\Q,\beta_\hb, \mu^*_\hb)$ is an \emph{quantum Hamiltonian quantization} of the classical Hamiltonian $G$-space $(X,\sym,\beta,\mu)$.
\end{definition}

The term `quantized moment map' is justified by the fact that the composite map
\[   \kk[\lieg^*] \cong S\lieg \cong \ugh/\hb\ugh \xrightarrow{\mh|_{\hb=0}} \Gamma(X,\Q)/\hb\Gamma(X,\Q) \hookrightarrow \Gamma(X, \Q/\hb\Q)  \cong \Gamma(X,\str_X)  \]	
induces a $G\times\gm$-equivariant classical moment map $\mu: X \to \lieg^*$ as above. We say that the quantum Hamiltonian $G$-action $(\beta_\hb,\mh)$ lifts the classical quantum $G$-action $(\beta,\mu)$. Conversely, one can ask when a classical moment map can be lifted to a quantized one. We will show below that there is a nice answer when $\Q$ is even. First of all, we make the following definition after Losev (\cite[\S\,5.4]{Losev1}).

\begin{definition}\label{defn:quan_moment_symm}
	Assume $(X,\sym,\beta,\mu)$ is a graded classical Hamiltonian $G$-space and with an quantum Hamiltonian quantization $(\Q, \beta_\hb, \mu^*_\hb)$. Suppose $\Q$ is an even with the involution $\epsilon$ and the quantum $G$-action $\beta_\hb$ is symmetrized in the sense of Definition \ref{defn:symm_beta}. We say that the quantized moment map $\mu^*_\hb$ is \emph{symmetrized}, if $\epsilon(\mh(\xi)) = \mh(\xi)$ for all $\xi \in \lieg$. In this case, we say that the datum $(\Q,\epsilon, \beta_\hb, \mh)$ is a \emph{symmetrized $G$-Hamiltonian quantization} and $(\beta_\hb,\mh)$ is a \emph{symmetrized quantum Hamiltonian $G$-action}.
\end{definition}

\begin{remark}
	Note that there is a unique anti-automorphism $\varsigma$ of the algebra $\ugh$  satisfying $\varsigma(\xi) = \xi$, $\forall~\xi \in \lieg$, and $\varsigma(\hb)=-\hb$. The definition above is equivalent to requirement that $\mu^*_\hb \circ \varsigma = \epsilon \circ \mu^*_\hb$.
\end{remark}

Next, consider the sheaf $\Q[\hb\pb]$ of algebras with the $\gm$-action extended from that on $\Q$, such that $t.h^i = t^i h^i$, $\forall~t \in \gm$, $i \in \ZZ$. It is naturally a sheaf of graded Lie algebras under commutator bracket. The subsheaf $\hb\pb\Q$ is a subsheaf of graded Lie algebras. We consider the diagram of two short exact sequences of sheaves of Lie algebras
\begin{equation}\label{diag:der}
\begin{tikzcd}
0  \arrow{r}  & \hb\pb\KKs \arrow{r} \arrow[d]  & \hb\pb \Q   \arrow[r, "\varphi"]  \arrow[d, "p"]   & \der(\Q)    \arrow[d, "q"]  \arrow[r] & 0    \\
0  \arrow{r}  & \hb\pb\kks \arrow{r}   & \hb\pb \str_X    \arrow[r, "\psi"]    & \mathcal{H} \arrow[r] & 0,
\end{tikzcd}
\end{equation}
where $\kks$ and $\KKs$ are the constant sheaves over $X$ with coefficients $\kk$ and $\pws$ respectively. The two vertical maps $p$ and $q$ on the right are the canonical quotient maps modulo $\hb$. The map $\varphi: \hb\pb \Q \to \der(\Q)$ is given by $a \mapsto [a,-]$. The sequence at the bottom is just \eqref{exsq:H} multiplied by $\hb\pb$. The Lie bracket on $\hb\pb\strx$ is given by $(\hb\pb f, \hb\pb g) \mapsto \hb\pb \{ f, g \}$. The map $\psi: \hb\pb\Q \to \mathcal{H}$ is given by $\hb\pb f \mapsto \{ f, -\}$. 

A similar discussion applies to $\ugh$. Regard $\hb\pb\ugh$ as a subspace of the graded algebra $\ugh[\hb\pb]$, then it is closed under commutator bracket and therefore is naturally a Lie algebra with $\gm$-action. Note that $\hb\lieg$ is a Lie subalgebra of $\hb\pb\ugh$ which is isomorphic to the Lie algebra $\lieg$ and lies in the $\gm$-invariant part. From now on we will identify the Lie algebra $\lieg$ with the Lie subalgebra $\hb\pb\lieg$, which will prove to be a great convenience. When a $G$-equivariant structure on $\Q$ is given, we can reformulate the definition of a quantized moment map $\mh$ as a $G\times\gm$-equivariant Lie algebra homomorphism $\hb\pb\lieg  \to \Gamma(X, \hb\pb \Q)$, still denoted by $\mh$. Since the $\gm$-action on $\hb\pb\lieg$ is trivial, this is equivalent to saying that the image of $\mh$ lies in the $\gm$-invariant part of $\Gamma(X, \hb\pb \Q)$. We can also regard a $G\times\gm$-equivariant classical moment map $\mu: X \to \lieg^*$ as a Lie algebra homomorphism $\mu^*: \hb\pb\lieg \to \Gamma(X, \hb\pb\strx)$ whose image lies in the $\gm$-invariant part.

Now suppose $(\Q, \epsilon)$ is an even quantization. Then the involutive anti-automorphism $\epsilon$ induces a $\ztwo$-action on $\der(\Q)$ by Lie algebra \emph{automorphisms}. We equip $\hb\pb\Q$ with an involution $\tilde{\epsilon}$ which is transported from $\epsilon$ of $\Q$ via the tautological isomorphism $\hb\pb\Q \cong \Q$. Note that The resulting $\ztwo$-action on $\hb\pb\Q$ \emph{preserves} the Lie bracket on $\hb\pb\Q$ defined above and the induced action on $\hb\pb\pws$ is given by $\tilde{\epsilon}(h^i) = (-1)^{i+1} h^i$. The $\ztwo$-actions on sheaves at the bottom induced from the ones on the top via the quotient maps are simply trivial. Then all maps in \eqref{diag:der} are $\gm\times\ztwo$-equivariant. Then a symmetrized quantized moment map is equivalent to a $G$-equivariant Lie algebra homomorphism $\mh: \hb\pb\lieg  \to \Gamma(X, \hb\pb \Q)$ whose image lies in the $\gm\times\ztwo$-invariant part of $\Gamma(X, \hb\pb \Q)$.

\begin{proposition}\label{prop:moment}
	Suppose a graded symplectic variety $(X,\sym)$ is equipped with a Hamiltonian $G$-action $\beta$ with a $G\times\gm$-equivariant moment map $\mu$. Then any even quantization $(\Q, \epsilon)$ of $X$ admits a unique symmetrized quantum Hamiltonian $G$-action $(\beta_\hb, \mh)$ which lifts $(\beta,\mu)$.
\end{proposition}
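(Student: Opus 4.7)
The plan splits naturally into two stages: first produce the $G$-equivariant structure $\beta_\hb$ on $\Q$, and then lift the classical moment map through the diagram \eqref{diag:der} to the quantized moment map $\mh$. For the first stage, since $(X,\sym)$ satisfies the standing weight hypothesis \ref{cond:wt} and $\Q$ is even, Corollary \ref{cor:equiv} supplies a unique $G$-action $\beta_\hb$ on $\Q$ commuting with $\gm\times\ztwo$; by construction $\beta_\hb$ is symmetrized in the sense of Definition \ref{defn:symm_beta}. Differentiating it produces a $G\times\gm\times\ztwo$-equivariant Lie algebra homomorphism $\beta_\hb' : \lieg \to \Gamma(X,\der(\Q))^{\gm\times\ztwo}$ lifting the infinitesimal classical action.

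The technical heart of the second stage is the vanishing $H^j(X,\hb\pb\KKs)^{\gm\times\ztwo} = 0$ for every $j\geq 0$. As a constant sheaf, $\hb\pb\KKs$ is a product over $i\geq -1$ of copies of $\underline{\kk}\cdot\hb^i$, and the component indexed by $i$ carries $\gm$-weight $2i$ and $\ztwo$-sign $(-1)^{i+1}$, so simultaneous invariance would force $i=0$ and $i$ odd, which is impossible. The analogous statement for $\hb\pb\kks$ is even more immediate, since the $\ztwo$-action on the bottom row of \eqref{diag:der} is trivial and $\hb\pb\kks$ has pure $\gm$-weight $-2$. Combining these vanishings with exactness of $\gm\times\ztwo$-invariants in characteristic zero, the long exact cohomology sequences of the two rows of \eqref{diag:der} yield Lie algebra isomorphisms on global invariants
\[
\varphi : \Gamma(X,\hb\pb\Q)^{\gm\times\ztwo} \isom \Gamma(X,\der(\Q))^{\gm\times\ztwo},
\qquad
\psi : \Gamma(X,\hb\pb\strx)^{\gm\times\ztwo} \isom \Gamma(X,\mathcal{H})^{\gm\times\ztwo},
\]
and the commutativity $q\circ\varphi = \psi\circ p$ descends to a commutative square on invariants.

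I would then define $\mh := \varphi^{-1}\circ \beta_\hb'$ as a $G\times\gm\times\ztwo$-equivariant Lie algebra homomorphism $\hb\pb\lieg \to \Gamma(X,\hb\pb\Q)^{\gm\times\ztwo}$. Because $\varphi$ is a Lie algebra isomorphism, the bracket relation $[\hb\pb a,\hb\pb b] = \hb^{-2}[a,b]$ on $\hb\pb\Q$ converts the statement $[\mh(\xi),\mh(\eta)] = \mh([\xi,\eta])$ into the commutator identity $[\mh(\xi),\mh(\eta)] = \hb\,\mh([\xi,\eta])$ inside $\Gamma(X,\Q)$, so $\mh$ extends by the universal property to the required continuous $\kk[\hb]$-algebra homomorphism $\ugh \to \Gamma(X,\Q)$. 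To verify that $\mh$ lifts $\mu^*$ one performs a short diagram chase: for $\xi \in \lieg$, the element $q(\varphi(\mh(\xi))) = q(\beta_\hb'(\xi))$ is the Hamiltonian vector field of $\xi$, while $\psi(\hb\pb\mu^*(\xi))$ is the same vector field by definition of the classical moment map, and injectivity of $\psi$ on invariants forces $p(\mh(\xi)) = \hb\pb\mu^*(\xi)$.

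Uniqueness is then automatic. Any symmetrized quantized moment map $\mh'$ compatible with $\beta_\hb$ must satisfy $\varphi\circ \mh' = \beta_\hb'$ by Definition \ref{defn:quan_moment}, and injectivity of $\varphi$ on $\gm\times\ztwo$-invariants forces $\mh' = \mh$; combined with the uniqueness of $\beta_\hb$ from Corollary \ref{cor:equiv}, this finishes the proposition. The step I expect to need the most careful bookkeeping is the $\ztwo$-equivariance of the vertical map $p$ in \eqref{diag:der}, which depends precisely on the hypothesis $\epsilon \equiv \mathrm{id} \pmod{\hb}$ built into the even structure; without it the diagram would fail to be $\ztwo$-equivariant and the invariant-theoretic rigidity driving the whole argument would collapse.
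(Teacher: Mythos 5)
Your proof follows essentially the same route as the paper's: Corollary \ref{cor:equiv} supplies $\beta_\hb$, the $\gm\times\ztwo$-equivariant cohomology sequence of \eqref{diag:der} together with the weight/parity calculation on $\hb^i\underline{\kk}$ supplies the unique lift $\mh$, and the diagram chase and injectivity of $\psi$ on weight-zero sections give compatibility with $\mu^*$. The one place you are slightly more ambitious than the paper is in asserting outright that $\varphi$ and $\psi$ become isomorphisms on global $\gm\times\ztwo$-invariants, which you justify by ``exactness of invariants in characteristic zero.'' That exactness is for \emph{rational} modules, and $\Gamma(X,\hb\pb\Q)$ is an $\hb$-adic completion, hence not locally finite as a $\gm$-module; the statement can be rescued (e.g., via the pro-rational/Mittag--Leffler structure of the inverse system $\{\Q/\hb^n\Q\}$), but the paper avoids the issue entirely by first pulling $\varphi$ back along the \emph{finite-dimensional} target $\hb\pb\lieg$, forming the extension $0\to\Gamma(X,\hb\pb\KKs)\to\hb\pb\tilde\lieg\to\hb\pb\lieg\to 0$, and splitting that; since $\hb\pb\lieg$ carries the trivial $\gm\times\ztwo$-action and $\Gamma(X,\hb\pb\KKs)^{\gm\times\ztwo}=0$, the splitting is canonical with a much lighter rationality hypothesis. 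So the underlying idea is identical; you should either restrict to the finite-dimensional pullback as the paper does, or make the pro-rationality of $\Gamma(X,\hb\pb\Q)$ explicit before invoking exactness of invariants.
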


\begin{proof}
	By Corollary \ref{cor:equiv}, the $G$-action $\beta$ on $X$ lifts uniquely to a $G$-action $\beta_\hb$ on $\Q$ that commutes with the $\gm\times\ztwo$-action. Taking cohomology of \eqref{diag:der} gives a diagram of two long exact sequences of Lie algebras
	\begin{equation}\label{diag:moment}
	\begin{tikzcd}
	0  \arrow{r}  & \Gamma(X, \hb\pb\KKs) \arrow{r} \arrow[d]  & \Gamma(X, \hb\pb \Q)   \arrow[r, "\varphi"] \arrow[d, "p"]   & \Gamma(X, \der(\Q))   \arrow[d, "q"] \arrow[r] & H^1(X,\hb\pb\KKs)  \arrow[d]  \\
	0  \arrow[r]  & \Gamma(X, \hb\pb\kks) \arrow{r}   & \Gamma(X, \hb\pb \str_X)   \arrow[r, "\psi"]    & \Gamma(X, \mathcal{H})  \arrow[r]  & H^1(X, \hb\pb\kks),
	\end{tikzcd}
	\end{equation}
such that all maps are $G\times\gm\times\ztwo$-equivariant. The differential of $\beta_\hb$ gives a Lie algebra homomorphism $d\beta_\hb: \hb\pb\lieg \to \Gamma(X, \der(\Q))$, such that its image lies in the $\gm\times\ztwo$-invariant part of $\Gamma(X, \der(\Q))$ and $q \circ d\beta_\hb = d\beta$. One has to show that $d\beta_\hb$ lifts uniquely to a $G\times\gm\times\ztwo$-equivariant Lie algebra homomorphism $\hb\pb\lieg \to \Gamma(X, \hb\pb \Q)$. Take the pullback $\hb\pb\tilde{\lieg}:= \Gamma(X, \hb\pb \str_X)  \times_{\Gamma(X, \mathcal{H})} \hb\pb\lieg$, then we obtain an extension of Lie algebras with $\gm\times\ztwo$-actions, 
	\begin{equation} \label{exsq:tlieg}
	0 \to \Gamma(X,\hb\pb\KKs) \to \hb\pb\tilde{\lieg}  \to \hb\pb\lieg \to 0,
	\end{equation}
	since the $\gm\times\ztwo$-invariant part of $H^1(X,\underline{\kk}) \subset H^1(X,\hb\pb\KKs)$ is zero. Note that the $\gm\times\ztwo$-action on each term in\eqref{exsq:tlieg} is rational. Moreover, the $\gm\times\ztwo$-action on $\hb\pb\lieg$ is trivial, while the $\gm\times\ztwo$-invariant part of $\Gamma(X,\hb\pb\KKs)$ is zero. Therefore the extension has a canonical splitting that identifies $\hb\pb\lieg$ with the $\gm\times\ztwo$-invariant part of $\hb\pb\tilde{\lieg}$, which is also $G$-equivariant. Hence there is a unique $G\times\gm\times\ztwo$-equivariant Lie algebra homomorphism $\mh: \hb\pb\lieg \to \Gamma(X, \hb\pb \Q)$ such that $\varphi \circ \mh = d\beta_\hb$. This is the desired moment map.
	
	It remains to check that $\mu^*_\hb$ induces $\mu^*: S\lieg \to \Gamma(X, \strx)$ or $\mu^*: \hb\pb\lieg \to \Gamma(X, \hb\pb\strx)$. Diagram chasing in \eqref{diag:moment} gives
	\begin{equation}\label{eq:Phi}
	\psi \circ p \circ \mh =  q \circ \varphi \circ \mh = q \circ d\beta_\hb = d\beta = \psi \circ \mu^*.  
	\end{equation}
	Note that the image of $\hb\pb\lieg$ in $\mu^*$ sits inside the $\gm$-invariant part of $\Gamma(X, \hb\pb \str_X)$ and hence maps injectively into $\Gamma(X,\mathcal{H})$ via $\psi$, since $\Gamma(X, \hb\pb\kks)$ is of weight $-2$. Therefore $p \circ \Phi = \mu^*$ by \eqref{eq:Phi}.
\end{proof}

\begin{remark}\label{rmk:ham}
	The existence of quantized moment map have been known since long time ago, at least in the $C^\infty$-setting (e.g., \cite{GuttRawnsley, Xu}). However, the traditional argument requires additional assumptions on the topology of the manifold or the group (e.g., connected, semisimple, etc.), which are the same assumptions ensuring the existence of classical moment maps. In some of the most important yet common examples, however, the classical moment maps do exist but these assumptions fail. For instance, a cotangent bundle of a smooth $G$-variety with a general algebraic group $G$ does not necessarily satisfy these conditions, but in this case one can explicitly write down both the classical and quantized moment maps. A natural question is whether there are some natural mild assumptions which ensure that a given classical moment map can be lifted (uniquely) to a quantized moment map. Our results seem to give a satisfactory answer. The cotangent bundle example, for instance, satisfies the condition \ref{cond:wt} and hence is covered by Proposition \ref{prop:moment}. Proposition \ref{prop:moment} can also be generalized to a relative version, which then applies to the case of twisted cotangent bundles. Another essential advantage of our approach is that the group $G$ can be rather general, e.g., can be disconnected and does not have to be reductive, which is quite convenient for the study of representation theory of disconnected Lie groups. 
\end{remark}

\section{Quantization of Lagrangian subvarieties}\label{sec:quan_lag}

\subsection{Quantization of line bundles}\label{subsec:quan_lag}

Let $(X,\sym)$ be a smooth symplectic variety with a quantization $\Q$ of the structure sheaf $\strx$. Suppose $Y$ is a smooth Lagrangian subvariety inside $X$ with the embedding denoted by $i_Y : Y \inj X$. Let $L$ be a line bundle over $Y$ and set $\LL=(i_Y)_* L$ to be the direct image sheaf as $\str_X$-module. A natural question is if there exists a $\Q$-module $\Lhb$, flat over $\pws$, with an isomorphism $\Lhb/\hb\Lhb \cong \LL$ of $\strx$-modules. We call such $\Lhb$ as a \emph{quantization of $\LL$} or $L$ or $(Y,L)$. We also say that $\Lhb$ is a \emph{rank one quantization of $Y$}. A complete answer in the general setting was given in \cite[Thm 1.4]{BGKP}. In this section, we will summarize their results in the form which can be promoted to the graded and equivariant setting.

Let $\IY$ be the ideal subsheaf of $\str_X$ associated to $Y$ and let $\JY$ and $\JY'$ be the preimage of the ideal $\IY$ and $\IY^2$ resp. under the projection $\Q \epi \strx$. We write $\JY^2 = \JY * \JY \subset \Q$, which is a subsheaf of two-sided ideals of $\Q$. Then $\hinv\JY$, $\hinv\JY'$ and $\JY^2$ are all subsheaves of Lie ideals in the sheaf $\hb\pb\Q$ of Lie algebras define in \S\,\ref{subsec:moment}. Consider the sheaf 
\begin{equation}\label{eq:tat}
\tatp : = \hinv \JY/ \hinv\JY^2,
\end{equation}
then it is a sheaf Lie algebras supported on $Y$. The inclusions $\JJ_Y^2 \subset \JJ_Y' \subset \JJ_Y$ give a short exact sequence of sheaves of Lie algebras
\begin{equation}\label{exsq:J}
0 \to \hinv\JY' / \hinv\JY^2 \to \hinv\JY/ \hinv\JY^2 \to \hinv\JY / \hinv\JY' \to 0.
\end{equation} 
It was shown in \cite[Lemma 5.4]{BGKP} that there are canonical isomorphisms
\begin{equation} \label{eq:J_tat}
\hinv \JJ_Y / \hinv\JJ'_Y \cong \T_Y, \quad \hinv \JJ_Y' / \hinv \JJ_Y^2 \cong \stry,
\end{equation}
 where the first one comes from the Lie algebra action $\hinv\JY\times \hinv\JY' \to \hinv\JY'$ by commutator and the second isomorphism. Therefore we have the short exact sequence
\begin{equation} \label{exsq:tatp}
0 \to \stry \to \tatp \to \T_Y \to 0.
\end{equation}
We endow $\tatp $ with the $\stry$-module structure given by 
  \[  \stry\times\tatp \to \tatp, \quad (f, \partial) \mapsto  f * \partial, \]
or equivalently, the map $\Q/\JY \times \hinv\JY/ \hinv\JY^2 \to \hinv\JY/ \hinv\JY^2$ that descends from the left multiplication of $\Q$ on $\hinv\JY$. We simply write the multiplication as $f\partial$ or $f \cdot \partial$. Then \eqref{exsq:tatp} becomes a sequence of $\stry$-modules and $\tatp$ is a Picard algebroid in the sense of \cite[Defn. 2.1.3]{BB}.

On the other hand, there is a second $\stry$-module structure on $\JY/\JY^2$ defined as follows. Define the \emph{symmetrized product} on $\Q$,
  \begin{equation} \label{eq:symmprod}
     a \bullet b := \frac{1}{2}(a * b + b * a),
  \end{equation}
  which is commutative but not associative. The symmetric product descends to the map 
  \[ \stry\times\hinv\JY / \hinv\JY^2 \to \hinv\JY / \hinv\JY^2, \quad (f, \partial) \mapsto  f \bullet \partial\] 
since $\JY$ is a sheaf of two-sided ideals. One can check that $\bullet$ is compatible with the Lie brackets on $\hinv\JY / \hinv\JY^2$, hence gives a second Lie algebroid structure on $\hinv\JY / \hinv\JY^2$. Moreover, the morphisms in \eqref{exsq:tatp} are still $\stry$-linear when $\hinv\JY / \hinv\JY^2$ is equipped with the second $\stry$-module structure, therefore we get a second Picard algebroid, denoted by $\tat$, which is the same as $\tatp=\hinv\JY / \hinv\JY^2$ as a sheaf of Lie algebras, but equipped with a different $\stry$-module structure. We denote the identity map by 
\begin{equation}\label{eq:tat2tatp}
s: \tat \to \tatp.
\end{equation}
It is graded, compatible with Lie brackets and satisfy the relation
\begin{equation} \label{eq:slinear}
s(f \bullet \partial) - f s(\partial) = \frac{1}{2} \wt{\partial}(f), 
\end{equation}
where $f$ is a locally defined function and $\wt{\partial}$ denotes the image of a local section $\partial$ of $\tat$ in $\TT_Y$. As in \cite[\S\,1]{BC}, this gives an isomorphism of Picard algebroids
\begin{equation}\label{eq:tatp2tatky}
v: \tatp \isom \tat + \frac{1}{2}\TT(K_Y).
\end{equation}
Note that in \cite[\S\,1]{BC}, $\tatp$ was directly defined as \eqref{eq:tatp2tatky}. Our definition is equivalent to theirs. In the special case when $X$ is the cotangent bundle of $Y$, our definition specializes to that of \cite[\S 2.4]{BB}. 

\begin{remark}
	In \cite{BGKP} the sheaf $\JY/\JY^2$ was used instead of $\hinv\JY/\hinv\JY^2$, etc. Of course our sheaves are tautologically isomorphic to theirs, but it is slightly more convenient when we consider the graded situation.
\end{remark}

In \cite{BGKP}, a class $\aty \in H^2(Y,\Omega_Y\gone)$ was defined as the Atiyah class $c_1(\tat)$ of the Picard algebroid $\tat$ (for details, see \cite{BB}, Lemma 2.1.6 and the paragraph below). This class is closely related to the period of $\Q$, which is of the form
\[ \per(\Q) = \hb\pb[\sym] + [\sym_1] + \hb[\sym_2] + \hb^2[\sym_3] + \cdots \in \hb\pb\hdrx, \]
where $[\sym_i]$ stands for the de Rham cohomology class of some closed $2$-form $\sym_i(\Q) \in \htwo(X)$. It was shown in \cite[Lemma 5.8]{BGKP} that the De Rham cohomology class corresponding to $\aty$ satisfies
\begin{equation} \label{eqn:aty}
  \aty_{\mathrm{DR}} = i^*_Y (\sym_1(\Q)), 
\end{equation}
where $i^*_Y: \htwo(X) \to \htwo(Y)$ is the restriction map of De Rham cohomology groups induced by the embedding $i_Y: Y \inj X$.

Now for any line bundle $L$ on $Y$, recall from \cite[\S 2.1.12]{BB} that there is an associated Picard algebroid $\TT(L)$, which is defined as the sheaf of first order differential operators acting on (local sections of) $L$. The Atiyah class $c_1(L) = c_1(\TT(L)) \in H^2(\omtry))$ of $\TT(L)$ is a lift of the usual first Chern class $c_1(L)_{\mathrm{DR}} \in \htwo(Y)$.

Let $K_Y = \Omega^{\dim Y}_Y$ be the canonical line bundle of $Y$. The following result in \cite[Theorem 1.4]{BGKP} gives the necessary and sufficient condition for the existence of formal quantization of $\LL$ and classifies all such quantizations. Here we denote by $\F((\stry\series)^\times)$ the group of isomorphism classes of $(\stry\series)^\times$-torsors on $Y$ with a flat algebraic connection.

\begin{theorem}\label{thm:lag}
	\begin{enumerate}
		\item 
		The line bundle $L$ admits a quantization if and only if the following two conditions hold:
		\begin{enumerate}
			\item \label{cond:c1}
			$ c_1(L) = \frac{1}{2} c_1(K_Y) + \aty$ holds in $H^2(\omtry)$;
			\item 
			$i^*_Y \sym_i(\Q) = 0$ holds in $\htwo(Y)$, $\forall ~ i \geq 2$.
		\end{enumerate}	
		\item 
		If the set $\qyone$ of isomorphism classes of quantizations of line bundles on $Y$ is non-empty, then this set admits a free and transitive action of the group $\F((\stry\series)^\times)$ of isomorphism classes of $(\stry\series)^\times$-torsors on $Y$ with a flat algebraic connection.  	  
	\end{enumerate}
\end{theorem}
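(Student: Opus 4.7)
My plan is to analyze the problem order by order in $\hb$, controlling both existence and classification of lifts through the Picard algebroid of $\Q$ along $Y$ together with the residual de~Rham classes on $Y$ coming from the period of $\Q$.

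For necessity of (a), I would observe that any quantization $\LL_\hb$ carries a commutator action $[\hinv a,-]$ of sections $a\in \JY$, which preserves $\LL_\hb$ because $\JY\cdot\LL_\hb\subset\hb\LL_\hb$, and descends modulo $\hinv\JY^2$ to an $\stry$-linear action of $\tatp$ on $L$ compatible with anchor and Lie brackets. Since $L$ is invertible, such an action is equivalent to an isomorphism of Picard algebroids $\tatp\cong\TT(L)$. Combining this with the identification (\ref{eq:tatp2tatky}) and taking Atiyah classes yields (a). For necessity of (b), I would proceed by induction on $n$: given a quantization of $L$ modulo $\hb^{n}$, the obstruction to lifting to $\hb^{n+1}$ naturally lives in $H^2(Y,\stry)$, and a careful analysis, parallel to the Bezrukavnikov--Kaledin computation underlying the $n=1$ identity (\ref{eqn:aty}), identifies this obstruction with $i_Y^*[\sym_n(\Q)]$.

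For sufficiency, I would adopt a Fedosov-style framework adapted to the Lagrangian embedding $Y\subset X$. Locally, a Darboux chart identifies a formal neighborhood of a point in $Y$ with that of the zero section of a cotangent bundle, in which the Weyl-algebra bundle admits a tautological Fock-type module quantizing $L$; fixing the first-order data $\tatp\cong\TT(L)$ afforded by (a) pins this local quantization down up to isomorphism. Conditions (a) and (b) are then exactly what is required for the local quantizations to glue into a Harish-Chandra torsor over $Y$ whose associated sheaf realizes $\LL_\hb$. For part~(2), given two quantizations $\LL_\hb,\LL'_\hb$ the sheaf $\lhom_\Q(\LL_\hb,\LL'_\hb)$ is locally free of rank one over $\stry\series$ and inherits a canonical flat connection from the compatible $\tatp$-actions on both sides; this produces a $(\stry\series)^\times$-torsor with flat connection. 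Conversely, tensoring a fixed quantization with such a torsor yields a new one, and these constructions are mutually inverse; freeness is guaranteed because the $\Q$-module automorphisms of $\LL_\hb$ reducing to the identity mod $\hb$ are precisely $(\stry\series)^\times$ with its induced flat structure.

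The main obstacle I anticipate is pinning down the identification of the $n$-th order obstruction with $i_Y^*[\sym_n(\Q)]$ for $n\ge 2$. The first-order case is absorbed into condition (a) via (\ref{eqn:aty}), but extending this identification to all orders requires carefully tracing the cohomological comparison between Hochschild-type deformations of $\Q$ (as measured by the period map) and deformations of $\LL$ as a $\Q$-module along $i_Y$. The technical input, which I expect to be unavoidable, is a formal-geometric local model for the full $\hb$-adic neighborhood of $Y$ in $X$, rather than just the first-order model that sufficed for the Picard algebroid argument above.
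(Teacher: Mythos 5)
The paper does not provide its own proof of Theorem~\ref{thm:lag}: the statement is quoted verbatim from \cite[Theorem~1.4]{BGKP}, and the exposition in \S\ref{sec:formal} re-develops that reference's formalism only to the extent needed for the graded variants proved later. Your strategy --- a Fedosov-style formal-geometry argument via a local Fock model $\M=\D/\D\xx$, the Picard-algebroid isomorphism $\tatp\cong\TT(L)$ at first order, and gluing via a Harish--Chandra torsor --- is in outline the same route taken in \cite{BGKP}, which works through the central extension of Harish--Chandra pairs $1\to\hcK\to\autderdm\to\autderj\to 1$ and identifies quantizations of $L$ with lifts of the transitive $\autderj$-torsor $\PJ$ to a transitive $\autderdm$-torsor $\Pdm$. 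You correctly locate both the first-order reduction (condition (a) $\Leftrightarrow$ $\TT(L)\cong\tatp$, matching the paper's own remark after the theorem) and the hard step: identifying the order-$n$ lifting obstruction with $i_Y^*\sym_n(\Q)$ for $n\ge 2$. That identification is precisely the technical content of the cited proof (a comparison of the noncommutative period map of \cite{BeKaledin} against the torsor-lifting cohomology), and you flag it as the missing piece without supplying it; this is indeed where the genuine work lies, and it is also why the present paper prefers graded hypotheses (Remark~\ref{rmk:quan_exist}) under which $\sym_n(\Q)=0$ for $n\ge 2$ automatically.

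One substantive caution for part~(2). The claim that $\lhom_\Q(\Lhb,\LL'_\hb)$ is locally free of rank one over $\stry\series$ should not be taken for granted: in the \emph{formal} local model one computes $\End_\D(\M)\cong\pws$, so endomorphisms are locally constant, not $\stry\series$-linear in the naive sense; and in the algebraic category two quantizations need not be isomorphic on small Zariski opens. Since Riemann--Hilbert fails for non-proper algebraic $Y$, flat $(\stry\series)^\times$-torsors are genuinely more than $\pws^\times$-local systems, so one cannot simply pass to flat sections of $\lhom_\Q$ and be done. The $\stry\series$-module structure and the flat connection on the hom-sheaf both have to be manufactured from the $\tatp$-algebroid action on each side, and justifying that this produces precisely a flat $(\stry\series)^\times$-torsor (freely and transitively acting) is most cleanly handled inside the Harish--Chandra torsor formalism, exactly as in \cite[\S 6]{BGKP}; your sketch elides this and would need to be filled in.
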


Note that, by the isomorphism \eqref{eq:tatp2tatky}, condition \eqref{cond:c1} in Theorem \ref{thm:lag} is equivalent to the existence of an isomorphism of Picard algebroids between $\TT(L)$ and $\tatp$.

\subsection{Graded quantization of line bundles}\label{subsec:graded_lag}

Now let us come back to the setting in \S~\ref{subsec:quan_lag} and consider the graded situation. Suppose $(\X,\sym)$ is graded such that the Lagrangian subvariety $Y \subset X$ is preserved by the $\gm$-action. In this case we say that $Y$ is a \emph{graded Lagrangian subvariety} of $(X,\sym)$. From now on we always assume that we are in the graded setting. 

\begin{definition}\label{defn:graded_line}
	Suppose $\Q$ is a graded quantization of $(X,\sym)$. A rank one quantization $\Lhb$ of $(Y,L)$ is said to be \emph{graded} is a usual quantization equppied with a $\gm$-action such that
	\begin{itemize}
		\item 
		  the isomorphism $\Lhb/\hb\Lhb \cong (i_Y)_*L$ is $\gm$-equivariant;
		\item 
		  he $\gm$-action satisfies $t.\hb = t^2 \hb$ and $t.(um) = (t.u)(t.m)$, for any $t \in \gm = \kkm$, local section $u$ of $\Q$ and local section $m$ of $\Lhb$. 
	\end{itemize}
	Two graded quantization of $Y$ are said to be \emph{$\gm$-equivariantly isomorphic} or simply \emph{equivalent} if there exists a $\gm$-equivariant isomorphism between them. We denote by $\qyonek$ the set of all equivalence classes of rank one graded quantization of $Y$.
\end{definition}

The goal of the section is to state the necessary and sufficient condition for the existence of graded quantization of line bundles. The proof will be postponed to \S\,\ref{subsec:graded_lag_proof}. Note that, in the graded setting, all the three sheaves in \eqref{exsq:J} admit natural $\gm$-actions inherited from the $\gm$-action on $\hb\pb\Q$ and all the connecting maps are $\gm$-equivariant. Moreover, the isomorphisms $d$ in \eqref{eq:J_tat} are $\gm$-equivariant with respect to the natural $\gm$-actions on $\T_Y$ and $\stry$. Similarly, all the natural maps appeared in \S\,\ref{subsec:quan_lag} concerning $\tat$ and $\tatp$ are $\gm$-equivariant. Therefore $\tat$ and $\tatp$ are \emph{graded Picard algebroids}, i.e., Picard algebroids equipped with $\gm$-actions which are compatible with all the structure maps. We require isomorphisms between graded Picard algebroids to be $\gm$-equivariant.

We denote by $\F_{\gm}(\stry^\times)$ the group of all equivalence classes of $\gm$-equivariant $\stry^\times$-torsors with $\gm$-invariant algebraic flat connections. It has a subgroup $\Conn$, the set of all equivalence classes of $\gm$-equivariant algebraic flat connections of the trivial $\stry^\times$-torsor (or, equivalently, the trivial bundle $\str_Y$) equipped with the canonical $\gm$-action. The main result of this subsection is the following.

\begin{theorem}\label{thm:quan_lag_graded}
	Suppose $(X,\sym)$ is a graded smooth symplectic variety with a graded quantization $\Q$. Let $Y$ be a graded Lagrangian subvariety of $X$ and let $L$ be a $\gm$-equivariant line bundle over $Y$. Then 
	\begin{enumerate}
		\item 
		  $L$ admits a graded quantization if and only if there exits a $\gm$-equivariant isomorphism of graded Picard algebroids,
		  \begin{equation}\label{eq:cond_sqrt}
		  \vu^+: \tatp \to \TT(L).
		  \end{equation}
		\item 
		  The set of equivalence classes of all graded quantization of a fixed $L$, if nonempty, can be identified with the set of all such $\vu^+$ in \eqref{eq:cond_sqrt}, which is a torsor over $\Conn$. 
		\item 	  
			The set $\qyonek$ of all equivalence classes of all rank one graded quantization over $Y$, if nonempty, is a torsor over the group $\F_{\gm}(\stry^\times)$.
	\end{enumerate}
	
\end{theorem}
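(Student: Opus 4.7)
The plan is to reduce to the ungraded Theorem~\ref{thm:lag} and then promote every piece of the classification to its $\gm$-equivariant counterpart, using the Harish-Chandra torsor formalism to be developed in \S\,\ref{subsec:hcpair}--\ref{subsec:hctorsor}. First, I would observe that condition~(b) of Theorem~\ref{thm:lag}(1), namely $i^*_Y \sym_i(\Q) = 0$ for $i \geq 2$, is automatic when $\Q$ is graded: by Proposition~\ref{prop:graded}(1), $\per(\Q) \in \htwo(X) \subset \hb\pb\hdrx$, so all higher coefficients $[\sym_i]$ with $i \geq 2$ vanish. Meanwhile, the identification \eqref{eq:tatp2tatky} together with \eqref{eqn:aty} shows that condition~(a), $c_1(L) = \tfrac{1}{2} c_1(K_Y) + \aty$, is equivalent to the existence of an isomorphism $\tatp \isom \TT(L)$ of (ungraded) Picard algebroids. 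Hence Theorem~\ref{thm:lag}(1) already supplies the existence part of our statement in the ungraded category.

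For part~(1), I would upgrade to the graded setting as follows. If $\Lhb$ is a graded rank one quantization of $(Y,L)$, the isomorphism $\vu^+$ arising in the ungraded case is built from the $\Q$-action on $\Lhb$ reduced modulo $\hinv \JY^2$; every ingredient is $\gm$-equivariant, so the resulting $\vu^+$ is automatically so. Conversely, given a $\gm$-equivariant $\vu^+$, I would reconstruct $\Lhb$ by a $\gm$-equivariant Fedosov-type procedure: both $\tatp$ and $\TT(L)$ arise as Atiyah algebroids of $\gm$-equivariant Harish-Chandra torsors on $Y$, and $\vu^+$ provides a $\gm$-equivariant reduction of structure on the Harish-Chandra torsor attached to $\Q|_Y$; the module $\Lhb$ is then obtained by descent in the $\gm$-equivariant category, in parallel with the construction of graded quantizations of $X$ from $\gm$-fixed points in $\quan$ recalled in \S\,\ref{subsec:graded}.

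For parts~(2) and (3), I would compare two graded rank one quantizations $\Lhb$ and $\Lhb'$. Locally $\lhom_\Q(\Lhb, \Lhb')$ is a $(\stry\series)^\times$-torsor with a flat algebraic connection, as in Theorem~\ref{thm:lag}(2), and in the graded case it inherits a compatible $\gm$-action. The key point is that a $\gm$-equivariant $(\stry\series)^\times$-torsor with flat connection is determined by its reduction modulo $\hb$: higher-order corrections lie in $\hb^{\geq 1} \stry\series$, whose sections carry only positive $\gm$-weights and hence have no nontrivial $\gm$-invariants, mirroring the uniqueness argument in Proposition~\ref{prop:even_auto}. This identifies $\Lhb'/\Lhb$ with an element of $\F_{\gm}(\stry^\times)$ and yields part~(3); restricting to the case where the underlying line bundle is trivial (equivalently, fixing $L' = L$) cuts out the subgroup $\Conn$, which gives part~(2) and at the same time identifies the set of graded quantizations of a fixed $L$ with the set of $\vu^+$'s.

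The main anticipated obstacle is setting up the graded Harish-Chandra torsor formalism so that every canonical map, reduction, and descent datum remains $\gm$-equivariant without producing spurious obstructions in the higher-$\hb$ directions. Once the Harish-Chandra pair governing Lagrangian quantization carries its natural $\gm$-action and the relevant continuous cohomologies split into weight pieces, the existence, uniqueness, and torsor statements all reduce to the same weight-positivity arguments pervasive in \S\,\ref{subsec:equiv}--\ref{subsec:moment}.
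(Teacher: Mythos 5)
Your overall strategy matches the paper's: pass through the Harish--Chandra torsor formalism of \S\,\ref{subsec:hcpair}--\ref{subsec:hctorsor}, observe that condition~(b) of Theorem~\ref{thm:lag}(1) is automatic when $\Q$ is graded (this is exactly Remark~\ref{rmk:quan_exist}), and then use positivity of $\gm$-weights in the higher-$\hb$ directions to kill the extra freedom present in the ungraded classification. However, your plan is missing the structural decision that makes the identification with $\vu^+$ rigorous: the paper factors the lift $\PJ \rightsquigarrow \Pdm$ through an intermediate $\bautderdm$-torsor $\bPdm$, where $\bautderdm = \lab \autdm/\kkm, \derdm/\kk \rab$. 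The isomorphism $\vu^+$ is only first-order data; it classifies graded lifts of $\bPdm$ to $\Pdm$ (Lemma~\ref{lemma:bPdm_lift}), \emph{once} $\bPdm$ is fixed. The nontrivial content of the proof is Proposition~\ref{prop:bPdm_lift} / Proposition~\ref{prop:stable}: the set $\Lambda$ of ungraded lifts $\bPdm$ of $\PJ$ is a torsor over $\Pi \cong \hone(Y, 1+\hb\pws)$, whose $\gm$-weights are strictly positive (they come purely from $\hb$, since the coefficients are constant), so $\Lambda$ has a \emph{unique} $\gm$-stable point, which then carries a unique graded structure (Proposition~\ref{prop:kstable}). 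Your sketch treats $\vu^+$ as if it alone provided a ``reduction of structure'' all the way down to $\Pdm$, but without identifying and disposing of the $\bautderdm$-layer, the classification by $\vu^+$ doesn't follow.

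A second, smaller imprecision: in your argument for parts~(2) and (3) you say ``higher-order corrections lie in $\hb^{\geq 1}\stry\series$, whose sections carry only positive $\gm$-weights.'' This is not true at the level of local sections of $\stry$, which typically carries mixed $\gm$-weights; it is true for the \emph{flat} sections (equivalently, the de Rham cohomology with coefficients in $1+\hb\pws$), where the weight comes entirely from $\hb$. This is exactly why the paper works with the group $\Pi \cong \hone(Y, 1+\hb\pws)$ and with $\hdr^0(Y, 1+\hb\pws)$ in Lemma~\ref{lemma:autpdm}, and not with sheaf cohomology of $\stry\series$. Your weight-positivity claim is correct once restated in those terms, and then Lemma~\ref{lemma:torsor} delivers both the existence of the $\gm$-stable point (hence graded structure) and its uniqueness.
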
  

\begin{remark}\label{rmk:grading}
	Theorem \ref{thm:quan_lag_graded} is only sensitive to the adjoint $\gm$-action on $\TT(L)$ induced by the $\gm$-action on $L$. The set of $\gm$-actions on $L$ (when nonempty) which induce the same $\gm$-action on $\TT(L)$ is a torsor over the group $\Hom(\gm, \gm)$ of characters of $\gm$. 
\end{remark}

\begin{remark}\label{rmk:quan_exist}
	Note that the graded quantization $\Q$ has period $\per(\Q) = \hb\pb[\sym] + [\sym_1]$ by Proposition \ref{prop:graded}. We can then deduce that the condition (b) in part (1) of Theorem \ref{thm:lag} is vacuous, while the condition (a) in part (1) of Theorem \ref{thm:lag} is implied by \eqref{eq:cond_sqrt}.  Therefore under the assumptions of Theorem \ref{thm:quan_lag_graded}, usual ungraded quantization of $L$ always exists. 
\end{remark}

\subsection{Graded quantization of vector bundles}\label{subsec:quan_lag_vec}

We now consider the general case in which the line bundle $L$ is replaced by a vector bundle $E$ of rank $r$ over $Y$. Just as in the case of line bundles, a \emph{quantization of} $E$, or the sheaf $\EE=(i_Y)_* E$ of $\str_X$-modules, is a $\Q$-module $\Ehb$, flat over $\pws$, with an isomorphism $\Ehb/\hb\Ehb \cong \EE$ of $\strx$-modules. We denote by $\qyr$ the set of all isomorphism classes of all quantization on $Y$ of all possible vector bundles $E$ of rank $r$. There is a generalization  \cite[Thm\,1.1]{BC} of Theorem \ref{thm:lag} for vector bundles.

\begin{theorem}\label{thm:lag_vec}
	\begin{enumerate}
		\item 
		A vector bundle $E$ of rank $r$ over a smooth Lagrangian subvariety $Y$ admits a quantization if and only if the following conditions hold:
		\begin{enumerate}
			\item \label{cond:lag_vec1}
			$i^*_Y \sym_i(\Q) = 0$ holds in $\htwo(Y)$, $\forall ~ i \geq 2$;
			\item  \label{cond:lag_vec2}
			the projectivization $\PE$ admits a flat algebraic connection;
			\item \label{cond:lag_vec3}
			$\frac{1}{r} c_1(E) = \frac{1}{2} c_1(K_Y) + \aty$ holds in $\omtry$.
		\end{enumerate}	
		\item 
		If nonempty, the set $\qyr$  admits a free action of the group $\F((\stry\series)^\times)$. The set of orbits for this action can be identified with the set of all isomorphism classes of $\PGLh$-bundles with a flat algebraic connection. 	  
	\end{enumerate}
\end{theorem}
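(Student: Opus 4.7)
The plan is to reduce the rank-$r$ case to the rank-one case of Theorem \ref{thm:lag} by decomposing the data carried by a quantization $\Ehb$ into its trace part on $\det E$ and its traceless projective part on $\PE$. Given a quantization $\Ehb$ of $E$, the $\Q$-module structure induces an action of the Picard algebroid $\tatp = \hb\pb\JY/\hb\pb\JY^2$ on $E = \Ehb/\hb\Ehb$. Taking traces of this action yields a morphism $\tatp \to \frac{1}{r}\TT(\det E)$ of Picard algebroids, whose existence at the level of Chern classes is precisely condition (c). The traceless (projective) component, combined with the fact that $\Q$ is a sheaf of associative $\pws$-algebras, produces a flat algebraic connection on $\PE$, which is condition (b). Condition (a) is forced because the existence of any $\Q$-module supported on the Lagrangian $Y$ constrains the higher order terms of $\per(\Q)$ upon restriction to $Y$, just as in the line bundle case.

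For sufficiency, I would work within the framework of formal geometry and Harish-Chandra torsors (as developed in \cite{BeKaledin} and \cite{BGKP} and reviewed later in this paper). Locally on $Y$ any vector bundle admits a quantization, so the problem is cohomological, and the point is to assemble the local quantizations using the data in (b) and (c). One constructs a Harish-Chandra torsor on $Y$ whose fibers parametrize formal coordinate systems on $X$ adapted to $Y$, trivializations of $E$ compatible with the flat projective connection, and local identifications of $\Q$ with the standard Moyal-Weyl quantization. The isomorphism of Picard algebroids from (c) rigidifies the gluing at first order in $\hb$, while the flat connection on $\PE$ from (b) handles the projective (non-abelian) degrees of freedom. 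Condition (a) ensures that the higher-order obstructions vanish, so that a Fedosov-type recursive construction produces a global $\Ehb$.

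For part (2), the free action of $\F((\stry\series)^\times)$ on $\qyr$ is defined by $\Ehb \mapsto \Ehb \otimes_{\stry\series} \mathcal{T}$: the flat connection on the torsor $\mathcal{T}$ is exactly what allows the tensor product to inherit a $\Q$-module structure despite the noncommutativity of $\Q$, and freeness is the rank-$r$ analogue of the line bundle statement in Theorem \ref{thm:lag}(2). To identify the orbit set with isomorphism classes of flat $\PGLh$-bundles, one observes that two quantizations lie in the same $\F((\stry\series)^\times)$-orbit if and only if their induced projectivizations $\PP(\Ehb)$ agree as flat $\PGLh$-bundles; conversely, any flat $\PGLh$-bundle can be realized as $\PP(\Ehb)$ for some $\Ehb$ by invoking sufficiency applied to its associated projective data, so the surjectivity onto $\PGLh$-bundles also holds.

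The main obstacle is the global construction in the sufficiency direction: one must simultaneously deform $E$ to all orders in $\hb$ while maintaining compatibility between the $\Q$-module structure (which is genuinely noncommutative and depends on the full quantization of $X$), the flat projective connection on $\PE$, and the isomorphism of Picard algebroids $\tatp \cong \frac{1}{r}\TT(\det E)$. The Harish-Chandra torsor formalism packages these constraints uniformly, but the order-by-order identification of its obstruction classes with the cohomological conditions in (a)-(c) requires a careful generalization of the Fedosov-type construction used in the line bundle case of \cite{BGKP}, where now the projective connection must be interwoven with the quantization procedure at every step.
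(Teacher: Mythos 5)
Your proposal follows essentially the same route as the cited source for this theorem (\cite[Thm.\,1.1]{BC}), which is also the route this paper outlines in \S\,4.3 and reuses in the graded case in \S\,5.5: decompose the induced $\tatp$-action on $E$ into a determinant part (a Picard algebroid isomorphism $\tatp\isom\frac{1}{r}\TT(\det E)$ giving condition (c)) and a projective part (a flat algebraic connection on $\PE$ giving condition (b)), then reassemble the global quantization by a chain of Harish-Chandra torsor lifts, with condition (a) killing the higher-order obstructions. The twisting action by flat $(\stry\series)^\times$-torsors and the identification of orbits with flat $\PGLh$-bundles in part (2) likewise match the paper's treatment.
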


Here $\PGLh = PGL(r, \pws)$ is the $\pws$-valued points of the algebraic group $\PGLr$. We denote by $\F(\PGLh)$ the set of all isomorphism classes of $\PGLh$-bundles with a flat algebraic connection and denote by 
  \begin{equation}\label{eq:tau}
    \tau: \qyr \epi \F(\PGLh)
  \end{equation} 
the quotient map by the $\F((\stry\series)^\times)$-action in part (2) of Theorem \ref{thm:lag_vec}.

The aim of this section is to promote Theorem \ref{thm:lag_vec} to the graded setting and generalize Theorem \ref{thm:quan_lag_graded} to the case of vector bundles. Assume we are in the situation of \S\,\ref{subsec:graded}. Suppose there is a $\gm$-equivariant structure on $E$. A quantization $\Ehb$ of $E$ is \emph{graded} if it satisfies exactly the same conditions in Definition \ref{defn:graded_line}. Again two graded quantizations are equivalent if there exists a $\gm$-equivariant isomorphism between them. Let $\qyrk$ denote the set of all equivalence classes of graded rank $r$ quantization of $Y$. 


We recall the discussions in \cite[\S 1]{BC}. Denote by $\TT(E)$ the Atiyah Lie algebroid of $E$ (\cite{Atiyah}), then there is an exact sequence of  Lie algebroids over $Y$,
\begin{equation}
0 \to \send_{\str_Y}(E) \to \TT(E) \to \T_Y \to 0, 
\end{equation}
where $\send_{\str_Y}(E)$ denotes the sheaf of $\str_Y$-algebras of $\str_Y$-linear endomorphisms of $E$. Suppose we have a usual (ungraded) quantization $\Ehb$ of $E$. Then the $\Q$-module structure of $\Ehb$, or rather $\Ehb/\hb^2\Ehb$ as a module over $\Q/\hb^2\Q$, gives rise to a morphism of Atiyah algebroids 
\begin{equation}\label{eq:vup}
\vu^+: \tatp \to \TT(E).
\end{equation}
In other words, the vector bundle $E$ is a module over the Lie algebroid $\tatp$.

The datum of the morphism $\vu^+: \tatp \to \TT(E)$ can be decomposed into two pieces. The trace morphism $\send_{\stry}(E) \to \stry$ induces a Lie algebroid morphism $\tr_E: \TT(E) \to \TT(\det(E))$ from $\TT(E)$ to the Picard algebroid of the determinant line bundle $\det(E)$ of $E$. Note that the map $\tr_E$ is scalar multiplication by $r=rk(E)$ on the subbundle $\stry$ and identity on the quotient bundle $\T_Y$. Therefore the composite map 
\begin{equation}\label{eq:detvup}
\det\vu^+ := \tr_E \circ \vu^+: \tatp \to \TT(\det(E))
\end{equation}
is an isomorphism of Lie algebroids which has the same property as $\tr_E$ when restricted to $\stry \subset \tatp$. Such an isomorphism is equivalent to an isomorphism 
  \begin{equation} \label{eq:detvupr}
  	\frac{1}{r}\det\vu^+: \tatp \isom \frac{1}{r} \TT(\det(E)) 
  \end{equation}
of Picard algebroids (which restricts to the identity map on $\stry$). Thus $\det\vu^+$ gives
\[  c_1(E) = c_1(\det(E)) = r \cdot c(\tatp) = r \cdot \aty \in \omtry, \]
which is condition \eqref{cond:lag_vec3} in Theorem \ref{thm:lag_vec}. Furthermore, the Atiyah algebroid $\TT(\PE)$ of the  associated $\PGLr$-torsor $\P_{\PE}$ of $\PE$ can be obtained by taking the quotient of $\TT(E)$ by the subbundle $\stry \subset \send_{\stry}(E)$. We hence obtain  a natural embedding $\TT(E) \inj \TT(\det(E)) \oplus \TT(\PE)$ of sheaves of Lie algebras. Composing $\vu^+$ with the quotient map $\TT(E) \epi \TT(\PE)$ provides a morphism $\T_Y \to \TT(\PE)$ of Lie algebroids lifting the identity map on $\T_Y$, which is exactly a flat algebraic connection on $\PE$. 

Conversely, the morphism $\vu^+$ can be recovered from a morphism $\det(\vu^+): \tatp \to \TT(\det(E))$ satisfying the condition above and a flat algebraic connection on $\PE$. Namely, they can be combined to give a commutative diagram
\begin{equation}\label{diag:tatp}
\begin{tikzcd}
0  \arrow{r}  & \stry \arrow{r} \arrow[d]  & \tatp \arrow[r] \arrow[d]   & \T_Y  \arrow{r} \arrow[d] & 0  \\
0  \arrow{r}  & \stry \oplus \send^0(E) \arrow{r}  & \TT(\det(E)) \oplus \TT(\PE) \arrow{r}   &  \T_Y \oplus \T_Y  \arrow{r} & 0,  
\end{tikzcd}
\end{equation}
where $ \send^0(E) := \send_{\stry}(E)/\stry$. The leftmost vertical map $\stry \to \stry \oplus \send^0(E)$ is given by $f \mapsto (r \cdot f, 0)$. The rightmost vertical map $\T_Y \to \T_Y \oplus \T_Y$ is the diagonal map $\partial \mapsto (\partial,\partial)$. The vertical map in the middle is the sum of the map $\det(\vu^+): \tatp \to \TT(\det(E))$ and the composition of the projection $\tatp \to \T_Y$ with the splitting $\T_Y \to \TT(\PE)$ given by the flat connection. It was argued in \cite[\S 1]{BC} that the image of the middle vertical map is exactly $\TT(E) \subset \TT(\det(E)) \oplus \TT(\PE)$.

Now suppose we are in the graded situation and $\Ehb$ is a graded quantization of $E$. Then there are natural $\gm$-actions on $\tatp$ and $\TT(E)$. Moreover the morphisms $\vu^+$ and $\det\vu^+$ are $\gm$-equivariant. By the discussions above, this is equivalent to the condition that both the isomorphism $\frac{1}{r}\det\vu^+$ is $\gm$-equivariant and the flat algebraic connection on $\PE$ are $\gm$-invariant. Furthermore, the flat connection lifts the infinitesimal Lie algebra action of $\lie(\gm)=\kk$ on $Y$ to a Lie algebra $\kk$-action on $\PE$. On the other hand, the $\gm$-action on $\PE$ differentiates to another Lie algebra $\kk$-action on $\PE$. To compare them, we make the following definition.

\begin{definition}
	If the Lie algebra $\kk$-action defined via the flat connection of $\PE$ as above coincides with the Lie algebra $\kk$-action induced by the graded structure of $\PE$, we say that the $\gm$-equivariant structure or the flat connection on $\PE$ is \emph{strong}. 
\end{definition}

The main result of this section is the following theorem, which basically says that $\Ehb$ can be reconstructed from $\vu^+$, or equivalently, $\frac{1}{r} \det\vu^+$ and the flat algebraic connection on $\PE$, assuming it is strong. The proof will be postponed to \S\,\ref{subsec:quan_lag_vec_proof}.

\begin{theorem}\label{thm:lag_vec_graded}
	Let $(X,\sym)$ be a graded smooth symplectic variety with a graded quantization $\Q$ and let $Y$ be a graded Lagrangian subvariety of $X$. Suppose $E$ is a graded vector bundle or rank $k$ over $Y$ such that $\PE$ is equipped with a strong flat connection. Then 
	\begin{enumerate}
		\item 
		  There is a unique $\gm$-equivariant flat $\PGLh$-torsor $\Ppgl$ which lifts $\PE$ with its strong flat connection.
		\item 
		   The set of (isomorphism classes of) graded quantizations of $E$ in the preimage  of $[\Ppgl] \in \F(\PGLh)$ under the projection $\tau$ \eqref{eq:tau} is nonempty if and only if there exists a $\gm$-equivariant isomorphism $\tatp \to \frac{1}{r} \TT(\det(E))$ of graded Picard algebroids. In this case, this set is a torsor over $\Conn$ by restriction of the action of 
		   $\F_{\gm}(\stry^\times)$. Moreover, it is isomorphic to the $\Conn$-torsor of all $\gm$-equivariant isomorphisms $\tatp \to \frac{1}{r} \TT(\det(E))$ of graded Picard algebroids.
	\end{enumerate}

\end{theorem}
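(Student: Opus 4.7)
The overall plan is to mirror the strategy of the ungraded Theorem \ref{thm:lag_vec}, but layering $\gm$-equivariance into every step, and to reduce the vector bundle case to the line bundle case already handled by Theorem \ref{thm:quan_lag_graded}. In broad strokes I would: (a) lift the strong flat connection on $\PE$ uniquely to a $\gm$-equivariant flat $\PGLh$-torsor $\Ppgl$ by an order-by-order argument in powers of $\hb$; (b) show that a $\gm$-equivariant isomorphism $\tatp \isom \frac{1}{r}\TT(\det(E))$ together with $\Ppgl$ reconstructs a $\gm$-equivariant $\vu^+$ via the diagram \eqref{diag:tatp}; and (c) use this graded $\vu^+$ to assemble $\Ehb$ by reducing to Theorem \ref{thm:quan_lag_graded} applied to the ``determinant'' Picard algebroid $\frac{1}{r}\TT(\det(E))$.

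For (a), the base is the flat $\PGLr$-torsor $\P_{\PE}$ furnished by the flat connection on $\PE$. At each stage $n\geq 0$, extending a flat $PGL(r,\pws/\hb^{n+1})$-torsor one order higher is controlled by an obstruction class in $H^1(Y,\hb^{n+1}\pglh/\hb^{n+2}\pglh)$ with its flat structure, while distinct lifts differ by $H^0$ of the same sheaf. Both the grading on $\Q$ (in which $\hb$ has weight $2$) and the parallel transport induced by the flat connection give $\kk$-actions on these cohomology groups, and the strong hypothesis says that these two $\kk$-actions agree on the zero-order piece. Requiring $\gm$-equivariance and compatibility with the flat connection at the $(n+1)$-th order forces us into the $\gm$-invariant part of the obstruction; a weight count analogous to the one preceding Proposition \ref{prop:even_auto} shows this invariant part is zero, so the obstruction vanishes automatically and the lift is unique. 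Passing to the inverse limit gives $\Ppgl$.

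For (b) and (c), necessity in (2) is immediate: a graded quantization $\Ehb$ yields a $\gm$-equivariant $\vu^+:\tatp\to\TT(E)$, and $\frac{1}{r}\det\vu^+$ is the required graded Picard algebroid isomorphism. For sufficiency, given the isomorphism and the strong connection, the diagram \eqref{diag:tatp} assembles a $\gm$-equivariant $\vu^+$ compatible with $\Ppgl$. Using $\Ppgl$ to trivialize the ``projective part'' locally, the problem of lifting $\vu^+$ to an actual $\Q$-module structure on a quantization of $E$ reduces to finding a graded quantization of the rank-one object attached to the Picard algebroid $\frac{1}{r}\TT(\det(E))$, which is precisely what Theorem \ref{thm:quan_lag_graded} (applied to this rescaled Picard algebroid in place of $\TT(L)$) produces. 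The identification of the fiber of $\tau$ over $[\Ppgl]$ with the $\Conn$-torsor of isomorphisms then follows by comparing the free $\Conn$-action on each side: tensoring $\Ehb$ with a $\gm$-equivariant flat trivial $\stry^\times$-torsor acts on the quantization side, while precomposing with an automorphism of the Picard algebroid $\tatp$ coming from $\Conn$ acts on the isomorphism side, and the two actions agree and are both free and transitive by the line bundle case.

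The main technical obstacle I expect is step (a): verifying that the strong condition, which a priori only matches the two $\kk$-actions at the zeroth order on $\PE$, really propagates through the whole order-by-order extension so that every lift can be chosen $\gm$-equivariantly and uniquely. This is exactly where the strong hypothesis does essential work: without it, the flat-connection-induced $\kk$-action and the grading-induced one can differ, the $\gm$-invariant part of the obstruction group is no longer forced to vanish, and the lift to $\Ppgl$ may be obstructed or non-unique, breaking the reconstruction of $\Ehb$ from $\frac{1}{r}\det\vu^+$. Once part (a) is in place, the assembly via \eqref{diag:tatp} and the reduction to the graded line bundle result are largely formal bookkeeping.
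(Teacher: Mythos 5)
Your overall strategy is essentially the same as the paper's, and step (a) in particular is exactly what the paper does: the paper's Proposition \ref{prop:ghzero_lift} carries out the order-by-order lifting of the flat $\PGLr$-torsor $\P_{\PE}$ to the flat $\PGLh$-torsor $\Ppgl$ by noting (via Lemma \ref{lemma:deRham_strong}) that the strong condition trivializes the $\gm$-action on the de Rham cohomology $H^\bullet_\nabla(Y,\pend(E))$, so that the torsor of order-$n$ lifts is controlled by a cohomology group of strictly positive weight, and Lemma \ref{lemma:torsor} then gives a unique $\gm$-fixed point at each stage. (A small caveat: the set of order-$n$ lifts is itself a torsor over $H^1_\nabla(Y,\hb^n\pend(E))$; it is not quite "obstruction in $H^1$, ambiguity in $H^0$" as you wrote, but your weight-counting conclusion is correct.)

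Where your proposal diverges from the paper, and where it is genuinely imprecise, is in (c). You assert that once $\Ppgl$ and a $\gm$-equivariant isomorphism $\tatp\isom\frac{1}{r}\TT(\det(E))$ are in hand, the construction of $\Ehb$ ``reduces to finding a graded quantization of the rank-one object attached to the Picard algebroid $\frac{1}{r}\TT(\det(E))$, which is precisely what Theorem \ref{thm:quan_lag_graded} (applied to this rescaled Picard algebroid in place of $\TT(L)$) produces.'' This is not quite how the reduction works, and as stated it does not go through: Theorem \ref{thm:quan_lag_graded} takes as input an actual graded line bundle $L$, and $\frac{1}{r}\TT(\det(E))$ need not be of the form $\TT(L)$. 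Moreover, the data governing a rank-$r$ quantization at the infinite-order level is not the determinant algebroid together with $\Ppgl$ and a local trivialization; it is the full HC $\autderdmr$-torsor $\Pmr$.

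What the paper actually does is factor the lift of $\PJ$ to $\Pmr$ through the chain of HC pairs $\gh\epi\ghtwo\epi\ghone\epi\ghzero$, where $\ghone=\autderj\times\pglpgl$, $\ghtwo=\bautderdm\times\pglpgl$, and the piece borrowed from the line-bundle proof is the lift of $\PJ$ to the ``barred'' torsor $\bPdm$ over $\bautderdm$ (Proposition \ref{prop:bPdm_lift}) --- this exists uniquely and is independent of any particular line bundle, since $\bautderdm$ is the quotient by the center $\hck$. The determinant Picard algebroid isomorphism $\frac{1}{r}\det\vu^+$ then enters only at the final step, as the datum needed to promote the $\autdmr$-torsor $\Pmr\cong\P_E\times_{\P_{\PP(E)}}\P_2$ (cf. \eqref{eq:Pmr} and \eqref{eq:autdmr}) to a transitive HC $\autderdmr$-torsor (Lemma \ref{lemma:ghtwo_lift}); it is not used to produce a rank-one quantization. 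Your identification of the $\Conn$-torsor structure in the last paragraph would likewise need to be routed through this torsor decomposition rather than borrowed wholesale from the line bundle case. So: correct skeleton and a correct step (a), but step (c) mischaracterizes the role of $\frac{1}{r}\TT(\det(E))$ and needs to be reformulated in terms of the intermediate HC torsor $\bPdm$ before this counts as a complete proof.
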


\section{Formal geometry and quantization}\label{sec:formal}

In \S\,\ref{subsec:hcpair} and \S\,\ref{subsec:hctorsor}, we first recall basics of formal geometry, which are taken from \cite{BK, BGKP, BC}. Then in the remaining parts we provide the proofs of main results from \S\,\ref{sec:quan_lag}.

\subsection{Harish-Chandra pairs}\label{subsec:hcpair}

In this section, we will recall the notion of $\emph{Harish-Chandra pair}$ and important examples related to quantization problems. Note that the same notion is also used in representation theory of finite dimensional reductive groups, but the examples considered in this section are mostly infinite dimensional. For applications in graded quantizations, we also introduce the notion of \emph{graded Harish-Chandra pair}.

\begin{definition}
	A \emph{Harish-Chandra pair (HC)} (over $\kk$) is a pair $\gh$, where $G$ is a (pro)algebraic group and $\lieh$ is a (pro)finite dimensional Lie algebra equipped with a $G$-action and with a $G$-equivariant embedding $\lieg \inj \lieh$ of Lie algebras such that the differential of the $G$-action on $\lieh$ coincides with the adjoint action of $\lieg$ on $\lieh$ via the embedding $\lieg \inj \lieh$. A \emph{module $V$ over the Harish-Chandra pair $\gh$} is a representation $V$ of the Lie algebra $\lieh$ whose restriction to $\lieg \subset \lieh$ integrates into an algebraic representation of $G$. Moreover, if both $G$ and $\lieh$ are equipped with continuous $\gm$-actions such that the embedding $\lieg \inj \lieh$ is $\gm$-equivariant, then we say that $\gh$ is a \emph{graded Harish-Chandra pair}. If a module $V$ over $\gh$ admits a compatible $\gm$-action, then we say that $V$ is a \emph{graded module over $\gh$}.
\end{definition}

We consider the following important examples of HC pairs. Fix $n \geq 1$ and a $2n$-dimensional vector space $\vv$ over $\kk$ equipped with a linear symplectic form $\sym_0 \in \wedge^2 \vv^*$. We choose a system of linear coordinate functions $x_1, \ldots, x_n, y_1, \ldots, y_n$ on $\vv$ such that
\[ \{x_i, x_j\} = 0 = \{y_i, y_j\}, \quad \{y_j,x_i\} = \delta_{ij}, \quad \forall~ i,j=1,\ldots,n, \]
where $\{\cdot,\cdot\}$ is the Poisson bracket associated to $\sym$. 

Let $\D=\kk \lb x_1, \ldots ,x_n, y_1, \ldots, y_n, \hb \rb$ be the completed homogeneous Weyl agebra over $\pws = \kk\series$ subject to the commutation relations
\begin{equation} \label{eq:CCR}
[x_i,x_j] = 0 = [y_i, y_j], \quad [y_j,x_i] = \delta_{ij} \hb, \quad [\hb,x_i]=[\hb,y_j]=0, \quad \forall~ i,j.
\end{equation}
Then $\A := \D/\hb\D \cong \kk \lb x_1, \ldots ,x_n, y_1, \ldots, y_n \rb$ is the (topological) algebra of functions on the formal completion $\widehat{\vv}$ of $\vv$ at the origin equipped with the Poisson bracket $\{\cdot,\cdot\}$. We set
\begin{equation}\label{eq:grading_D}
\deg x_i = \deg y_j = 1, \quad \deg \hb = 2,
\end{equation} 
so that $\A$ and $\D$ are graded algebras (or equivalently have $\gm$-actions) and all canonical homomorphisms, structures and constructions below are compatible with this grading. We denote by $\D^i$ the $i$-th graded component of $\D$. Let $\autd$ be the group of $\pws$-linear automorphisms of the algebra $\D$ and let $\der(\D)$ be the Lie algebra of $\pws$-linear derivations of $\D$. Then they form a graded HC pair $\autder$. They also have the natural $\gm$-actions by conjugations induced by the $\gm$-action on $\D$. 

Let $\xx$ be the Lagrangian subspace of $\vv$ spanned by $\{ y_1,\ldots,y_n \}$. Then $\M := \D/\D\xx$ is a left $\D$-module such that $\M/\hb\M \cong \A/\A\xx$ as a $\A$-module. This is the `formal local model' of quantization of line bundles. Let $1_\M := 1 \mod \D\xx$ denote the generator of $\M$. The composite map 
\[ \kk \llbracket x_1, \dots, x_n, \hb \rrbracket \inj \D \epi \D/\D\xx = \M   \]
is an isomorphism of $\kk \llbracket x_1, \dots, x_n, \hb \rrbracket$-modules. Under this isomorphism, the action of $y_i$ on $\M$ goes to the operator $\hb\partial_i = \hb\partial_{x_i}$ on $\kk \llbracket x_1, \dots, x_n, \hb \rrbracket$ by \eqref{eq:CCR}.

In order to obtain a graded version of Theorem \ref{thm:lag}, we need to introduce a grading, or equivalently a (continuous) $\gm$-action $(t,m) \mapsto t.m$, on $\M$ compatible with the grading \eqref{eq:grading_D} of $\D$, in the sense that   
\begin{equation}\label{eq:grading_M}
t.\hb = t^2\hb, \quad t.(um) = (t.u)(t.m), \quad  \forall~ t \in \kkm,~u \in \D,~m \in \M.
\end{equation}   
It is clear that such a $\gm$-action is uniquely determined by its value on the generator $1_\M$. We can define the $\gm$-action by regarding $\M$ as a quotient of $\D$ such that $t.1_\M = 1_\M$, $\forall~t \in \kkm$. We denote this action by $s^0 : \gm\times\M \to \M, (t, m) \mapsto s^0_t(m)$. 

\begin{remark}\eqref{rmk:grading}
	The $\gm$-action satisfying \eqref{eq:grading_M} is not unique. For instance, any character $t \mapsto t^n$ of $\gm=\kkm$ gives rise to a $\gm$-action $s^n: \gm\times\M \to \M$ on $\M$ such that
	\[  s^n_t(m) = t^n \cdot s^0_t(m).  \]
	Up to automorphisms of $\M$ as $\D$-modules, the collection $\{ s^n \}_{n \in \ZZ}$ give all equivalence classes of $\gm$-actions on $\M$ satisfying \eqref{eq:grading_M}. The choice of the grading on $\M$ will not affect the discussions afterwards. This is compatible with the fact that the grading on admissible bundles on nilpotent coadjoint orbits is only unique up to degree shift (see \S\,\ref{subsec:grading}). So we will stick with the $\gm$-action $s^0$. 
\end{remark}

Let $\JJ \subset \D$ be the preimage of the ideal $\A\xx \subset \A$ under the projection $\D \epi \D/\hb\D = \A$. Then $\JJ$ is a two-sided ideal of $\D$. Let $\autdj$, resp. $\derdj$, be the subset of $\autd$, resp. $\der(\D)$, consisting of maps preserving $\JJ$. The pair $\autderj$ is a HC subpair of $\autder$, which is preserved by the $\gm$-action and hence graded. 

Now we define another HC pair which is directly related to quantization of Lagrangian subvarieties. Let $\derdm$ be the Lie algebra of derivations of the pair $(\D,\M)$, which are of the form $(\dd, \dm)$ where $\dd \in \der(\D)$ and $\dm: \M \to \M$ is a continuous $\pws$-linear map such that
\begin{equation}\label{eq:dm}
\dm(u \cdot m) = \dd(u) m + u (\dm(m)), \quad \forall~ u \in \D, ~ m \in \M.
\end{equation}
Similarly, we define $\autdm$ to be the group of automorphisms of the pair $(\D,\M)$, which are of the form $(\fd,\fm)$ where $\fd \in \autd$ and $\fm: \M \to \M$ is a continuous $\pws$-linear bijection such that
\begin{equation}
\fm(u \cdot m) = \fd(u)\fm(m).
\end{equation}
Then $\autderdm$ forms a HC pair and $\M$ is a module over this pair. If we choose the grading on $\M$ described after \ref{eq:grading_M}, we also get a $\gm$-action on $\autdm$ and $\derdm$ by conjugation. Note that the collection of $\gm$-actions $s^n$ on $\M$ in Remark \ref{rmk:grading}  all lead to the same $\gm$-action on the pair $\autderdm$. Thus the pair $\autderdm$ is \emph{canonically} a graded HC pair and $\M$ is a graded module over it.

Let $\m$ be the preimage of the unique maximal ideal of $\A$ under the projection $\D \epi \A$. Then $\m$ is the unique maximal ideal of $\D$ and there is a canonical $\gm$-equivariant isomorphism $\M / \m \M \cong \kk$, where the $\gm$-action on $\kk$ is trivial. For any $(\fd,\fm) \in \autderdm$, since $\fd \in \autd$ always preserves $\m$, the bijection $\fm: \M \to \M$ descends to a $\kk$-linear automorphism of $\M/\m \M \cong \kk$, which is the same as an element in $\kkm$. This gives a canonically defined $\gm$-equivariant group homomorphism
\begin{equation}\label{eq:kappa}
	\kappa: \autdm \to \kkm,
\end{equation}
where the $\gm$-action on the codomain is trivial. The natural embedding $\kkm \inj \autdm$ is a section of this projection. 

The map $f \mapsto (\Id_\D, f \cdot \Id_\M)$ provides a natural central embedding $ \epaut: \pws^\times \inj \autdm$. Similarly there is a central Lie algebra embdding $\epder: \pws \inj \derdm$ given by $a \mapsto (0, a \cdot \Id_\M)$. This gives an injective morphism 
\[ \ep = \lab \epaut, \epder \rab: \hcK \inj \autderdm \] 
of graded HC pairs. On the other hand, the obvious forgetful map gives a morphism 
\[ F : \autderdm \to \autder \] 
of graded HC pairs. It can be shown that the image of $F$ is exactly $\autderj$ and we have an exact sequence of \emph{graded} HC pairs(\cite[Cor. 3.8] {BGKP}),
\begin{equation}\label{exsq:autderdm}
1 \to \hcK \xrightarrow{\ep} \autderdm \xrightarrow{F} \autderj \to 1.
\end{equation}

Similarly for the pair $\autderdm$, we define the HC pair
\[ \bautderdm := \lab \autdm/\epaut(\kkm), \derdm/\epder(\kk)  \rab.  \]
Because of $\kappa: \autdm \to \kkm$ in \eqref{eq:kappa}, we have a decomposition of groups
  \begin{equation}\label{eq:autdm_split}
  	\autdm \cong \kkm\times\bautdm.
  \end{equation}

Consider the algebra $\D[\hb\pb]$ with the $\gm$-action extended from that on $\D$, such that $t.h^i = t^i h^i$, $\forall~t \in \gm$, $i \in \ZZ$. The subspace $\hinvj \subset \D[\hb\pb]$ is naturally a graded Lie algebra under commutator bracket. There is a graded Lie algebra homomorphism $\varphi_\D: \hinvj \to \der(\D)$ given by $a \mapsto [a, -]$. Note that $\JJ\M \subset \hb\M$ since $\JJ=\ann(\M/\hb\M)$. Therefore we have a well-defined map $\pdm: \M \to \M$ such that $\pdm(a)$ is given by $m \mapsto am$ for any $a \in \hinvj$ and $m \in \M$. It is straightforward to check that both $\pd(a)$ and $\pdm(a)$ satisfy the equation \eqref{eq:dm} and hence the pair $(\pd(a), \pdm(a))$ gives an element of $\derdm$. Therefore the assignment $a \mapsto (\pdm(a), \pd(a)) $ yields a \emph{graded} Lie algebra homomorphism $\pddm: \hinvj \to \derdm$. In fact, it is an isomorphism by \cite[Lemma 3.6]{BGKP}.


For the high rank case, consider $\D$-module $\M_r:=(\M)^{\oplus r}$ to be the direct sum of $r$-copies of $\M$, in particular $\M_1 = \M$. The natural grading on $\M$ induces a grading on $\M_r$ so that $\M_r$ is a graded $\D$-module. Similar to the rank one case, one can consider the group $\autdmr$ of automorphisms and the Lie algebra $\derdmr$ of derivations of the pair $(\D,\M_r)$, which together form a HC pair $\autderdmr$. Denote by $\GLh = GL(r,\pws)$ and $\PGLh=PGL(r,\pws)$ the $\pws$-valued points of the group $\GLr$ and $\PGLr$ respectively and by $\glh$ and $\pglh$ their Lie algebras.  We then have a short exact sequence of HC pairs (\cite[Lemma\,4.4]{BC}),
\begin{equation}\label{exsq:autderdmr}
1 \to \glgl \to \autderdmr \to \autderj \to 1,
\end{equation}
which is the sequence \eqref{exsq:autderdm} when $r=1$. Like the rank one case, the graded structure on $\M_r$ induces a graded structure on $\autderdmr$, so that maps in \eqref{exsq:autderdmr} respect the graded structures. Here the $\gm$-action on $\glgl$ is induced by the $\gm$-action on $\pws$.

There is a commutative diagram of graded Lie algebras
\begin{equation}\label{diag:derdmr}
\begin{tikzcd}
0  \arrow{r}  & \pws \arrow{r} \arrow[d]  & \hb\pb\JJ \arrow[r] \arrow[d]   & \derdj  \arrow{r} \arrow[d, equal] & 0  \\
0  \arrow{r}  & \glh    \arrow{r}  & \derdmr  \arrow{r}   &  \derdj  \arrow{r} & 0  
\end{tikzcd}
\end{equation}
where the middle vertical map is defined in exactly the same way as the isomorphism $\pddm: \hb\pb\JJ \isom \derdm$. There is a natural splitting $\glh \cong \pws \oplus \pglh$ of graded Lie algebras given by the trace map $\tr: \glh \to \pws$. Combining this splitting with diagram \eqref{diag:derdmr} give a splitting of graded Lie algebras
\begin{equation}\label{eq:derdmr}
\derdmr \cong [\hb\pb\JJ \oplus \glh]/\pws \cong  \hb\pb\JJ \oplus \pglh.
\end{equation}
At the level of groups there is a corresponding diagram
\begin{equation}\label{diag:autdmr}
\begin{tikzcd}
1  \arrow{r}  & \pws^\times \arrow{r} \arrow[d]  & \aut(\D,\M) \arrow[r] \arrow[d]   & \derdj  \arrow{r} \arrow[d, equal] & 1  \\
1  \arrow{r}  & \GLh    \arrow{r}  & \autdmr  \arrow{r}   &  \derdj  \arrow{r} & 1  
\end{tikzcd}
\end{equation}
but the splitting at the Lie algebra level does not pass to groups in constant terms (with respect to the order of $\hb$), since the group $\GLr$ does not split into $\PGLr\times\kkm$. On the other hand, we have the splitting $\autdm \cong \kkm\times\bautdm$ by \eqref{eq:autdm_split}. Therefore we have a splitting of graded groups
\begin{equation}\label{eq:autdmr}
\autdmr \cong  \bautdm\times[\GLr \ltimes \exp(\hb \cdot \pglh)],
\end{equation}
where $\exp(\hb \cdot \pglh)$ is the prounipotent kernel of the evaluation map $\PGLh \to \PGLr$ modulo $\hb$. The two splittings \eqref{eq:derdmr} and \eqref{eq:autdmr} agree modulo $\lab \kkm, \kk \rab$ (see \cite[Lemma 6]{BC}),
\begin{equation}\label{eq:autderdmr_quot}
\autderdmr / \lab \kkm, \kk \rab \cong \bautderdm\times\pglpgl.
\end{equation}

\subsection{Harish-Chandra torsors} \label{subsec:hctorsor}

For the remaining part of this section, we sketch basic definitions and constructions related to the proof of Theorem \ref{thm:per}, Proposition \ref{prop:graded} and Corollary \ref{cor:per_graded}. An essential notion in the approach of \cite{BK} and \cite{BGKP} is that of a \emph{Harish-Chandra (HC) torsor} over a Harish-Chandra pair $\gh$, which is defined as follows. Suppose $\P$ is a $G$-torsor over a variety $X$ with the structure map $\pi:\P \to X$. Let $\lieg_\P:=\pi_*(\str_\P \hten \lieg)^G$ and $\lieh_\P := \pi_*(\str_\P \hten \lieh)^G$ be the Lie algebra bundles on $X$ associated to the $G$-modules $\lieg$ and $\lieh$ respectively. Let $\EE_\P:=\pi_*(\T_\P)^G$ be the sheaf of $G$-invariant vector fields on $\P$. Then we have the short exact sequence (the \emph{Atiyah extension}, cf. \cite{Atiyah})
\begin{equation} \label{eq:atiyah}
	 0 \to \lieg_\P \xrightarrow{\iota_\P} \EE_\P \to \T_X \to 0
\end{equation}
of bundles over $X$. An \emph{$\lieh$-valued connection} on $\P$ is a $G$-equivariant bundle map $\theta_\P: \EE_\P \to \lieh_\P$ such that the composition $\theta_\P \circ \iota_\P: \lieg_\P \to \lieh_\P$ coincides with the embedding $\lieg_\P \inj \lieh_\P$ induced by $\lieg \inj \lieh$. An $\lieh$-valued connection is said to be \emph{flat} if the $\lieh$-valued $1$-form $\Theta:=\pi^*(\theta_\P)$ on $\P$ satisfies the Maurer-Cartan equation $d\Theta + \frac{1}{2}\Theta \wedge \Theta = 0$. 

\begin{definition}
	A \emph{Harish-Chandra (HC) $\gh$-torsor} over $X$ is a pair $\lab \P, \theta_\P \rab$ of a $G$-torsor $\P$ over $X$ and a flat $\lieh$-valued connection $\theta_\P: \EE_\P \to \lieh_\P$ on $\P$. It is said to be \emph{transitive} if the connection map $\theta_\P : \EE_\P \to \lieh_\P$ is an isomorphism. In this case, $\theta_\P$ is equivalent to a Lie algebra morphism $\lieh \to \Gamma(\P, \T_\P)$ which induces an isomorphism of vector bundles $\lieh \otimes \str_\P \cong \T_\P$. 
\end{definition}


Given a module $V$ over a HC pair $\gh$ and a Harish-chandra $\gh$-torsor $\P$ over $X$, the \emph{localization  $\loc(\P,V)$ of $V$ with respect to $\P$} is defined to be the associated bundle $\P \times_G V$ of the $G$-torsor $\P$ with fiber $V$, which carries a natural flat connection induced by $\theta_\P$ (see \cite[\S\,2.3]{BK} for details).

Suppose $\str_X$ is a quantization of the smooth symplectic variety $(X,\sym)$. Recall that $\IY$ is the ideal subsheaf of $Y$ and $\JY$ is the preimage of $\IY$ under the projection $\Q \epi \str_X$. For any $y \in Y$,  let $\hstr_{\hb,y}$ denote the completion of $\Q$ at $y$. Correspondingly, let $\hJJ_y \subset \hstr_{\hb,y}$ be the completion of the ideal sheaf $\JY$  at $y$. Set $\PJ$ to be the set of pairs $(y, \eta)$ where $\eta: \hstr_{\hb,y} \isom \D$ is a continuous $\pws$-algebra isomorphism such that $\eta(\hJJ_y) = \JJ$. Then $\PJ$ is a torsor over the HC pair $\autderj$. Moreover, $\PJ$ is a transitive torsor since the conormal bundle to $Y$ is naturally identified with its tangent bundle due to the Lagrangian condition (cf. \cite[Lemma 4.3]{BC}). 


By \cite[Lemma 6.1]{BGKP}, a choice of line bundle $L$ on $Y$ and its quantization $\Lhb$ is equivalent to the choice of a lift of the torsor $\PJ$ to a transitive HC $\autderdm$-torsor $\Pdm$. Given a line bundle $L$ and its quantization $\Lhb$, set $\LL_{\hb,y} := \hstr_{\hb,y} \otimes_{\Q} \Lhb$. Then it is shown in the proof of \cite[Lemma 6.1]{BGKP} that the $\D$-module $\eta^*\LL_{\hb,y}$, obtained from $\LL_{\hb,y}$ by transporting the module structure via $\eta$, is isomorphic to $\M$. Various choices of an isomorphism $\eta^*\LL_{\hb,y} \cong \M$ for all $y \in Y$ give a transitive HC torsor $\Pdm$ over $\autderdm$, which is the required lift of the torsor $\PJ$. Conversely, the quantization $\Lhb$ can be recovered as the (Zariski) sheaf of flat sections of $\loc(\Pdm, \M)$. 

\begin{remark}
	Note that the nonemptyness of the torsors $\PJ$ and $\Pdm$ is not tautological. The case of vector bundles in \S\,5.4 is similar. The proof was provided in \cite[Lemma 4.2]{BC}.
\end{remark}

To promote the previous discussions into the graded setting, we need the notion of \emph{graded HC torsors}.

\begin{definition} \label{defn:graded_HC_pair}
	A \emph{graded Harish-Chandra (HC) torsor} over a graded Harish-Chandra pair $\gh$ is a usual Harish-Chandra $\gh$-torsor $\P$ equipped with a $\gm$-action which lifts the $\gm$-action on $X$, such that 
	\begin{enumerate}
		\item it is compatible with the $G$-action and the $\gm$-action on $G$, and
		\item 
		  the connection map $\theta_\P : \EE_\P \to \lieh_\P$ is $\gm$-equivariant with respect to the induced $\gm$-actions on $\EE_\P$ and $\lieh_\P$.
	\end{enumerate}
 We say that a graded Harish-Chandra torsor is \emph{transitive} if it is transitive as a usual Harish-Chandra torsor.
\end{definition}

\begin{remark} \label{rmk:graded_HC_pair}
	Equivalently, a graded structure on a $\gh$-torsor $\P$ can be described as follows. We identify $\P$ with its sheaf of local sections. For any $t \in \kkm$, one can define a new $\gh$-torsor $\tPP$ to be the sheaf $t_*(\P)$ on $Y$, where $t_*$ stands for the sheaf-theoretic push-forward with respect to the automorphism of $X$ induced by $t$. The $\gh$-torsor structure of $\tPP$ is defined by twisting the $\gh$-torsor structure of $\P$ via the $\gm$-action on $G$:  $gs := (t.g)s$, where $g \in G$, $s$ is any local section of $\tPP$, $t.g$ stands for the $\gm$-action on $G$ and the product on the right hand side is taken in $\P$. Note that $\EE_{\tPP}$ and $\lieh_{\tPP}$ are canonically isomorphic to $t_*(\EE_\P)$ and $t_*(\lieh_\P)$ respectively. Therefore we can define the connection $\theta_{\tPP}:= t_*(\theta_\P)$ for $\tPP$ and $\tPP$ is a HC $\gh$-torsor. A graded structure of $\P$ is encoded as a collection of $\gh$-torsor isomorphisms $\phi_t: \P \to \tPP$ for all $t \in \kkm$, satisfying the cocycle condition $\phi_{t_1 t_2} = (t_1)_* (\phi_{t_2}) \phi_{t_1}$, $\forall t_1, t_2 \in \kkm$.
\end{remark}

Now suppose $\Lhb$ is graded, we can define a $\gm$-action on the set of isomorphisms $\eta^*\LL_{\hb,y} \cong \M$  by conjugation of the $\gm$-action on $\Lhb$ and the $\gm$-action on $\M$ by the equation \eqref{eq:grading_M}. Therefore $\Pdm$ has a $\gm$-equivariant structure which is compatible with the $\gm$-equivariant structure on $\PJ$ via the natural projection $\Pdm \epi \PJ$. Conversely, if a $\gm$-equivariant lift $\Pdm \epi \PJ$, then $\loc(\Pdm, \M)$ is graded and so is its sheaf of flat sections. 

\begin{lemma}\label{lemma:torsor_lag_graded}
	Let $(X,\sym)$ be a graded smooth symplectic variety with a graded quantization $\Q$ and let $Y$ be a graded Lagrangian subvariety of $X$. Then a graded rank one quantization $\Lhb$ of $Y$ is equivalent to the choice of a lift of the $\gm$-equivariant torsor $\PJ$ to a $\gm$-equivariant transitive Harish-Chandra $\autderdm$-torsor $\Pdm$ such that the natural projection $\Pdm \epi \PJ$ is $\gm$-equivariant. 
\end{lemma}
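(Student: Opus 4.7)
The plan is to upgrade the ungraded bijection of \cite[Lemma 6.1]{BGKP}, recalled in the paragraphs preceding the lemma, by tracking $\gm$-equivariance on both sides. Recall that in the ungraded setting, the torsor $\Pdm$ is assembled from isomorphisms $\eta^*\LL_{\hb,y} \isom \M$ of $\D$-modules for $(y,\eta) \in \PJ$, and conversely $\Lhb$ is recovered as the sheaf of flat sections of $\loc(\Pdm,\M)$.

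For the forward direction, suppose $\Lhb$ is a graded rank one quantization. For $t \in \kkm$, the graded structures on $\Q$ and $\Lhb$ give, for every $y \in Y$, compatible isomorphisms $t_*:\hstr_{\hb,y} \to \hstr_{\hb,t.y}$ and $t_*:\LL_{\hb,y} \to \LL_{\hb,t.y}$. Given a point $(\eta,\phi) \in \Pdm$ sitting over $y$ (where $\eta \in \PJ$ and $\phi:\eta^*\LL_{\hb,y} \isom \M$), I would define its image under $t$ as the pair $(t_*\eta t_*^{-1}, s^0_t \circ \phi \circ t_*^{-1})$ over $t.y$, where $s^0$ is the canonical $\gm$-action on $\M$ from \S\ref{subsec:hcpair}. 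The compatibility conditions \eqref{eq:grading_M} between $s^0$ and the $\gm$-action on $\D$ ensure that this is a well-defined $\gm$-action by $\autderdm$-torsor maps, lifting the $\gm$-action on $\PJ$; the flat connection $\theta_{\Pdm}$ is automatically $\gm$-equivariant because the whole construction is natural in the symplectic-Lagrangian data.

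For the backward direction, given a graded transitive HC $\autderdm$-torsor $\Pdm$ covering $\PJ$ equivariantly, I would form the associated bundle $\loc(\Pdm,\M) = \Pdm \times_{\autdm} \M$. Since $\Pdm$ carries a $\gm$-action compatible with its $\autdm$-structure, and $\M$ carries the $\gm$-action $s^0$ intertwining the $\gm$-action on $\D$, the associated bundle inherits a $\gm$-action. Its sheaf of flat sections, which by the ungraded correspondence is the underlying ungraded quantization $\Lhb$, is then preserved because $\theta_{\Pdm}$ is $\gm$-equivariant by Definition \ref{defn:graded_HC_pair}(2). A routine check of \eqref{eq:grading_M} at the level of local models shows that this yields a graded rank one quantization in the sense of Definition \ref{defn:graded_line}. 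The two constructions are mutually inverse because they are so in the ungraded setting and we have only added a compatible $\gm$-structure throughout.

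The main subtlety, and the one point where care is needed, is that the $\gm$-action we place on $\M$ is not unique: as Remark \ref{rmk:grading} records, the family $\{s^n\}_{n \in \ZZ}$ of choices all induce the \emph{same} $\gm$-action on the pair $\autderdm$. This is precisely what is required so that the torsor pushout described in Remark \ref{rmk:graded_HC_pair} is intrinsically defined and the bijection above is canonical (independent of the unavoidable character ambiguity in grading $\M$). Once this is observed, the remainder of the argument is a straightforward transcription of the ungraded proof of \cite[Lemma 6.1]{BGKP} with $\gm$-equivariance inserted at each step.
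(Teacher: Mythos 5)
Your proposal is correct and follows essentially the same route as the paper: the $\gm$-action on $\Pdm$ is obtained by conjugating the isomorphisms $\eta^*\LL_{\hb,y}\cong\M$ against the graded structures on $\Lhb$ and $\M$ (via $s^0$), and conversely the grading on $\Pdm$ and on $\M$ makes $\loc(\Pdm,\M)$ graded and hence its sheaf of flat sections graded. The observation that the character ambiguity in grading $\M$ (the family $s^n$) is invisible at the level of $\autderdm$, and hence harmless for the torsor equivalence, is exactly the point the paper flags in its Remark inside \S\ref{subsec:hcpair}.
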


Similar to Lemma \ref{lemma:torsor_lag_graded}, we have the following graded version of \cite[Prop.\,4.4]{BC}.

\begin{lemma}\label{lemma:torsor_lag_vec}
	A choice of a graded vector bundle $E$ of rank $r$ on $Y$ and its graded quantization $\Ehb$ is equivalent to a choice of a lift of the $\gm$-equivariant Harish-Chandra $\autderj$-torsor $\PJ$ to a $\gm$-equivariant transitive Harish-Chandra $\autderdmr$-torsor $\Pmr$ such that the natural projection $\Pmr \epi \PJ$ is $\gm$-equivariant. 
\end{lemma}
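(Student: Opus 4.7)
The plan is to parallel the argument of Lemma~\ref{lemma:torsor_lag_graded} in the higher rank setting, combining the ungraded bijection of \cite{BC} with a tracking of the $\gm$-equivariant structures on both sides. The ungraded statement already gives a bijection between pairs $(E,\Ehb)$ of a rank $r$ vector bundle $E$ over $Y$ with a quantization $\Ehb$ and transitive Harish-Chandra $\autderdmr$-torsors $\Pmr$ lifting $\PJ$. Explicitly, for each $y \in Y$ one sets $\EE_{\hb,y} := \hstr_{\hb,y} \otimes_\Q \Ehb$ and takes $\Pmr$ to parametrize triples $(y,\eta,\xi)$ where $(y,\eta) \in \PJ$ and $\xi : \eta_*\EE_{\hb,y} \isom \M_r$ is a $\D$-module isomorphism; conversely, $\Ehb$ is recovered as the Zariski sheaf of flat sections of $\loc(\Pmr, \M_r)$ with respect to $\theta_{\Pmr}$. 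So the only thing to establish is that graded refinements on the two sides correspond.

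From graded $\Ehb$ to graded $\Pmr$: the $\gm$-actions on $\Q$ and on $\Ehb$ equip each $\EE_{\hb,y}$ with a compatible $\gm$-action. Using the canonical grading on $\M_r = \M^{\oplus r}$ induced componentwise from $s^0$ of \eqref{eq:grading_M}, I would define a $\gm$-action on the fiber of $\Pmr$ over $y$ by conjugation, $(t.\xi)(m) := t.\bigl(\xi(t^{-1}.m)\bigr)$, which combines with the already-present $\gm$-equivariant structure on $\PJ$ to give a $\gm$-equivariant structure on $\Pmr$. That this is compatible with the $\autderdmr$-action is immediate from the fact that the $\gm$-action on $\autderdmr$ is defined by conjugation with the $\gm$-action on the pair $(\D,\M_r)$, which matches the graded HC pair structure described in \S\,\ref{subsec:hcpair}. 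Equivariance of the projection $\Pmr \epi \PJ$ is built into the construction, and the flat connection $\theta_{\Pmr}$ is automatically $\gm$-equivariant since it is the unique lift of $\theta_{\PJ}$ through $\autderdmr \to \autderj$ compatible with the transitive torsor structure (cf.\ the discussion around Definition~\ref{defn:graded_HC_pair}).

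Conversely, given a $\gm$-equivariant $\autderdmr$-torsor $\Pmr$ lifting $\PJ$, the associated bundle $\loc(\Pmr,\M_r) = \Pmr \times_{\autdmr} \M_r$ inherits a $\gm$-action from the diagonal action on $\Pmr \times \M_r$ (using the canonical grading on $\M_r$); since the connection is $\gm$-equivariant, the sheaf of flat sections is $\gm$-stable, yielding a graded $\Q$-module $\Ehb$ whose reduction modulo $\hb$ is a graded vector bundle $E$ on $Y$. The two constructions are mutually inverse, essentially by unfolding the definition of $\loc$ and the equivalences already employed in the rank one case.

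The main obstacle, and what takes some care, is the compatibility of the two possible choices of grading on $\M_r$. As explained in Remark~\ref{rmk:grading}, the $\gm$-action $s^0$ on $\M$ is not canonical, and twists by characters $t \mapsto t^n$ of $\gm$ yield equivalent but distinct graded structures. However, all these choices induce the same $\gm$-action on the HC pair $\autderdmr$ (since the twist is central), so different choices produce graded quantizations that differ only by an overall shift in the $\gm$-grading of $\Ehb$. Hence the bijection is well-defined on equivalence classes on both sides, and the lemma follows.
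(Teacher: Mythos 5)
Your proposal is correct and follows essentially the same route the paper uses for the rank-one case (Lemma~\ref{lemma:torsor_lag_graded}): start from the ungraded equivalence of \cite[Prop.~4.4]{BC}, endow $\Pmr$ with a $\gm$-action by conjugation from the graded structures on $\Ehb$ and on $\M_r$ fixed via $s^0$, and conversely recover $\Ehb$ as flat sections of $\loc(\Pmr,\M_r)$. One small imprecision: once $s^0$ is fixed (as the paper does), the correspondence is a genuine bijection of choices, not merely "well-defined on equivalence classes" — the role of Remark~\ref{rmk:grading} is only to explain that the particular choice of $s^n$ is immaterial, not to weaken the statement.
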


\subsection{Graded quantization of line bundles: proof of Theorem \ref{thm:quan_lag_graded}}\label{subsec:graded_lag_proof}

By Lemma \ref{lemma:torsor_lag_graded}, a choice of graded quantization $\Lhb$ of a graded line bundle $L$ can be divided into a two-step choice of:

\begin{itemize}
	\item 
	A graded lift of the $\autderj$-torsor $\PJ$ to a graded transitive torsor $\bPdm$ over $\bautderdm$.
	\item 
	A graded lift of $\bPdm$ to a graded transitive $\autderdm$-torsor $\Pdm$ such that there is a graded isomorphism $L \cong \kk \otimes_\kappa \Pdm$, where the right hand side is the associated line bundle of $\Pdm$ via the group homomorphism $\kappa: \autdm \to \kkm$ as defined in \eqref{eq:kappa}.
\end{itemize}

The following lemma describes when the second part of the choice exists. The proof is exactly the same as that of \cite[Lemma 6.5]{BGKP}. One only needs to check that every step of the argument is compatible with the graded structures.

\begin{lemma}\label{lemma:bPdm_lift}
	The set of isomorphism classes of graded lifting $\Pdm$ of the Harish-Chandra $\bautderdm$-torsor $\bPdm$ as above is in bijection to the set of $\gm$-equivariant isomorphisms $\vu^+: \tatp \to \TT(L)$ of graded Picard algebroids. Moreover, the bijection is an isomorphism of $\Conn$-torsors.
\end{lemma}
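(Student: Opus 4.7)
The plan is to parallel the proof of \cite[Lemma\,6.5]{BGKP} and verify that every step respects the graded $\gm$-structures set up in \S\,\ref{subsec:hcpair}. First I would unpack the central extension
\[
  1 \to \lab \kkm, \kk \rab \to \autderdm \to \bautderdm \to 1
\]
arising from \eqref{eq:autdm_split} and the definition of $\bautderdm$: it shows that a graded lift of $\bPdm$ to a transitive $\autderdm$-torsor is the same datum as a graded $\lab \kkm, \kk \rab$-torsor on $Y$, i.e., a graded line bundle together with a $\gm$-equivariant algebraic flat connection, compatible with $\bPdm$ along the projection $\autderdm \to \bautderdm$. The constraint $L \cong \kk \otimes_\kappa \Pdm$ then fixes the underlying line bundle to $L$, so the only remaining freedom in the lift is a $\gm$-equivariant flat connection on the trivial bundle, which is precisely what the group $\Conn$ records.

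Next I would construct the forward map $\Pdm \mapsto \vu^+$. Given $\Pdm$, the transitive flat connection $\theta_{\Pdm}$ identifies the Atiyah bundle $\EE_{\Pdm}$ with the associated bundle of $\derdm$, and the $\gm$-equivariant isomorphism $\pddm : \hb\pb\JJ \isom \derdm$ of \S\,\ref{subsec:hcpair} identifies the latter with the associated bundle of $\hb\pb\JJ$. Passing to the quotient $\hb\pb\JJ/\hb\pb\JJ^2 = \tatp$ and then projecting to the Atiyah algebroid of the $\kkm$-torsor associated to $\Pdm$ via $\kappa$---which by our normalization is exactly $\TT(L)$---produces the required $\gm$-equivariant morphism $\vu^+ : \tatp \to \TT(L)$. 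The resulting morphism will be an isomorphism of graded Picard algebroids because $\theta_{\Pdm}$ and $\pddm$ are isomorphisms and the induced maps on the subsheaf $\stry$ and on the quotient $\T_Y$ coincide tautologically with those in \eqref{exsq:tatp}.

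For the inverse direction, starting from a $\gm$-equivariant Picard algebroid isomorphism $\vu^+ : \tatp \isom \TT(L)$, I would reconstruct $\Pdm$ by gluing $\bPdm$ with the graded $\kkm$-frame torsor of $L$ along their common projection to $\PJ$, using $\vu^+$ as the compatibility datum between the two flat connections. More concretely, the fiber product of $\bPdm$ and the frame torsor of $L$ over $\PJ$ carries a natural flat $\derdm$-valued connection assembled from the two component connections via $\vu^+$, and the three graded structures extend canonically to a graded structure on the fiber product, yielding the desired graded $\autderdm$-torsor $\Pdm$. Verifying that these two constructions are mutually inverse will reduce, modulo the $\gm$-equivariance check, to the calculation already carried out in the proof of \cite[Lemma\,6.5]{BGKP}.

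To conclude that the bijection is an isomorphism of $\Conn$-torsors, I would compare the two actions directly: on the torsor side $\Conn$ acts by twisting $\Pdm$ with a graded $\lab \kkm, \kk \rab$-torsor whose underlying $\kkm$-torsor is trivial; on the isomorphism side, any two graded isomorphisms $\vu^+_1, \vu^+_2 : \tatp \isom \TT(L)$ differ by a $\gm$-equivariant automorphism of $\TT(L)$ fixing both $\stry$ and $\T_Y$, and such automorphisms are exactly the elements of $\Conn$. The main technical difficulty throughout will be the meticulous bookkeeping of graded structures, especially checking that $\pddm$ is $\gm$-equivariant for the canonical grading on $\M$ fixed in \S\,\ref{subsec:hcpair} and that the identification of the Atiyah algebroid of the $\kkm$-torsor associated to $\Pdm$ with $\TT(L)$ intertwines the natural $\gm$-actions; the underlying ungraded calculations are borrowed wholesale from \cite[Lemma\,6.5]{BGKP}.
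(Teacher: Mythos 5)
Your proposal follows exactly the route the paper takes: it cites \cite[Lemma 6.5]{BGKP} for the ungraded bijection and then confirms that each ingredient (the central extension by $\lab \kkm,\kk\rab$, the isomorphism $\pddm$, the identification of $\tatp$ with a quotient of the localized $\hb^{-1}\JJ$, the $\Conn$-torsor structure) is compatible with the $\gm$-actions fixed in \S\,\ref{subsec:hcpair}. The paper's own proof is precisely that one-line reference plus a description of the torsor action $\Pdm \mapsto (\Pdm \times_Y \stry^\times)/\kkm$, so your more detailed walkthrough is an unpacking of the same argument rather than a different one.
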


The only thing we need to explain is the torsor structure on the former set. It is given by $\Pdm \mapsto (\Pdm \times_Y \stry^\times) /\kkm$, where $\stry^\times$ is equipped with any $\gm$-invariant flat connection.

Next we show that the first part of the choice above also exists and is unique. This will complete the proof of Theorem \ref{thm:quan_lag_graded}.

\begin{proposition}\label{prop:bPdm_lift}
	There is a unique (up to isomorphism) graded transitive $\bautderdm$-torsor $\bPdm$ which is a graded lift of the $\autderj$-torsor $\PJ$ such that the natural projection $\Pdm \epi \PJ$ is $\gm$-equivariant.
\end{proposition}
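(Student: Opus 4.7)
The plan is to work with the central extension of graded Harish-Chandra pairs
$$1 \to \mathcal{N} \to \bautderdm \to \autderj \to 1$$
obtained from \eqref{exsq:autderdm} by quotienting out the constant subpair $\hck$. The kernel $\mathcal{N} := \hcK/\hck$ can be identified with $\lab 1 + \hb\pws, \hb\pws\rab$; it is prounipotent, and because scalar multiplication on $\M$ commutes with everything in $\autdm$, it is central in $\bautderdm$. Under the convention $\deg\hb = 2$, its Lie algebra $\hb\pws$ is concentrated in strictly positive $\gm$-weights $\{2,4,6,\ldots\}$.

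I would then cast the problem as that of lifting the graded transitive HC $\autderj$-torsor $\PJ$ through this central extension, as in the reformulation of Lemma \ref{lemma:torsor_lag_graded} modulo the second step (which is the content of Lemma \ref{lemma:bPdm_lift}). By the standard theory of torsors under a central extension, once a lift exists the set of isomorphism classes of lifts forms a torsor over an appropriate first cohomology group with coefficients in $\mathcal{N}$ (incorporating the flat connection data carried by the transitive HC structure), while the obstruction to existence lies in the corresponding second cohomology. For the graded problem, only the $\gm$-invariant subgroups of these groups are relevant.

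The core of the argument is then a successive-approximation step along the filtration $\mathcal{N} \supset (1 + \hb^2\pws) \supset (1+\hb^3\pws) \supset \cdots$. The abelian successive quotients are isomorphic, as $\gm$-modules, to $\hb^k\kk$, which carries the strictly positive character $t \mapsto t^{2k}$. At each stage, both the obstruction and the ambiguity for the next lift live in $\gm$-invariant subspaces of $H^i(Y,\str_Y) \otimes \hb^k\kk$ for $i=1,2$; these invariants vanish because the $\gm$-action on $H^i(Y,\str_Y)$ is rational and the twisting character $t^{2k}$ has no zero weight component. Exactly as in the spirit of Proposition \ref{prop:even_auto}, induction on $k$ combined with passage to the inverse limit produces a unique-up-to-isomorphism graded transitive $\bautderdm$-torsor $\bPdm$ lifting $\PJ$, and $\gm$-equivariance of the projection $\bPdm \epi \PJ$ is built into the construction.

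The principal technical obstacle will be setting up this cohomological machinery cleanly in the transitive HC setting: in particular verifying rationality of the induced $\gm$-action on all cohomology groups at every order so that taking $\gm$-invariants is exact, and confirming that transitivity of $\bPdm$ is preserved (which follows because $\mathcal{N}$ is ``non-tangential'' relative to $\autderj$, acting trivially on the tangent part of the Atiyah sequence \eqref{eq:atiyah}). Once that bookkeeping is in place, the strict positivity of $\deg\hb$ delivers both existence and uniqueness in one stroke.
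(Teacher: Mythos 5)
Your overall strategy—recasting the lift as a successive-approximation problem along the $\hb$-adic filtration of the central prounipotent kernel, and winning with the strictly positive $\gm$-weight of $\hb$—is indeed the mechanism the paper uses. But the proposal has a real gap in how you identify and kill the obstruction and ambiguity groups.

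You assert that these groups are $\gm$-invariant subspaces of $H^i(Y,\str_Y)\otimes\hb^k\kk$ and that they vanish ``because the $\gm$-action on $H^i(Y,\str_Y)$ is rational and the twisting character $t^{2k}$ has no zero weight component.'' That inference is false: rationality only gives a weight decomposition, and $H^i(Y,\str_Y)$ can certainly have weight $-2k$ pieces, so the $\gm$-invariants of the twisted group need not vanish. More importantly, $\str_Y$-coherent cohomology is the \emph{wrong} coefficient group. The HC torsors carry flat connections, and the deformation theory is governed by the algebraic de Rham cohomology $H^i_{\mathrm{DR}}(Y,\kk)$ with \emph{constant} coefficients. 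This is what makes the argument work: since $\gm$ is connected, its action on $H^\bullet_{\mathrm{DR}}(Y,\kk)$ is trivial, so twisting by $\hb^k$ ($k\geq 1$) gives purely positive weight and the invariants genuinely vanish. This is precisely what happens in the paper's Proposition \ref{prop:stable}, where $\Pi\cong H^1_{\mathrm{DR}}(Y,1+\hb\pws)$ and each connecting-step group $H^1_{\mathrm{DR}}(Y,\hb^n\kk)$ has weight $2n$; Lemma \ref{lemma:torsor} then produces a unique $\gm$-stable point at each stage.

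Two further points. First, the paper does not argue existence via vanishing of $H^2$; it instead invokes the ungraded existence result (Remark \ref{rmk:quan_exist} with \cite[Lemma 6.4]{BGKP}) to get $\Lambda\neq\emptyset$, and then only handles uniqueness of a $\gm$-stable element cohomologically. Your proposal commits you to controlling $H^2_{\mathrm{DR}}$ as well; that can be done, but it is unnecessary once the ungraded lift is known. Second, you implicitly pass from ``the isomorphism class $[\bPdm]\in\Lambda$ is fixed by $\gm$'' to ``the torsor $\bPdm$ carries a graded structure.'' This is not automatic: it requires the $2$-cocycle argument of Proposition \ref{prop:kstable}, which uses rationality of the $\gm$-action on the automorphism group $\aut_{\PJ}(\bPdm)\cong H^0_{\mathrm{DR}}(Y,1+\hb\pws)$ together with Lemma \ref{lemma:autpdm}, to trivialize a $\kkm$-valued cocycle. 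That step needs to be made explicit.
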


Let $\Lambda$ denote the set of isomorphism classes of all (ungraded) transitive $\bautderdm$-torsors $\bPdm$ equipped with a $\autderj$-torsor isomorphism $\bPdm \otimes_ {\bautdm} \autdj \cong \PJ$, i.e., $\bPdm$ is a lift of $\PJ$ to a $\bautderdm$-torsor. By Remark \ref{rmk:quan_exist} as well as \cite[Lemma 6.4]{BGKP}, the set $\Lambda$ is nonempty. We denote by $\Pi$ the group of isomorphism classes of torsors over $1 + \hb \str_Y \series \cong (\str_Y \series)^\times / \str_Y^\times$ with a flat algebraic connection (but the flat connection is not part of the data). Then by \cite[\S\,6.1]{BGKP}, the set $\Lambda$ is a $\Pi$-torsor such that the group action is given by $\bPdm \mapsto (\bPdm \times_Y Q) / \pws^\times$, where $\bPdm$ is any lift of $\PJ$ and $Q$ is any flat $(1 + \hb \str_Y \series)$-torsor.

Suppose that the graded structure of $\PJ$ is encoded as a collection of $\autderj$-torsor isomorphisms $\phi_t: \PJ \to \tPJ$ for all $t \in \kkm$, satisfying the cocycle condition in Remark \ref{rmk:graded_HC_pair}. There is a natural $\gm$-action on $\Lambda$ defined as follows. Pick any transitive $\bautderdm$-torsor $\bPdm$ which is a lift of $\PJ$. For any $t \in \kkm$, one can define a new transitive $\bautderdm$-torsor $\tP$ just as in Remark \ref{rmk:graded_HC_pair}. It is naturally a lift of $\tPJ$ and hence also a lift of $\PJ$ via the isomorphism $\phi_t: \PJ \to \tPJ$. Therefore $[\tP]$ is a well-defined element in $\Lambda$ and we define the $\gm$-action on $\Lambda$ by $t.[\bPdm] := [\tP]$. In exactly the same manner, we can define a $\gm$-action on the group $\Pi$ by group automorphisms, which makes $\Lambda$ a graded torsor over the graded group $\Pi$.

It is clear that the isomorphism class of a graded transitive $\bautderdm$-torsor determines a $\gm$-stable point in $\Lambda$. The converse is also true. To prove this, consider the group $\autpdm$ of all $\bautderdm$-torsor automorphisms of $\bPdm$ which intertwines the projection $\bPdm \epi \PJ$. The following lemma is immediate from the definition of the HC torsors in question.

\begin{lemma}\label{lemma:autpdm}
	There is a canonical $\gm$-action on $\autpdm$ by group automorphisms induced by the $\gm$-action on the group $\autdm$. Moreover, we have a canonical $\gm$-equivariant group isomorphism
	\begin{equation}\label{eq:autpdm}
	\autpdm \cong \hdr^0(Y, 1 + \hb \pws),
	\end{equation} 
	where $\hdr^0(Y, 1 + \hb \pws)$ is regarded as a multiplicative abelian group equipped with the  $\gm$-action induced by the isomorphism $1 + \hb \pws \cong \pws^\times / \kkm$. In particular, the only $\gm$-equivariant automorphism of $\bPdm$ in $\autpdm$ is the identity.
\end{lemma}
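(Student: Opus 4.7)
The plan is to identify $\autpdm$ with flat global sections of an associated bundle that will turn out to be trivial, and then read off both the $\gm$-equivariant isomorphism and the uniqueness statement. First I would observe that an element of $\autpdm$ is a global section of the sheaf of $\bautderdm$-torsor automorphisms of $\bPdm$ covering the identity on $\PJ$; such sections are classified by the kernel of the projection $\bautdm \to \autdj$, which by reducing \eqref{exsq:autderdm} modulo $\kkm$ equals $1 + \hb \pws$. Since $\epaut : \pws^\times \inj \autdm$ lands in the center of $\autdm$, the subgroup $1 + \hb \pws \subset \bautdm$ is central. Consequently the sheaf of automorphisms coincides with the sheaf of flat sections of the associated bundle $\bPdm \times_{\bautdm} (1 + \hb \pws)$.

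Centrality of $1 + \hb \pws$ in $\bautdm$ makes this associated bundle canonically isomorphic to the constant sheaf $(1 + \hb \pws)_Y$, and makes the induced connection coming from the transitive HC-torsor structure on $\bPdm$ coincide with the canonical flat connection on this constant sheaf; the $\bderdm$-action on its central abelian ideal is trivial. Therefore flat global sections are exactly locally constant $(1 + \hb \pws)$-valued functions on $Y$, which by definition is $\hdr^0(Y, 1 + \hb \pws)$.

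For the $\gm$-equivariance, I would verify that the graded structure on $\bPdm$ induces the $\gm$-action on $\autpdm$ by conjugation (using the morphisms $\phi_t : \bPdm \to \tP$ from Remark \ref{rmk:graded_HC_pair}), and that each step in the identification above is manifestly $\gm$-equivariant with respect to the natural $\gm$-action on $1 + \hb \pws$ coming from $\pws^\times / \kkm$ with $t.\hb = t^2 \hb$. The final uniqueness statement is then immediate: on $1 + \hb \pws$ the $\gm$-action has strictly positive weights on $\hb$, so $1$ is the only fixed element, whence the only $\gm$-fixed locally constant section is the unit, corresponding to the identity automorphism of $\bPdm$. The only point requiring some care is the claim that the connection on the associated bundle reduces to the trivial flat connection on the constant sheaf, but this follows directly from the centrality of $1 + \hb \pws$ in $\bautdm$.
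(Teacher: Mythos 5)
Your proof is correct and spells out in detail what the paper treats as immediate from the definitions (the lemma is preceded by ``The following lemma is immediate from the definition of the HC torsors in question,'' with no explicit argument given). The chain of identifications you give --- automorphisms over $\PJ$ valued in the kernel $1+\hb\pws$ of $\bautdm \to \autdj$, centrality of $\epaut(\pws^\times)$ forcing the associated conjugation bundle to be the constant sheaf with the trivial flat connection, hence HC-torsor automorphisms being locally constant $1+\hb\pws$-valued functions $= \hdr^0(Y, 1+\hb\pws)$, and the positive-weight argument for uniqueness of the $\gm$-fixed point --- is exactly the argument the authors have in mind, and each step is sound.

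One stylistic note: the phrase ``Consequently the sheaf of automorphisms coincides with the sheaf of flat sections\ldots'' places the ``consequently'' slightly oddly, since the identification of HC-torsor automorphisms with flat sections of the conjugation bundle holds independently of centrality; what centrality buys you is that this bundle is canonically trivial with trivial connection. You repair this in the next sentence, so the logic is fine, but it would read more cleanly as two separate observations: (i) HC-torsor automorphisms over $\PJ$ are flat sections of $\bPdm \times_{\bautdm}(1+\hb\pws)$, and (ii) centrality of $1+\hb\pws$ in $\bautdm$, together with triviality of the adjoint $\bderdm$-action on the central Lie ideal $\hb\pws$, renders this bundle the constant sheaf with its canonical flat connection.
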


\begin{proposition}\label{prop:kstable}
	A lift $\bPdm$ of $\PJ$ to a transitive $\bautderdm$-torsor admits a graded structure such that the natural projection $\bPdm \epi \PJ$ is $\gm$-equivariant, if and only if the corresponding element $[\bPdm]$ in $\Lambda$ is $\gm$-stable. In this case, the graded structure is unique up to automorphism.
\end{proposition}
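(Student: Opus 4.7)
The ``only if'' direction follows directly from the definitions: a graded structure on $\bPdm$ consists precisely of a coherent family of $\bautderdm$-torsor isomorphisms $\phi_t: \bPdm \to \tP$ for $t \in \kkm$, and these witness the equality $[\bPdm] = [\tP]$ in $\Lambda$ for every $t$, so $[\bPdm]$ is $\gm$-stable.

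For the ``if'' direction, assume $[\bPdm] = [\tP]$ in $\Lambda$ for every $t$. Then the set $I_t$ of $\bautderdm$-torsor isomorphisms $\bPdm \to \tP$ compatible with the projections to $\PJ$ is a nonempty torsor over $\autpdm$ by Lemma \ref{lemma:autpdm}. My plan is to construct a regular family $\phi_t \in I_t$ satisfying the cocycle condition
\[ \phi_{t_1 t_2} = (t_1)_*(\phi_{t_2}) \circ \phi_{t_1}, \qquad \forall \, t_1, t_2 \in \kkm. \]
First I would choose any regular family $\phi_t$ by trivializing the $\autpdm$-torsor over $\kkm$ with fiber $I_t$, which is possible because $\kkm$ is affine and $\autpdm$ is pro-unipotent. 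The failure of the cocycle relation is measured by the $2$-cochain
\[ c(t_1, t_2) := \phi_{t_1 t_2}^{-1} \circ (t_1)_*(\phi_{t_2}) \circ \phi_{t_1} \in \autpdm, \]
which by associativity of composition is a $2$-cocycle of $\kkm$ with values in the rational $\kkm$-module $\autpdm$.

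To trivialize $c$, I would establish the vanishing of $H^2(\kkm, \autpdm)$ and then modify $\phi_t$ by a $1$-cochain to obtain the desired cocycle. The pro-unipotent group $\autpdm \cong \hdr^0(Y, 1+\hb\pws)$ carries an $\hb$-adic filtration whose successive quotients are vector groups equipped with a rational $\kkm$-action, and this action has no $\kkm$-invariants -- this is precisely the Lie algebra content of the last assertion of Lemma \ref{lemma:autpdm}. For any rational $\kkm$-module with no invariants, the Hochschild cohomology of $\kkm$ vanishes in every positive degree; by induction on the filtration and passage to the inverse limit, this gives $H^i(\kkm, \autpdm) = 0$ for all $i \geq 1$. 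Applying the vanishing for $i=2$ to $c$ yields the desired correction of $\phi_t$, and applying the vanishing for $i=1$ to the difference between any two graded structures yields uniqueness.

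The main obstacle I anticipate is justifying the interchange of group cohomology with the inverse limit in the pro-unipotent setting, together with the algebraicity of the family $\phi_t$ in $t$. A cleaner alternative route, which I would fall back on if the cohomological framework proves cumbersome, is to construct $\phi_t$ order by order in $\hb$: at each stage both the obstruction to lifting and the ambiguity of the lift lie in a rational $\kkm$-module with trivial invariants, so the $\gm$-equivariant lift exists and is unique at every order, directly giving both existence and uniqueness without explicit reference to Hochschild cohomology.
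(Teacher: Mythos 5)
Your proof takes essentially the same route as the paper: cast the failure of the cocycle condition as a $\kkm$-valued $2$-cocycle in the abelian group $\autpdm$ and kill it by a rational-cohomology vanishing argument. One small logical slip worth flagging: the vanishing of $H^{i}(\kkm, V)$ for $i>0$ and rational $V$ is not a consequence of $V$ having no invariants as you write; it holds for \emph{every} rational $\kkm$-module, because $\kkm$ is linearly reductive in characteristic zero (this is what the paper's phrase ``since the $\gm$-action on $\autpdm$ is rational'' is invoking). The ``no invariants'' fact from Lemma \ref{lemma:autpdm} is what controls $H^{0}$, and in this proposition it is used only implicitly (and again, more explicitly in Proposition \ref{prop:quan_lag_equiv}). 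Your concern about interchanging group cohomology with the inverse limit in the pro-unipotent setting is legitimate but resolvable exactly as you anticipate: working order by order in $\hbar$ reduces to finite-dimensional rational modules at each stage, which is in fact how the paper's accompanying Proposition \ref{prop:stable} is argued.
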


\begin{proof}
	The ``only if'' part is obvious. To prove the ``if'' part, we only need to construct a $\gm$-action on $\bPdm$ when $[\bPdm]$ is $\gm$-stable in $\Lambda$. Since $[\tP]=t.[\bPdm]=[\bPdm]$, we can pick a $\bautderdm$-torsor isomorphism $\psi_t: \bPdm \to \tP$ for each $t \in \kkm$, which intertwines the projections $\bPdm \epi \PJ$ and $\tP \epi \PJ$. The isomorphisms $\psi_t$ may not satisfy the cocycle condition in Remark \ref{rmk:graded_HC_pair}. We can define a map $a: \kkm\times\kkm \to \autpdm$ by 
	\[ \psi_{t_1 t_2} = (t_1)_* (\psi_{t_2}) \psi_{t_1} a(t_1,t_2), \quad \forall t_1, t_2 \in \kkm. \] 
	Since $\autpdm$ is abelian, one can check that $a$ is a $2$-cocycle, i.e.,
	\[  [(t_1).a(t_2, t_3) ] a(t_1, t_2 t_3) = a(t_1t_2,t_3) a(t_1,t_2), \]
	where $(t_1).a(t_2, t_3)$ stands for the $\gm$-action on $\autpdm$. Since the $\gm$-action $\autpdm$ is rational, the $2$-cocycle $a$ is a coboundary and we can therefore modify $\psi_t$ so that it satisfies the cocycle condition. This means that $\bPdm$ is graded. A similar argument shows that the graded structure is unique up to automorphism of $\bPdm$.	
\end{proof}

To complete the proof of Proposition \ref{prop:bPdm_lift}, it remains to prove the following:

\begin{proposition}\label{prop:stable}
	There exists a unique $\gm$-stable element in $\Lambda$.
\end{proposition}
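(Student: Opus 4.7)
The natural strategy is to view this as a cohomological statement: $\Lambda$ is a $\Pi$-torsor with compatible $\gm$-actions, so existence and uniqueness of a $\gm$-stable element in $\Lambda$ correspond, respectively, to vanishing of a rational $1$-cocycle class for $\gm$ acting on $\Pi$, and to the vanishing $\Pi^{\gm} = 0$. The first will follow from the linear reductivity of $\gm$, while the second requires a weight analysis via the $\hb$-adic filtration of $\Pi$.

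For existence, I would pick any (ungraded) lift $[\bPdm_0] \in \Lambda$, which is nonempty by Remark \ref{rmk:quan_exist}. Since the $\Pi$-action on $\Lambda$ is free and transitive, there is a unique map $c \colon \kkm \to \Pi$ characterized by $t.[\bPdm_0] = c(t) \cdot [\bPdm_0]$. The compatibility $t.(p \cdot \lambda) = (t.p) \cdot (t.\lambda)$ of the two $\gm$-actions forces the cocycle identity $c(s t) = c(s) \cdot s.c(t)$, and $c$ is rational in $t$ because the $\gm$-action on $\Pi$ is locally finite, being inherited from the conjugation action on $\autderdm$ which is locally finite on every $\hb$-truncation. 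Linear reductivity of $\gm$ then produces $\pi \in \Pi$ with $c(t) = (t.\pi) \cdot \pi^{-1}$, and the twisted lift $\pi^{-1}\cdot [\bPdm_0]$ is $\gm$-stable.

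For uniqueness, if $[\bPdm_1]$ and $[\bPdm_2]$ are two $\gm$-stable elements of $\Lambda$, write $[\bPdm_2] = \pi \cdot [\bPdm_1]$ for some $\pi \in \Pi$; applying the torsor compatibility in reverse shows $t.\pi = \pi$ for all $t$, so uniqueness reduces to $\Pi^{\gm} = 0$. Via the logarithm I would identify $\Pi$ with the group of flat $\hb\stry\series$-torsors on $Y$, filtered by $\Pi \supset \Pi^{(2)} \supset \cdots$ according to the $\hb$-adic filtration $\hb\stry\series \supset \hb^2\stry\series \supset \cdots$. On each graded piece $\Pi^{(n)}/\Pi^{(n+1)}$, the $\gm$-action is shifted by $+2n$ from the action on the coefficient sheaf $\hb^n \stry$; combined with $\deg \hb = 2 > 0$ and the fact that the $\gm$-weights on $\stry$ coming from the graded structure are bounded below, this forces strictly positive total weights on every graded quotient. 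Hence no $\gm$-invariant element survives the associated graded, and an inverse-limit argument in the $\hb$-adic topology upgrades this to $\Pi^{\gm} = 0$.

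\textbf{Main obstacle.} The harder part will be the weight analysis for $\Pi^{\gm} = 0$; the existence half is a standard application of the vanishing of rational cohomology for $\gm$. The real work lies in tracking how the $\gm$-weights on $\stry$ interact with the shifts by $\hb^n$ and in verifying that the $\hb$-adic inverse limit preserves the absence of invariants.
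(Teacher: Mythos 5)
Your overall strategy — filter $\Pi$ $\hb$-adically, argue on graded pieces by weight, and translate existence/uniqueness into vanishing of $\gm$-invariants and of rational $H^1(\gm,-)$ — is morally the same as the paper's inductive argument via Lemma \ref{lemma:torsor} (which is precisely a ``unique $\gm$-stable point'' statement for graded torsors over pure nonzero-weight spaces). However, there is a genuine gap in your weight analysis, and it stems from a misidentification of the coefficient group in the graded pieces.

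You write that $\Pi^{(n)}/\Pi^{(n+1)}$ is classified using the ``coefficient sheaf $\hb^n\stry$'' and that the $\gm$-weights on $\stry$ are ``bounded below.'' But $\Pi$ classifies torsors \emph{with a flat algebraic connection}: a flat $\stry$-torsor has locally constant transition data, so the relevant cohomology is the de Rham group $\hdr^1(Y,\kk)$ with \emph{constant} coefficients, not the coherent cohomology $H^1(Y,\stry)$. Concretely, $\Pi^{(n)}/\Pi^{(n+1)} \cong \hdr^1(Y,\hb^n\kk)$. Since $\gm$ is connected and acts on $Y$ by (algebraic) homotopy, its action on $\hdr^\bullet(Y,\kk)$ is trivial, so the only source of weight on $\hdr^1(Y,\hb^n\kk)$ is the factor $\hb^n$, giving pure weight $2n > 0$. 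Thus the vanishing of invariants follows immediately — no ``bounded below'' hypothesis on $\stry$-weights is needed, and indeed that hypothesis would not be justified: the coherent cohomology $H^1(Y,\stry)$ has no reason to carry only nonnegative weights (condition \ref{cond:wt} bounds $\Gamma(X,\strx)$, not higher cohomology of $Y$). Once the coefficient group is corrected, your existence half should likewise be phrased level-by-level (as the paper does with Lemma \ref{lemma:torsor}), since $\Pi$ is only pro-rational and linear reductivity applies to each finite truncation rather than to the pro-object all at once; your closing ``inverse-limit argument'' is then automatic precisely because each truncation has a \emph{unique} $\gm$-stable point.
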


By \cite[Prop. 2.7]{BK} and \cite[\S\,6.1]{BGKP}, the set $\Lambda$ is a torsor over the (multiplicative) abelian group 
  \[ \Pi \cong \hone(Y, 1 + \hb \pws) \cong  \hone(Y, \pws^\times / \kkm ).\] 
Before proving Proposition \ref{prop:stable}, we need the following result on graded torsors.

\begin{lemma}\label{lemma:torsor}
	Let $V$ be a vector space over $\kk$ equipped with a $\gm$-action given by $t. v = t^d \cdot v$, where $d$ is any nonzero integer and the right hand side of the equation is the scalar multiplication in $V$. Assume that there exists an element $a \in \gm=\kkm$ such that $a^d \neq 1$. Suppose $W$ is a graded torsor (or affine space) over $V$ (regarded as an additive abelian group). Then $W$ is in fact a trivial torsor, i.e., $W$ is isomorphic to $V$ canonically as graded $V$-torsors. In particular, there is a unique $\gm$-stable point in $W$.
\end{lemma}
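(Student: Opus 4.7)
The plan is to produce a unique $\gm$-fixed point $w_0 \in W$ by an explicit averaging argument using the element $a$, and then to observe that such a point gives the canonical trivialization $V \isom W$, $v \mapsto w_0 + v$, of graded $V$-torsors.

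First I would choose any $w \in W$ and translate the problem into a cocycle equation. For each $t \in \kkm$, the torsor structure gives a unique $v(t) \in V$ with $t.w = w + v(t)$. The compatibility between the $\gm$-action on $W$, the $\gm$-action on $V$ (weight $d$), and the torsor structure forces the twisted cocycle identity $v(ts) = v(t) + t^d v(s)$ for all $t,s \in \kkm$. Looking for a $\gm$-stable point of the form $w_0 = w + u$ amounts to finding $u \in V$ such that $v(t) = (1-t^d)u$ for every $t$.

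The key step is to solve this equation using $a$. Swapping $t$ and $s=a$ in the cocycle gives $v(t) + t^d v(a) = v(a) + a^d v(t)$, hence $(a^d - 1) v(t) = (t^d - 1) v(a)$. Since $a^d \neq 1$, we may set $u := v(a)/(1-a^d) \in V$, and the displayed identity yields $v(t) = (1-t^d) u$ for all $t \in \kkm$. A direct check then shows $t.(w+u) = w + v(t) + t^d u = w + u$, so $w_0 := w + u$ is $\gm$-stable.

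For uniqueness, any two $\gm$-stable points differ by some $u' \in V$ which must satisfy $t^d u' = u'$ for every $t \in \kkm$; evaluating at $t = a$ and using $a^d \neq 1$ forces $u' = 0$. Thus $w_0$ is the unique $\gm$-fixed point, and the map $V \to W$, $v \mapsto w_0 + v$, is a canonical isomorphism of graded $V$-torsors. I do not expect any serious obstacle here: the only subtle point is to make sure the cocycle identity is correctly twisted by $t^d$ (reflecting the weight of the $\gm$-action on $V$), after which the hypothesis $a^d \neq 1$ makes $1 - t^d$ invertible at a single test point, which is exactly what is needed to split the cocycle.
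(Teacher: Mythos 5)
Your proposal is correct and follows essentially the same path as the paper: the paper defines the trivialization directly by $f(w) = (a^d-1)^{-1}(a.w - w)$, which sends the fixed point to $0$, while you first locate the fixed point $w_0 = w + v(a)/(1-a^d)$ and then take the inverse map $v \mapsto w_0 + v$; the cocycle manipulation you spell out is exactly the computation that verifies the paper's $f$ is a torsor isomorphism. No gap.
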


\begin{proof}
	Let $w_1 - w_2 \in V$ denotes the difference of two elements of $W$. Define a map $f: W \to V$ by $f(w) = (a^d - 1)^{-1} \cdot (a.w - w)$. Then one can check that $f$ is a $\gm$-equivariant isomorphism of $V$-torsors. The only $\gm$-stable point in $V$ is the zero element by the assumption of $a$. So $W$ has a unique $\gm$-stable point. Note that $f$ is independent of the cohice of $a$.
\end{proof}

\begin{proof}[Proof of Proposition \ref{prop:stable}]
	Our field $\kk$ is of characteristic zero, therefore satisfies the condition in Lemma \ref{lemma:torsor}. The uniqueness of $\gm$-stable point in $\Lambda$ follows from the uniqueness of $\gm$-stable point in $\hone(Y, 1 + \hb \pws)$. Now $\Lambda$ is a projective limit of torsors $\Lambda_n$ over the groups $\hone(Y, (1+ \hb \pws )/ ( 1+ \hb^n \pws))$, $\forall~n \geq 1$. Each connecting map $\Lambda_{n+1} \epi \Lambda_{n}$ is a torsor over the abelian group $\hone(Y, \hb^n\kk)$ ($\hb^n\kk$ is regarded as an additive group), which is equipped with a $\gm$-action of weight $2n$. By induction on $n$, after finding the $\gm$-stable point at each step, we can then apply Lemma \ref{lemma:torsor} to lift it to a unique $\gm$-stable point at the next level. The result of the proposition follows.
\end{proof}

\subsection{Graded quantization of vector bundles: proof of Theorem \ref{thm:lag_vec_graded}}\label{subsec:quan_lag_vec_proof}

The proof of Theorem \ref{thm:lag_vec_graded} goes in the same way as that of Theorem \ref{thm:lag_vec} which is \cite[Theorem 1.1]{BC}, with appropriate adjustments to incorporate the $\gm$-equivariant structures on the HC pairs and torsors as in the proof of Theorem \ref{thm:quan_lag_graded}. We only sketch the key ingredients here. We adopt the same notations in \S\,\ref{subsec:hcpair}.

By Lemma \ref{lemma:torsor_lag_vec}, we need to construct a graded transitive HC torsor $\Pmr$ over the HC pair $\autderdmr$ which lifts the graded HC torsor $\PJ$ over $\autderj$. As discussed in \cite[\S\,5]{BC}, the lift can be divided into a chain of lifts of torsors
\[ \Pmr = \P \epi \P_2 \epi \P_1 \epi \P_0 = \PJ    \]
corresponding to a chain of surjections
\[ \gh \epi \ghtwo \epi \ghone \epi \ghzero   \]
of graded HC pairs, where
\[ \gh = \autderdmr , \quad \ghzero = \autderj, \]
\[ \ghtwo = \gh /  \lab \kkm, \kk \rab  \cong \bautderdm\times\pglpgl  \quad (\text{by \eqref{eq:autderdmr_quot}}),\]
and
\[ \ghone = \autderj\times\pglpgl. \]
By \cite[Theorem\,5.1, (1)]{BC}, a lift $\P_1 $ of $\P_0 = \PJ$ to an ungraded transitive $\ghone$-torsor is of the form 
  \begin{equation} \label{eq:P1}
     \P_1 = \PJ\times\P_{\PGLh}, 
   \end{equation}
where $\P_{\PGLh}$ is a flat $\PGLh$-torsor lifting the flat $\PGLr$-torsor $\P_{\PE}$ associated to $\PE$. In general, there can be many different such lifts of $\P_{\PE}$ equipped with various graded structures. Among them there is a special one, 
  \begin{equation} \label{eq:Ppgl}
    \Ppgl := \P_{\PE} \times_{\PGLr} \PGLh,
  \end{equation}
the pushforward torsor $\P_{\PE}$ via the natural embedding $\PGLr \to \PGLh$ which is equipped with the graded structure induced from that of $\P_{\PE}$ and $\PGLh$. The $\gm$-invariant flat connection of $\P_{\PE}$ also extends uniquely to a $\gm$-invariant flat connection of $\Ppgl$. In fact, when the flat connection on $\PE$ is strong, we can show that $\Ppgl$ with the canonical flat connection is the only possible lift. To show this we need the following lemma, whose proof is straightforward by the definition of strong flat connection and Cartan's magic formula. 

\begin{lemma}\label{lemma:deRham_strong}
	For a strong flat connection of $\PE$ with the strong $\gm$-equivariant structure, denote by $\nabla$ the induced flat connection on the associated vector bundle $\pend(E) = \send_{\stry}(E)/\stry$. Then the induced $\gm$-action on the de Rham cohomology groups $H^\bullet_{\nabla}(Y, \pend(E))$ is trivial.
\end{lemma}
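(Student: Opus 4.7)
The plan is to use Cartan's magic formula to identify the infinitesimal generator of the $\gm$-action on the flat de Rham complex with a chain-homotopy to zero, and then upgrade this triviality at the infinitesimal level to triviality of the $\gm$-action itself via rationality.

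Let $\xi$ denote the vector field on $Y$ induced by the $\gm$-action, i.e.\ the image of $1 \in \kk = \lie(\gm)$ under the infinitesimal action. Combining the $\gm$-equivariant structure on $Y$ with the one on $\pend(E)$, we obtain a $\gm$-action on the flat de Rham complex $(\Omega^\bullet_Y \otimes_{\stry} \pend(E),\, d_\nabla)$ which commutes with $d_\nabla$ and hence induces a rational $\gm$-action on $H^\bullet_\nabla(Y, \pend(E))$. Denote by $L_\xi$ the degree-zero derivation of the complex obtained by differentiating this $\gm$-action; on a local section $\omega \otimes s$ with $\omega$ a $k$-form and $s$ a section of $\pend(E)$, we have $L_\xi(\omega \otimes s) = (L_\xi \omega) \otimes s + \omega \otimes \widetilde{L}_\xi s$, where $L_\xi \omega$ is the usual Lie derivative of $\omega$ and $\widetilde{L}_\xi$ is the infinitesimal $\gm$-action on sections of $\pend(E)$.

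Introduce the Cartan operator $L^\nabla_\xi := d_\nabla \iota_\xi + \iota_\xi d_\nabla$, which is tautologically a chain homotopy to zero on $(\Omega^\bullet_Y \otimes_{\stry} \pend(E), d_\nabla)$ and hence induces zero on $H^\bullet_\nabla(Y, \pend(E))$. A direct Leibniz calculation using $d_\nabla(\omega \otimes s) = d\omega \otimes s + (-1)^k \omega \wedge \nabla s$ together with the derivation property of $\iota_\xi$ yields
\begin{equation*}
L^\nabla_\xi(\omega \otimes s) \,=\, (L_\xi \omega) \otimes s \,+\, \omega \otimes \nabla_\xi s.
\end{equation*}
Strongness of the flat connection on $\PE$ says precisely that the horizontal lift of $\xi$ coincides with the infinitesimal generator of the $\gm$-action on $\PE$; passing to the associated Lie algebra bundle $\pend(E)$, this translates into the equality $\nabla_\xi s = \widetilde{L}_\xi s$ for every local section $s$. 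Consequently $L_\xi = L^\nabla_\xi$ as degree-zero derivations on the full complex, and so $L_\xi$ induces zero on $H^\bullet_\nabla(Y, \pend(E))$.

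Finally, since the $\gm$-action on $H^\bullet_\nabla(Y, \pend(E))$ is rational, it decomposes into weight spaces, and the infinitesimal action on the weight-$n$ summand is multiplication by $n$. Vanishing of the infinitesimal action therefore forces concentration in weight zero, which is the asserted triviality of the $\gm$-action. The only substantive step is the Cartan-formula identification $L_\xi = L^\nabla_\xi$, which reduces to the Leibniz calculation above once the strong hypothesis is unpacked; no serious obstacle is anticipated.
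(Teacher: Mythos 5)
Your proof is correct and is exactly the argument the paper has in mind: the paper only remarks that the lemma follows "by the definition of strong flat connection and Cartan's magic formula," and your computation $L^\nabla_\xi(\omega\otimes s) = (L_\xi\omega)\otimes s + \omega\otimes\nabla_\xi s$, together with the translation of strongness into $\nabla_\xi = \widetilde L_\xi$ on $\pend(E)$, is precisely how that remark unpacks. The passage from infinitesimal triviality to triviality of the rational $\gm$-action via the weight decomposition is the standard final step and is also what is intended.
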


\begin{proposition}\label{prop:ghzero_lift}
	Given a strong flat connection on $\PE$, $\Ppgl$ is the unique (up to isomorphism) graded flat $\PGLh$-torsor which lifts the graded flat $\PGLr$-torsor $\P_{\PE}$.
\end{proposition}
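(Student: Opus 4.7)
The plan is to mimic the $\gm$-fixed-point argument used in the proof of Proposition \ref{prop:stable}. The first step is to verify that $\Ppgl$ is indeed a graded flat $\PGLh$-torsor lifting $\P_{\PE}$: the $\gm$-equivariant structure is induced from the strong $\gm$-equivariant structure on $\P_{\PE}$ together with the natural $\gm$-action on $\PGLh$ (with $\hb$ of weight $2$), and the flat connection of $\P_{\PE}$ extends $\pws$-linearly to $\Ppgl$ and remains $\gm$-invariant. This establishes the existence of at least one $\gm$-stable lift, which will serve as the base point of the torsor of lifts.

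Next I would identify the set of isomorphism classes of flat $\PGLh$-torsors lifting $\P_{\PE}$ with a pro-finite torsor whose associated graded pieces are $H^1_\nabla(Y, \hb^n\cdot\pend(E))$ for $n \geq 1$, where $\nabla$ is the flat connection induced on $\pend(E) = \send_{\stry}(E)/\stry$ by the flat connection on $\P_{\PE}$. This is the standard deformation-theoretic description: the evaluation map $\PGLh \epi \PGLr$ has prounipotent kernel $\exp(\hb\cdot\pglh)$, whose associated bundle twisted by $\P_{\PE}$ produces $\hb\cdot\pend(E)\series$, and flat lifts modulo gauge equivalence are controlled by the corresponding de Rham cohomology filtered by powers of $\hb$. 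The filtration, the torsor structure, and the identification of the graded pieces all respect the $\gm$-action.

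By Lemma \ref{lemma:deRham_strong}, the strongness hypothesis forces $\gm$ to act trivially on $H^1_\nabla(Y, \pend(E))$; hence the $\gm$-action on $H^1_\nabla(Y, \hb^n\cdot\pend(E))$ is scalar multiplication of weight $2n$, which is nonzero for every $n \geq 1$. Starting from the $\gm$-stable base point $\Ppgl$ and proceeding by induction on $n$ exactly as in the proof of Proposition \ref{prop:stable}, an application of Lemma \ref{lemma:torsor} at each stage shows that the $\gm$-stable lift is unique at that order; passing to the inverse limit then yields the uniqueness of the graded flat $\PGLh$-lift up to isomorphism.

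The main obstacle will be the second step: verifying that the $\gm$-action on the cohomological torsor genuinely corresponds to the $\gm$-action on isomorphism classes of lifts, and that the inverse limit is compatible with extracting $\gm$-fixed points at each level (an analog of Proposition \ref{prop:kstable} for flat $\PGLh$-torsors, using that $\autpdm$-style automorphism groups are here replaced by $1$-cocycle groups with prounipotent coefficients and thus admit no $\gm$-equivariant ambiguity). The strongness hypothesis is crucial precisely here: without it, the $\gm$-weight on $H^1_\nabla(Y, \pend(E))$ could include a zero component, and Lemma \ref{lemma:torsor} would no longer kill the ambiguity, permitting non-isomorphic graded lifts.
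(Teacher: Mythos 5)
Your proposal is correct and follows essentially the same route as the paper: an inductive argument order by order in $\hbar$, identifying the torsor of lifts at the $n$-th stage with a torsor over the abelian group $H^1_\nabla(Y, \hb^n\pend(E))$, then invoking Lemma~\ref{lemma:deRham_strong} and Lemma~\ref{lemma:torsor} to isolate the unique $\gm$-fixed point at each stage. The ``obstacle'' you flag — promoting $\gm$-fixed isomorphism classes to genuine graded structures — is handled in the paper by the same $2$-cocycle argument as Proposition~\ref{prop:kstable}, using that the automorphism groups $H^0_\nabla(Y,\hb^n\pend(E))$ also carry $\gm$-action of strictly positive weight.
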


\begin{proof}
	The proof is by a similar inductive argument with respect to the order of $\hb$ as in the proof of Proposition \ref{prop:bPdm_lift}. The only difference is that $\PGLh$ is nonabelian. However, the relevant torsor of all possible lifts of a flat $PGL(r, \pws/\hb^n\pws)$-torsor to a flat $PGL(r, \pws/\hb^{n+1}\pws)$-torsors at each step is a torsor over the abelian group $H^1_\nabla(Y, \hb^n\pend(E))$, $n \geq 1$. By strongness of the flat connection and Lemma \ref{lemma:deRham_strong}, the $\gm$-action on $H^1_\nabla(Y, \hb^n\pend(E))$ has positive weight.  Therefore Lemma \ref{lemma:torsor} implies that there is a unique $\gm$-stable point in the torsor at each step. The resulting flat $\PGLh$-torsor is exactly $\Ppgl$. 
\end{proof}

Given a choice of $\P_1$, any lift $\P_2$ of $\P_1$ to a graded transitive $\ghtwo$-torsor is of the form $\P_2 = \bPdm\times\P_{\PGLh}$, where $\bPdm$ is a graded transitive $\bautderdm$-torsor which lifts $\P_0=\PJ$. By Proposition \ref{prop:bPdm_lift}, such $\bPdm$ always exists and is unique up to isomorphism. So there is no obstruction at this step.

The last step is to lift the graded transitive $\ghtwo$-torsor $\P_2$ to a graded transitive $\gh$-torsor $\P = \Pmr$. In fact, we have the isomorphism of \emph{$\autdmr$-torsors}
  \begin{equation}\label{eq:Pmr}
    \Pmr \cong \P_E \times_{\P_{\PP(E)}} \P_2
  \end{equation}
corresponding to the canonical isomorphism of groups
\[ \autdmr \cong \GLr \times_{\PGLr} \autdmr/\kkm  \]
due to \eqref{eq:autdmr}. The $\gm$-action on $\P_2$ and the $\gm$-action on $\P_{\PP(E)}$ extends uniquely to a $\gm$-action on $\Pmr$ by $\autdmr$-torsor automorphisms. By part (3) of Theorem 5.1, \cite{BC}, the extension of the $\autdmr$-torsor structure on $\Pmr$ to the structure of a transitive HC torsor over $\gh$ is equivalent to the choice of an isomorphism $ \tatp \isom \frac{1}{r} \TT(\det(E))$ of Picard algebroids, which was denoted as $\frac{1}{r}\det\vu^+$ in \eqref{eq:detvupr}. Therefore to require that the $\gm$-action on $\Pmr$ to be compatible with the HC $\gh$-torsor structure is equivalent to requiring $\frac{1}{r}\det\vu^+$ to be $\gm$-equivariant. Therefore we have the following graded analogue of the part (3) of Theorem 5.1, \cite{BC}, which completes the proof of Theorem \ref{thm:lag_vec_graded}.

\begin{lemma}\label{lemma:ghtwo_lift}
	Given a lift of a graded transitive $\ghtwo$-torsor $\P_2$ to a ungraded transitive $\gh$-torsor $\Pmr$, the graded structure of $\P_2$ lifts to a graded structure on $\Pmr$ which induces the given graded structure on $E$, if and only if the associated isomorphism $ \frac{1}{r} \det \vu^+: \tatp \isom \frac{1}{r} \TT(\det(E))$ of graded Picard algebroids in \eqref{eq:detvupr} is $\gm$-equivariant. Such lifting, if exists, is unique.
\end{lemma}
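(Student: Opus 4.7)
The plan is to deduce Lemma \ref{lemma:ghtwo_lift} directly from the presentation $\Pmr \cong \P_E \times_{\P_{\PP(E)}} \P_2$ of \eqref{eq:Pmr}, combined with part (3) of Theorem 5.1 in \cite{BC}, essentially formalizing the paragraph immediately preceding the statement. The ``only if'' direction is functorial: the assignment $\Pmr \mapsto \frac{1}{r}\det\vu^+$ intertwines the $\gm$-actions on both sides, so if $\Pmr$ carries a graded HC $\gh$-torsor structure compatible with the graded structures on $\P_2$ and $E$, then the resulting isomorphism of Picard algebroids is automatically $\gm$-equivariant.

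For the ``if'' direction, I would start from an arbitrary ungraded lift $\Pmr$ given by \eqref{eq:Pmr}. Because $E$ and $\P_2$ are both graded, the fiber product $\P_E \times_{\P_{\PP(E)}} \P_2$ inherits a canonical $\gm$-action by $\autdmr$-torsor automorphisms that lifts the $\gm$-action on $Y$. By Theorem 5.1(3) of \cite{BC}, upgrading this $\autdmr$-torsor to a transitive HC $\gh$-torsor amounts to choosing the isomorphism $\frac{1}{r}\det\vu^+$ of Picard algebroids, and this choice in turn furnishes the extension of the $\lieh_2$-valued flat connection already present on $\P_2$ to a full $\lieh$-valued flat connection on $\Pmr$ (corresponding to the kernel $\hck$ of $\gh \epi \ghtwo$). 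The $\gm$-equivariance of $\frac{1}{r}\det\vu^+$ then translates verbatim into the $\gm$-equivariance of this extended connection, so that $\Pmr$ becomes a graded transitive HC $\gh$-torsor in the sense of Definition \ref{defn:graded_HC_pair}.

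For uniqueness, any two graded liftings of $\P_2$ to $\Pmr$ inducing the same graded structure on $E$ and giving rise to the same Picard algebroid isomorphism $\frac{1}{r}\det\vu^+$ differ by a $\gm$-equivariant automorphism of $\Pmr$ as a HC $\gh$-torsor which projects to the identity on $\P_2$. By an analogue of Lemma \ref{lemma:autpdm}, such automorphisms are identified with a subgroup of $\hdr^0(Y, 1 + \hb\pws)$ whose natural $\gm$-action has strictly positive weights, so the only $\gm$-invariant element is the identity. The main (and only mildly delicate) step is to verify that, starting from any collection of $\gh$-torsor isomorphisms $\psi_t: \Pmr \to \prescript{t}{}{\Pmr}$ extending the given $\phi_t: \P_2 \to \tP$, the $2$-cocycle obstruction to the cocycle condition is a coboundary; this follows from rationality of the $\gm$-action and positivity of $\gm$-weights on $\hdr^0(Y, \hb^n\pws)$ for $n \geq 1$, exactly as in the inductive cocycle arguments used in the proofs of Proposition \ref{prop:kstable} and Proposition \ref{prop:stable}. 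Modifying the $\psi_t$ by the resulting coboundary produces the desired graded structure, and its uniqueness is immediate from the vanishing of the automorphism group above.
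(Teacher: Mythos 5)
Your argument follows the paper's route: the fiber product presentation $\Pmr \cong \P_E \times_{\P_{\PP(E)}} \P_2$ of \eqref{eq:Pmr}, combined with part (3) of Theorem 5.1 in \cite{BC}, is exactly the mechanism the paper uses, and your ``only if'' and main ``if'' steps match it. The one place you diverge is the closing cocycle discussion for existence/uniqueness, and that part is redundant: because $\Pmr$ is literally a fiber product, the $\gm$-action assembled componentwise from the given graded structures on $\P_E$ and $\P_2$ automatically satisfies the cocycle condition (there is no $2$-cocycle obstruction to kill), and uniqueness of the $\gm$-action as an $\autdmr$-torsor action is immediate since any torsor automorphism of $\Pmr$ covering the identity on both factors is the identity --- no appeal to $\hdr^0(Y, 1+\hb\pws)$ is needed. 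So the real content, as the paper observes, is not ``construct a cocycle'' but simply the equivalence (via \cite[Thm.~5.1(3)]{BC}) between compatibility of the already-determined $\gm$-action with the HC $\gh$-torsor structure and $\gm$-equivariance of $\frac{1}{r}\det\vu^+$; you had already established this before the cocycle paragraph.
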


\section{Quantization of Lagrangian subvarieties with group actions}\label{sec:lag_group}

\subsection{Equivariant quantization of Lagrangian subvarieties}\label{subsec:equiv_lag}

We consider quantizations in the presence of group actions. Suppose we are in the setting of \S\,\ref{subsec:equiv}. We assume that the $G$-action $\beta$ lifts to a $G$-action $\beta_\hb$ on the graded quantization $\Q$ which commutes with the $\cm$-action. For instance, such a lift always exists under the assumptions of Corollary \ref{cor:equiv}. Let $Y$ be a graded Lagrangian subvariety of $X$. Now assume there is an algebraic subgroup $K$ of $G$ with Lie algebra $\liek$, such that the restriction $\beta|_K$ of the $G$-action $\beta$ to $K$-action preserves the ideal subsheaf $\IY \subset \str_X$. We will simply say that $Y$ is preserved by the $\beta|_K$.

Suppose $E$ is a graded vector bundle of rank $r$ over $Y$ satisfying the condition in Theorem \ref{thm:lag_vec_graded}, so that it admits a unique graded quantization $\Ehb$. We are interested in the case when $\Ehb$ admits a $K$-action which is compatible with $\beta_\hb|_K$, the restriction of the quantum $G$-action $\beta_\hb$ to $K$-action.

\begin{definition}\label{defn:Kquan}
	A \emph{$K$-equivariant structure}, or simply a $K$-action, on a graded quantization $\Ehb$ of $(Y, E)$ is a $K$-action $\aeh$ on $\Ehb$, such that
	\begin{enumerate}[label=(K\arabic*)]
		\item \label{cond:Kquan1}
		$\aeh$ commutes with the $\gm$-action and the $\pws$-module structure.
		\item  \label{cond:Kquan2}
		The module structure map $\Q \otimes \Ehb \to \Ehb$ is $K$-equivariant, where the $K$-action on $\Q$ is given by $\beta_\hb|_K$ and the $K$-action on $\Q$ is given by $\alpha_\hb$.
	\end{enumerate}
\end{definition}

Note that if such an $\aeh$ exists, then it descends to a $K$-action $\alpha$ on $E$ compatible with the $K$-action $\beta|_K$ and the $\gm$-equivariant structure. Conversely, when $E$ is equipped with such a $K$-action $\alpha$ , it is natural to ask if this action lifts to a $K$-equivariant structure on $\Ehb$. Note that the $K$-action $\beta_\hb|_K$ preserves the ideal $\JY \subset \Q$ and descends to a $K$-action on $\tat$ via the isomorphisms \eqref{eq:J_tat}, which is still denoted by $\beta_\hb|_K$. The $K$-action on $Y$ induces a $K$-equivariant structure on the canonical bundle $K_Y$ which commutes with the natural $\gm$-action on $K_Y$. This leads to a $K$-equivariant structure on the graded Picard algebroid $\tat + \frac{1}{2} \TT(K_Y)$. It can be shown that the isomorphism $v: \tatp \isom \tat + \frac{1}{2}\TT(K_Y)$ in \eqref{eq:tatp2tatky} is $K$-equivariant.

\begin{proposition}\label{prop:quan_lag_equiv}
	Under the assumptions of Theorem \ref{thm:lag_vec_graded}, suppose $E$ is a graded vector bundle over $Y$ with a strong $\gm$-action and $\Ehb$ is a graded quantization of $E$. Let $\alpha$ be any $K$-action on $E$ such that it commutes with the $\gm$-action on $E$. Then
	\begin{enumerate}
      \item
	 	The $K$-action $\alpha$ on $E$ can be lifted to a $K$-equivariant structure $\aeh$ on $\Ehb$ if and only if the associated morphism of graded Picard algebroids $ \vu^+: \tatp \to \TT(E) $ is $K$-equivariant. Here the $K$-action on the right hand side is the adjoint action induced by $\alpha$, where the $K$-action on the $\tatp$ is defined as above. 
	  \item 
	    The sufficient and necessary condition in part (1) is equivalent to the following requirements that,
		\begin{enumerate}
			\item \label{cond:quan_lag_equiv1}
				the isomorphism $\frac{1}{r} \det\vu^+: \tatp \to \frac{1}{r}  \TT(\det(E))$ in \eqref{eq:detvupr} is $K$-equivariant;
			\item \label{cond:quan_lag_equiv2}
				the flat connection on $\PE$ as in Theorem \ref{thm:lag_vec_graded} is $K$-equivariant with respect to the $K$-action on $\PE$ induced from that on $E$.
		\end{enumerate} 
     \item 
		If the condition in part (1) or the conditions in part (2) are fulfilled, then the $K$-equivariant structure $\aeh$ is unique (up to graded automorphisms of $\Ehb$ as $\Q$-module intertwining the projection $\Ehb \epi E$).
  \end{enumerate}
\end{proposition}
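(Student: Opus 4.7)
The plan is to reduce everything to the uniqueness part of Theorem \ref{thm:lag_vec_graded} via an equivariance-by-pullback argument, in the spirit of Corollary \ref{cor:equiv}.

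First I would handle the necessity direction of (1) together with part (2). Suppose a $K$-equivariant structure $\aeh$ on $\Ehb$ exists. Condition \ref{cond:Kquan2} says that the $\Q$-module structure map is $K$-equivariant. Passing to the induced action of $\tatp = \hinv \JY/\hinv\JY^2$ on $E = \Ehb/\hb\Ehb$, which is precisely the construction defining $\vu^+$, one sees immediately that $\vu^+ : \tatp \to \TT(E)$ intertwines the $K$-action on the left (induced from $\beta_\hb|_K$) and the adjoint $K$-action on the right (induced from $\alpha$). Both $\frac{1}{r}\det \vu^+$ and the flat connection on $\PE$ are extracted from $\vu^+$ via natural functorial constructions, namely applying $\tr_E$ and composing with the quotient $\TT(E) \epi \TT(\PE)$, so they are automatically $K$-equivariant. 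Conversely, the diagram \eqref{diag:tatp} reconstructs $\vu^+$ from this pair of data and is manifestly natural with respect to $K$-actions; this yields the equivalence asserted in part (2).

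For the sufficiency direction, I would construct the $K$-equivariant structure on $\Ehb$ by a pullback argument. Let $\beta_Y : K \times Y \to Y$ denote the restriction of $\beta|_K$ and let $p_Y : K \times Y \to Y$ denote the projection, with similar notation $\beta_X, p_X : K \times X \to X$. The quantum $G$-action $\beta_\hb$ furnishes a $\gm$-equivariant isomorphism $\beta_X^* \Q \isom p_X^* \Q$ of graded quantizations of $K \times X$ relative to $K$. The sheaves $\beta_Y^* \Ehb$ and $p_Y^* \Ehb$ are then graded quantizations (relative to $K$) of the $K$-equivariantly identified bundles $\beta_Y^* E \cong p_Y^* E$ over the Lagrangian $K \times Y \subset K \times X$. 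The hypothesis on $\vu^+$, equivalently on the pair in (2), implies that the first-order data attached to these two quantizations by Theorem \ref{thm:lag_vec_graded} coincide under this identification. The relative version of that theorem then produces a unique isomorphism $\aeh : \beta_Y^* \Ehb \isom p_Y^* \Ehb$ lifting the identity on $\beta_Y^* E \cong p_Y^* E$ and respecting all structures. The cocycle identity on $K \times K \times Y$ follows by applying the same uniqueness principle to the triple product.

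For uniqueness in part (3), two $K$-equivariant structures $\aeh, \aeh'$ lifting the same $\alpha$ differ by a $\gm$-equivariant and $K$-equivariant $\Q$-module automorphism of $\Ehb$ inducing the identity on $E$. Such an automorphism has the form $1 + \hb f$ in the relevant automorphism sheaf; order by order in $\hb$, the discrepancy lives in a sheaf carrying a $\gm$-action of strictly positive weight. The argument of Lemma \ref{lemma:torsor} applied inductively, as in the proof of Proposition \ref{prop:bPdm_lift}, then forces $\aeh = \aeh'$. The main technical point throughout is to verify that the ``data determines quantization'' correspondence of Theorem \ref{thm:lag_vec_graded} holds in the relative setting over an arbitrary base, here over $K$, which is where I would expect to spend the most care; however, the proofs in \S\ref{subsec:hctorsor} and \S\ref{subsec:quan_lag_vec_proof} extend essentially verbatim once the base is equipped with the trivial $\gm$-action.
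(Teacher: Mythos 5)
Your approach is correct, and it takes a genuinely different route from the paper. The paper works directly at the level of Harish-Chandra torsors: it lifts the $K$-action step by step through the chain $\P_0 = \PJ$, $\P_1 = \PJ\times\P_{\PGLh}$, $\P_2 = \bPdm\times\P_{\PGLh}$, $\P=\Pmr$. At the first stage the $K$-action extends canonically because $\Ppgl = \P_{\PE}\times_{\PGLr}\PGLh$ carries a pushforward action; at the second the authors run a cocycle argument directly on $\bPdm$ (as in Proposition \ref{prop:kstable}), observing that Lemma \ref{lemma:autpdm} kills the obstruction without any reductivity assumption on $K$; and the last stage is the fiber-product description \eqref{eq:Pmr} together with condition (a). Your proof instead treats $\Ehb$ as a black box, pulls it back along the action map and the projection to get two relative quantizations over $K$, and invokes the determination-by-first-order-data part of Theorem \ref{thm:lag_vec_graded} in its relative form to produce a canonical identification and then the cocycle identity over $K\times K$ — the same template the paper uses for $\Q$ in Corollary \ref{cor:equiv}. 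Both proofs rest on the identical underlying mechanism (gradedness forces each order-$\hb$ ambiguity into a positive-weight piece, hence triviality), but you concentrate the work into justifying the relative version of a fairly heavy theorem, whereas the paper concentrates it into explicit torsor arithmetic. The paper does concede the relative generalization (remark at the end of \S\ref{subsec:moment_lag}), so the shortcut is legitimate, though it hides more machinery than the paper's line-by-line lift. One small imprecision in your part (3): two $K$-equivariant structures do not differ globally by a single automorphism; rather, for each $k\in K$ the composite $\aeh(k)\circ\aeh'(k)^{-1}$ is a $\gm$-equivariant $\Q$-module automorphism of $\Ehb$ covering the identity on $E$, and it is this pointwise discrepancy that the positive-weight argument kills. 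This is a phrasing slip rather than a conceptual gap.
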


\begin{proof}
	 Part (2) follows from \eqref{diag:tatp} and discussions there. We only need to show the conditions in part (2) imply the conclusion in part (2). In other words, we need to lift the $K$-action on $\PJ$ induced by $\beta_\hb|_K$ and the $K$-action $\alpha$ on $E$ to a $K$-action on the transitive graded $\autderdmr$-torsor $\Pmr$, such that it commutes with the $\gm$-action on $\Pmr$. By the discussions in \S\,\ref{subsec:quan_lag_vec}, this amounts to lifting the $K$-action from $\PJ$ and $E$ to $\P_1$, $\P_2$ and $\P=\Pmr$ successively. We also need to show that all the lifts are unqiue to verify part (3).
	
	By Proposition \ref{prop:ghzero_lift}, passing from $\PJ$ to $\P_1$ is equivalent to lifting the $K$-action on the graded flat $\PGLr$-torsor associated to $\PE$ satisfying requirement \eqref{cond:quan_lag_equiv2} to the graded flat $\PGLh$-torsor $\Ppgl$ defined in \eqref{eq:Ppgl}.  Since $\Ppgl = \P_{\PE} \times_{\PGLr} \PGLh$, it is immediate that the $K$-action on $\P_{\PE}$ extends uniquely to a $K$-action on $\Ppgl$ which commutes with the $\PGLh$-action. Therefore we obtain the desired $K$-action on $\P_1 = \PJ\times\P_{\PGLh}$.
	
	
	
	Next, we need to show that the $K$-action on $\P_1$ lifts to a unqiue $K$-action on the $\ghtwo$-torsor $\P_2 = \bPdm\times\P_{\PGLh}$ which commutes with the $\gm$-action. This amounts to lifting the $K$-action on $\PJ$ to a $K$-action on $\bPdm$. This follows from a similar cocycle argument as in the proof of Proposition \ref{prop:kstable}. The main difference is that we \emph{do not} require $K$ to be reductive in this case. The compatibility with the $\gm$-action already enforces the cocycle condition automatically, since the only $\gm$-equivariant automorphism of the HC torsor $\bPdm$ which intertwines the projection $\bPdm \epi \PJ$ is the identity map by Lemma \ref{lemma:autpdm}. For the same reason, the lifted $K$-action is unique. Therefore we obtain the desired $K$-action on $\P_2$. This $K$-action together with the $K$-action on $E$ provides a $K$-action on $\Pmr$ by $\autdmr$-torsor automorphisms, via the isomorphism \eqref{eq:Pmr}. Finally, the requirement \eqref{cond:quan_lag_equiv1} ensures that the $K$-action is also compatible with the $\gh$-torsor structure.
	
\end{proof}

\begin{remark}\label{rmk:pair}
  For applications in orbit method, we need a slightly more general setting in which $K$ is not necessarily a subgroup of $G$. Suppose there is a group homomorphism $J: K \to G$ which induces an inclusion of Lie algebras $j: \liek \inj \lieg$. Then the adjoint action of $K$ on $\lieg$ via $J$ is compatible with $j$ and $(G,K)$ is a \emph{pair} in the sense of \cite[Definition 5.12]{Vogan_AV}. In this case, Definition \ref{defn:Kquan} and Proposition \ref{prop:quan_lag_equiv} are still valid without any change. We only need to redefine $\beta|_K$ and $\beta_\hb|_K$ as the $K$-actions via $J: K \to G$ without changing the notations. This also applies \S\, \ref{subsec:moment_lag} below.
\end{remark}

\subsection{Hamiltonian quantization of Lagriangian subvarieties}\label{subsec:moment_lag}

Suppose we are in the setting of \S\,\ref{subsec:equiv_lag} and Proposition \ref{prop:quan_lag_equiv}. Note that there can be many different $K$-action $\alpha$ on $E$ which induce the same adjoint action on $\TT(E)$. We now put an extra restriction on the choice of $\alpha$ in the presence of a quantized moment map (see Definition \ref{defn:quan_lag_ham} below). Consider the situation where $(X,\sym)$ admits a Hamiltonian $G$-action $\beta$ that commutes with the $\gm$-action as in \S\,\ref{subsec:moment}. Let $\Q$ be a graded quantization of $X$. Now suppose there exists a quantum Hamiltonian $G$-action $\beta_\hb$ on $\Q$ together with a quantized moment map $\mu^*_\hb: \ugh \to \Gamma(X, \Q)$ which lifts the classical moment map $\mu: X \to \lieg^*$. For instance, this true under the conditions of Proposition \ref{prop:moment}. Let $Y$ be a graded Lagrangian subvariety as before. We make the following assumptions on $Y$, $\mu$ and $\beta$:
\begin{enumerate}[label=(M)]
	\item \label{cond:M}
	The image of $\liek$ under $\mu^*: S\lieg \to \Gamma(X, \strx)$ lies in $\Gamma(X, \IY)$.
\end{enumerate}

\begin{remark}
	The condition \ref{cond:M} together with the definition of moment map and the Lagrangian condition force $\IY$ to be preserved by the infinitesimal action 
	$d\beta|_K$ of $\liek$. However, we do not assume $K$ is connected. Therefore the assumption that $\beta|_K$ preserves $\II_Y$ is not redundant.
\end{remark}
Recall that in \ref{subsec:moment} we identify $\lieg$ with $\hb\pb\lieg$. Now the Lie algebra $\hb\pb\lieg$ acts on $\Ehb$ via $ \mu^*_\hb: \hb\pb\lieg \to \Gamma(X,\Q)$ and the $\Q$-module structure of $\Ehb$. This action restricts to a Lie algebra action of $\liek$ on $\Ehb$, which lifts the infinitesimal action of $\liek$ on $Y$. Therefore $\liek$ also acts on $(i_Y)_*E \cong \Ehb/\hb\Ehb$ or simply $E$. To unload notations, we will denote both actions on $\Ehb$ and $E$ by $\mhk := \mu^*_\hb |_\liek$. For the remaining part of this section, we shall compare the two $\liek$-actions $d\aeh$ and $\mhk$ on $\Ehb$.

\begin{definition}\label{defn:quan_lag_ham}
	We say that $(\Ehb, \aeh, \mu^*_\hb)$ is a \emph{(graded) Hamiltonian quantization} of $(Y, E)$ if, besides \ref{cond:Kquan1} and \ref{cond:Kquan2} in \ref{defn:Kquan}, the following additional condition is satisfied:
	\begin{enumerate}[label=(K\arabic*)]
		\setcounter{enumi}{2}
		\item  \label{cond:Kquan3}
		The two $\liek$-actions $d\aeh$ and $\mhk$ on $\Ehb$ coincide.
	\end{enumerate}
\end{definition}

We want to find natural criteria for the condition \ref{cond:Kquan3}. Let $\hJY$ be the completion of $\JY$ along $Y$. Let $\der(\Q)_{\JY}$ be the sheaf of derivations of $\Q$ which preserves $\JY$ and let $\widehat{\der}(\Q)_{\JY}$ be its completion along $Y$. Set $\derehb$ to be the sheaf of derivations of $\Ehb$ as $\Q$-module. Then there is a commutative diagram of graded sheaves of Lie algebras over $Y$
\begin{equation}\label{diag:derehb}
\begin{tikzcd}
0  \arrow{r}  & \underline{\pws}  \arrow{r} \arrow[d, hookrightarrow]  & \hb\pb\hJY \arrow[r] \arrow[d, hookrightarrow]   & \widehat{\der}(\Q)_{\JY} \arrow{r} \arrow[d, equal] & 0  \\
0  \arrow{r}  & \underline{\pws} \oplus \pend_\nabla(E)    \arrow{r}  & \derehb  \arrow{r}   &  \derdj  \arrow{r} & 0,
\end{tikzcd}
\end{equation}
which is the localization of the diagram \eqref{diag:derdmr} with respect to the HC torsor $\PJ$. Here $\KKs$ is the constant sheaf with coefficient $\pws$ over $Y$ and $\pend_\nabla(E)$ is the sheaf of flat sections of $\pend(E)$ with respect to $\nabla$. For any $Z \in \liek$, $\mhk(Z)$ is a section of $\hb\pb\hJY$. Therefore a necessary condition for \ref{cond:Kquan3} to hold is that $d\aeh(Z)$, which is \emph{a priori} a section of $\derehb$, is a section of $\hb\pb\hJY$ for any $Z \in \liek$. We can regard $d\aeh(Z)$ as an $\autdmr$-invariant vector field on the torsor $\Pmr$. Its projection to the $\PGLh$-torsor $\P_{\PGLh}$, which appears in \eqref{eq:P1}, is the vector field $\xi_Z$ generated by $Z$ via the $\liek$-action induced by $d\aeh$.  By the decomposition \eqref{eq:derdmr}, $d\aeh(Z)$ is a section of $\hb\pb\hJY$ if and only if $\xi_Z$ is horizontal with respect to the flat connection of $\P_{\PGLh}$. In this situation, we say that the action on the flat torsor is \emph{horizontal}. In particular, the induced $\liek$-action on $\P(E)$ is also horizontal with respect to the flat connection of $\P(E)$. Conversely, starting with a horizontal $K$-action on $\P(E)$, the unique extended $K$-action on the special $\PGLh$-torsor $\Ppgl$ is also horizontal with respect to the flat connection on $\Ppgl$.

Now assume that the $K$-action on $\P_{\PGLh}$ is horizontal, so that both $d\aeh(Z)$ and $\mhk(Z)$ lie in the $\gm$-invariant part of $\Gamma(Y, \hb\pb\hJY)$, for any $Z \in \liek$.  Moreover, both their images under the projection $\hb\pb\hJY \to \widehat{\der}(\Q)_{\JY}$ coincide with $d\beta_\hb(Z)$. Thus the map $\rho:=d\aeh - \mhk$ has images in the $\gm$-invariant part $\hdr^0(Y,\kk)$ of $\hdr^0(Y,\pws)$. By Definition \ref{defn:Kquan} of $\aeh$ and Definition \ref{defn:quan_moment} of quantized moment map, it is standard to check that $\rho : \liek \to \kk$ defines a Lie algebra homomorphism. 

Note that we have a commutative diagram of short exact sequences of graded sheaves of Lie algebras
\begin{equation}\label{diag:rho}
\begin{tikzcd}
0  \arrow{r}  & \underline{\pws} \arrow{r} \arrow[d]  & \hb\pb\hJY \arrow[r] \arrow[d, "\varphi_Y"]   & \widehat{\der}(\Q)_{\JY}  \arrow{r} \arrow[d] & 0  \\
0  \arrow{r}  & \stry \arrow{r}  & \tatp \arrow[r, "\delta"]  &  \T_Y  \arrow{r}   & 0  ,
\end{tikzcd}
\end{equation}
where the middle vertical map is the quotient map modulo $\hb\pb\hJY^2$ as in \eqref{eq:J_tat}. The leftmost vertical map is the composition of the natural projection $\underline{\pws} \to \underline{\kk}$ and the embedding $\underline{\kk} \hookrightarrow \stry$, which is injective when restricted to the $\gm$-invariant part $\kk \subset \pws$. Therefore $\rho$ is in fact determined by the descent of $d\aeh$ and $\mhk$ to $\tatp$ via the map $\varphi_Y$.  The former can be identified with a strong $\liek$-action on $\frac{1}{r} \TT(\det(E))$ via $\det\Upsilon^+: \tatp \isom \frac{1}{r}  \TT(\det(E))$. The latter is still denoted by $\mhk$. 

We want to find natural conditions under which $\rho=0$. Of course this can be ensured by assuming $\liek=[\liek,\liek]$. However, we are mainly interested in the case when $K$ is the complexification of a maximal compact subgroup $\KR$ of a real reductive Lie group $\GR$, so that $\liek$ might have a nontrivial center. We restrict ourselves to the case when $\Q$ has a even structure, i.e., there is a $\gm$-equivariant involutory anti-automorphism $\epsilon$ of $\Q$ which sends $\hb$ to $-\hb$. By Proposition \ref{prop:moment}, we can assume that the quantum Hamiltonian $G$-action $\beta_\hb$ on $\Q$ and the quantized moment map $\mh$ are symmetrized with respect to $\epsilon$. 

By Proposition \ref{prop:graded}, we have $\per(\Q)=\sym_1(\Q)=0$. Thus equation \eqref{eqn:aty} implies that the image of $\aty$ in $\htwo(Y)$ vanishes. In fact, we will show below that $\tat$ is isomorphic to the trivial Picard algebroid in this case. Since the involution $\epsilon$ on $\Q$ descends to the identity map on $\strx$, it preserves the ideal sheaves $\JY$, $\JY'$ and $\JY^2$ and therefore the involution $\tilde{\epsilon}$ on $\hb\pb\Q$ (introduced in \S\,\ref{subsec:moment}) also preserves $\hinv\JY$, $\hinv\JY'$ and $\hinv\JY^2$. Therefore $\tilde{\epsilon}$ descends to an involution on $\hinv\JY / \hinv\JY^2$ of $\gm$-equivariant Lie algebra \emph{automorphisms}, which is still denoted as $\tilde{\epsilon}$. Moreover, $\tilde{\epsilon}$ is $\stry$-linear with respect to the symmetrized product $\bullet$ defined in \eqref{eq:symmprod}. The $-1$-eigensubsheaf of $\tilde{\epsilon}$ is exactly $\str_Y \cong \hb\str_Y$. Therefore $\tat$ splits into a direct sum $\str_Y \oplus \T_Y$.  In particular, the class $\aty$ vanishes. Since $\tilde{\epsilon}$ is $\gm$-equivariant, the splitting also preserves the $\gm$-equivariant structure. In summary, we have the following lemma.

\begin{lemma}\label{lemma:atc_even}
	Suppose $(\Q,\epsilon)$ is an even graded quantization of $(X,\sym)$. Then $\tat$ is isomorphic (as graded Picard algebroids) to the trivial Picard algebroid $\str_Y \oplus \T_Y$ with the obvious $\gm$-action. 
\end{lemma}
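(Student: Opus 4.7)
The plan is to use the parity involution $\epsilon$ to manufacture a canonical $\gm$-equivariant splitting of the short exact sequence \eqref{exsq:tatp}, exhibiting $\tat$ as a trivial Picard algebroid. The whole argument is driven by the observation that, although $\epsilon$ is an \emph{anti}-automorphism of $\Q$, the sign coming from $\epsilon(\hb)=-\hb$ conspires with reversal of order so that, after transport, the induced action on $\tat=\hb\pb\JY/\hb\pb\JY^2$ becomes a genuine Lie algebra \emph{automorphism}.

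First, I would transport $\epsilon$ to an involution $\tilde\epsilon$ of the sheaf $\hb\pb\Q$ exactly as in \S\,\ref{subsec:moment}: by construction $\tilde\epsilon$ preserves the commutator bracket and acts by $(-1)^{i+1}$ on $\hb^{-i}\pws$. Since $\epsilon$ reduces to the identity on $\strx$ modulo $\hb$, it preserves each of the ideals $\JY$, $\JY'$ and $\JY^2$, so $\tilde\epsilon$ preserves the corresponding subsheaves of $\hb\pb\Q$ and therefore descends to a $\gm$-equivariant involution of $\tat$ by Lie algebra automorphisms. Because the symmetrized product $\bullet$ in \eqref{eq:symmprod} is by definition averaged over the two orderings, $\epsilon$ and thus $\tilde\epsilon$ intertwines $\bullet$, so the descended involution is $\stry$-linear with respect to the $\stry$-module structure defining $\tat$.

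Next I would pin down the sign with which $\tilde\epsilon$ acts on the $\stry$-piece of \eqref{exsq:tatp}. Under the identification $\hb\pb\JY'/\hb\pb\JY^2 \isom \stry$ from \eqref{eq:J_tat}, which is essentially $\hb\pb f \leftrightarrow f$, the scalar $\hb\pb$ contributes a factor $-1$ while $\epsilon$ acts trivially on $\strx$ mod $\hb$; hence $\tilde\epsilon$ acts by $-1$ on $\stry\subset\tat$. Consequently the $+1$-eigensheaf $\TT^{+}\subset\tat$ is disjoint from $\stry$ and, via the anchor, maps isomorphically onto $\T_Y$. This produces the $\stry$-module and Lie algebroid splitting $\tat\cong \stry\oplus \T_Y$, which is a splitting of graded Picard algebroids because $\tilde\epsilon$ commutes with the $\gm$-action on $\Q$ and hence on $\tat$.

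The only place where genuine care is required is verifying that $\tilde\epsilon$ preserves (rather than reverses) the Lie bracket on $\tat$; this is exactly the cancellation of the two signs and is the main content of \S\,\ref{subsec:moment}, so I would simply invoke it. The remaining checks — preservation of the two ideals, $\stry$-linearity with respect to $\bullet$, and the computation of the eigenvalue on $\stry$ — are formal and compatible with the $\gm$-grading throughout.
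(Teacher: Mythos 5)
Your argument is correct and follows exactly the paper's own line of reasoning: transport $\epsilon$ to the Lie algebra automorphism $\tilde\epsilon$ of $\hb\pb\Q$, note it preserves $\hinv\JY$, $\hinv\JY'$, $\hinv\JY^2$ and is $\stry$-linear for $\bullet$, compute that $\tilde\epsilon$ acts by $-1$ on $\stry\subset\tat$, and conclude the $+1$-eigensheaf gives a $\gm$-equivariant splitting $\tat\cong\stry\oplus\T_Y$. The only cosmetic imprecision is the phrase ``the scalar $\hb\pb$ contributes a factor $-1$'' --- in fact $\tilde\epsilon(\hb^{-1})=\hb^{-1}$; the $-1$ arises from $\epsilon(\hb f)=-\hb\epsilon(f)$ when you identify $\stry$ with $\hinv\JY'/\hinv\JY^2$ via $\hinv(\hb f)\mapsto \bar f$ --- but the conclusion $\tilde\epsilon|_{\stry}=-1$ is right.
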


We can also regard $\tilde{\epsilon}$ as an involution on $\tatp$ via the isomorphism $s: \tat \to \tatp$, but it is not linear with respect to the $\str_Y$-module structure on $\tatp$. Set $\star := - \tilde{\epsilon}$, then $\star$ is a $\gm$-equivariant involution on $\tatp$ satisfying
\begin{equation}\label{cond:alg_inv}
\star([\partial_1,\partial_2]) = -[\star\partial_1, \star\partial_2], \quad \star(f \partial) = f (\star\partial) - \wt{\partial}(f), \quad \star(1)= 1, \quad \text{and} \quad \wt{\star\partial} = -\wt{\partial},
\end{equation}
for any local sections $\partial_1, \partial_2, \partial$ of $\tatp$ and local sections $f$ of $\stry$, where $\wt{\partial}$ denotes the image of $\partial$ in $\T_Y$. Here we are using the $\stry$-module structure of $\tatp$. In fact, the existence of such $\star$ characterizes the Picard algebroid $\frac{1}{2} \TT(K_Y)$. By \cite[\S 2.4]{BB}, there is also a canonical involution $\APLstar$ on the Picard algebroid $\frac{1}{2} \TT(K_Y)$ satisfying the same property \eqref{cond:alg_inv} as $\star$. We have the following reformulation of Lemma \ref{lemma:atc_even}.

\begin{lemma}\label{lemma:atc_sqrt}
	Suppose $(\Q,\epsilon)$ is an even graded quantization of $(X,\sym)$. Then there exists a unique $\gm$-equivariant isomorphism $\tatp \cong \frac{1}{2} \TT(K_Y)$ of graded Picard algebroids which intertwines $\star$ with the canonical involution $\APLstar$ on $\frac{1}{2} \TT(K_Y)$.
\end{lemma}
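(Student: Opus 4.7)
My plan is to combine Lemma~\ref{lemma:atc_even} with the canonical identification $v: \tatp \isom \tat + \tfrac{1}{2}\TT(K_Y)$ in \eqref{eq:tatp2tatky}. Since $\stry \oplus \T_Y$ is the trivial element in the group of (graded) Picard algebroids, Lemma~\ref{lemma:atc_even} immediately supplies a $\gm$-equivariant isomorphism $\Psi_0: \tatp \isom \tfrac{1}{2}\TT(K_Y)$ of graded Picard algebroids, so the only remaining issue is to arrange compatibility with the involutions.

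To that end, I would transport $\star$ across $\Psi_0$ to an involution $\sigma := (\Psi_0)_*(\star)$ on $\tfrac{1}{2}\TT(K_Y)$ satisfying the four properties in \eqref{cond:alg_inv}, and study the composition $\tau := \APLstar \circ \sigma$. Both $\sigma$ and $\APLstar$ fix $\stry$ pointwise and have anchor $-\Id$, so $\tau$ restricts to the identity on $\stry$ and descends to the identity on $\T_Y$. A direct calculation using the inhomogeneous rule $\star(f\partial) = f\star\partial - \wt{\partial}(f)$ shows that the two inhomogeneous contributions cancel under the composition, so $\tau$ is $\stry$-linear and a Lie algebra automorphism. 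The standard description of automorphisms of a Picard algebroid then forces $\tau(\partial) = \partial + \omega(\wt{\partial})$ for a unique $\gm$-invariant closed $1$-form $\omega \in \Omega^1_Y$.

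For any $\gm$-invariant closed $1$-form $\omega_A$, the rule $A(\partial) := \partial + \omega_A(\wt{\partial})$ defines a graded Picard algebroid automorphism of $\tfrac{1}{2}\TT(K_Y)$, and a short calculation using $\wt{\sigma\partial} = -\wt{\partial}$ yields $A \sigma A^{-1}(\partial) = \sigma(\partial) - 2\omega_A(\wt{\partial})$. Thus replacing $\Psi_0$ by $A \circ \Psi_0$ sends $\tau$ to $\tau - 2\omega_A(\wt{\cdot})$. Choosing $\omega_A = \tfrac{1}{2}\omega$---available since $\kk$ has characteristic zero---produces the desired $\Psi$ intertwining $\star$ with $\APLstar$; the same formula yields uniqueness, because any two such $\Psi$'s differ by some $A$ commuting with $\APLstar$, which forces $\omega_A = 0$ and hence $A = \Id$. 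I expect the main obstacle to be the bookkeeping in the sign-and-cancellation computation demonstrating $\stry$-linearity of $\tau$ and the factor of $-2$ in the conjugation identity; once those are in hand, the rest reduces to a one-line argument exploiting division by $2$.
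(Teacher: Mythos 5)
Your proof is correct, but it follows a mildly different route than the paper seems to intend. The paper offers no explicit argument, calling the lemma a ``reformulation'' of Lemma~\ref{lemma:atc_even}; the implicit construction is to take the \emph{canonical} splitting of $\tat$ furnished by the $\pm 1$-eigensheaf decomposition of $\tilde{\epsilon}$ (which is what the proof of Lemma~\ref{lemma:atc_even} actually produces) and feed it through $v: \tatp \isom \tat + \tfrac{1}{2}\TT(K_Y)$, yielding a specific isomorphism that intertwines $\star$ and $\APLstar$ essentially by construction. You instead begin with an \emph{arbitrary} $\gm$-equivariant isomorphism $\Psi_0 : \tatp \isom \tfrac{1}{2}\TT(K_Y)$ of graded Picard algebroids, transport $\star$ to $\sigma$, and correct $\Psi_0$ by half of the obstruction $1$-form extracted from $\tau = \APLstar \circ \sigma$. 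Both strategies rest on the same inputs (Lemma~\ref{lemma:atc_even}, the isomorphism $v$, and the identification of graded Picard-algebroid automorphisms inducing the identity on $\stry$ and $\T_Y$ with closed $\gm$-invariant $1$-forms). The canonical-splitting route avoids your averaging step; yours is somewhat more robust, since it never needs to track which trivialization of $\tat$ the earlier lemma supplies, and it makes uniqueness transparent via the conjugation identity $A\sigma A^{-1}(\partial) = \sigma(\partial) - 2\omega_A(\wt{\partial})$. Your intermediate computations---the cancellation of the inhomogeneous terms that makes $\tau$ $\stry$-linear, the factor of $-2$, and the fact that $\omega$ is closed and $\gm$-invariant so that $\tfrac{1}{2}\omega$ defines an admissible $A$---are all sound.
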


 The symmetrized condition on $\mh$ gives
\begin{equation}\label{eq:inv_k}
\star [\mhk(Z)] = -\mhk(Z) \quad \text{in} ~ \tatp  , \quad \forall ~  Z \in \liek \cong \hb\pb\liek \subset \hb\pb\lieg.
\end{equation}
By Lemma \ref{lemma:atc_sqrt}, there exists a unique $\gm$-equivariant isomorphism $\tatp \cong \frac{1}{2} \TT(K_Y)$ of graded Picard algebroids which intertwines $\star$ and the involution $\APLstar$ on $\frac{1}{2} \TT(K_Y)$. Moreover, the canonical strong $\liek$-action on $\frac{1}{2} \TT(K_Y)$ induced by $\beta|_K$ (restricted to $Y$) also satisfies an equality similar to \eqref{eq:inv_k}. This implies that the isomorphism intertwines the strong $\liek$-actions. 

For any graded Picard algebroid $\TT$ equipped with a strong $\liek$-action that commutes with the $\gm$-action,  set $\aut_\liek(\TT)$ to be the group of Picard algebroid automorphisms of $\TT$ that commute with the strong $\liek$-action  and the $\gm$-action. Note that, with respect to the natural strong $\liek$-actions on $\TT(K_Y)$ and $\frac{1}{2}\TT(K_Y)$ induced by $\beta|_K$, we have a canonical group isomorphism $\aut_\liek(\TT(K_Y)) \cong \aut_\liek(\frac{1}{2}\TT(K_Y))$ via the canonical isomorphism $\frac{1}{2}\TT(K_Y) + \frac{1}{2}\TT(K_Y) \cong \TT(K_Y)$ of Picard algebroids. We finally arrive at the following main result of this section.

\begin{theorem}\label{thm:lag_equiv}
	Suppose $(\Q,\epsilon,\beta_\hb,\mh)$ is a symmetrized $G$-Hamiltonian quantization of a graded symplectic variety $(X,\sym, \beta, \mu)$. Let $E$ be a graded vector bundle of rank $r$ over $Y$ equipped with a compatible $K$-action $\alpha$,  such that $\PP(E)$ is equipped with a strong flat connection. Then there exists a graded quantization $\Ehb$ of $E$ equipped with a $K$-equivariant structure $\aeh$, such that $(\Ehb, \aeh, \mh)$ is a Hamiltonian quantization of $(Y,E)$, if and only if the following conditions are satisfied:
	\begin{enumerate}
		\item 
		the $K$-action on $\PP(E)$ is horizontal with respect to the strong flat connection;
		\item 
		there exists an isomorphism $\frac{1}{2} \TT(K_Y) \isom \frac{1}{r} \TT(\det(E))$ of graded Picard algebroids which intertwines the natural strong $\liek$-action on $\frac{1}{2} \TT(K_Y)$ induced by $\beta$ and the strong $\liek$-action on $\frac{1}{r} \TT(\det(E))$ induced by $\alpha$.
	\end{enumerate}
	In this case, the set of isomorphism classes of $(\Ehb, \aeh)$ with the fixed $E$ is in bijection with the set of isomorphisms in (2), which is a torsor over the group $\aut_\liek(\TT(K_Y))$.
\end{theorem}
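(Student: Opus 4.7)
The plan is to reduce the proof to a combination of three earlier results—Lemma \ref{lemma:atc_sqrt}, Theorem \ref{thm:lag_vec_graded}, and Proposition \ref{prop:quan_lag_equiv}—with the genuinely new work being the verification of the Hamiltonian condition $d\aeh = \mhk$ on $\Ehb$.

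For necessity, suppose $(\Ehb, \aeh, \mh)$ is already a Hamiltonian quantization. The equality $d\aeh(Z) = \mhk(Z)$ for $Z \in \liek$ forces $d\aeh(Z)$ to lie in $\Gamma(Y, \hb\pb\hJY)$, which via the splitting \eqref{eq:derdmr} is precisely the horizontality of the induced $K$-action on the $\PGLh$-torsor $\P_{\PGLh}$, hence on $\PE$; this yields (1). The isomorphism $\frac{1}{r}\det\vu^+$ produced by Theorem \ref{thm:lag_vec_graded} is $K$-equivariant by Proposition \ref{prop:quan_lag_equiv}. Descending $d\aeh$ to $\tatp$ and transporting through $\frac{1}{r}\det\vu^+$ gives the natural strong $\liek$-action on $\frac{1}{r}\TT(\det(E))$ induced by $\alpha$; descending $\mhk$ to $\tatp$ and transporting through the canonical iso $\tatp \cong \frac{1}{2}\TT(K_Y)$ of Lemma \ref{lemma:atc_sqrt} gives, as noted in the discussion preceding the theorem, the natural strong $\liek$-action on $\frac{1}{2}\TT(K_Y)$ induced by $\beta$. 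The Hamiltonian condition identifies these two descents, producing the iso asserted in (2).

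For sufficiency, assume (1) and (2). Composing the iso in (2) with the $\gm$-equivariant iso $\tatp \isom \frac{1}{2}\TT(K_Y)$ of Lemma \ref{lemma:atc_sqrt} yields a $\gm$-equivariant iso $\tatp \isom \frac{1}{r}\TT(\det(E))$ of graded Picard algebroids that intertwines the two strong $\liek$-actions in play. Theorem \ref{thm:lag_vec_graded} then produces a unique graded quantization $\Ehb$ of $E$ realizing this iso (with the fixed strong flat connection on $\PE$), and Proposition \ref{prop:quan_lag_equiv} gives a unique $K$-equivariant structure $\aeh$ on $\Ehb$: the Picard algebroid condition there holds by construction, and the horizontality condition by assumption (1).

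The crux of the argument, which I expect to be the main obstacle, is the verification of the Hamiltonian condition $d\aeh = \mhk$. By (1) and the decomposition \eqref{eq:derdmr}, both $d\aeh(Z)$ and $\mhk(Z)$ are $\gm$-invariant sections of $\hb\pb\hJY$ projecting to the same element $d\beta_\hb(Z) \in \widehat{\der}(\Q)_{\JY}$, so $\rho(Z) := d\aeh(Z) - \mhk(Z)$ lies in the kernel of the top row of \eqref{diag:rho}, namely the constant sheaf $\underline{\pws}$, and by $\gm$-invariance in $\underline{\kk}$. Descending via $\varphi_Y$ to $\tatp$, the two descents agree thanks to (2) together with the identification of $\mhk$ with the natural strong $\liek$-action on $\frac{1}{2}\TT(K_Y)$ given by Lemma \ref{lemma:atc_sqrt}; hence $\rho$ descends to zero in $\tatp$. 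Since the leftmost vertical arrow of \eqref{diag:rho} is injective on $\gm$-invariants, we conclude $\rho = 0$. Finally, uniqueness of $(\Ehb, \aeh)$ given a fixed iso in (2) follows from the uniqueness clauses in Theorem \ref{thm:lag_vec_graded} and Proposition \ref{prop:quan_lag_equiv}; two such isomorphisms differ exactly by an element of $\aut_\liek(\frac{1}{2}\TT(K_Y)) \cong \aut_\liek(\TT(K_Y))$, supplying the torsor structure.
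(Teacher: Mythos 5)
Your proposal is correct and follows essentially the same route the paper takes: the paper states Theorem~\ref{thm:lag_equiv} as the conclusion of the discussion in \S\,\ref{subsec:moment_lag}, and you have assembled those same ingredients (the $\rho$-vanishing argument via diagram \eqref{diag:rho}, Lemma~\ref{lemma:atc_sqrt} to convert $\tatp$ into $\frac{1}{2}\TT(K_Y)$, Theorem~\ref{thm:lag_vec_graded} and Proposition~\ref{prop:quan_lag_equiv} for existence and uniqueness) in the intended order. One small imprecision worth flagging: when invoking Proposition~\ref{prop:quan_lag_equiv} in the sufficiency direction you refer to its ``horizontality condition,'' but what that proposition actually requires is $K$-equivariance of the flat connection on $\PE$ (condition \eqref{cond:quan_lag_equiv2}); you should say explicitly that horizontality of the $K$-vector fields together with flatness yields this equivariance (at least infinitesimally, which is what the torsor-lifting argument uses), rather than treating the two conditions as synonyms.
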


\begin{remark}
	All the results so far can be generalized to families of Lagrangian subvarieties without any change in the arguments. We leave the details to interested readers.
\end{remark}

\section{Quantization of nilpotent orbits}\label{sec:quan_orbit}

\subsection{Quantization of complex orbits}\label{subsec:quan_c}

From now on, we set $\kk= \C$ and $G$ to be a complex semisimple with Lie algebra $\lieg$. Therefore $\gm = \cm$. Any coadjoint orbit $\OO$ in $\lieg^*$ is naturally a smooth symplectic variety equipped with a $G$-invariant algebraic symplectic form $\sym$. Suppose $\OO$ is nilpotent, then it has a $\cm$-action such that $t. \sym = t^2 \sym$ by \cite{BK}(see also \S\,\ref{subsec:grading}). So we are in the setting of \S\,\ref{subsec:graded}. However, $\OO$ is far from satisfying the admissibility condition \eqref{eq:adm}, so we cannot directly apply Theorem \ref{thm:per} and Proposition \ref{prop:graded} to construct quantization of $(\OO,\sym)$. 

The key observation made by Losev in \cite{Losev2} is that $\OO$ can be embedded as an open subvariety into some smooth symplectic variety that is strongly admissible in the sense of Definition \ref{defn:strong_adm}. Namely, take $X = \Spec(\C[\OO])$. This is an affine Poission variety which is the normalization of the closure $\cl{\OO}$ of $\OO$ in $\lieg^*$ (\cite{BK}). It was shown in \cite{Panyushev} and \cite{Hinich} that $X$ is a conical symplectic singularity in the sense of Beauville (\cite{Beauville1}). We have the following result about general symplectic singularities (\cite[Prop. 2.3]{Losev2})
\begin{proposition}
	Let $X$ be a symplectic singularity. Then there is a birational projective morphism $\rho: \wt{X} \to X$ such that $\wt{X}$ satisfies the following properties:
	\begin{enumerate}[(a)]
		\item 
		$\wt{X}$ is an irreducible normal Poisson variety which has symplectic singularieties.
		\item 
		$\wt{X}$ is $\QQ$-factorial.
		\item   
		$\wt{X}$ has terminal singularities.
		\item 
		If additionally $X$ is conical, then $\wt{X}$ admits a $\cm$-action such that $\rho$ is $\cm$-equivariant.
	\end{enumerate}
\end{proposition}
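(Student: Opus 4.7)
The plan is to apply the Minimal Model Program in the form established in \cite{BCHM} to produce a $\QQ$-factorial terminalization of $X$, and then invoke Namikawa's results on the stability of symplectic singularities under crepant partial resolutions to verify that the output carries the required Poisson structure.

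First I would pick any resolution of singularities $\pi: Y \to X$ (available by Hironaka). Since $X$ has symplectic singularities by assumption, the canonical divisor $K_Y$ is $\pi$-exceptional, so the pair $(Y,0)$ is klt relative to $X$ and one may run a relative MMP over $X$ by \cite{BCHM}. The output is a birational projective morphism $\rho: \wt{X} \to X$ where $\wt{X}$ is $\QQ$-factorial with terminal singularities, and since every step is $K_Y$-negative while $K_Y$ is $\pi$-exceptional, the morphism $\rho$ is crepant, i.e.\ $\rho^* K_X = K_{\wt{X}}$. This yields (b) and (c) immediately from the structure of the MMP output.

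Next, to verify (a), I would transport the symplectic form: the form on $X^{reg}$ pulls back to a regular form on $\rho\pb(X^{reg}) \subset \wt{X}$, and since $\rho$ is crepant and $\wt{X}$ is normal, the complement of this open set in $\wt{X}^{reg}$ has codimension at least $2$, so by Hartogs-type extension combined with Namikawa's theorem on symplectic singularities the form extends non-degenerately to the smooth locus $\wt{X}^{reg}$ and makes $\wt{X}$ into a symplectic singularity in the sense of Beauville (\cite{Namikawa1}). Irreducibility and normality are preserved throughout the MMP, and the Poisson bracket on $\str_{\wt{X}}$ is recovered from the extended symplectic form by normality across the codimension-$2$ singular locus.

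For (d), when $X$ is conical with a $\cm$-action, I would aim to make every step of the construction above $\cm$-equivariant. An equivariant Hironaka resolution supplies a $\cm$-equivariant $Y \to X$ to start with. Each extremal contraction and flip in the MMP is determined by a $K_Y$-negative extremal ray in the relative cone of curves, and since $\cm$ is connected it acts trivially on numerical classes; hence every such extremal ray is $\cm$-stable, and so each contraction and flip can be chosen to be $\cm$-equivariant. The delicate part of the argument, and the one I expect to be the main obstacle, is that the auxiliary ample divisors and scaling parameters used in running the MMP in \cite{BCHM} are not \emph{a priori} $\cm$-invariant; this is resolved by averaging them over the compact circle $S^1 \subset \cm$, which does not alter the resulting birational transformation, so that the full construction can be realized $\cm$-equivariantly.
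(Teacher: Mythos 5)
The paper does not prove this proposition; it is stated as a known result with a citation to \cite[Prop.\ 2.3]{Losev2}, which in turn rests on \cite{BCHM} and on Namikawa's work. Your sketch recovers what is by now the standard argument and essentially the one Losev records: start from a resolution $\pi: Y \to X$, note that symplectic singularities are canonical so $K_Y$ is effective and $\pi$-exceptional, run a relative MMP over $X$ to get a $\QQ$-factorial terminal $\wt{X} \to X$, deduce crepantness from the negativity lemma (the $\rho$-nef, $\rho$-exceptional divisor $K_{\wt{X}} - \rho^*K_X$ must vanish), and invoke Namikawa's theorem that crepant partial resolutions of symplectic singularities again have symplectic singularities to extend the two-form and the Poisson bracket over a codimension-$\geq 2$ locus. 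This gives (a)--(c) correctly, and the logical skeleton is sound.

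The one genuinely shaky move is in your treatment of (d). You propose to make the MMP equivariant by averaging the auxiliary ample divisors over the compact circle $S^1 \subset \cm$. Integrating a divisor class against Haar measure on $S^1$ does not land in the rational N\'eron--Severi group; this step does not make literal sense in the algebraic category. It is also unnecessary. The correct observation, and the one that is actually used, is that the birational modifications in an MMP are \emph{canonical}: the set of $K$-negative extremal rays of the relative cone of curves is discrete, so a connected group $\cm$ fixes each ray; the contraction associated to a ray is then automatically $\cm$-equivariant; and a flip is $\operatorname{Proj}$ of the relative pluricanonical section ring $\bigoplus_m f_* \mathcal{O}(mK)$, which is intrinsically $\cm$-equivariant because $K$ is. One needs only a $\cm$-equivariant starting resolution, which equivariant Hironaka supplies, and equivariance then propagates for free without any averaging. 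With this repair your argument aligns with the proof behind the citation.
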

The form $\rho^*\sym$ extends to a symplectic form on the smooth part $\xreg$ of $\wt{X}$, still denoted as $\sym$. Such a $\wt{X}$ is called a \emph{$\QQ$-factorial terminalization} of $X$. Under the assumption (a), Namikawa \cite{Namikawa1} showed that (c) is equivalent to the condition that $\codim(\wt{X} - \xreg, \wt{X}) \geq 4$. Then the proof of \cite[Lemma 12]{Namikawa2} shows that

\begin{proposition}\label{prop:adm}
	If $X$ is affine, then $H^i(\xreg, \str_{\wt{X}})=0$ for $i=1, 2$.
\end{proposition}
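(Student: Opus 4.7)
The plan is to combine two vanishing results. First, I would show that $H^i(\wt{X}, \str_{\wt{X}}) = 0$ for all $i \geq 1$. Second, I would show that the local cohomology along $\xsing$ vanishes in low degrees, so that the restriction map $H^i(\wt{X}, \str_{\wt{X}}) \to H^i(\xreg, \str_{\wt{X}})$ is an isomorphism for $i \leq 2$. Combining these gives the claim.

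For the first step, recall that $\wt{X}$ has symplectic singularities by property (a), so by Beauville \cite{Beauville1} it has rational singularities; in particular $R^i \rho_* \str_{\wt{X}} = 0$ for $i > 0$, and since $X$ is normal we have $\rho_* \str_{\wt{X}} = \str_X$. As $X$ is affine by hypothesis, the Leray spectral sequence collapses to
\begin{equation*}
H^i(\wt{X}, \str_{\wt{X}}) \;=\; H^i(X, \rho_*\str_{\wt{X}}) \;=\; H^i(X, \str_X) \;=\; 0, \qquad i \geq 1.
\end{equation*}

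For the second step, consider the local cohomology long exact sequence
\begin{equation*}
\cdots \to H^i_{\xsing}(\wt{X}, \str_{\wt{X}}) \to H^i(\wt{X}, \str_{\wt{X}}) \to H^i(\xreg, \str_{\wt{X}}) \to H^{i+1}_{\xsing}(\wt{X}, \str_{\wt{X}}) \to \cdots.
\end{equation*}
Because $\wt{X}$ has terminal singularities, the Namikawa criterion quoted just before the proposition gives $\codim(\xsing, \wt{X}) \geq 4$. Moreover, symplectic singularities are Gorenstein (the symplectic form trivialises the canonical sheaf on $\xreg$) and in particular Cohen--Macaulay, so the depth of $\str_{\wt{X}}$ along $\xsing$ equals the codimension. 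The standard vanishing of local cohomology then yields $H^j_{\xsing}(\wt{X}, \str_{\wt{X}}) = 0$ for all $j < 4$. Feeding this into the long exact sequence produces isomorphisms $H^i(\wt{X}, \str_{\wt{X}}) \isom H^i(\xreg, \str_{\wt{X}})$ for $i \leq 2$, which combined with Step 1 gives the desired vanishing.

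The main technical point to verify is the Cohen--Macaulay property of $\wt{X}$ at singular points; once this is in hand, the depth--codimension equality and the rational singularity vanishing are standard, and the whole argument is essentially a diagram chase with the Leray and local cohomology spectral sequences. This recovers the argument of \cite[Lemma 12]{Namikawa2} in the present setting.
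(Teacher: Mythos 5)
Your proof is correct and reconstructs exactly the argument of \cite[Lemma~12]{Namikawa2}, which is what the paper invokes: rational singularities and $X$ affine kill $H^{>0}(\wt{X},\str_{\wt{X}})$, and Gorenstein (so Cohen--Macaulay) plus codimension $\geq 4$ of $\xsing$ gives the local cohomology vanishing needed to pass to $\xreg$. One small imprecision in Step~1: $R^i\rho_*\str_{\wt{X}}=0$ does not follow from $\wt{X}$ having rational singularities alone, but from the fact that \emph{both} $X$ and $\wt{X}$ have rational singularities (factor a resolution of $\wt{X}$ through $\rho$ and use the Grothendieck spectral sequence); since $X$ has symplectic singularities this is available here.
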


Therefore $(\xreg,\sym)$ is strongly admissible as a smooth symplectic variety in the sense of \eqref{defn:strong_adm}. Then one can consider graded quantizations of $(\xreg,\sym)$ using Corollary \ref{cor:per_graded}. Moreover, if $i: \xreg \hookrightarrow \wt{X}$ denotes the embedding, \cite[Proposition 3.4]{BPW} shows that the functors $i_*$ and $i^*$ give a bijection between isomorphism classes of graded quantization of $\xreg$ and $\wt{X}$.  Therefore graded quantizations of $\wt{X}$ are classified by $H^2(\xreg,\C)$. It is shown in \cite[\S 3.2]{Losev2} that any such quantization $\tQ$ gives a filtered quantization $\A$ of the Poisson algebra $\C[X]$ by taking $\cm$-finite global sections and setting $\hb = 1$. Conversely any filtered quantization of $\C[X]$ arises in this way. 

Back to the case when $X=\Spec \C[\OO]$. We will focus on the unique (up to isomorphisms) even quantization $(\tQ, \epsilon)$ of $\xreg$ since they are related to Vogan's Conjecture \ref{conj:vogan}. Proposition \ref{prop:moment} ensures a symmetrized quantum Hamiltonian $G$-action on $\Q$ equipped with the quantized moment map $\mu^*_\hb: \ugh \to \Gamma(\xreg, \tQ)$. The evaluation of $\mu^*_\hb$ at $\hb = 1$ gives an algebra homomorphism $\Phi: \Ug \to \A$, which makes $\A$ into a Dixmier algebra. Set $\JJ_{\OO} := \ker \Phi$, then \cite[Lemma 3.7]{Losev2} shows that $\JJ_{\OO}$ is a multiplicity-free complete prime primitive ideal of $\Ug$, such that its associated variety is $\cl{\OO}$. In particular, $\A$ is a Harish-Chandra bimodule of $G$ with infinitesimal character, denoted by $\ld_\OO$.

\begin{remark}
	Note that Losev did not require his quantized moment map in \cite{Losev2} to be symmetrized, but it is important for us to have a symmetrized one for the discussions in \S\,\ref{subsec:quan_r}.
\end{remark}

The restriction $\Q:=\tQ|_{\OO}$ of $\tQ$ to the open subset $\OO$ is again an even quantization of $(\OO,\sym)$, equipped with the restriction of $\epsilon$ which is still denoted by the same symbol. Moreover, the composition of the symmetrized quantized moment map $\mu^*_\hb$ for $\tQ$ and the restriction map $\Gamma(\xreg, \tQ) \to \Gamma(\OO,\Q)$ gives a symmetrized quantized moment map for $\Q$. When $\OO$ is a \emph{birationally rigid orbit} in the sense of \cite[\S 4.1]{Losev2}, it was shown in \cite[Prop. 4.4]{Losev2} that $\wt{X} = X$ and $H^2(X^{reg}, \C) = 0$. Therefore there is one unique quantization $\Q$ of $X^{reg}$ which is even.

\begin{remark}
	For a non-birationally rigid orbits, or even more generally, any affine variety with conical symplectic singuarities, Losev \cite{Losev2} has classified all the quantization of its algebra of regular functions as a Poisson algebra and showed that the parameter space is isomorphic to the parameter space of Poisson deformations. We will focus on the birational rigid case, but it is also possible to apply the family versions of our results in \S\,\ref{sec:lag_group} to the general case, where a general nilpotent orbit fits into a family of (non-nilpotent) coadjoint orbits.
\end{remark}

\subsection{Quantization of $K$-orbits}\label{subsec:quan_r}

Suppose that $\GR$ is a real reductive Lie group of Harish-Chandra class. In particular, the adjoint action of $K$ on $\lieg$ is inner. Let $G = G_{ad}$ be the adjoint group of $\lieg$, then $G$ is a connected semsimple complex Lie group. Moreover, the adjoint action of $K$ gives a group homomorphism $J: K \to G$ which satisfies the assumption in Remark \ref{rmk:pair}. Let $\OO_\liep = K \cdot x \subset \liep^* \cong \liep$ be a nilpotent $K$-orbit. Then $\OO := G \cdot x$ is the unique $G$-orbit in $\lieg^* \cong \lieg$ containing $\OO_\liep$ due to our assumption on the group $\GR$. By \cite[Prop.\,5.16, Cor.\,5.20]{Vogan_AV}, $\OO_\liep$ is a closed smooth Lagrangian subvariety of $\OO$. Let $(\Q,\epsilon)$ be the even quantization of $\OO$ as constructed in \S\,\ref{subsec:quan_c}, which is equipped with a unique symmetrized quantum Hamiltonian $G$-action $\beta_\hb$ and a symmetrized quantized moment map $\mu^*_\hb: \ugh \to \Gamma(X, \Q)$. Recall that Vogan's Conjecture \ref{conj:vogan} concerns a vector bundle of the form $\V_\chi$ determined by any irreducible representation $(\chi, V_\chi)$ of the stablizer $K^x$ satisfying \eqref{eq:chi} via \eqref{eq:vchi}. By \S\,\ref{subsec:grading}, the vector bundle $\V_\chi$ can be equipped with a $\cm$-action which commutes with the $K$-action. The $\cm$-action is only unique up to a degree shift, but note that different choices lead to the same $\cm$-action on the Picard algebroid $ \TT(\det(\V_\chi))$, i.e., the one induced from the canonical $\cm$-action on the canonical bundle $K_{\OO_\liep}$. By Remark \ref{rmk:grading}, this ambiguity does not affect our discussion of graded quantization of $\V_\chi$. Again we use $r$ to denote the rank of $\V_\chi$.

\begin{remark}
	Comparing the definition of admissible orbit data \eqref{eq:chi} and condition (2) of Theorem \ref{thm:lag_equiv}, one immediately sees the similarity. It should not be a surprise. Vogan proved that admissible orbit data are the right candidates for classical limits of representations (\cite[Thm. 8.7]{Vogan_AV}) by analyzing the first-order information (cf. \cite[\S\,11]{Vogan_AV}), which is essentially the same as the arguments in the proof of Theorem \ref{thm:lag_equiv}.
\end{remark}

\begin{proposition}\label{prop:vchi}
	The vector bundles $\V_\chi$ in Conjecture \ref{conj:vogan} are exactly the irreducible $K$-equivariant graded vector bundles satisfying the conditions of Theorem \ref{thm:lag_equiv} with respect to $(\Q,\epsilon)$. More precisely, 
	\begin{enumerate}
		\item 
		the projectivization $\PP(\V_\chi)$ is equipped with a unique strong flat connection, with respect to which the induced $K$-action is horizontal, and
		\item 
		there is a unique isomorphism $\frac{1}{2} \TT(K_Y) \isom \frac{1}{r} \TT(\det(\V_\chi))$ of graded Picard algebroids which intertwines the natural strong $\liek$-action on $\frac{1}{2} \TT(K_Y)$ and the induced strong $\liek$-action on $\frac{1}{r} \TT(\det(\V_\chi))$.
	\end{enumerate}
	Conversely, any irreducible $K$-equivariant graded vector bundle satisfying the conditions of Theorem \ref{thm:lag_equiv} is of the form $\V_\chi$ for some $\chi$. Therefore the set of equivalence classes of $\chi$ satisfying \eqref{eq:chi} is in bijection with the set of isomorphism classes of irreducible $K$-equivariant graded vector bundles satisfying the conditions of Theorem \ref{thm:lag_equiv}.
\end{proposition}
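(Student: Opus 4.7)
The plan is to translate both directions of the claimed equivalence into representation-theoretic conditions on the isotropy representation, using the fact that $\OO_\liep \cong K/K^x$ is a homogeneous space. Any $K$-equivariant algebraic vector bundle on $\OO_\liep$ is of the form $K \times_{K^x} V$ for a finite-dimensional algebraic $K^x$-module $V$, and it is irreducible as a $K$-equivariant bundle if and only if $V$ is an irreducible $K^x$-representation. The compatible $\cm$-action from \S\,\ref{subsec:grading} identifies the graded structure on such a bundle with the extension of $\chi$ to the semidirect product $F=K^x\rtimes C$. Hence it suffices to show that, writing $E=K\times_{K^x}V_\chi$ with $\chi$ irreducible, conditions (1) and (2) of Theorem \ref{thm:lag_equiv} on $E$ are jointly equivalent to the admissibility condition \eqref{eq:chi} on $\chi$.

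The key dictionary is as follows: for any algebraic character $\mu \colon K^x \to \cm$, the associated homogeneous line bundle $L_\mu = K\times_{K^x}\C_\mu$ on $\OO_\liep$ has Picard algebroid $\TT(L_\mu)$ carrying a canonical strong $\liek$-action (Lie derivative of sections), and two such Picard algebroids are isomorphic as graded Picard algebroids with intertwining strong $\liek$-actions precisely when the characters have equal differentials. The scalar multiple $\tfrac{1}{n}\TT(L_\mu)$ corresponds under this dictionary to the differential $\tfrac{1}{n}d\mu$. Applied to $K_{\OO_\liep} = L_\gamma$ and $\det(\V_\chi) = L_{\det\chi}$, condition (2) of Theorem \ref{thm:lag_equiv} is exactly the statement $\tfrac{1}{2}\,d\gamma = \tfrac{1}{r}\,d(\det\chi) = \tfrac{1}{r}\tr(d\chi)$ as characters of $\liek^x$. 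Condition (1) I will show is equivalent to $d\chi \colon \liek^x \to \gl(V_\chi)$ taking values in scalar matrices. Granting this, the two conditions together become $d\chi = \tfrac{1}{2}d\gamma\cdot\Id_{V_\chi}$, which is \eqref{eq:chi}, giving the desired bijection.

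For the reformulation of (1), in the forward direction assume $d\chi$ is scalar. Then the projectivized representation $\bar\chi\colon K^x \to PGL(V_\chi)$ has trivial differential and hence factors through the component group of $K^x$, so the principal $PGL(V_\chi)$-bundle associated to $\PP(\V_\chi) = K\times_{K^x}\PP(V_\chi)$ reduces to a discrete structure group and admits a canonical flat connection whose local horizontal leaves are precisely $K$-orbits of constant fiber points, making the $K$-action tautologically horizontal. The $\cm$-action on $\PP(\V_\chi)$ from \S\,\ref{subsec:grading} is induced by a one-parameter subgroup of $K$ (the group $C=\exp(\C h)$), so its differential at a point coincides with the horizontal lift through the $K$-vector field of $h$, yielding strongness. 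In the reverse direction, if $\PP(E)$ carries a strong flat connection with horizontal $K$-action, evaluating the induced Lie algebra homomorphism $\liek \to \Gamma(\PP(E), \T_{\PP(E)})$ at a point with stabilizer $K^x$ and using strongness to fix the $\cm$-piece, one finds that the stabilizer $\liek^x$ must act trivially on the fiber $\PP(V_\chi)$, i.e., $d\bar\chi = 0$ and $d\chi$ is scalar. The main technical obstacle will be the reverse direction of this last step: pinning down the precise way the strongness hypothesis, together with horizontality of the $K$-action, forces the isotropy to act by scalars, which amounts to a careful diagram chase through the definitions of strong flat connection and $K$-equivariant structure on the projective bundle at the chosen basepoint.
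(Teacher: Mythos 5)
Your proposal follows essentially the same route as the paper's: translate everything to the isotropy representation at the basepoint, then match condition (1) with ``$d\chi$ scalar'' and condition (2) with ``$\tfrac{1}{2}d\gamma = \tfrac{1}{r}\operatorname{tr}(d\chi)$,'' and use $K$-equivariance and transitivity to propagate from the basepoint. One small correction to your closing remark: the reverse direction is not the bottleneck you anticipate, and strongness plays no role in forcing the isotropy to act by scalars --- that follows from horizontality alone, since a vector field generated by $Z\in\liek^x$ is vertical over $x$, and a vertical and horizontal vector is zero; strongness only enters in matching up the $\cm$-equivariant structure with the grading of \S\,\ref{subsec:grading}.
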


\begin{proof}
	The proof is almost straightforward. Given an admissible $K$-equivariant vector bundle $\V_\chi$ over the $K$-orbit $\OO_\liep$, \eqref{eq:chi} implies that the projectivization $\PP(\V_\chi)$ is equipped with a unique flat connection with respect to which the $K$-action is horizontal. The infinitesimal action induced by $\cm$-action on $\V_\chi$ satisfies the strongness condition at the base point $x \in \OO_\liep$, then by $K$-equivariance it is strong everywhere on $\OO_\liep$. Again by \eqref{eq:chi} there is a graded isomorphism $\frac{1}{r} \TT(\det(\V_\chi)) \isom  \frac{1}{2} \TT(K_Y)$ intertwining the strong $\liek$-actions. Since the $K$-action on $\OO_\liep$ is transitive, such an isomorphism is unique.
	
	Conversely, if $E$ is a $K$-equivariant vector bundle satisfying the assumptions of Theorem \ref{thm:lag_equiv}, the condition (1) implies that the stablizer group $K^x$ acts on the fiber $E_x$ by scalars and condition (2) implies that the scalars is exactly given by \eqref{eq:chi}.
\end{proof}

By Proposition \ref{prop:vchi}, we can apply Theorem \ref{thm:lag_equiv} to $\Q$ and the vector bundle $E = \V_\chi$ over $\OO_\liep$, and conclude that there exists a unique Hamiltonian quantization $\Ehb$ of $(\OO_\liep, \V_\chi)$. Let $\M_\hb$ be the $\cm$-finite part of $\Gamma(\OO_\liep, \Ehb)$. Then by Definition \ref{defn:quan_lag_ham} of Hamiltonian quantization, $\M_\hb$ is a module over $\ugh$ with a compatible $K$-action such that $\M:=\M_\hb|_{\hb = 1}$ is a $(\lieg,K)$-module. Now assume that $\OO_\liep$ is a Vogan orbit in the sense of Definition \ref{defn:voganorbit}, i.e., $\codim(\partial \OO_\liep, \cl{\OO}_\liep) \geq 2$. Then $\C[\OO_\liep]$ is a finitely generated algebra and therefore by \cite[Lemma 3.2.1]{Losev4}, $\Gamma(\OO_\liep, E)$ is a finitely generated $\C[\OO_\liep]$-module. As a consequence, the grading on $\Gamma(\OO_\liep, E)$ is bounded from below since the grading on $\C[\OO_\liep]$ is nonnegative. Since the action of $\cm$ on $\Gamma(\OO_\liep, E)$ is locally finite, any eigenvector of the $\cm$-action on $\Gamma(\OO_\liep, E)$ can be lifted to an eigenvector in $\Gamma(\OO_\liep, \Ehb)$ with the same eigenvalue. Thus we have canonical maps
\begin{equation}\label{eq:Kembed}
\M_\hb / \hb\M_\hb \cong \Gamma(\OO_\liep, \Ehb) / \hb \Gamma(\OO_\liep, \Ehb) \hookrightarrow \Gamma(\OO_\liep, E). 
\end{equation}
Therefore we deduce that $\M_0 := \M_\hb / \hb\M_\hb$ is a finitely generated $\C[\OO_\liep]$-submodule of $\Gamma(\OO_\liep, E)$. This implies that $\M_0$ is a finitely generated module over both $\C[\OO]$ and $S\lieg$. Now $\M$ has a natural good filtration whose associated graded module is isomorphic to $\M_0$, we conclude that $\M$ is a finitely generated module over both $\A$ and $\Ug$. Moreover, the $K$-action on $\Gamma(\OO_\liep, E)$ is locally finite and by Frobenius reciprocity, the multiplicity of each irreducible finite dimensional representation of $K$ appearing in $\Gamma(\OO_\liep, E)$ is finite. Then by \eqref{eq:Kembed} again we see that $\M$ is admissible (see \cite[\S\,3.3]{Wallach}). Thus we have

\begin{theorem}\label{thm:hc1}
	The $(\lieg,K)$-module $\M$ constructed above from a Vogan orbit is an admissible Harish-Chandra module of $\GR$ with a canonical good filtration (defined up to degree shift), such that its associated variety is contained in $\cl{\OO}_\liep$.
\end{theorem}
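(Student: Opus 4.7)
The plan is to extract $\M$ as the $\hb=1$ specialization of the graded Hamiltonian quantization $\Ehb$ produced by Theorem \ref{thm:lag_equiv}, and then deduce the three assertions (finite generation, admissibility, and the bound on associated variety) simultaneously by passing to the associated graded. First I would note that, by the definition of Hamiltonian quantization (Definition \ref{defn:quan_lag_ham}), $\Ehb$ carries commuting $\cm$- and $K$-actions together with a $\ugh$-module structure coming from $\mh$; taking global sections, restricting to the $\cm$-finite part, and specializing $\hb=1$ gives the $(\lieg,K)$-module $\M$. The $\cm$-grading on $\M_\hb$ descends to a canonical filtration on $\M$ (well-defined up to the choice of degree shift discussed in Remark \ref{rmk:grading}), whose associated graded module is $\M_0 := \M_\hb/\hb\M_\hb$.

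The central task is to show $\M_0$ is finitely generated over $\C[\OO_\liep]$. Here the Vogan condition $\codim(\partial\OO_\liep,\cl{\OO}_\liep)\ge 2$ plays a crucial role: together with normality of the closure it implies $\C[\OO_\liep] = \C[\cl{\OO}_\liep]$, which is a finitely generated (hence Noetherian) $\C$-algebra. I would then invoke Losev's local finiteness theorem (\cite[Lemma 3.2.1]{Losev4}) to conclude that $\Gamma(\OO_\liep,\V_\chi)$ is a finitely generated $\C[\OO_\liep]$-module. To bring $\M_0$ into this picture, I need the natural map $\M_0 \to \Gamma(\OO_\liep,\V_\chi)$ obtained by reducing mod $\hb$ to be injective; this is exactly the place where one exploits that the $\cm$-weights on $\C[\OO_\liep]$ are nonnegative (so those on $\Gamma(\OO_\liep,\V_\chi)$ are bounded below) and that the $\cm$-action is locally finite. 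Order by order in $\hb$, every $\cm$-eigenvector downstairs lifts to an eigenvector in $\Gamma(\OO_\liep,\Ehb)$ of the same weight, giving the injection in \eqref{eq:Kembed}. Hence $\M_0$ sits inside a finitely generated module over the Noetherian ring $\C[\OO_\liep]$, so is itself finitely generated — and therefore also finitely generated over $\C[\cl{\OO}] = \C[X]$ and over $S\lieg$. Standard filtered-to-graded lifting then promotes this to finite generation of $\M$ over the Dixmier algebra $\A$ and over $\Ug$.

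For admissibility, I would apply Frobenius reciprocity to $\V_\chi = K \times_{K^x} V_\chi$: the multiplicity of any irreducible finite-dimensional $K$-module $V_\tau$ inside $\Gamma(\OO_\liep,\V_\chi)$ equals $\dim\Hom_{K^x}(V_\tau,V_\chi)$, which is finite because $V_\chi$ is finite-dimensional. Since the embedding $\M_0 \hookrightarrow \Gamma(\OO_\liep,\V_\chi)$ is $K$-equivariant, the same finiteness passes to $\M_0$ and then to $\M$ (multiplicities can only decrease or stay the same under taking subquotients of a filtered module). Combined with finite generation over $\Ug$, this gives that $\M$ is an admissible Harish-Chandra module. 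Finally, the associated variety of $\M$ is by definition $\supp_{S\lieg}(\gr \M) = \supp_{S\lieg}(\M_0)\subset \lieg^*$; since $\M_0$ is a $\C[\cl{\OO}]$-submodule of $\Gamma(\OO_\liep,\V_\chi)$, which is set-theoretically supported on $\cl{\OO}_\liep$, this support is contained in $\cl{\OO}_\liep$.

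The main obstacle is the injectivity of $\M_0 \hookrightarrow \Gamma(\OO_\liep,\V_\chi)$, equivalently the statement that no $\cm$-eigenvector in $\Gamma(\OO_\liep,\Ehb)$ lies entirely in $\hb\cdot\Gamma(\OO_\liep,\Ehb)$ without being $\hb$ times another $\cm$-eigenvector. This rests squarely on the bounded-below and locally finite nature of the $\cm$-action, which in turn requires the Vogan codimension hypothesis so that $\C[\OO_\liep]$ is finitely generated with nonnegative $\cm$-weights. Absent this, even the formal statement that $\M$ has associated graded equal to $\M_0$ as a $\C[\OO_\liep]$-module could fail, so the Vogan condition is doing all the real work.
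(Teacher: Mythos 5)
Your proposal reproduces the paper's argument essentially line by line: the same appeal to the Vogan codimension condition for finite generation of $\C[\OO_\liep]$, the same invocation of Losev's lemma for finite generation of $\Gamma(\OO_\liep,\V_\chi)$, the same nonnegative/bounded-below $\cm$-weight argument producing the injection $\M_0 \hookrightarrow \Gamma(\OO_\liep,\V_\chi)$, and the same Frobenius-reciprocity step for admissibility. The only stray remark is your parenthetical that normality gives $\C[\OO_\liep]=\C[\cl{\OO}_\liep]$ — closures of nilpotent $K$-orbits need not be normal, and the correct statement involves the normalization of $\cl{\OO}_\liep$ — but this does not affect the argument, since all one needs is finite generation of $\C[\OO_\liep]$, which the codimension hypothesis supplies.
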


\begin{remark}
	The conclusion that $\M$ is a Harish-Chandra module is in fact valid for any orbit, which is not necessarily a Vogan orbit. Indeed, $\M$ is admissible since the $K$-action on $\Gamma(\OO_\liep,E)$ is admissible. On the other hand, $\M$ is a module over $\A$ and therefore has an infinitesimal character. Thus $\M$ is Harish-Chandra. On the other hand, when $\codim(\partial \OO_\liep, \cl{\OO}_\liep) =1$, the associated variety of $\M$ can be bigger than $\cl{\OO}_\liep$. See the example below. In this case we do not get a canonical good filtration. 
\end{remark}

\begin{example}
	Let $\GR=\SLR$ and $\KR=SO(2,\R)$. The nilpotent cone in $\liep^*$ is the union of the zero orbit and the two principal nilpotent $K$-orbits, denoted by $\OO_\RN{1}$ and $\OO_\RN{2}$. The orbits $\OO_\RN{1}$ and $\OO_\RN{2}$ are Lagrangian subvarieties of the nilpotent cone in $\lieg^*=\slc^*$. The stablizer at any point of the principal orbits is $\ZZ/2\ZZ$, so there are exactly two choices of admissible line bundles on each of them. Picking either $\OO_\RN{1}$ or $\OO_\RN{2}$, the quantization of  the trivial $K$-equivariant line bundle gives the (Harish-Chandra module of) the irreducible spherical principal series representation of $\SLR$ with zero infinitesimal character. The corresponding associated variety is the entire nilpotent cone in $\liep^*$. The quantization of the nontrivial line bundle gives the reducible principal series representation of $\SLR$, which is the sum of two limits of discrete series representations. The associated variety of this reducible representation is again the entire nilpotent cone, but its irreducible components have associated varieties $\cl{\OO}_\RN{1}$ and $\cl{\OO}_\RN{2}$ respectively.
\end{example}

\subsection{Nonvanishing result}

In general, we can only conclude that the associated variety $\V(M)$ of $M$ constructed in Proposition \ref{thm:hc1} satisfies $\V(M) \subset \cl{\OO}_\liep$. The equality is preferable. We prove the following result which is an analogue of \cite[Lemma 4.5]{Losev3}.

\begin{proposition}\label{prop:nonzero}
	Suppose that $\codim(\partial \OO_\liep, \cl{\OO}_\liep) \geq 3$. Then the associated variety of the Harish-Chandra module $M$ is $\cl{\OO}_\liep$. If in addition the primitive ideal $\JJ_{\OO}$  associated to $\OO$ (see \S\,\ref{subsec:quan_c}) is maximal, then $M$ is irreducible.
\end{proposition}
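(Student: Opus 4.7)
The plan is to use the $K$-equivariant embedding $M_0 := M_\hb/\hb M_\hb \hookrightarrow \Gamma(\OO_\liep, E)$ as a finitely generated $\C[\OO_\liep]$-submodule (established before Theorem \ref{thm:hc1}) as the primary structural input. For the associated variety claim, Theorem \ref{thm:hc1} gives containment in $\cl{\OO}_\liep$, so only the reverse inclusion is needed. Since the associated variety is closed and $K$-invariant and $\cl{\OO}_\liep$ is the closure of the single $K$-orbit $\OO_\liep$, it suffices to show $M_0 \neq 0$: the coherent $K$-equivariant subsheaf $\str_{\OO_\liep} \cdot M_0 \subseteq E$ then has nonempty $K$-invariant support on $\OO_\liep$, hence full support, forcing $\cl{\OO}_\liep$ to lie in the associated variety.

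The nonvanishing $M_0 \neq 0$ is the crux. I would extend $\Ehb$ to a $K$-equivariant quantized coherent sheaf on an open neighborhood of $\OO_\liep$ inside $\xreg$, the smooth locus of the $\QQ$-factorial terminalization of $\Spec \C[\OO]$, and use the hypothesis $\codim(\partial \OO_\liep, \cl{\OO}_\liep) \geq 3$ in a Hartogs-type extension argument to lift nonzero classical $\cm$-finite sections of $E$ (which exist by Frobenius reciprocity, $\Gamma(\OO_\liep, E) \cong \Ind_{K^x}^K V_\chi$) order by order in $\hb$ to $\cm$-finite sections of $\Ehb$. The obstruction at each order lives in a local cohomology group along $\partial \OO_\liep$ whose vanishing is guaranteed by the codimension bound combined with the Cohen--Macaulay property inherited from the symplectic singularities of $\cl{\OO}$.

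For the irreducibility claim, assume $\JJ_\OO$ is maximal and let $M' \subseteq M$ be a nonzero $(\lieg, K)$-submodule. The induced good filtration gives $\gr M' \subseteq M_0$ as nonzero $K$-equivariant $\C[\OO_\liep]$-submodules. The $K$-equivariant coherent subsheaf of $E$ generated by $\gr M'$ has as its saturation a $K$-equivariant subbundle of $E$, which by irreducibility of $V_\chi$ under $K^x$ must equal $E$; since a $K$-equivariant torsion sheaf on the single orbit $\OO_\liep$ is zero, the subsheaf itself equals $E$, so $\gr M'$ has generic rank $r$ on $\OO_\liep$, matching that of $M_0$. Hence the multiplicity of $\cl{\OO}_\liep$ in the associated cycle of $M'$ equals that of $M$, so $M/M'$ has zero multiplicity along $\cl{\OO}_\liep$, and its associated variety lies properly inside $\partial \OO_\liep$, of dimension strictly less than $\dim \OO_\liep = \tfrac{1}{2}\dim \OO$. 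If $M/M'$ were nonzero, its annihilator would also equal $\JJ_\OO$ by maximality, so any irreducible subquotient would have associated variety of dimension exactly $\tfrac{1}{2}\dim \OO$ by the Joseph--Vogan theory of primitive ideals, a contradiction. Therefore $M/M' = 0$ and $M = M'$.

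The main obstacle is the nonvanishing step $M_0 \neq 0$, which requires delicate local-cohomological control in every order of $\hb$; this should parallel Losev's argument in the complex case from \cite[Lemma 4.5]{Losev3} but must be adapted to the $K$-equivariant Lagrangian setting considered here, using the graded Hamiltonian quantization of $(\OO_\liep, E)$ constructed in Theorem \ref{thm:lag_equiv}.
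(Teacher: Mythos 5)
Your overall plan is in the right spirit -- it tracks Losev's \cite[Lemma 4.5]{Losev3} and correctly isolates the nonvanishing of $\M_0 := \M_\hb/\hb\M_\hb$ as the crux -- but the mechanism you propose for establishing $\M_0\neq 0$ has a genuine gap, and it is exactly the step where the paper's argument and yours part ways.

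You propose an order-by-order Hartogs lifting argument whose step-$n$ obstruction you place in a local cohomology group along $\partial\OO_\liep$, and you claim this obstruction \emph{vanishes} via the codimension bound together with a Cohen--Macaulay property ``inherited from the symplectic singularities of $\cl{\OO}$.'' Two problems. First, the obstruction to lifting a section of $\Ehb/\hb^n\Ehb$ to $\Ehb/\hb^{n+1}\Ehb$ lies in $H^1(\OO_\liep,E)$, and the paper does \emph{not} establish $H^1(\OO_\liep,E)=0$ under $\codim(\partial\OO_\liep,\cl{\OO}_\liep)\geq 3$; it only shows $H^1(\OO_\liep,E)$ is finitely generated with support in $\partial\OO_\liep$. (The remark after Theorem \ref{thm:hc1} invokes $H^1=0$ only for \emph{complex} groups, citing Mason-Brown.) Second, and more seriously, the variety whose singularities govern this $H^1$ is $\cl{\OO}_\liep\subset\liep^*$, not $\cl{\OO}\subset\lieg^*$. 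The symplectic singularity/Cohen--Macaulay property of the complex orbit closure $\cl{\OO}$ does \emph{not} pass to a Lagrangian subvariety such as $\cl{\OO}_\liep$; for real groups these $K$-orbit closures need not be normal, let alone Cohen--Macaulay, and $E$ has no canonical maximal Cohen--Macaulay extension across $\partial\OO_\liep$. So your Hartogs step asks for a vanishing that you cannot produce.

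The paper sidesteps vanishing entirely. It takes the sheaf-cohomology long exact sequence of $0\to\Ehb\xrightarrow{\cdot\hb}\Ehb\to E\to 0$, identifying $\Gamma(E)/\M_0$ with $\Ker(\hb:H^1(\Ehb))$. It then (i) cites Losev's Step~2 for finite generation of $H^1(\OO_\liep,E)$ over $\C[\OO_\liep]$ with support in $\partial\OO_\liep$, (ii) shows $H^1(\OO_\liep,\Ehb)$ is finitely generated over the completed algebra $\ughh$ via the \v{C}ech complex, and (iii) runs a purely module-theoretic argument to filter $\Ker(\hb:H^1(\Ehb))$ by subquotients of $H^1(E)$, concluding it is supported in $\partial\OO_\liep$. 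This gives the stronger statement that $\M_0\hookrightarrow\Gamma(E)$ is an isomorphism away from the boundary; in particular it yields both $\V(M)=\cl{\OO}_\liep$ and, as a byproduct, that the multiplicity of $\cl{\OO}_\liep$ in the associated cycle of $M$ equals $r$, which is precisely what the irreducibility argument needs.

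Your reduction of the associated-variety statement to $\M_0\neq 0$ by $K$-homogeneity of $\OO_\liep$ is a correct observation, and your irreducibility argument (a nonzero $K$-equivariant subsheaf of $E$ over the single orbit $\OO_\liep$ has full rank by irreducibility of $V_\chi$ under $K^x$, combined with Vogan's theorem that irreducibles annihilated by $\JJ_\OO$ have associated variety of dimension $\frac{1}{2}\dim\OO$) is essentially the intended argument and fills in the paper's ``follows immediately.'' But both still hang on $\M_0\neq 0$, and the Hartogs route you sketch for that does not go through; replace it with the long-exact-sequence and finite-generation argument above.
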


\begin{proof}
	The proof is similar to that of \cite[Lemma 4.5]{Losev3}. Let $\ughh := \ugh \hten_{\kk[\hb]} \pws$ be the $\hb$-adic completion of $\ugh$. Then the quantized moment map $\mh$ can also be regarded as an algebra homomorphism $\ughh \to \Gamma(\OO,\Q)$. Taking sheaf cohomology of the short exact sequence 
	\[ 0 \to \Ehb \xrightarrow{\cdot \hb} \Ehb \to E \to 0 \]
	gives the exact sequence of $\ughh$-modules,
	\begin{equation} \label{exsq:gamma}
	0 \to \Gamma(\Ehb)/\hb\Gamma(\Ehb) \to \Gamma(E) \to H^1(\Ehb) \xrightarrow{\cdot \hb} H^1(\Ehb) \to H^1(E),
	\end{equation}
	where all cohomology are taken over $\OO_\liep$. By Step 2 of the proof of \cite[Lemma 4.5]{Losev3}, $H^1(E)$ is finitely generated over $\C[\OO_\liep]$ and hence also finitely generated over $\C[\lieg^*]=S\lieg$ with support in $\partial\OO_\liep$. The cohomology group $H^1(\Ehb)$ can be computed using \v{C}ech complex, which is $\hb$-complete and separated. By the same argument at the end of the proof of \cite[Lemma 5.6.3]{GordonLosev}, we can conclude that $H^1(\Ehb)$ is finitely generated over the Noetherian algebra $\ughh$.
	
	We claim that the kernel of multiplication map by $\hb$ in $H^1(\Ehb)$ is finitely generated over $S\lieg$ and supported on $\partial\OO_\liep$. Once the claim is proven, the first and second conclusions follow immediately. To prove the claim, we follow the argument in Step 6 of \cite[Lemma 4.5]{Losev3}. For the benefit of the reader and ourselves, 
we sketch the argument for this exercise of linear algebra here. To be honest, we are not confident that we could recall this simple argument in ten years or even in one week. For any $k \geq 0$, denote by $\Ker \hb^k$ (resp. $\Image \hb^k$) the kernel (resp. the image) of the multiplication map by $\hb^k$ in $H^1(\Ehb)$. Since $H^1(\Ehb)$ is finitely generated over $\ughh$, the ascending chain $\{ \Ker \hb^k \}_{k \geq 1}$ of submodules terminates. Assume that $\Ker \hb^k = \Ker\hb^N$ for some $N >0$ and any $k \geq N$. Then the descending chain $\{ \Ker\hb \cap \Image \hb^k\}_{k \geq 0}$ of submodules also terminates such that $\Ker\hb \cap \Image \hb^N = 0$. Now for each $k \geq 0$, we have isomorphisms of $S\lieg$-modules
	  \[  \frac{\Ker \hb^{k+1} + \Image \hb}{\Ker\hb^k + \Image \hb} \xrightarrow{~~\sim~~} \frac{\Ker \hb^{k+1}}{(\Ker \hb^{k+1} \cap \Image \hb) + \Ker \hb^k} \xrightarrow{~~\cdot  \hb^k~~} \frac{\Ker \hb \cap \Image \hb^k}{\Ker \hb \cap \Image \hb^{k+1}}.  \]
	Therefore $\{ \Ker\hb \cap \Image \hb^k\}_{k \geq 0}$ gives a finite filtration of $\Ker\hb$ whose subquotients are subquotients of $H^1(\Ehb)/\hb H^1(\Ehb)$, which is a submodule of the finitely generated $S\lieg$-module $H^1(E)$ supported on $\partial\OO_\liep$. The claim follows.
	
\end{proof}

\begin{definition} \label{defn:tall}
	We say that a nilpotent $K$-orbit $\OO_\liep$ is a \emph{tall orbit} if it satisfies the assumption $\codim(\partial \OO_\liep, \cl{\OO}_\liep) \geq 3$ in Proposition \ref{prop:nonzero}. Otherwise $\OO_\liep$ is a \emph{short orbit}.
\end{definition}

\begin{remark}
	Note that our construction only ensures an inclusion $\M_\hb / \hb\M_\hb \hookrightarrow \Gamma(\OO_\liep, E)$, while Vogan's Conjecture \ref{conj:vogan} predicts that the inclusion is in fact a bijection. In the case when $\GR$ is a complex group, our results also apply and give quantization of flat vector bundles on $\GR$-orbits. For a tall orbit, the $K$-types (or $G$-types) of the Harish-Chandra (bi)module do coincide with that of the global sections of the vector bundle by \cite{Mason-Brown}. The reason is that $H^1(\OO_\liep, E) = 0$ in this case.
\end{remark}

\subsection{Admissible orbits of exceptional groups} \label{subsec:list}

 Below we list the admissible nilpotent $K$-orbits of all noncompact real forms of exceptional groups, which are included in birationally rigid complex nilpotent orbits, and specify whether they are tall or short in the sense of \ref{defn:tall}. Here by admissible we mean that the orbits are admissible under the simply connected cover groups (In \cite{Noel1,Noel2} the terminology is \emph{sc-admissible}). The closure diagrams of all nilpotent $K$-orbits can be found in \cite{Dk1, Dk6, Dk5, Dk2, Dk3, Dk4, Dk7, Dk8}. We adopt the numbering of orbits from these papers and give references to specific papers for each real group. The list of admissible orbits can be found in \cite{Noel1, Noel2, Nevins}. A number with an asterisk means that the ambient complex orbit is not special (in the sense of Lusztig \cite{Lusztig}).

\begin{enumerate}[1.]
	\item 
	  {\bf GI} $= G_{2(2)}$: only two admissible orbits of (complex) dimensions $3$ and $4$ are included in birationally rigid orbits (in fact rigid orbits in this case). Only the one of dimension $3$ (the minimal orbit) is tall.
	\item 
	  {\bf FI} $= F_{4(4)}$: \cite{Dk1}
	  \begin{itemize}
	  	\item 
	  	  tall orbits: 1, 2*, 3*, 4, 5
	  	\item 
	  	  short orbits: 13
	  \end{itemize}
	\item 
	 {\bf FII} $= F_{4(-20)}$: \cite{Dk1}
	    \begin{itemize}
	    	\item 
	    	tall orbits: 11
	    	\item 
	    	short orbits: none
	    \end{itemize}
   \item
      {\bf EI} $=E_{6(6)}$: \cite{Dk2}
       \begin{itemize}
      	\item 
      		tall orbits: class 1, 3*
      	\item 
      		short orbits: class 11
      \end{itemize}
    \item
    {\bf EII} $=E_{6(2)}$: \cite{Dk2}
    \begin{itemize}
    	\item 
    	tall orbits: 1, 4*, 5*
    	\item 
    	short orbits: 17
    \end{itemize}
    \item
    {\bf EIII} $=E_{6(-14)}$: \cite{Dk2}
    \begin{itemize}
    	\item 
    	tall orbits: 1, 2
    	\item 
    	short orbits: none
    \end{itemize}
  \item 
   {\bf EIV} $=E_{6(-26)}$: \cite{Dk2}. No admissible orbit is included in a birationally rigid complex $E_6$-orbit.
 
 \item
 {\bf EV} $=E_{7(7)}$: \cite{Dk5}
 \begin{itemize}
 	\item 
 	tall orbits: 1, 2, 8*, 9*, 10, 11, 12, 13, 14, 15, 24
 	\item 
 	short orbits: 50
 \end{itemize}

  \item
  {\bf EVI} $=E_{7(-5)}$: \cite{Dk3, Dk4}
  \begin{itemize}
  	\item 
  	tall orbits: 1, 2, 3, 5*, 9, 10, 11, 16*, 18*
  	\item 
  	short orbits: 17*, 27
  \end{itemize}
    
   \item
    {\bf EVII} $=E_{7(-25)}$: \cite{Dk3, Dk4}
    \begin{itemize}
    	\item 
    	tall orbits: 1, 2, 3, 4, 5, 11, 12
    	\item 
    	short orbits: none
    \end{itemize}
   
   \item
   {\bf EVIII} $=E_{8(8)}$: \cite{Dk7, Dk8}
   	\begin{itemize}
   		\item 
   		tall orbits: 1, 2, 6*, 7, 8, 9, 10, 12*, 13*, 17*, 22*
   		\item 
   		short orbits: 23*, 24, 25, 31*, 32*, 38, 42, 43, 44
   	\end{itemize}
   
   \item
    {\bf EIX} $=E_{8(-24)}$: \cite{Dk6}
    \begin{itemize}
    	\item 
    	tall orbits: 1, 2, 3, 4*, 5*, 9, 10, 11, 15, 16*, 26
    	\item 
    	short orbits: 17*
    \end{itemize}
\end{enumerate}

\subsection{Speculations}\label{subsec:spec}
\begin{enumerate}[1.]
\item 
  For real classical groups, the tall orbits are very rare. Therefore Proposition \ref{prop:nonzero} is not enough. The analysis of the case of codimension 1 or 2 singularities will be studied in the future.

\item 
  By applying Saito's theory of mixed Hodge modules to Beilinson-Bernstein realization of Harish-Chandra modules using $\D$-modules \cite{BB}, Schmid and Vilonen \cite{SchmidVilonen} suggested that there is a natural good filtration, so-called \emph{Hodge filtration}, on each irreducible Harish-Chandra module. They conjectured that this filtration should be closely related to unitarity of the Harish-Chandra (\cite[Conj 1.10]{SchmidVilonen}). On the other hand, the Harish-Chandra module $\M$ constructed in our Theorem \ref{thm:hc1} also admits a natural filtration induced from the $\C$-action the orbit datum. We suspect that there is a close relationship between these two geometrically defined filtrations. It is possible that they only differ by a degree shift, at least when the ambient  nilpotent $G$-orbit $\OO$ is birationally rigid.
\end{enumerate}

\vskip 5em
\bibliographystyle{abbrv}
\bibliography{orbit.bib}

\end{document}